\newif\ifproofs\proofsfalse\ifproofs\RequirePackage[displaymath,mathlines]{lineno}\fi
\newif\ifsubsections
	\definecolor{linkred}{rgb}{0.7,0.2,0.2}
	\definecolor{linkblue}{rgb}{0,0.2,0.6}
	\definecolor{linkred}{rgb}{0.0,0.0,0.0}
	\definecolor{linkblue}{rgb}{0,0.0,0.0}
\newlength\figurewidth 
\newlength\figureheight 
\pgfplotsset{compat=newest}
\theoremstyle{plain} 
\newtheorem{theorem}[equation]{Theorem}
\newtheorem*{theorem*}{Theorem}
\newtheorem{lemma}[equation]{Lemma}
\newtheorem*{lemma*}{Lemma}
\newtheorem{exercise}[equation]{Exercise}
\newtheorem*{exercise*}{Exercise}
\newtheorem*{corollary*}{Corollary}
\newtheorem{proposition}[equation]{Proposition}
\newtheorem*{proposition*}{Proposition}
\theoremstyle{definition} 
\newtheorem{definition}[equation]{Definition}
\newtheorem*{definition*}{Definition}
\newtheorem*{example*}{Example}
\theoremstyle{remark} 
\newtheorem{remark}[equation]{Remark}
\newtheorem*{remark*}{Remark}
\newtheorem*{remarks*}{Remarks}
\numberwithin{equation}{subsection}
\let\oldsubsubsection\subsubsection
\renewcommand{\subsubsection}[1]{%
\setcounter{subsubsection}{\value{equation}}%
\oldsubsubsection{#1}
\refstepcounter{equation}
}
\newcommand{\bla}{\mbox{\boldmath$\lambda$}}
\newcommand{\bnu}{\mbox{\boldmath$\nu$}}
\newcommand{\bvarphi}{\mbox{\boldmath$\varphi$}}
\def\cH{{\mathcal H}}
\def\cF{{\mathcal F}}
\newcommand{\wt}{\widetilde}
\newcommand{\wh}{\widehat}
\newcommand{\eps}{\varepsilon}
\newcommand{\rd}{{\rm d}}
\newcommand{\bt}{{\bf {t}}}
\newcommand{\ba}{{\bf{a}}}
\newcommand{\bbm}{{\bf{m}}}
\newcommand{\bbf}{{\bf{f}}}
\newcommand{\bx}{{\bf{x}}}
\newcommand{\by}{{\bf{y}}}
\newcommand{\bu}{{\bf{u}}}
\newcommand{\bv}{{\bf{v}}}
\newcommand{\bw}{{\bf{w}}}
\newcommand{\bh}{{\bf{h}}}
\newcommand{\al}{\alpha}
\newcommand{\be}{\begin{equation}}
\newcommand{\ee}{\end{equation}}
\newcommand{\e}{{\varepsilon}}
\newcommand{\la}{\lambda}
\newcommand{\om}{{\omega}}
\newcommand{\cC}{{\cal C}}
\newcommand{\wkto}{\rightharpoonup}
\newcommand{\cal}{\mathcal}
\renewcommand{\le}{\leq}
\renewcommand{\ge}{\geq}
\renewcommand{\P}{\mathbb{P}}
\newcommand{\E}{\mathbb{E}}
\newcommand{\R}{\mathbb{R}}
\newcommand{\C}{\mathbb{C}}
\newcommand{\N}{\mathbb{N}}
\newcommand{\Z}{\mathbb{Z}}
\newcommand{\norm}[1]{\lVert #1 \rVert}
\DeclareMathOperator{\tr}{Tr}
\DeclareMathOperator{\re}{Re}
\DeclareMathOperator{\im}{Im}
\DeclareMathOperator{\sgn}{sgn}
\newcommand{\cS}{{\cal S}}
\newcommand{\cU}{{\cal U}}
\newcommand{\cK}{{\cal K}}
\newcommand{\ov}{\overline}
\newcommand{\nc}{}
\newcommand{\1} {\mspace{1 mu}}
\newcommand{\2} {\mspace{2 mu}}
\newcommand{\Cp} {\mathbb{H}}
\newcommand{\Bounded}{\mathcal{B}}
\newcommand{\xSpace} {\mathfrak{X}} 
\newcommand{\xMeasure} {\pi} 
\begin{document}
\ifproofs\linenumbers\fi

\author{L\'aszl\'o Erd{\H o}s}

\address{Institute of Science and Technology (IST) Austria\\
Am Campus 1\\
A-3400, Klosterneuburg, Austria}
\email{lerdos@ist.ac.at}

\thanks{Partially supported  by ERC Advanced Grant, RANMAT 338804}

\title[The matrix Dyson equation and its applications for random matrices]{The Matrix Dyson Equation \\ and its Applications for Random Matrices}

%
%
\subjclass[2010]{Primary 15B52; Secondary 82B44}
\keywords{Park City Mathematics Institute, Random matrix, Matrix Dyson Equation, local semicircle law, Dyson sine kernel, Wigner-Dyson-Mehta conjecture,
Tracy-Widom distribution, Dyson Brownian motion}

\begin{abstract} These lecture notes  are a concise   introduction of recent
techniques to  prove local spectral universality for a large class of  random matrices.
The general strategy  is presented following the recent book with H.T. Yau \cite{ErdYau2017}.
We extend the scope of this book by focusing on new techniques developed to deal with generalizations of 
Wigner matrices that allow for non-identically distributed entries and even for correlated entries.
This requires to analyze a system of nonlinear equations, or more generally a nonlinear matrix
equation called the {\it Matrix Dyson Equation (MDE)}. We demonstrate that stability properties 
of the MDE play a central role in random matrix theory. The analysis of MDE is based upon
joint works with J. Alt, O. Ajanki, D. Schr\"oder and  T. Kr\"uger that are supported by the  ERC Advanced Grant, RANMAT 338804 of the European Research Council. 

The lecture notes were written for the  27th Annual PCMI Summer Session on Random Matrices held
in 2017. The current edited version will appear in the IAS/Park City Mathematics Series, Vol. 26. 
\end{abstract}

%
%
\date{Sep 1, 2017}
\maketitle
\thispagestyle{empty}

\tableofcontents

\section{Introduction}\label{sec:intro}

{\leavevmode\kern27pt
\begin{minipage}[c]{3.9in}
 {\it ``Perhaps I am now too courageous when I try to guess the distribution of the distances between
successive levels (of energies of heavy nuclei).   Theoretically, the 
situation is quite simple if one attacks the problem  in a 
simpleminded fashion.  The question is simply what are the 
 distances  of the characteristic values of  a symmetric
matrix with random coefficients.''  }
\medskip 
\medskip 
\centerline{\hfill Eugene Wigner on 
the Wigner surmise, 1956}   
\medskip 
\end{minipage}
}

The cornerstone of probability theory is the fact that the collective behavior of many independent random  variables 
 exhibits universal patterns; the obvious examples are
the {\it law of large numbers (LLN)} and the {\it central limit theorem (CLT)}. They assert that the normalized sum of $N$
independent, identically distributed (i.i.d.) random variables $X_1, X_2, \ldots , X_N \in \R$ converge to their common
expectation value:
\be\label{lan}
 \frac{1}{N}\big( X_1+ X_2 + \ldots +X_N) \to \E X_1
\ee
as $N\to \infty$, 
and their centered average with a $\sqrt{N}$ normalization converges to the centered Gaussian distribution
with variance $\sigma^2= \mbox{Var}(X)$:
\[
   S_N:=\frac{1}{\sqrt{N}} \sum_{i=1}^N \big( X_i - \E X_i) \Longrightarrow {\cal N}(0, \sigma^2).
\]
The convergence in the latter case is understood in distribution, i.e. tested against any bounded continuous function $\Phi$:
\[
  \E \Phi(S_N ) \to \E \Phi(\xi),
\]
where $\xi$ is an ${\cal N}(0, \sigma^2)$ distributed normal random variable.
  
 These basic results directly  extend to 
random vectors instead of scalar valued random variables. The main question is:  what are their analogues in the non-commutative
setting, e.g. for matrices? Focusing on their spectrum,  what  do eigenvalues of typical large random matrices look like?
Is there a deterministic limit of some relevant random  quantity, like the average in case of the LLN \eqref{lan}. Is there some
stochastic {\it universality pattern} arising, similarly to the ubiquity of the Gaussian distribution in Nature owing
to the central limit theorem?

These natural questions could have been raised from pure curiosity by mathematicians, but historically random matrices first appeared
in statistics (Wishart in 1928 \cite{Wis1928}), where empirical covariance matrices of measured data (samples) naturally form
a random matrix ensemble and the eigenvalues play a crucial role in principal component analysis.
  The question regarding  the universality of eigenvalue  statistics, however,  
appeared  only in the 1950's in the pioneering work \cite{Wig1955} of Eugene Wigner. He was motivated by
a simple observation looking at data from nuclear physics, but he immediately realized 
a very general phenomenon in the background.  He noticed from experimental data that gaps in
energy levels  of large nuclei tend to follow the same statistics irrespective of
the material. Quantum mechanics predicts that energy levels are eigenvalues of a self-adjoint 
operator, but the correct  Hamiltonian  operator describing nuclear forces was not known at that time. 
 Instead of pursuing  a direct solution of  this problem, Wigner appealed to  a
phenomenological model to explain  his observation.   His pioneering idea was to model the complex Hamiltonian  
by a random matrix with independent entries. All physical details of the system 
were ignored except one, the {\it symmetry type}: systems with time reversal symmetry
were modeled by real symmetric random matrices, while complex Hermitian 
random matrices were used for systems without time reversal symmetry (e.g.  with magnetic forces). 
This simple-minded model  amazingly  reproduced the correct  gap statistics.
Eigenvalue gaps carry basic information about possible excitations of the quantum systems.
In fact, beyond nuclear physics,   random matrices enjoyed a renaissance in the theory
of disordered quantum systems, where the spectrum of a non-interacting electron
in a random impure environment was studied. It turned out that eigenvalue
statistics is one of the basic signatures of the celebrated {\it metal-insulator}, or {\it Anderson} transition
in condensed matter physics \cite{And1958}.

\subsection{Random matrix ensembles}

Throughout these notes we will consider  $N\times N$ square matrices of the form
\be\label{Hdef}
H=H^{(N)} = 
\begin{pmatrix} h_{11} &  h_{12} & \ldots & h_{1N} \\ 
 h_{21} &  h_{22} & \ldots & h_{2N} \\
\vdots & \vdots &  & \vdots\\
 h_{N1} &  h_{N2} & \ldots & h_{NN} \\
\end{pmatrix} .
\ee
The entries are  real or complex random variables constrained by
the symmetry 
\[
h_{ij}=  \bar h_{ji}, \qquad i,j=1, \ldots, N,
\]
so that $H=H^*$ is either Hermitian (complex) or symmetric (real). 
In particular, the eigenvalues of $H$, $\lambda_1\le \lambda_2\le \ldots \le \lambda_N$
are real and we will be interested in their statistical
behavior induced by the randomness of $H$ as the size of the matrix $N$ goes to infinity.
Hermitian symmetry is very natural from  the point of view of  physics applications and it makes the problem
much more tractable mathematically.  Nevertheless, there has recently been  an increasing interest
in non-hermitian random matrices as well motivated by systems of ordinary differential equations
with random coefficients arising in biological networks (see, e.g.   \cite{EKR, Nelson2016} and references therein).

There are essentially two customary ways to define a probability measure on the space of $N\times N$ random matrices
that we now briefly introduce. The main point is that
either 
one specifies the distribution of  the matrix elements directly or one aims at a basis-independent measure. 
The prototype of the first case is the Wigner ensembles and we will be focusing on its natural generalizations
in these notes. The typical example of the second case are the invariant ensembles. We will briefly introduce them now.

\subsubsection{Wigner ensemble}

The most prominent
example of the first class is the traditional {\bf Wigner matrix}, where the matrix elements $h_{ij}$ are i.i.d. random
variables subject to the symmetry constraint $h_{ij} = \overline{h_{ji}}$. More precisely, Wigner matrices are defined by  assuming that
\be\label{wignermoment}
     \E h_{ij} =0, \qquad \E |h_{ij}|^2 = \frac{1}{N}.
\ee
In the {\it real symmetric case},  the collection of random variables $\{ h_{ij} \; : \; i\le j\}$ are independent, identically distributed,
while in the {\it complex hermitian case} the distributions of $ \{ \re h_{ij}, \im h_{ij} \; : \; 1\le i<j\le N\}$ and $\{ \sqrt{2} h_{ii} \; : \;  i=1,2,\ldots, N\}$
are independent and identical. 

The common variance of the matrix elements is the single parameter of the model; by a trivial
rescaling  we may fix it conveniently.  The normalization $1/N$ chosen in \eqref{wignermoment}
guarantees that the typical size of the eigenvalues remain of order 1 even as $N$ tends to infinity.
To see this, we may compute the expectation of the trace of $H^2$ in two different ways:
\be\label{aves}
     \E \sum_i \lambda_i^2  =  \E \tr H^2 = \E \sum_{ij} |h_{ij}|^2 = N
\ee
indicating that $\lambda_i^2\sim 1$ on average.  In fact, much stronger bounds hold and one can prove
that 
\[
   \| H\| = \max_i |\lambda_i| \to 2, \qquad N\to \infty,
\]
in probability.

 In these notes we will  focus on Wigner ensembles and their extensions, where 
we will drop the condition of identical distribution and we will weaken the independence condition.
We will call them {\it Wigner type} and {\it correlated ensembles}.  Nevertheless,
for completeness we also present the other class of random matrices.

\subsubsection{Invariant ensembles}

The  ensembles in the second class are defined by the measure
\be
   \P(H) \rd H : = Z^{-1} \exp{ \big(-\frac{\beta}{2} N\tr V(H)\big) }\rd H.
\label{phold}
\ee
Here $\rd H = \prod_{i\le j} \rd h_{ij} $ is the flat Lebesgue measure on $\R^{N(N+1)/2}$
(in case of complex Hermitian matrices and $i<j$, $\rd h_{ij} $ is the Lebesgue measure
on the complex plane $\C$ instead of $\R$).  The (potential) function $V:\R\to\R$ is assumed to
grow mildly at infinity (some logarithmic growth would suffice)
to ensure that the measure defined in \eqref{phold} is finite. 
The parameter $\beta$ distinguishes between the two symmetry classes: $\beta=1$
for the real symmetric case, while $\beta=2$ for the complex hermitian case -- for traditional reason we
factor this parameter out of the potential.

Finally, $Z$ is
the normalization factor to make $P(H)\rd H$ a probability measure.
Similarly to the normalization of the variance in \eqref{wignermoment}, the factor $N$ in the exponent in \eqref{phold} guarantees that 
the eigenvalues remain order one even as $N\to \infty$. This scaling also  guarantees that empirical density of the eigenvalues will
have a deterministic limit without further rescaling.

Probability distributions of the form \eqref{phold}
are called {\bf invariant ensembles} since they are  invariant under
the orthogonal or unitary conjugation (in case of symmetric or Hermitian
matrices, respectively). For example,  in the Hermitian case,
for any fixed unitary matrix $U$, the transformation
\[
    H \to U^* H U
\]
leaves the distribution \eqref{phold} invariant thanks to $\tr V(U^*HU)=\tr V(H)$
and that $\rd (U^*HU) = \rd H$.

An important special case is when  $V$ is a quadratic polynomial, after shift and rescaling 
we may assume that $V(x)=\frac{1}{2} x^2$. In this case
\begin{align*}
    \P(H) \rd H  &  = Z^{-1} \exp{ \big(-\frac{\beta}{4} N \sum_{ij} |h_{ij}|^2\big) }\rd H \\
    & = Z^{-1} \prod_{i<j} \exp{ \big(-\frac{\beta}{2} N |h_{ij}|^2\big) }\rd h_{ij} \prod_i \exp{ \big(-\frac{\beta}{4} N h_{ii}^2\big) }\rd h_{ii},
\end{align*}
i.e. the measure factorizes and it is equivalent to  independent Gaussians for the matrix elements.
The factor $N$ in the definition \eqref{phold}  and the choice of $\beta$ ensure that we recover the normalization 
\eqref{wignermoment}. (A pedantic reader may notice that the normalization of the diagonal element
for the real symmetric case is off by a factor of 2, but this small discrepancy plays no role.)
The invariant Gaussian ensembles, i.e. \eqref{phold} with $V(x) = \frac{1}{2}x^2$, are called {\bf Gaussian orthogonal ensemble (GOE)}
for the real symmetric case $(\beta=1)$
and {\bf Gaussian unitary ensemble (GUE)} for the complex hermitian case $(\beta=2)$.

Wigner matrices and  invariant ensembles form two different universes with
quite different mathematical tools  available for their studies. In fact, these two
classes are almost disjoint because the Gaussian ensembles are the
only invariant Wigner matrices. This is the content of the following lemma:

\begin{lemma}[\cite{Dei1999} or Theorem 2.6.3 \cite{Meh1991}]\label{WH}
Suppose that the real symmetric or complex Hermitian matrix ensembles given in \eqref{phold}
have independent entries $h_{ij}$, $i\le j$. Then $V(x)$ is a quadratic
polynomial, $V(x) = a x^2 + bx + c$ with $a>0$. This means that apart from 
a trivial shift and  normalization, the ensemble is  GOE or GUE.
\end{lemma}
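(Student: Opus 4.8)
The plan is to exploit a rigidity: independence of the entries will force the exponent $\tr V(H)$ to be a sum of functions of the \emph{individual} entries of $H$, whereas $\tr V(H)=\sum_i V(\lambda_i)$ always depends on $H$ only through its eigenvalues (a rotation-invariant quantity), and these two facts are compatible only when $V$ is quadratic. Assume $N\ge 2$ (the assertion presumes this: for $N=1$ there is one entry, no independence constraint, and any $V$ is admissible), and --- for the differentiations below --- that $V\in C^2$. First I would take logarithms in \eqref{phold}: the density is everywhere positive ($V$ is real valued) and factorizes over the independent variables $\{h_{ij}:i\le j\}$, so there are functions $a_i$ of $h_{ii}$ and $b_{ij}$ of $h_{ij}$ ($i<j$), and a constant, with $\tr V(H)=\sum_i a_i(h_{ii})+\sum_{i<j}b_{ij}(h_{ij})+\mathrm{const}$. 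Evaluating this at $xE_{ii}$ (all other entries $0$) and subtracting the value at $H=0$ gives $a_i(x)=V(x)+\mathrm{const}_i$; evaluating at $x(E_{ij}+E_{ji})$, whose spectrum is $\{x,-x,0,\dots,0\}$, gives $b_{ij}(x)=V(x)+V(-x)+\mathrm{const}_{ij}$. Hence
\be\label{eq:WHsplit}
\tr V(H)\;=\;\sum_{i}V(h_{ii})\;+\;\sum_{i<j}\big(V(h_{ij})+V(-h_{ij})\big)\;+\;\Gamma_N
\ee
for every admissible $H$, with $\Gamma_N$ a constant (in the complex Hermitian case $V(h_{ij})+V(-h_{ij})$ is read as $V(|h_{ij}|)+V(-|h_{ij}|)$, by the same computation with a $2\times 2$ anti-diagonal block).

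Next I would feed a $2\times 2$ corner into \eqref{eq:WHsplit}. Put an arbitrary Hermitian block $M=\begin{pmatrix} p & r\\ \bar r & q\end{pmatrix}$, $p,q\in\R$, in the top-left corner of $H$ with all other entries zero, and compute $\tr V(H)$ in two ways: from $\tr V(H)=\sum_i V(\lambda_i)$ it equals $V(\mu_+)+V(\mu_-)+(N-2)V(0)$, where $\mu_\pm=\tfrac{p+q}{2}\pm\sqrt{(\tfrac{p-q}{2})^2+|r|^2}$ are the eigenvalues of $M$; from \eqref{eq:WHsplit} it equals $V(p)+V(q)+V(|r|)+V(-|r|)$ plus an $N$-dependent constant. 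Equating,
\be\label{eq:WHfunc}
V(\mu_+)+V(\mu_-)\;=\;V(p)+V(q)+V(|r|)+V(-|r|)+c_N .
\ee
The left side depends on $(p,q,r)$ only through $s:=p+q$ and $\rho:=\big((\tfrac{p-q}{2})^2+|r|^2\big)^{1/2}$. Fixing $s,\rho$ and parametrizing $p=\tfrac s2+\rho\cos\phi$, $q=\tfrac s2-\rho\cos\phi$, $|r|=\rho\sin\phi$ for $\phi\in(0,\pi)$, the right side of \eqref{eq:WHfunc} must be independent of $\phi$; with $G_s(u):=V(\tfrac s2+u)+V(\tfrac s2-u)$ and $W(v):=V(v)+V(-v)$ this says $G_s(\rho\cos\phi)+W(\rho\sin\phi)$ is constant in $\phi$. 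Differentiating in $\phi$ and dividing by the nonzero factors yields $G_s'(t)/t=W'(t')/t'$ with $t=\rho\cos\phi$ and $t'=\rho\sin\phi$ varying independently; since the right-hand side carries no $s$, both quotients equal a single real constant $\kappa$.

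From $W'(t)=\kappa t$ and $G_s'(t)=\kappa t$, together with $G_s(0)=2V(s/2)$, one obtains $V(c+t)+V(c-t)=\tfrac\kappa2 t^2+2V(c)$ for all $c,t$; differentiating twice in $t$ and setting $t=0$ gives $2V''(c)=\kappa$ for every $c$, so $V''$ is constant and $V(x)=ax^2+bx+c$. Finiteness of the measure \eqref{phold} then forces $a>0$, since otherwise $\exp(-\tfrac\beta2 N\tr V(H))$ is not integrable in $H$. (An alternative to this corner-block argument, in the spirit of \cite{Meh1991}, is to differentiate the identity $\tr V(e^{-\theta A}He^{\theta A})=\tr V(H)$ at $\theta=0$ with $A$ an infinitesimal rotation in one coordinate plane, which turns \eqref{eq:WHsplit} into a functional identity that one solves by varying a single entry at a time.)

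The step I expect to be the genuine obstacle is the regularity bookkeeping, not the algebra: the lemma only assumes that $V$ grows mildly, whereas the argument above differentiates it twice. This is not serious --- continuity of $V$ already makes the one-variable functions of the first step continuous (in fact $a_i=V+\mathrm{const}$), so one may either solve \eqref{eq:WHfunc} directly in the continuous class or replace $V$ by a mollification $V_\eta$, note that $V_\eta$ still satisfies an identity of the form \eqref{eq:WHfunc}, deduce that $V_\eta$ is quadratic, and let $\eta\to 0$ --- but it is the point demanding care, while the trace computations are routine.
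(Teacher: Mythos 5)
The paper does not actually prove this lemma: it is stated with pointers to \cite{Dei1999} and to Theorem 2.6.3 of \cite{Meh1991}, so there is no in-text argument to compare yours against. Your sketch is a sound, self-contained route, and it differs from the classical Porter--Rosenzweig argument reproduced in Mehta, which differentiates the invariance $P(U^*HU)=P(H)$ along an infinitesimal rotation in a single coordinate plane and combines the resulting first-order identity with the factorized form of the density to obtain ordinary differential equations for the single-entry marginals; your version instead extracts everything from the exact identity $\tr V(H)=\sum_i V(h_{ii})+\sum_{i<j}\big(V(|h_{ij}|)+V(-|h_{ij}|)\big)+\Gamma_N$ together with the explicit $2\times2$ eigenvalue formula, which buys you an argument with no reference to the invariance of the ensemble at all. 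Two remarks. First, the regularity worry you flag at the end is even less serious than you suggest: the functional equation $V(c+t)+V(c-t)-2V(c)=\tfrac{\kappa}{2}t^2$ needs no differentiation, since subtracting $\tfrac{\kappa}{4}t^2$ reduces it to Jensen's equation, whose continuous (indeed measurable) solutions are affine; alternatively, comparing $\phi=0$ with $\phi=\pi/2$ in your parametrization already yields the $u$-independence of the second symmetric difference $V(u+\rho)+V(u-\rho)-2V(u)$, from which the parallelogram identity and quadraticity follow without ever forming $G_s'$ or $W'$. Second, the one step you should state more carefully is the very first: independence gives the factorization of the joint density, hence the additive decomposition of $\tr V(H)$, only Lebesgue-almost everywhere on the space of entries, whereas your identifications of $a_i$ and $b_{ij}$ evaluate that decomposition on null sets (matrices with all but one entry vanishing). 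Upgrading the a.e.\ identity to an everywhere identity (via continuity of the marginals, or by running the $2\times2$ functional equation on a set of full measure) is routine but is the genuine bookkeeping obstacle, rather than the twice-differentiability of $V$.
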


The significance of the Gaussian ensembles  is that they allow for explicit
calculations that are not available for Wigner matrices with general non-Gaussian 
single entry distribution. In particular the celebrated {\bf Wigner-Dyson-Mehta
correlation functions} can be explicitly obtained for the GOE and GUE ensembles.
Thus the typical proof of identifying the eigenvalue correlation function for
a general matrix ensemble goes through universality: one first proves
that the correlation function is independent of the distribution, hence
it is the same as GUE/GOE, and then, in the second step,
one computes the GUE/GOE correlation functions. This second step has
been completed by Gaudin, Mehta and Dyson in the 60's by an ingenious calculation, see e.g. the classical
treatise by Mehta  \cite{Meh1991}.

One of the  key ingredients of the explicit calculations is the surprising fact that the joint (symmetrized) density  function 
of the eigenvalues, $p(\lambda_1, \lambda_2,\ldots , \lambda_N)$ can be computed
explicitly for any invariant ensemble. It is given by
\be
    p_N(\lambda_1, \lambda_2, \ldots , \lambda_N) 
  = \mbox{const.} \prod_{i<j} (\lambda_i-\lambda_j)^\beta 
   e^{- \frac{\beta}{2}N\sum_{j=1}^N V(\lambda_j)}.
\label{expli}
\ee
where the constant ensures the normalization, but its exact value is typically unimportant.

\begin{remark} In other sections of these notes we usually label the eigenvalues in increasing order so that 
their probability density,  denoted by  $\wt p_N(\bla)$,   is defined on the set 
\[
  \Xi^{(N)}:= \{ \lambda_1 \le \lambda_2 \le\ldots \le \la_N\} \subset \R^N.
\] 
For the
purpose of  \eqref{expli}, however, we dropped this restriction and
we consider $p_N(\lambda_1, \lambda_2, \ldots ,\lambda_N)$ to be a
symmetric function of $N$ variables, $\bla =(\la_1, \ldots, \la_N)$
on $\R^N$.  The relation between the ordered and unordered
densities is clearly  
 $\wt p_N(\bla)= N!\, p_N(\bla)\cdot {\bf 1}(\bla\in \Xi^{(N)})$.
\end{remark}

 The emergence of the {\it Vandermonde determinant}
 \index{Vandermonde determinant} in \eqref{expli} is a result of 
integrating out the ``angle'' variables in \eqref{phold}, i.e., the  unitary matrix
 in the diagonalization of $H=U\Lambda U^*$. 
This is a remarkable formula since it gives  a direct access to  the eigenvalue  distribution.
In particular, it shows that the  eigenvalues are strongly correlated.
For example, no two eigenvalues can be too close to each other since the corresponding probability is suppressed by the
factor $\lambda_{j}-\lambda_i$ for any $i\ne j$; this phenomenon is called the {\bf level repulsion}. We remark that level repulsion also holds
for Wigner matrices  with smooth distribution  \cite{ErdSchYau2010} but its proof is much more involved.

In fact, one may  view the ensemble  \eqref{expli}  as a statistical physics question by rewriting $p_N$ 
as a classical Gibbs measure of a $N$ point particles on the line with a logarithmic  mean field interaction:
\be\label{mf}
 p_N(\bla) = \mbox{(const.)} e^{-\beta N \cH (\bla)}
\ee
with a Hamiltonian
\[
   \cH (\bla) = \frac{1}{2}\sum_i V(\lambda_i) - \frac{1}{N}\sum_{i<j} \log |\lambda_j-\lambda_i|.
\]
This ensemble of point particles with logarithmic interactions is  also called {\bf log-gas}. We remark that viewing 
the Gibbs measure \eqref{mf} as the starting point and forgetting about the matrix ensemble behind, the parameter $\beta$ does not have to be 1 or 2; it can be 
any positive number, $\beta>0$, and it has the interpretation of the inverse temperature.  We will not pursue general invariant ensembles in these notes.

\subsection{Eigenvalue statistics on different scales}

The normalization both in \eqref{wignermoment} and \eqref{phold}  is chosen in such a way
 that the typical eigenvalues remain of order 1 even in the large $N$ limit. In particular, the
typical distance between neighboring eigenvalues is of order $1/N$. We distinguish two
different scales for studying eigenvalues: {\it macroscopic} and {\it microscopic} scales.
With our scaling, the macroscopic scale
is order one and on this scale we detect the cumulative effect of $cN$  eigenvalues
with some positive constant $c$. In contrast, on the microscopic scales indiv\-idual eigenvalues
are detected; this scale is typically of order $1/N$. However, near the spectral edges, where
the density of eigenvalues goes to zero, the typical eigenvalue spacing hence the microscopic scale may be larger.
Some phenomena (e.g. fluctuations of linear statistics of eigenvalues)
occur on various {\it mesoscopic} scales that lie between the macroscopic and the microscopic scales.

\subsubsection{Eigenvalue density on macroscopic scales: global laws}

The first and simplest question is to determine the eigenvalue density, i.e. the behavior of the {\it empirical eigenvalue
density} or {\bf empirical density of states}
\be\label{empire}
    \mu_N(\rd x) : = \frac{1}{N}\sum_i \delta(x-\lambda_i)\rd x
\ee
in the large $N$ limit. This is a random measure, but under very general conditions it converges to a deterministic
measure, similarly to self-averaging property encoded in the law of large numbers \eqref{lan}.

\begin{figure}[h]
	\centering
	\includegraphics[width=.86\textwidth]{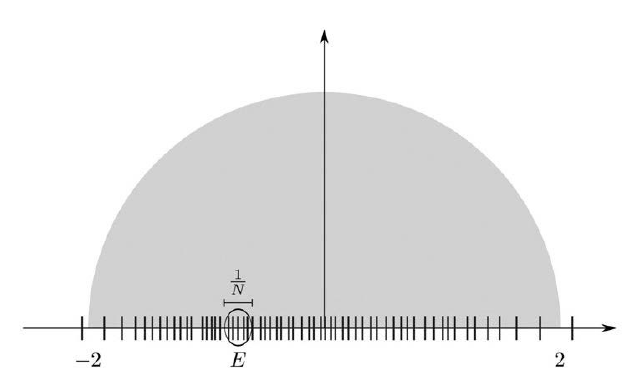}
	\caption{
	Semicircle law and eigenvalues of a GUE random matrix of size $N=60$. }
	\label{Fig:sc}
\end{figure}

For Wigner ensemble, the empirical distribution of eigenvalues converges  to 
the {\bf Wigner semicircle law}. \index{Wigner semicircle law|textbf} To formulate it more precisely, note that
the typical spacing between neighboring eigenvalues is of order $1/N$, so
in a fixed interval $[a,b]\subset \R$, one expects macroscopically many 
(of order $N$) eigenvalues.
More precisely, it can be shown (first proof was given by Wigner \cite{Wig1955})
 that  for any fixed $a\le b$ real numbers,
\be
   \lim_{N\to\infty} \frac{1}{N}\#\big\{ i \; : \; \lambda_i\in [a,b]\big\}
  = \int_a^b \varrho_{sc}(x)\rd x, \qquad \varrho_{sc}(x) : = \frac{1}{2\pi}
  \sqrt{(4-x^2)_+},
\label{sc} 
\ee
where $(a)_+:=\max\{ a, 0\}$ denotes the positive part of the number $a$.
Alternatively, one may formulate the Wigner semicircle law as the weak convergence in probability of
the empirical distribution $\mu_N$ to the semicircle distribution, $\varrho_{sc}(x)\rd x$. This means 
that the limit
\[
  \int_\R f(x) \mu_N(\rd x) = \frac{1}{N}\sum_i f(\lambda_i) \to \int_\R f(x) \varrho_{sc}(x)\rd x, \qquad N\to \infty
\]
holds in probability  for any bounded continuous  function $f$, i.e.,
\[ 
   \P\Big( \Big| \int_\R f(x) \mu_N(\rd x) - \int_\R f(x) \varrho_{sc}(x)\rd x\Big|\ge \e\Big) \to 0
\]
for any $\e>0$ as $N\to \infty$.

Note that the emergence of  the semicircle density is already a certain form of universality: the common distribution
of the individual matrix elements is ``forgotten''; the density of eigenvalues is asymptotically  always the same,
 independently of the details of the distribution of
the matrix elements. 

We will see that for a more general class of Wigner type matrices with zero expectation 
but not identical distribution a similar limit statement holds for the empirical density of
eigenvalues, i.e. there is a deterministic density function $\varrho(x)$ such that
\be\label{lime}
  \int_\R f(x) \mu_N(\rd x) = \frac{1}{N}\sum_i f(\lambda_i) \to \int f(x) \varrho(x)\rd x, \qquad N\to \infty
\ee
holds. The density function $\varrho$  thus approximates the empirical density, so we will call it {\bf asymptotic density (of states).}
In general it is not the semicircle density, but is determined by the second moments of 
the matrix elements and it is independent of other details of the distribution.
For independent entries, the variance matrix
\be\label{Sijdef}
      S=( s_{ij})_{i,j=1}^N, \qquad s_{ij} :  =\E |h_{ij}|^2
 \ee
contains all necessary information. For matrices with correlated entries, all relevant second moments are encoded in the 
linear operator 
\[
  \cS[R]: =  \E HRH, \qquad R\in \C^{N\times N}
\]
acting on $N\times N$ matrices. It is one of the key questions in random matrix theory to compute
the asymptotic density $\varrho$ from the second moments; we will see that the answer requires solving 
a system of nonlinear equations, that will be commonly called the {\bf Dyson equation}.
The explicit solution leading to the semicircle law is available only for Wigner matrices, or a little bit
more generally, for ensembles with the property
\be\label{genuine}
\sum_j s_{ij}=1  \qquad \mbox{for any $i$}.
\ee
These are called {\bf generalized Wigner ensembles} and have been introduced in \cite{EYY}.

For invariant ensembles, the self-consistent density $\varrho=\varrho_V$ depends on the potential function $V$. It can be
computed by solving a convex  minimization problem, namely it is the 
the unique minimizer  of the
functional
\[
I(\nu)=
\int_\R V(t) \nu(t)\rd t-
\int_\R \int_\R \log|t-s| \nu(s) \nu(t) \rd t \rd s.
\]
In both cases, under some  mild conditions on the variances $S$ or on the potential $V$, respectively, 
the asymptotic density $\varrho$ is compactly supported.

\subsubsection{Eigenvalues on mesoscopic scales: local laws}

The Wigner semicircle law in the form \eqref{sc} asymptotically determines the number of eigenvalues
in a fixed interval $[a,b]$. The number of eigenvalues in such intervals is comparable with $N$. However, 
keeping in mind the analogy with  the law of large numbers, it is natural to raise the question whether
the same asymptotic relation holds if the length of the interval $[a,b]$ shrinks to zero as $N\to \infty$.
To expect a deterministic answer, the interval should still contain many eigenvalues, but this would be
guaranteed by  $|b-a|\gg 1/N$. This turns out to be correct and the local semicircle law asserts that
\be
   \lim_{N\to\infty} \frac{1}{2N\eta}\#\big\{ i \; : \; \lambda_i\in [E-\eta, E+\eta]\big\}
  = \varrho_{sc}(E)
  \label{sclocal}
\ee
uniformly in $\eta=\eta_N$ as long as $N^{-1+\e}\le \eta_N\le N^{-\e}$ for any $\e>0$ and $E$ is not at the edge, $|E|\ne 2$.
Here we considered
the interval $[a,b]= [E-\eta, E+\eta]$, i.e. we fixed its
 center and viewed its length as
an $N$-dependent parameter.  (The $N^\e$ factors can be improved to some $(\log N)$-power.)

\subsubsection{Eigenvalues on microscopic scales: universality of local eigenvalue statistics}

Wigner's original observation concerned the distribution of 
the distances between consecutive (ordered) eigenvalues, or {\bf gaps}. In the bulk of the spectrum, i.e. in 
the vicinity of a fixed energy level $E$ with $|E|<2$
in case of the semicircle law, 
 the gaps have a typical size of order $1/N$ (at the spectral edge, $|E|=2$, the relevant microscopic
 scale is of order $N^{-2/3}$, but we will not pursue edge behavior in these notes).
 Thus the corresponding rescaled gaps have the form
 \be\label{gi}
      g_i:=   N\varrho(\lambda_i) \big(\lambda_{i+1}-\lambda_i\big),
 \ee
 where $\varrho$ is the asymptotic density, e.g. $\varrho=\varrho_{sc}$ for Wigner matrices.
  Wigner predicted that the fluctuations of the 
gaps are universal and their distribution is given by a new law, the {\it Wigner surmise}. Thus 
there exists a random variable $\xi$, depending only on the symmetry class $\beta=1,2$, such that
\[
   g_i \Longrightarrow \xi
\]
in distribution, for any gap away from the edges, i.e.,  if $\e N \le i\le (1-\e)N$
with some fixed $\e>0$.
\index{Wigner surmise} 
This might  be viewed as the random matrix analogue of the central limit theorem. 
Note that universality is twofold. First, the distribution of $g_i$  is independent of 
the index $i$ (as long as $\lambda_i$ is away from the edges). Second, more importantly, the limiting  gap distribution 
is independent of the distribution of the matrix elements, similarly to the universal character
of the central limit theorem.

However, the gap universality holds much more generally than the semicircle law: the 
rescaled gaps  \eqref{gi} follow the same distribution as the gaps of the GUE or GOE
(depending on the symmetry class) essentially for any  random matrix ensemble
with ``sufficient'' amount of randomness. In particular, it holds for invariant ensembles, as well as
for Wigner type and correlated random matrices, i.e. for  very broad extensions of 
the original  Wigner ensemble. In fact, it holds much beyond the traditional realm of random
matrices; it is conjectured to hold for any random matrix describing a disordered quantum 
system in the {\it delocalized regime}, see Section~\ref{sec:grand} later.

The universality on microscopic scales can also be expressed in terms of the appropriately rescaled {\bf correlation 
functions}. In fact, in this way the formulas are more  explicit.
 First we define the correlation functions. 
\begin{definition}\label{def:corrfn}
Let $p_N(\lambda_1, \lambda_2, \ldots ,\lambda_N)$ be the joint
symmetrized probability distribution of the eigenvalues.  For any $n\ge 1$,
the $n$-point 
correlation function is defined by
\be
  p^{(n)}_N(\la_1, \la_2, \ldots, \la_n): = \int_{\R^{N-n}} p_N( \la_1, \ldots,\la_n, \la_{n+1},
\ldots \la_N) \rd\la_{n+1} \ldots \rd\la_{N}.
\label{pk}
\ee 
\end{definition}

 The significance of the correlation functions is that with their help
one can compute the expectation value of any symmetrized  observable.  For example, 
for any bounded continuous test function $O$ of two variables we have,
directly from the definition of the correlation functions, that 
\be\label{coro}
\frac 1 { N (N-1)} \E \sum_{i \not = j} O(\lambda_i, \lambda_j)  = \int_{\R\times\R}
 O(\lambda_1, \lambda_2) p^{(2)}_N (\lambda_1, \lambda_2) \rd \lambda_1 \rd 
\lambda_2, 
\ee
where the expectation is w.r.t. the probability density $p_N$ or in this case w.r.t. the original random matrix ensemble. 
Similar formula holds for observables of any number of variables.  In particular, the global law \eqref{lime} implies
 that  the one point correlation 
function converges to the asymptotic density
\[
  p^{(1)}_N (x)\rd x \to \varrho(x) \rd x
\]
weakly, since
\[
   \int_\R O(x)  p^{(1)}_N (x)\rd x = \frac{1}{N} \E \sum_i O(\lambda_i) \to \int O(x)\varrho(x)\rd x.
\]

Correlation functions are difficult to compute in general, even if the joint density function $p_N$ is 
explicitly given as in the case of the invariant ensembles \eqref{expli}. Naively one may think 
that  computing
the correlation functions in this latter case  boils down to an elementary calculus exercise by integrating out
all but a few variables. However, that task is complicated.

As mentioned, one may view the joint density of eigenvalues of invariant ensembles \eqref{expli} as a Gibbs measure of
a log-gas and here $\beta$ can be any positive number (inverse temperature). The universality of
correlation functions is a valid question for all $\beta$-log-gases
 that has been positively answered in \cite{BouErdYau2014, BouErdYau2012, BouErdYau2014-2, BekFigGui2015, Shc2014} 
 by showing that for a sufficiently smooth potential $V$ (in fact $V\in C^{4}$ suffices) the correlation functions
 depend only on $\beta$ and are independent of $V$. We will not pursue general invariant ensembles in these notes.

The logarithmic interaction is of long range, so the system \eqref{mf} is strongly correlated
and standard methods of statistical mechanics to compute correlation functions cannot be applied. 
The computation is quite involved even for the simplest Gaussian case, and it relies on sophisticated 
identities involving Hermite orthogonal polynomials. These calculations have been developed
by Gaudin, Mehta and Dyson in the 60's and can be found, e.g. in Mehta's book \cite{Meh1991}. Here
we just present the result for the most relevant $\beta=1, 2$ cases. 

 We  fix an energy $E$ in the bulk, i.e., $|E| < 2$, and we  rescale 
the correlation functions by a factor $N\varrho$  around $E$ to make the typical distance between neighboring
eigenvalues 1.  These rescaled correlation functions then have a universal limit:

\begin{theorem}
For GUE ensembles, the rescaled correlation functions converge to the determinantal formula with the sine kernel, $S(x): =  \frac{\sin \pi x}{\pi x}$,
 i.e. 
\begin{align}\label{sineres}
  \frac{1}{[\varrho_{sc}(E)]^n}
 p_N^{(n)}  \Big( E+ \frac{\al_1}{N\varrho_{sc}(E)}, & E + \frac{\al_2}{N\varrho_{sc}(E)},
 \ldots ,E+ \frac{\al_n}{N\varrho_{sc}(E)}\Big) \\
& \rightharpoonup  q^{(n)}_{\text{\tiny GUE}}\left({\boldsymbol  \alpha}\right):=
\det \big(  S (\al_i - \al_j)\big)_{i,j=1}^n \nonumber
\end{align}
as  weak convergence of functions in the variables ${\boldsymbol  \alpha}= (\al_1, \ldots, \al_n)$.
\end{theorem}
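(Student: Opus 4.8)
The plan is to run the classical Gaudin--Mehta--Dyson argument, which rests on the fact that the $\beta=2$ invariant ensemble is an exactly solvable determinantal point process. First I specialize the explicit eigenvalue density \eqref{expli} to $\beta=2$, $V(x)=\tfrac12 x^2$, getting
\be
 p_N(\la_1,\dots,\la_N)=\mathrm{const.}\,\prod_{i<j}(\la_i-\la_j)^2\,e^{-\frac{N}{2}\sum_j\la_j^2},
\ee
and rewrite the Vandermonde factor as $\prod_{i<j}(\la_i-\la_j)=\det(\la_i^{\,j-1})_{i,j=1}^N$. Since adding multiples of earlier columns to later ones leaves a determinant unchanged, I may replace the monomials $\la^k$ by the monic Hermite polynomials $P_k$ orthogonal for the weight $w(x)=e^{-Nx^2/2}$; pulling a factor $e^{-N\la_i^2/4}$ out of row $i$ and normalizing produces the orthonormal Hermite functions $\psi_k(x)=c_kP_k(x)e^{-Nx^2/4}$, $\int_\R\psi_k\psi_\ell=\delta_{k\ell}$, with $\prod_{i<j}(\la_i-\la_j)^2e^{-\frac N2\sum_j\la_j^2}=(\det\Psi)^2$ for $\Psi=\big(\psi_{j-1}(\la_i)\big)_{i,j=1}^N$. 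Multiplicativity of the determinant then turns $(\det\Psi)^2=\det(\Psi\Psi^{\mathsf T})$ into $\det\big(K_N(\la_i,\la_j)\big)_{i,j=1}^N$ in terms of the \emph{Christoffel--Darboux kernel} $K_N(x,y):=\sum_{k=0}^{N-1}\psi_k(x)\psi_k(y)$, so that
\be\label{eq:GUEdet}
 p_N(\la)=\frac{1}{N!}\det\big(K_N(\la_i,\la_j)\big)_{i,j=1}^N .
\ee

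Next I integrate out the last $N-n$ variables. Orthonormality of the $\psi_k$ yields the two reproducing identities $\int_\R K_N(x,y)K_N(y,z)\,\rd y=K_N(x,z)$ and $\int_\R K_N(x,x)\,\rd x=N$; feeding these into the Mehta--Dyson lemma for integrating out a projection-kernel determinant and iterating gives, from \eqref{eq:GUEdet},
\be\label{eq:GUEcorr}
 p^{(n)}_N(\la_1,\dots,\la_n)=\frac{(N-n)!}{N!}\,\det\big(K_N(\la_i,\la_j)\big)_{i,j=1}^n .
\ee
Everything up to here is exact and purely algebraic; in particular the $n=1$ case of \eqref{eq:GUEcorr} is $p^{(1)}_N(x)=\tfrac1N K_N(x,x)$.

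The analytic core is the bulk asymptotics of $K_N$. I would use the Christoffel--Darboux formula $K_N(x,y)=a_N\frac{\psi_N(x)\psi_{N-1}(y)-\psi_{N-1}(x)\psi_N(y)}{x-y}$ together with the Plancherel--Rotach asymptotics of $\psi_N$ and $\psi_{N-1}$ at a fixed bulk energy $E$ with $|E|<2$: these are oscillatory, $\psi_N(E+u)\approx(\text{slowly varying amplitude})\cdot\cos\big(N\phi(E)+\pi N\varrho_{sc}(E)\,u+(\text{phase})\big)$ locally in $u$, with local frequency exactly $\pi\varrho_{sc}(E)$. A short trigonometric manipulation then gives
\be\label{eq:GUEsine}
 \frac{1}{N\varrho_{sc}(E)}\,K_N\!\Big(E+\frac{a}{N\varrho_{sc}(E)},\ E+\frac{b}{N\varrho_{sc}(E)}\Big)\ \longrightarrow\ S(a-b)=\frac{\sin\pi(a-b)}{\pi(a-b)}
\ee
locally uniformly for $(a,b)$ in compact sets (the diagonal $a=b$, where the right side is $S(0)=1$, being exactly the content of the local law \eqref{sclocal}). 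Finally, combine \eqref{eq:GUEcorr} with \eqref{eq:GUEsine}: since $\frac{(N-n)!}{N!}N^n\to1$ for fixed $n$ and the determinant is multilinear in its rows, $[\varrho_{sc}(E)]^{-n}\,p^{(n)}_N\big(E+\tfrac{\al_1}{N\varrho_{sc}(E)},\dots,E+\tfrac{\al_n}{N\varrho_{sc}(E)}\big)$ equals $\big(1+\OO(N^{-1})\big)\det\big(\tfrac1{N\varrho_{sc}(E)}K_N(\cdots)\big)_{i,j=1}^n$, which by \eqref{eq:GUEsine} converges --- weakly, indeed locally uniformly --- to $\det\big(S(\al_i-\al_j)\big)_{i,j=1}^n=q^{(n)}_{\text{\tiny GUE}}(\boldsymbol\alpha)$, as claimed.

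The hard part is \eqref{eq:GUEsine}. The algebraic steps --- Vandermonde $\to$ determinant of orthogonal polynomials, the Gram identity, the integrating-out lemma --- are routine bookkeeping, but proving the Plancherel--Rotach asymptotics with enough uniformity requires real work: one carries out a steepest-descent (Laplace) analysis of a contour-integral or generating-function representation of the Hermite polynomials, tracking the oscillatory phase accurately enough to read off the local frequency $\pi\varrho_{sc}(E)$ and to control the error uniformly for $a,b$ in a compact set. An alternative yielding the same \eqref{eq:GUEsine} with optimal error bounds --- and one that extends to non-Gaussian potentials --- is to replace Plancherel--Rotach by a Riemann--Hilbert steepest-descent analysis of the Hermite orthogonal-polynomial problem.
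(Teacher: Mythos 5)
Your proposal is correct and is precisely the classical Gaudin--Mehta--Dyson computation that these notes invoke (the paper does not reprove the theorem but refers to Mehta's book, describing the argument as resting on the Vandermonde-to-Hermite-orthogonal-polynomial identities and the resulting determinantal structure). Your outline — Christoffel--Darboux kernel, the integrating-out lemma giving $p_N^{(n)}=\frac{(N-n)!}{N!}\det(K_N)$ for the probability-normalized marginals of Definition~\ref{def:corrfn}, and Plancherel--Rotach asymptotics yielding the sine kernel — matches that reference, and you correctly identify the bulk asymptotics of $K_N$ as the only genuinely analytic step.
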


 Formula \eqref{sineres} holds for the GUE case.  The corresponding expression for GOE  is
more involved \cite{Meh1991, AndGuiZei2010} 
\be \label{pGOE}
q^{(n)}_{\text{\tiny GOE}}\left({\boldsymbol  \alpha}\right):=
\det \big( K (\al_i - \al_j)\big)_{i,j=1}^n, \qquad K(x) := \begin{pmatrix} S(x) & S'(x) \cr 
-\frac{1}{2}\sgn (x)+\int_0^x S(t)\rd t
 & S(x) 
\end{pmatrix}.
\ee
Here the determinant is understood as  the trace of the  {\it quaternion determinant}
\index{Quaternion determinant} after the canonical correspondence between
 quaternions $a\cdot{\bf 1} +b\cdot{\bf i} + c\cdot {\bf j} + d\cdot{\bf k}$, $a,b,c,d\in \C$, and $2\times 2$
 complex matrices given by
 \[
    {\bf 1} \leftrightarrow\begin{pmatrix} 1 &0\cr 0&1 \end{pmatrix}\qquad
    {\bf i} \leftrightarrow\begin{pmatrix} i &0\cr 0&-i \end{pmatrix}\qquad
    {\bf j} \leftrightarrow \begin{pmatrix} 0 &1\cr -1&0 \end{pmatrix}\qquad
    {\bf k} \leftrightarrow\begin{pmatrix} 0 &i\cr i&0 \end{pmatrix}.
 \]

 Note that the limit in \eqref{sineres} is universal in the  sense that it is independent of the energy $E$. 
 However, universality also holds in a much stronger sense, namely that the local statistics
 (limits of rescaled correlation functions) depend only on the symmetry class, i.e. on $\beta$,
 and are independent of any other details. In particular, they are always given by the sine kernel \eqref{sineres}
 or \eqref{pGOE} not only for the Gaussian case but for any Wigner matrices with arbitrary 
 distribution of the matrix elements, as well as for any invariant ensembles with arbitrary potential $V$.
 This is the {\bf Wigner-Dyson-Mehta (WDM) universality conjecture}, formulated precisely in Mehta's book 
 \cite{Meh1991} in the late 60's. 
 
 The WDM conjecture for invariant ensembles has been in the focus 
 of very intensive research on orthogonal polynomials with general weight function
 (the Hermite polynomials arising in the Gaussian setup have Gaussian weight function).
 It motivated the development of the Riemann-Hilbert method \cite{FokItcKit1992},   that was originally
  brought into this subject by Fokas, Its and Kitaev \cite{FokItcKit1992}, and 
the universality of eigenvalue statistics was established  for  large  classes of invariant ensembles
by  Bleher-Its \cite{BleIts1999} and by Deift and collaborators \cite{Dei1999, DeiGio2007-2, DeiKriMcLVen1999}. 
The key element of this success was
that invariant ensembles, unlike Wigner matrices, have explicit formulas \eqref{expli}
for the joint densities of the eigenvalues.  With the help of the Vandermonde structure of these
 formulas, one may express the eigenvalue correlation functions as determinants 
whose  entries are given by functions of 
orthogonal polynomials. 
 
 For Wigner ensembles, 
there  are  no explicit formulas for the joint density of eigenvalues or for the correlation functions statistics 
 and  the WDM conjecture was open for almost fifty years with virtually 
no  progress. The first significant advance  in this direction  was made by Johansson \cite{Joh2001}, who  proved  the universality for {\it complex} Hermitian matrices  under the assumption that
the common distribution of the matrix entries has a substantial Gaussian component, i.e., the random matrix $H$ is of the form 
$H= H_0+ a H^G$ where  $H_0$ is a  general  Wigner matrix,  
$H^G$ is the   GUE matrix,  and $a$ is a certain,  not too small, positive constant  independent of $N$. 
His proof relied on an explicit formula by Br\'ezin and Hikami \cite{BreHik1996, BreHik1997} that  uses a certain version of 
the Harish-Chandra-Itzykson-Zuber formula \index{Harish-Chandra-Itzykson-Zuber formula}
\cite{ItzZub1980}. These formulas are available
for the complex Hermitian case only, which   restricted the method to this symmetry class.

 \begin{exercise}
Verify formula \eqref{coro}.
\end{exercise}
 
 \subsubsection{The three step strategy}\label{sec:3step}

 The WDM conjecture  in full generality has recently been  resolved  by a new approach called the {\bf three step strategy}
 that has been developed in a series of papers by Erd{\H o}s, Schlein, Yau and Yin between 2008 and 2013
 with a parallel development by Tao and Vu.
 A detailed presentation of this method can be found in \cite{ErdYau2017}, while a 
 shorter summary was presented in 
  \cite{ErdYau2012-2}.
 
This approach consists of the following three steps:  \index{Three-step strategy}

\noindent
{\bf Step 1.}  {\bf Local semicircle law:} \index{Local semicircle law}
It provides an a priori estimate  showing that the density of eigenvalues of
generalized Wigner matrices 
is  given by
the semicircle law at  very small microscopic scales, i.e., down to spectral intervals
that contain $N^\e$ eigenvalues. 

\noindent
{\bf Step 2.}
{\bf Universality for
Gaussian divisible ensembles:}  It proves that the local statistics of \index{Gaussian divisible ensemble}
 {\it Gaussian divisible ensembles} $H_0+ a H^G$ 
are the same as those of the Gaussian ensembles $H^G$ as long as $a \ge N^{-1/2+ \e}$, i.e.,
 already for very small $a$.

\noindent
{\bf Step 3.}  {\bf Approximation by a Gaussian divisible ensemble:}
It is a type  of  ``density argument'' 
that  extends the local spectral universality 
 from  Gaussian divisible ensembles to all Wigner ensembles.


The conceptually novel  point  is  Step 2. The eigenvalue distributions of 
the Gaussian divisible ensembles, written in the form 
$e^{-t/2} H_0+ \sqrt {1 - e^{-t}}H^{\rm G}$,   are the  same as that of  
the solution of a {\it matrix valued Ornstein-Uhlenbeck (OU) process~$H_t$ \index{Ornstein-Uhlenbeck process}}
\be\label{OU}
   \rd H_t = \frac{\rd {\bf B}_t}{\sqrt{N}} -\frac{1}{2}H_t \rd t, \qquad H_{t=0}=H_0,
\ee
 for any time $t\ge 0$, where ${\bf B}_t$ is a matrix valued standard Brownian motion
 of the corresponding symmetry class
(The OU process is preferable over its rescaled version $H_0+ a H^G$
since it keeps the variance constant).
 Dyson \cite{Dys1962} observed  half a century ago    that the dynamics of the 
 eigenvalues $\lambda_i=\lambda_i(t)$ of $H_t$ is  
 given by  an interacting stochastic
particle system,  called the {\it Dyson Brownian motion (DBM)},
where the eigenvalues  are the particles:
\be\label{dbmm}
  \rd \lambda_i =\sqrt{\frac{\beta}{2}} \frac{1}{\sqrt{N}} \rd B_i  + \Big( -\frac{\lambda_i}{2} +\frac{1}{N}\sum_{j\ne i}\frac{1}{\lambda_i-\lambda_j}\Big) \rd t, \qquad
  i=1,2,\ldots, N.
\ee
Here $\rd B_i$ are independent white noises.

 \index{Dyson Brownian motion (DBM)}
In addition, the invariant  measure of this 
dynamics  is  exactly the eigenvalue  distribution of GOE or GUE, i.e. \eqref{expli} with $V(x)=\frac{1}{2}x^2$.
 This invariant measure is thus a Gibbs measure 
of point particles in one dimension  interacting via  a long range
logarithmic potential.  In fact, $\beta$ can be any positive parameter, the corresponding DBM  \eqref{dbmm}
may be studied even if there is no invariant matrix ensemble behind.
Using a heuristic physical argument, Dyson  remarked \cite{Dys1962} that the DBM reaches 
its ``local equilibrium'' on a short time scale $t \gtrsim N^{-1}$. 
We call this {\it Dyson's conjecture}, \index{Dyson conjecture} although
it was rather an intuitive physical picture than an exact mathematical statement. 
Step 2 gives a precise mathematical meaning of this vague idea. 
The key point is that by applying 
local relaxation to {\it all} initial states (within a reasonable class) simultaneously, Step 2
generates a large set of random matrix ensembles for which universality holds. 
For the purpose of universality,  this set is sufficiently  dense so that any Wigner matrix $H$ is sufficiently close to a Gaussian divisible ensemble  of the form $e^{-t/2} H_0+ \sqrt {1 - e^{-t}}H^{\rm G}$ with a suitably chosen  $H_0$. 

We note that in the Hermitian case, Step 2 can be circumvented  by using the 
Harish-Chandra-Itzykson-Zuber formula.  This approach was followed by Tao and Vu \cite{TaoVu2011}
who gave an alternative proof of universality for Wigner matrices in the Hermitian symmetry class 
as well as for the real symmetric class but only  under a certain moment matching condition.

The three step strategy has been refined and streamlined in the last years. By now it has reached a stage when  the content of
Step 2 and Step 3 can be presented as a very general ``black-box'' result that is model independent assuming  that Step 1, the local law, holds.
The only model dependent ingredient  is the local law. Hence to prove local spectral universality  for a new ensemble, one needs to verify 
the local law. Thus in these lecture notes we will focus on the recent  developments in the direction of the local laws.

We will discuss generalizations of the original Wigner ensemble to relax the basic conditions {\it ``independent, identically distributed''}.
First we drop the identical distribution and allow the variances $s_{ij}=\E |h_{ij}|^2$ to vary. The simplest class is
the {\bf generalized Wigner matrices}, defined in \eqref{genuine}, which still leads to  the Wigner semicircle law.
 The next level of generality is to allow arbitrary matrix of variances $S$. The density of states is not the semicircle any more
 and we need to solve a genuine {\bf vector Dyson equation} to find the answer. The most general case discussed in these notes
 are correlated matrices, where different matrix elements have nontrivial correlation that leads to a {\bf matrix Dyson equation}.
    In all cases we  keep the mean field
 assumption, i.e. the typical size of the matrix elements is $|h_{ij}|\sim N^{-1/2}$.
 Since Wigner's vision on the universality of local eigenvalue statistics predicts the same universal behavior for 
 a  much larger class of hermitian random matrices (or operators), it is fundamentally important
 to extend the validity of the mathematical  proofs as much as possible beyond the Wigner case. 
 
We remark that  there are several other directions to extend the Wigner ensemble that we will not discuss here in details, we
just mention some of them with a few references, but we do not aim at completeness; apologies for any omissions.
First, in these notes we will assume very high moment conditions on the matrix elements. These make the proofs
easier and the tail probabilities of the estimates stronger. Several works have focused on {\it lowering the moment
assumption} \cite{Joh2012, GotNauTik2015, Aggarwal2016} and even considering {\it heavy tailed distributions}
\cite{BenPech2014, BorGui02016}. An important special case is the class of {\it  sparse matrices} such as  adjacency matrix
of Erd{\H o}s-R\'enyi random graphs and $d$-regular graphs \cite{ErdKnoYauYin2013-2,
ErdKnoYauYin2012, BauHuaKnoYau2015, HuaLanYau2015, BauKnoYau2015}.
Another direction is to remove the condition that the matrix elements are centered;
this ensemble often goes under the name of {\it deformed Wigner matrices}. One typically separates the expectation and
writes $H=A + W$, where $A$ is a deterministic matrix and $W$ is a Wigner matrix with centered entries.
Diagonal deformations  ($A$ is diagonal) are easier to handle, this class was considered even for a large diagonal
in  \cite{ORourkeVu2014, KnoYin2013, KnoYin2014, LeeSchSteYau2016}. The general $A$ was considered in \cite{HeKnowlesRosenthal2016}.
Finally, a very challenging direction is to depart from the mean field condition, i.e. allow some matrix elements to be much bigger than
$N^{-1/2}$. The ultimate example is the {\it random band matrices} that goes towards the random Schr\"odinger operators
\cite{Sch2009, Sod2010, ErdKno2011, ErdKnoYauYin2013, ErdKnoYau2013, TShc2014, TShc2014-2, Shc2015, BaoErd2016}.

\subsubsection{User's guide}

These lecture notes were intended to Ph.D students and postdocs with general interest in analysis and probability;
we assume knowledge of these areas on  a beginning Ph.D. level.
The overall style is informal, the proof of many statements are only sketched or indicated. Several technicalities
are swept under the rug -- for the precise theorems the reader should consult with the original papers.
We  emphasise conveying the main ideas in a colloquial way.

In Section~\ref{sec:tools} we collected basic tools  from analysis such as Stieltjes transform and resolvent.
We also introduce the semicircle law. We outline the moment method that was traditionally important in random
matrices, but we will not rely on it in these notes, so this part can be skipped. 
In Section~\ref{sec:resolvent} we outline
the main method to obtain local laws, the resolvent approach and we explain in an informal way
its two constituents; the probabilistic and deterministic parts.
In Section~\ref{sec:compl} we introduce four models of Wigner-like ensembles with increasing complexity
and we informally explain the novelty and the  additional complications for each model.
Section~\ref{sec:motivation}  on the physical motivations to study
these models is a  detour. Readers interested only in the mathematical aspects
  may skip this section. Section~\ref{sec:results} contains our main results on the local law formulated in a mathematically
  precise form. We did not aim at presenting the strongest results and the weakest possible conditions;
  the selection was guided to highlight some key phenomena. Some consequences of these local laws
  are also presented with sketchy proofs. Section~\ref{sec:vector} and \ref{sec:matrix} contain
  the main mathematical part of these notes, here we give a more detailed  analysis of
  the vector and the matrix Dyson equation and their stability properties.
   In these sections we aim at rigorous presentation although
 not every proof contains all details.
  Finally, in Section~\ref{sec:ideas} we present the main ideas of the proof of the local laws based
on  stability results on the Dyson equation.

These lecture notes are far from being  a comprehensive text on random matrices.
Many key issues are left out and even those we discuss will be presented in their simplest form.
For more interested readers, we refer to the recent book \cite{ErdYau2017}
that focuses on the three step strategy and discusses all steps in details.
For readers
interested in other aspects of random matrix theory,  in addition to 
the classical book of Mehta \cite{Meh1991}, several excellent works are available that
present random matrices in a  broader scope. The  books by Anderson, Guionnet and Zeitouni \cite{AndGuiZei2010}
and Pastur and Shcherbina  \cite{PasShc2011} contain extensive material starting from the basics.
Tao's book \cite{Tao2012} provides a different aspect to this subject and is self-contained  
as a graduate textbook.   Forrester's monograph \cite{For2010} is a handbook
for any explicit formulas related to random matrices. Finally, \cite{AkeBaiDi-2011}
is an excellent comprehensive overview of diverse applications of random matrix theory
in mathematics, physics, neural networks and engineering.


{\it Notational conventions.} In order to focus on the essentials, we will
not follow the dependence of various constants on different parameters. In particular, 
we will use the generic letters $C$ and $c$ to denote positive constants, whose values
may change from line to line and which may depend on some fixed basic parameters of
the model. For two positive quantities $A$ and $B$, we will write $A\lesssim B$ to indicate that there
 exists a constant $C$
such that $A\le CB$. If $A$ and $B$ are comparable in the sense that $A\lesssim B$ and $B\lesssim A$,
then we write $A\sim B$.   In informal explanations, we will often use $A\approx B$ which indicates
closeness in a not precisely specified sense.
We  introduce  the notation $\llbracket A, B\rrbracket : = \Z \cap [A, B]$
for the set of integers between any two real numbers 
 $A<B$. 
We will usually denote vectors in $\C^N$ by boldface letters; $\bx=(x_1, x_2, \ldots, x_N)$.


{\it Acknowledgement. } A special thank goes to Torben Kr\"uger for many  discussions and suggestions
on  the presentation of this material as well as for his careful proofreading and  invaluable comments.
I am also very grateful to both referees for many constructive suggestions, as well as to Ian Morrison
for the excellent editing work.

\section{Tools}\label{sec:tools}

\subsection{Stieltjes transform}

In this section we introduce our basic tool, the {\bf Stieltjes transform} of a measure.
We denote the open upper half  of the complex  plane by
\[
  \Cp : = \{ z\in \C\; : \;\im z>0\,.\} 
\]

\begin{definition}\label{def:sit}
Let $\mu$ be a Borel probability measure on $\R$. Its Stiltjes transform at a  {\bf spectral parameter} $z\in \Cp$ is defined by
\be\label{StDef}
   m_\mu(z): = \int_\R \frac{\rd\mu(x)}{x-z}.
\ee
\end{definition}

\begin{exercise} The following three properties are straightforward to check:
\begin{itemize}
\item[i)]
The Stieltjes transform $m_\mu(z)$ is analytic on $\Cp$ and it maps $\Cp$ to $\Cp$, i.e. $\im m_\mu(z)>0$.
\item[ii)]  We have $-i\eta m_\mu(i\eta) \to 1$ as $\eta\to \infty$.
\item[iii)]  We have the bound
\[
      |m_\mu(z)|\le \frac{1}{\im z}.
\]
\end{itemize}
\end{exercise}
In fact,  properties i)-ii)  characterize the Stieltjes transform in a sense that if a function $m:\Cp\to\Cp$ satisfies i)--ii), then 
there exists a probability measure $\mu$ such that $m=m_\mu$ (for the proof, see e.g. Appendix B of \cite{Weidmann}; it
is also called the Nevanlinna's representation theorem).

From the Stieltjes transform one may recover the measure:

\begin{lemma}[Inverse Stieltjes transform]\label{lm:st}
Suppose that $\mu$ is a probability measure on $\R$ and let $m_\mu$ be its Stieltjes transform. Then
for any $a<b$ we have
\[
  \lim_{\eta\to 0} \frac{1}{\pi}\int_a^b \im m_\mu(E+i\eta) \rd E = \mu(a,b) +\frac{1}{2}\big[ \mu(\{ a\})+ \mu(\{ b\})\big]
  \]
  Furthermore, if $\mu$ is absolutely continuous with respect to  the Lebesgue measure, i.e. $\mu(\rd E)= \mu(E)\rd E$ with some
  density function $\mu(E)\in L^1$, then 
  \[
     \frac{1}{\pi} \lim_{\eta\to 0+} \im m_\mu(E+i\eta) \to \mu(E)
  \]
  pointwise for almost every $E$.
\end{lemma}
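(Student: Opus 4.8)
The plan is to reduce everything to the approximate-identity properties of the Poisson kernel. Taking imaginary parts under the integral sign in \eqref{StDef} (which is trivially justified) gives the basic identity
\[
  \frac{1}{\pi}\im m_\mu(E+i\eta) = \int_\R \frac{1}{\pi}\frac{\eta}{(x-E)^2+\eta^2}\,\rd\mu(x) = (P_\eta * \mu)(E),
\]
where $P_\eta(t):=\frac{1}{\pi}\frac{\eta}{t^2+\eta^2}$ is the Poisson kernel of $\Cp$, a probability density on $\R$ for each $\eta>0$ that concentrates at the origin as $\eta\to0+$. Both assertions of the lemma are then statements about $P_\eta*\mu$, and the rest is bookkeeping.

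For the first (integrated) statement, I would integrate this identity over $E\in[a,b]$ and apply Tonelli's theorem — legitimate because the integrand is nonnegative — to swap the order of integration, obtaining
\[
  \frac{1}{\pi}\int_a^b \im m_\mu(E+i\eta)\,\rd E = \int_\R \Phi_\eta(x)\,\rd\mu(x),
  \qquad
  \Phi_\eta(x):=\frac{1}{\pi}\Big(\arctan\tfrac{b-x}{\eta}-\arctan\tfrac{a-x}{\eta}\Big).
\]
Next I would identify the pointwise limit of $\Phi_\eta$ as $\eta\to0+$ using $\arctan(s/\eta)\to\frac{\pi}{2}\sgn(s)$: it equals $1$ for $x\in(a,b)$, equals $0$ for $x\notin[a,b]$, and equals $\tfrac12$ at $x=a$ and at $x=b$. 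Since $0\le\Phi_\eta\le1$ uniformly in $\eta$ and $\mu$ is a finite measure, the dominated convergence theorem yields $\int_\R\Phi_\eta\,\rd\mu\to\mu(a,b)+\tfrac12[\mu(\{a\})+\mu(\{b\})]$, which is exactly the claim.

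For the second statement, with $\rd\mu(E)=\mu(E)\,\rd E$ and density $\mu(\cdot)\in L^1$, the same identity reads $\frac1\pi\im m_\mu(E+i\eta)=(P_\eta*\mu)(E)$, and I would invoke the standard fact that convolution with an approximate identity possessing a radially decreasing integrable majorant converges to the function at every Lebesgue point, hence for almost every $E$. Here $P_\eta(t)=\eta^{-1}P_1(t/\eta)$ with $P_1\in L^1$ even and decreasing on $(0,\infty)$, so this applies; if one wants to prove it from scratch rather than cite it, this is precisely the place where the Hardy–Littlewood maximal inequality is used to upgrade the easy $L^1$-convergence (which follows at once from continuity of translation in $L^1$ together with $\int_\R P_\eta=1$) to almost-everywhere convergence. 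I expect this a.e.\ (as opposed to merely $L^1$ or weak) convergence to be the only genuinely nontrivial point; the remainder is elementary calculus with $\arctan$ and an application of Tonelli/dominated convergence.
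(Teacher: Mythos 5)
Your proof is correct and follows exactly the route the paper intends: the text leaves this lemma as an exercise with the hint ``Fubini's theorem and Lebesgue density theorem,'' and your argument is precisely that — Tonelli to reduce the first claim to the pointwise limit of the $\arctan$ expression $\Phi_\eta$ plus dominated convergence, and the approximate-identity/Lebesgue-point theorem (resting on the Hardy--Littlewood maximal inequality) for the almost-everywhere statement. Both steps are carried out correctly, including the boundary values $\Phi_\eta(a),\Phi_\eta(b)\to\tfrac12$ that produce the half-mass terms at the endpoints.
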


In particular, Lemma~\ref{lm:st} guarantees that $m_\mu = m_\nu$ if and only of $\mu=\nu$, i.e. the Stieltjes transform
uniquely characterizes the measure. Furthermore, pointwise convergence of a sequence of Stieltjes transforms 
is equivalent to weak convergence of the measures. More precisely, we have

\begin{lemma}\label{lm:se}
Let $\mu_N$ be a  sequence of probability measures and let $m_N(z) =m_{\mu_N}(z)$ be their Stieltjes transforms.
Suppose that 
\[
  \lim_{N\to \infty} m_N(z) =: m(z)
\]
exists for any $z\in \Cp$ and $m(z)$ satisfies property ii), i.e. $-i\eta m(i\eta) \to 1$ as $\eta\to \infty$.
Then there exists a probability measure $\mu$ such that $m=m_\mu$ and $\mu_N$ converges  to $\mu$ 
in distribution.
\end{lemma}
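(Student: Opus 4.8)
The plan is to combine a tightness argument with the injectivity of the Stieltjes transform already recorded in Lemma~\ref{lm:st}. Nothing quantitative is needed; the whole point is to extract mass control from hypothesis~ii).

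\textbf{Step 1 (tightness of $\{\mu_N\}$).} Since each $m_N$ is genuinely the Stieltjes transform of the probability measure $\mu_N$, I would write $\eta\,\im m_N(i\eta)=\int_\R \frac{\eta^2}{x^2+\eta^2}\,\rd\mu_N(x)$, so that $1-\eta\,\im m_N(i\eta)=\int_\R \frac{x^2}{x^2+\eta^2}\,\rd\mu_N(x)\ge \tfrac12\,\mu_N(\{|x|>\eta\})$, where the last step restricts the integral to $|x|>\eta$ and uses $\frac{x^2}{x^2+\eta^2}>\tfrac12$ there. Taking $\eta=R$ gives the uniform bound $\mu_N(\{|x|>R\})\le 2\,(1-R\,\im m_N(iR))$. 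Now the assumed pointwise convergence $m_N(iR)\to m(iR)$ forces the right-hand side to converge, as $N\to\infty$, to $2\,(1-R\,\im m(iR))$; and property~ii) for $m$, namely $-i\eta\,m(i\eta)\to1$, has real part exactly $\eta\,\im m(i\eta)\to1$, so $2\,(1-R\,\im m(iR))\to0$ as $R\to\infty$. Given $\eps>0$ I fix $R$ large so this limit is $<\eps$, which controls all but finitely many $N$, and then enlarge $R$ once more to handle the remaining finitely many probability measures. Hence $\{\mu_N\}$ is tight.

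\textbf{Step 2 (identification of the limit).} By Prokhorov's theorem, tightness gives relative compactness of $\{\mu_N\}$ in the topology of weak convergence. Let $\nu$ be any subsequential weak limit, $\mu_{N_k}\rightharpoonup\nu$. For each fixed $z\in\Cp$ the map $x\mapsto (x-z)^{-1}$ is bounded and continuous on $\R$ (its modulus is at most $(\im z)^{-1}$), so weak convergence yields $m_{N_k}(z)\to m_\nu(z)$; comparing with the hypothesis $m_{N_k}(z)\to m(z)$ gives $m_\nu=m$ on $\Cp$. By Lemma~\ref{lm:st} the Stieltjes transform determines the measure, so $\nu$ does not depend on the chosen subsequence; denote it $\mu$. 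Then $m_\mu=m$, and since every subsequence of $\{\mu_N\}$ has a further subsequence converging to this same $\mu$, the whole sequence converges, $\mu_N\rightharpoonup\mu$. (One could instead produce $\mu$ up front from the Nevanlinna representation, since $m$ inherits properties i)--ii) as a locally uniform limit of the $m_N$ by Montel--Vitali, but the compactness route gives $\mu$ for free.)

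The one genuinely load-bearing step is Step~1, and specifically the use of property~ii): without it mass can escape to infinity — for instance $\mu_N=\delta_N$ has $m_N\to0$ pointwise on $\Cp$ with no probability-measure limit — so the hypothesis $-i\eta\,m(i\eta)\to1$ is exactly what rules this out and upgrades pointwise convergence of the Stieltjes transforms to weak convergence of the measures. The remaining ingredients (Prokhorov, and the fact that a tight family with a single weak limit point converges) are standard soft analysis.
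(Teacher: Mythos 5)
Your proof is correct. The paper does not write out the argument; it refers to the literature and indicates a route through Montel's theorem and Lemma~\ref{lm:st}: uniform boundedness of the $m_N$ on compacts of $\Cp$ (via $|m_N|\le 1/\im z$) upgrades pointwise to locally uniform convergence, so the limit $m$ is analytic with $\im m\ge 0$ and satisfies ii), hence by the Nevanlinna representation it is $m_\mu$ for some probability measure $\mu$, after which the inversion formula identifies $\mu$ as the weak limit. You work on the measure side instead: the identity $1-\eta\,\im m_N(i\eta)=\int x^2/(x^2+\eta^2)\,\rd\mu_N$ converts hypothesis ii) directly into uniform tightness, and Prokhorov plus injectivity of the Stieltjes transform (the consequence of Lemma~\ref{lm:st} noted in the text) finishes the argument via the standard subsequence trick. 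Both are legitimate; yours is more elementary in that it avoids normal families and the Nevanlinna representation entirely (indeed it never needs to know a priori that $m$ is a Stieltjes transform — that falls out at the end), while the complex-analytic route yields locally uniform convergence of the $m_N$ as a useful byproduct. Your closing remark correctly isolates ii) as the tightness hypothesis, with $\mu_N=\delta_N$ as the counterexample when it fails.
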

The proof can be found e.g.  in \cite{GeronimoHill} and it relies on Lemma~\ref{lm:st} and Montel's theorem.
The converse of  Lemma \ref{lm:se} is trivial: if the sequence $\mu_N$ converges in distribution to a probability measure $ \mu$,
then clearly $m_N(z)\to m_\mu(z)$ pointwise, since the Stieltjes transform for any fixed $z\in \Cp$ is just the integral of the continuous
bounded function $x\to (x-z)^{-1}$. Note that the additional condition ii) is a compactness (tightness) condition, it prevents that 
part of the measures $\mu_N$ escape to infinity in the limit.

All these results are very similar to the Fourier transform (characteristic function) 
\[
   \phi_\mu(t): = \int_\R e^{-itx}\mu(\rd x)
\] 
of a probability measure. In fact, there is a direct connection 
between them; 
\[
   \int_0^\infty e^{-\eta t} e^{itE} \phi_\mu(t)\rd t = i \int_\R  \frac{\rd\mu(x)}{x-E-i\eta}  = i m_\mu(E+i\eta)
\]
for any $\eta>0$ and $E\in \R$.
In particular, due to the regularizing factor $e^{-t\eta}$, the large $t$ behavior of the Fourier transform $\phi(t)$ is closely related to the small 
$\eta\sim 1/t$ behavior of the Stieltjes transform.

Especially important is the imaginary part of the Stieltjes transform since
\[
   \im m_\mu(z) =  \int_\R \frac{\eta}{|x-E|^2+\eta^2} \mu(\rd x), \qquad z=E+i\eta,
\]
which can also be viewed as the convolution of $\mu$ with the Cauchy kernel on scale~$\eta$:
\[
    P_\eta(E) = \frac{\eta}{E^2+\eta^2},
\]
indeed
\[
     \im m_\mu(E+i\eta) = (P_\eta \star \mu)(E).
\]
Up to a normalization $1/\pi$, the Cauchy kernel is an approximate delta function on scale $\eta$. Clearly
\[ 
    \int_\R \frac{1}{\pi}\;  P_\eta(E)\rd E =1
\]
and the overwhelming majority of its mass is supported on scale $\eta$:
\[
   \int_{|E|\ge K\eta} \frac{1}{\pi}\; P_\eta(E) \le \frac{2}{K}
\]
for any $K$. Due to standard properties of the convolution,
 the {\it moral of the story} is that {\bf $\im m_\mu (E+i\eta)$ resolves the measure $\mu$ on a scale $\eta$ around an energy $E$}. 
 
Notice that the small $\eta$ regime is critical; it is the regime where the integral in the definition of the Stieltjes transform  \eqref{StDef}
becomes more singular, and properties of the integral more and more depend on the local smoothness properties of the measure.
In general, the regularity of the measure $\mu$ on some scales $\eta>0$ is directly related to the Stieltjes transform $m(z)$ with
$\im z\approx \eta$.

The Fourier transform $\phi_\mu(t)$  of $\mu$ for large $t$ also characterizes the local behavior of the measure $\mu$
on scales $1/t$, 
We will nevertheless work with the Stieltjes transform since for hermitian matrices  (or self-adjoint operators in general) it is directly related to the 
resolvent, it is relatively easy to handle and it has many convenient properties.

\begin{exercise}
Prove Lemma \ref{lm:st} by using Fubini's theorem and Lebesgue density theorem.
\end{exercise}

\subsection{Resolvent}\label{sec:resolve}

Let $H=H^*$ be a hermitian matrix, then its resolvent at spectral parameter $z\in \Cp$ is defined
as
\[
   G=G(z) = \frac{1}{H-z}, \qquad z\in \Cp.
\]
In these notes, the spectral parameter $z$ will always be in the upper half plane, $z\in \Cp$. We usually follow the
convention that $z=E+i\eta$, where $E=\re z$ will often be referred as ``energy'' alluding to the quantum mechanical
interpretation of $E$.

Let $\mu_N$ be the normalized {\it empirical measure of the eigenvalues} of $H$:
\[
  \mu_N(\rd x) = \frac{1}{N}\sum_{i=1}^N \delta(\lambda_i-x) \rd x.
\]
Then clearly the normalized trace of the resolvent is
\[
\frac{1}{N}\tr G(z)  =\frac{1}{N}\sum_{i=1}^N\frac{1}{\lambda_i -z}= \int_\R \frac{\mu_N(\rd x)}{x-z} = m_{\mu_N}(z)=:m_N(z)
\]
exactly the Stieltjes transform of the empirical measure. This relation justifies why we focus on the Stieltjes transform;
based upon Lemma~\ref{lm:se}, if we could identify the (pointwise) limit of $m_N(z)$, then the asymptotic eigenvalue density $\varrho$
would be given by the inverse Stieltjes transform of the limit. 

Since $\mu_N$ is a discrete (atomic) measure on small $(1/N)$ scales, it may behave very badly (i.e.  it is strongly fluctuating and  may blow up)
for $\eta$ smaller than $1/N$, depending on whether there happens to be an eigenvalue in an $\eta$-vicinity of $E=\re z$.
Since the eigenvalue spacing is (typically) of order $1/N$,  for $\eta\ll 1/N$ there is no  approximately deterministic
(``self-averaging'') behavior of $m_N$. 
 However, as long as $\eta\gg 1/N$, we may hope a {\it law of large number phenomenon}; this would be
 equivalent to the fact that the eigenvalue density does not have much fluctuation above its inter-particle
  scale $1/N$. The local law on $m_N$ down to the smallest possible (optimal) scale $\eta\gg 1/N$ will
  confirm this hope.

In fact, the resolvent carries much more information than merely its trace. 
In general the resolvent of a hermitian matrix  is a very rich object: it gives information on the eigenvalues and 
eigenvectors for energies near the real part of the spectral parameter.
For example, 
by spectral decomposition we have
\[
    G(z) = \sum_i \frac{|\bu_i\rangle \langle \bu_i|}{\lambda_i-z}
 \]
 where $\bu_i$ are the ($\ell^2$-normalized) eigenvectors associated with $\lambda_i$. 
 (Here we used the Dirac notation $|\bu_i\rangle \langle \bu_i|$ for the orthogonal projection
 to the one-dimensional space spanned by $\bu_i$.)
  For example, the diagonal matrix elements
 of the resolvent at $z$ are closely related to the eigenvectors  with eigenvalues near $E=\re z$:
 \[
     G_{xx} = \sum_i \frac{ |\bu_i(x)|^2}{\lambda_i-z}, \qquad \im G_{xx}  = \sum_i \frac{\eta}{ |\lambda_i-E|^2+\eta^2} |\bu_i(x)|^2.
      \]    
  Notice that for very small $\eta$, the factor $\eta/(  |\lambda_i-E|^2+\eta^2)$ effectively reduces the sum from all $i=1,2,\ldots , N$ to
  those indices where $\lambda_i$ is $\eta$-close to $E$; indeed this factor changes from the very large value $1/\eta$ to 
  a very small value $\eta$ as $i$ moves away. Roughly speaking
  \[
     \im G_{xx}  = \sum_i \frac{\eta}{ |\lambda_i-E|^2+\eta^2} |\bu_i(x)|^2 \approx  \sum_{i: |\lambda_i-E|\lesssim\eta} \frac{\eta}{ |\lambda_i-E|^2+\eta^2} |\bu_i(x)|^2.
  \]
  
  This idea can be made rigorous at least as an upper bound on each summand.
A physically important consequence will be that one may directly obtain $\ell^\infty$ bounds on the eigenvectors: for any fixed $\eta>0$ we have
 \be\label{delocbound}
     \| \bu_i\|_\infty^2:= \max_x |\bu_i(x)|^2  \le \eta \cdot \max_x \max_{E\in \R}   \im G_{xx}(E+i\eta).   
 \ee
 In other words, if we can control diagonal elements of the resolvent on some scale $\eta=\im z$, then we can prove
 an $\sqrt{\eta}$-sized bound on the max norm of the eigenvector. The strongest result is always the smallest possible scale.
 Since the local law will hold down to scales $\eta\gg 1/N$, in particular we will be able to establish that $\im G_{xx}(E+i\eta)$
 remains bounded as long as $\eta\gg 1/N$, thus we will prove the {\bf complete delocalization}
 of the eigenvectors:
 \be\label{del}
    \| \bu_i\|_\infty \le \frac{N^\e}{\sqrt{N}}
\ee
for any $\e>0$ fixed, independent of $N$, and with very high probability. 
 Note that the bound \eqref{del} is optimal (apart from the  $N^\e$ factor) since clearly
 \[
    \| \bu \|_\infty \ge \frac{\| \bu\|_2}{\sqrt{N}}
\]
for any $\bu\in \C^N$. 

We also    note  that if $\im G_{xx}(E+i\eta)$ can be controlled only for energies
in a fixed subinterval $I\subset\R$, e.g. the local law holds only for all  $E\in I$, the we
can conclude complete delocalization for those eigenvectors whose eigenvalues lie in~$I$.

\subsection{The semicircle law for Wigner matrices  via the moment method}

This section introduces the traditional moment method to identify the semicircle law. We included this material 
for historical relevance, but it will not be needed later hence it can be skipped at first reading.

For large $z$ one can expand $m_N$ as follows
\be
  m_N(z) =  \frac{1}{N}\tr \frac{1}{H-z} =
 -\frac{1}{Nz}\sum_{m=0}^\infty \tr \Big( \frac{H}{z}\Big)^m,
\label{mexp}
\ee
so after taking the expectation, we need to compute traces of high moments of $H$:
\be\label{EmN}
   \E\, m_N(z) = \sum_{k=0}^\infty z^{-(2k+1)} \frac{1}{N} \E \tr H^{2k}.
 \ee
Here we tacitly used that the contributions of   odd powers are  algebraically zero, which clearly holds  at least 
if we assume that $h_{ij}$ have symmetric 
distribution for simplicity. Indeed, in this  case $H^{2k+1}$ and $(-H)^{2k+1}$ have the same
distribution, thus 
\[
  \E \tr H^{2k+1} =  \E \tr (-H)^{2k+1}  = - \E \tr H^{2k+1} .
\]

The computation of even powers, $\E \tr H^{2k}$, reduces to a combinatorial problem. Writing out
\[
  \E \tr H^{2k} =  \sum_{i_1, i_2, \ldots i_{2k}} \E  h_{i_1i_2} h_{i_2 i_3} \ldots h_{i_{2k} i_1},
\]
one notices that, by $\E h_{ij}=0$, all those terms are zero 
 where at least one  $h_{i_j i_{j+1}}$ stands alone, i.e. is not paired 
with itself or its conjugate. This restriction poses a severe constraint on the  relevant index sequences $i_1, i_2, \ldots , i_{2k}$.
For the terms where an exact pairing of all the $2k$ factors is available, we can use $\E |h_{ij}|^2 = N^{-1}$ to see
that all these terms contribute by $N^{-k}$.  There are terms where three or more  $h$'s coincide, giving rise to higher 
moments of $h$, but their combinatorics is of lower order.
Following Wigner's classical calculation  (called the {\bf moment method}, 
see e.g. \cite{AndGuiZei2010}), one needs to compute the number of relevant index sequences that give rise to a perfect pairing and
 one finds that the leading term is given by the Catalan numbers, i.e. 
\be\label{cat}
          \frac{1}{N}\E \tr H^{2k} = \frac{1}{k+1}{\binom{2k}{k}} + O_k\big(\frac{1}{N}\big).
\ee
Notice that the $N$-factors cancelled out in the leading term.

Thus, continuing \eqref{EmN}
 and neglecting the error terms, we get
\be
  \E\, m_N(z) \approx  - \sum_{k=0}^\infty \frac{1}{k+1}{\binom{2k}{k}} z^{-(2k+1)},
\label{anal}
\ee
which, after some calculus, can be identified as the  Laurent 
 series of the function $\frac{1}{2}(-z+\sqrt{z^2-4})$. 
The approximation becomes exact in the $N\to\infty$ limit. 
Although the expansion \eqref{mexp} is valid
only for large $z$, given that the limit is an analytic function of $z$, one
can extend the relation
\be\label{11}
   \lim_{N\to\infty}\E m_N(z) = \frac{1}{2}(-z+\sqrt{z^2-4})
\ee
by analytic continuation to the whole upper half plane $z=E+i\eta$, $\eta>0$.
It is an easy exercise to see that this is exactly the Stieltjes transform of
the semicircle density, i.e.,
\be
  m_{sc}(z):=\frac{1}{2}(-z+\sqrt{z^2-4}) = \int_\R \frac{\varrho_{sc}(x)\rd x}{x-z}, \qquad \varrho_{sc}(x) =\frac{1}{2\pi}\sqrt{ (4-x^2)_+}.
\label{msc}
\ee
The square root function is chosen with a branch cut in the segment $[-2, 2]$
so that $\sqrt{z^2-4}\sim z$ at infinity. This guarantees that 
 $\im m_{sc}(z)>0$ for $\im z>0$.

 \begin{exercise}
 As a simple calculus exercise, verify \eqref{msc}. Either use integration by parts, or compute the moments
 of the semicircle law and verify that they are given by the Catalan numbers, i.e.
 \be\label{cat11}
     \int_\R x^{2k} \varrho_{sc}(x)\rd x = \frac{1}{k+1}{\binom{2k}{k}}.
  \ee
 \end{exercise}

Since the Stieltjes transform identifies the measure uniquely, and pointwise convergence
of Stieltjes transforms implies weak convergence of measures, we obtain 
\be
   \E \,\varrho_N(\rd x) \wkto \varrho_{sc}(x)\rd x.
\label{global:sc}
\ee
The relation \eqref{11} actually holds with high probability, that is,  for any $z$ with $\im z>0$, 
\be
   \lim_{N\to\infty} m_N(z) = \frac{1}{2}(-z+\sqrt{z^2-4}),
\label{limst}
\ee
in probability, implying a similar strengthening  of the convergence in
\eqref{global:sc}.  In the next sections  we will prove this limit  with an effective error term
via the resolvent method.


 The semicircle law can be identified in many different ways. The {\it  moment method} sketched above
  utilized the fact that
the moments of the semicircle density are given by the Catalan numbers
\eqref{cat11}, which also emerged as the normalized traces of powers of $H$, see
\eqref{cat}. The {\it resolvent method} relies on the fact that $m_N$
approximately satisfies a self-consistent equation,
 \be\label{money}
 m_N(z) \approx -\frac{1}{z+m_N(z)},
 \ee
that is very close to the quadratic equation that $m_{sc}$  from \eqref{msc} exactly satisfies:
\be\label{exact}
     m_{sc}(z) = -\frac{1}{z+m_{sc}(z)}.
\ee
Comparing these two equations, one finds that $m_N(z) \approx m_{sc}(z)$. Taking inverse
Stieltjes transform, one concludes the semicircle law. In the next section we give more details on 
\eqref{money}.

In other words, in the resolvent method the semicircle 
density emerges via a specific relation for its Stieltjes transform. The key relation \eqref{exact} 
is the simplest form of the {\bf Dyson equation}, or a {\bf self-consistent equation} for the trace of the resolvent:
later we will see  a Dyson equation for the entire resolvent.
It turns out that the resolvent  approach allows us to perform a much more precise
analysis than the moment method, especially in the short scale regime, where $\im z$ approaches
to 0 as a function of $N$. Since the
Stieltjes transform of a measure at spectral parameter $z=E+i\eta$
essentially identifies the measure around $E$ on scale $\eta>0$,
a precise understanding of $m_N(z)$ for small $\im z$ will
yield a local  version of the semicircle law.

\section{The resolvent method}\label{sec:resolvent}

In this section we sketch the two basic steps of the resolvent method for the simplest Wigner case but we will already
make remarks preparing for the more complicated setup. The first
step concerns the derivation of the approximate equation \eqref{money}. This is a probabilistic step since
$m_N(z)$ is a random object and even in the best case \eqref{money} can hold only with high probability.
In the second step  we compare the approximate equation \eqref{money} with the exact equation  \eqref{exact}
to conclude that $m_N$ and $m_{sc}$ are close. We will view \eqref{money} as a perturbation 
of \eqref{exact}, so this step  is about a stability property of the exact equation and it is a deterministic problem.

\subsection{Probabilistic step}

   There are essentially two ways to obtain \eqref{money}; either by Schur complement formula
or by cumulant expansion.  Typically the Schur method gives more precise results since it 
can be  easier  turned into a full asymptotic expansion, but it heavily relies on the independence
of the matrix elements and that the resolvent of $H$ is essentially diagonal. 
We now discuss these methods separately.

\subsubsection{Schur complement method}\label{sec:schur}

The basic input is the following well-known formula from linear algebra:

\begin{lemma}[Schur formula]\label{2x2}
Let $A$, $B$, $C$ be $n\times n$, $m\times n$ and $m\times m$
 matrices. We define $(m+n)\times (m+n)$
 matrix $D$ as 
\be 
D:=\begin{pmatrix}
    A & B^*  \\
  B& C 
\end{pmatrix}
\ee
and  $n\times n$ matrix $\widehat D$ as
\be\label{defhatD}
\widehat D:=A -B^*C^{-1}B.
\ee
 Then $\wh D$ is invertible if $D$ is invertible \nc
and for any $1\leq i,j\leq n$, we have 
\be
(D^{-1})_{ij}= {(\widehat D^{-1})}_{ij}
\label{Dinv}
\ee
for the 
corresponding matrix elements. \qed
\end{lemma}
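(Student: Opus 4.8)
The plan is to prove this by a direct block computation, exploiting the block structure of $D$ and the well-known $UL$ (or $LU$) factorization of a $2\times 2$ block matrix. First I would write down the identity
\[
D = \begin{pmatrix} I_n & B^* C^{-1} \\ 0 & I_m \end{pmatrix}
    \begin{pmatrix} \widehat D & 0 \\ 0 & C \end{pmatrix}
    \begin{pmatrix} I_n & 0 \\ C^{-1} B & I_m \end{pmatrix},
\]
which one verifies simply by multiplying the three factors and using the definition $\widehat D = A - B^* C^{-1} B$ from \eqref{defhatD}. (Here one must assume $C$ is invertible; this is implicit in the statement since $C^{-1}$ already appears in \eqref{defhatD}.) The two triangular factors have determinant $1$ and are manifestly invertible, with explicit inverses obtained by flipping the sign of the off-diagonal block. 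Hence $D$ is invertible if and only if the middle block-diagonal factor is, i.e.\ if and only if $\widehat D$ is invertible (given that $C$ is); in particular invertibility of $D$ forces invertibility of $\widehat D$, which is the first claim.

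Next I would simply invert the factorization:
\[
D^{-1} = \begin{pmatrix} I_n & 0 \\ -C^{-1} B & I_m \end{pmatrix}
         \begin{pmatrix} \widehat D^{\,-1} & 0 \\ 0 & C^{-1} \end{pmatrix}
         \begin{pmatrix} I_n & -B^* C^{-1} \\ 0 & I_m \end{pmatrix}.
\]
Multiplying this out, the top-left $n\times n$ block of $D^{-1}$ is exactly $\widehat D^{\,-1}$ (the off-diagonal triangular factors only contribute to the other three blocks). Reading off the $(i,j)$ entry for $1\le i,j\le n$ then gives $(D^{-1})_{ij} = (\widehat D^{\,-1})_{ij}$, which is \eqref{Dinv}.

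There is no real obstacle here: the entire content is the block factorization, and the only point requiring a word of care is the standing assumption that $C$ is invertible, without which $\widehat D$ is not even defined. An alternative, perhaps even shorter, route is to solve the linear system $D\binom{x}{y} = \binom{b}{0}$ directly: the second block row gives $y = -C^{-1}Bx$, and substituting into the first gives $\widehat D\, x = b$, so $x = \widehat D^{\,-1} b$; since $x$ is the first $n$ coordinates of $D^{-1}\binom{b}{0}$ and $b$ ranges over all of $\mathbb{C}^n$, this identifies the top-left block of $D^{-1}$ with $\widehat D^{\,-1}$. Either argument is a few lines; I would present the factorization version since it also yields the invertibility statement transparently.
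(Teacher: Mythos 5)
Your proof is correct. The paper states this lemma as a well-known fact from linear algebra and supplies no proof of its own, so there is nothing to compare against; your block $UL$ factorization (equivalently, the direct elimination of the second block row that you sketch as an alternative) is the standard argument, it does establish both the invertibility claim and the identity $(D^{-1})_{ij}=(\widehat D^{-1})_{ij}$, and your observation that invertibility of $C$ is an implicit standing assumption — since $\widehat D$ is otherwise undefined — is accurate.
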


We will use this formula for the resolvent of $H$.
Recall that $G_{ij}=G_{ij}(z)$ 
denotes the  matrix element of the resolvent
\[
G_{ij}=\left(\frac1{H-z}\right)_{ij}.
\]
Let $H^{[i]}$ denote the $i$-th minor of $H$, i.e. the $(N-1)\times (N-1)$ matrix obtained from $H$ by
removing the $i$-th row and column:
\[
     H^{[i]}_{ab} := h_{ab}, \qquad a, b\ne i.
\]
Similarly, we set 
\[ G^{[i]}(z): =\frac{1}{H^{[i]}-z}
 \]
to be the resolvent of the minor.
For $i=1$, $H$ has the  block-decomposition
\[
   H = \begin{pmatrix}  h_{11} & [\ba^1]^*  \cr \ba^{1} & H^{[1]} \end{pmatrix},
\]
where $\ba^{i}\in \C^{N-1}$ is the $i$-th column of $H$ without  the $i$-th element.

Using Lemma \ref{2x2} for $n=1$, $m=N-1$ we have
\be
     G_{ii} =
\frac{1}{ h_{ii} - z- [\ba^i]^* G^{[i]}\ba^i},
\label{1row}
\ee
 where
\be\label{hGh}
  [\ba^i]^*G^{[i]}\ba^i  = \sum_{k,l\ne i} h_{ik} G_{kl}^{[i]} h_{li}.
\ee
Here and below, we use the convention that unspecified summations always run from 1 to $N$.

Now we use the fact that for Wigner matrices $\ba^i$ and $H^{[i]}$ are independent. So in the 
quadratic form \eqref{hGh} we can condition on the $i$-th minor and momentarily consider only
the randomness of  the $i$-th column. Set $i=1$ for notational simplicity. Then we have a quadratic form of the type
\[
     \ba^* B \ba = \sum_{k,l=2}^N\bar a_k B_{kl} a_l
 \]
 where $B=G^{[1]}$ is considered as 
 a fixed deterministic matrix and $\ba$ is a random vector with centered  i.i.d. components and $\E |a_k|^2=1/N$.
We decompose it into its expectation w.r.t. $\ba$, denoted by $\E_\ba$, and the fluctuation:
\be\label{Zdef}
   \ba^* B \ba = \E_\ba  \ba^* B \ba + Z, \qquad  Z:=  \ba^* B \ba -\E_\ba  \ba^* B \ba.
\ee
The expectation gives
\[
   \E_\ba  \ba^* B \ba= \E_\ba \sum_{k,l=2}^{N} \bar a_k B_{kl} a_l = \frac{1}{N} \sum_{k=2}^{N}  B_{kk} = \frac{1}{N}\tr B,
\]
where we used that $a_k$ and $a_l$ are independent, $\E_\ba \bar a_k a_l=\delta_{kl}\cdot\frac{1}{N}$, so the double sum collapses to a single sum.
Neglecting the fluctuation $Z$ for a moment (see an argument later), we have from \eqref{1row} that
\be\label{g11}
  G_{11} = -\frac{1}{z + \frac{1}{N}\tr G^{[1]} +\mbox{error}},
\ee
where we also included the small $h_{11}\sim N^{-1/2}$ into the error term. Furthermore, it is easy to see that $\frac{1}{N}\tr G^{[1]}$ and
$\frac{1}{N}\tr G$ are close to each other, this follows from a basic  fact from linear algebra that the eigenvalues of $H$ and its minor $H^{[1]}$ {\bf interlace}
(see Exercise~\ref{exinterlace}).

Similar formula holds for each $i$, not only for $i=1$.
Summing  them up, we have
\[
   \frac{1}{N} \tr G \approx -\frac{1}{z + \frac{1}{N}\tr G},
\]
which is exactly \eqref{money}, modulo the argument that the fluctuation $Z$ is small.
Notice that we were aiming only at $ \frac{1}{N}\tr G$, but in fact the procedure gave us more. After
approximately identifying  $ \frac{1}{N}\tr G\approx \frac{1}{N}\tr G^{[1]}$
with $m_{sc}$, we can feed this information back to \eqref{g11} to obtain information for each diagonal
matrix element of the resolvent:
\[ 
    G_{11}\approx -\frac{1}{z + m_{sc}} = m_{sc},
\]
i.e. not only the trace of $G$ are close to $m_{sc}$, but each diagonal matrix element.

What about the off-diagonals? It turns out that they are small. The simplest argument to
indicate this is using the {\bf Ward identity} that is valid for resolvents of any self-adjoint operator $T$:
\be\label{wardidentity}
       \sum_j \Big|\big( \frac{1}{T-z}\big)_{ij}\Big|^2= \frac{1}{\im z} \im  \big( \frac{1}{T-z}\big)_{ii}.
\ee
We recall that the imaginary part of a matrix $M$ is given by $\im M =\frac{1}{2i}(M-M^*)$ and notice that $(\im M)_{aa} =\im M_{aa}$
so there is no ambiguity in the notation of its diagonal elements. 
Notice that the summation in \eqref{wardidentity}
 is removed at the expense of a factor $1/\im z$. So if $\eta =\im z\gg 1/N$ and diagonal elements are controlled,
 the Ward identity 
is a substantial improvement over the naive bound 
of estimating each of the $N$ terms separately. In particular, applying \eqref{wardidentity} for~$G$, we get
\[
  \sum_j |G_{ij}|^2 = \frac{1}{\im z} \im G_{ii}.
\]
Since the diagonal elements have already been shown to be close to $m_{sc}$, this implies that
\[
  \frac{1}{N} \sum_j |G_{ij}|^2\approx \frac{\im m_{sc}}{N \im z} ,
\]
i.e. on average we have
\[
   |G_{ij}| \lesssim \frac{1}{\sqrt{N\eta}}, \qquad i\ne j.
\]
With a bit more argument, one can show that this relation holds for every $j\ne i$ and not just on average up to a factor $N^\e$
with very high probability.
We thus showed that the resolvent $G$ of a Wigner matrix is close to the  $m_{sc}$ times the identity matrix $I$, very roughly
\be\label{sloppy}
    G(z) \approx m_{sc}(z)I.
\ee
Such relation must be treated with a certain care, since $G$ is a large matrix and the sloppy formulation in
\eqref{sloppy} does not indicate in which sense the closeness $\approx$ is meant. It turns our that it holds
in {\bf normalized trace sense}:
\[   
    \frac{1}{N} \tr G \approx m_{sc},
\]
in {\bf entrywise sense}:
\be\label{int}
    G_{ij} \approx m_{sc} \delta_{ij}
\ee
for every fixed $i,j$; and more generally in {\bf isotropic sense}:
\[
 \langle \bx, G\by\rangle \approx m_{sc} \langle \bx, \by\rangle
 \]
for every fixed (deterministic) vectors $\bx, \by\in \C^N$.  In all cases, these relations are meant
with very high probability.
But \eqref{sloppy} does {\bf not} hold in operator norm sense
since
\[
    \| G(z)\|=\frac{1}{\eta}, \qquad \mbox{while}\quad \| m_{sc} I\| = |m_{sc}|\sim O(1)
\]
even if $\eta\to 0$. One may not invert \eqref{sloppy} either, since the relation
\be\label{wring}
   H-z \approx \frac{1}{m_{sc}} I 
\ee
is very wrong, in fact 
\[
  H-z\approx -z
\]
if we disregard small off-diagonal elements as we did in \eqref{int}.  The point is that
the cumulative effects of many small  off diagonal matrix elements  substantially changes the matrix.
In fact, using   \eqref{exact}, the  relation \eqref{int}  in the form
\be\label{self energy}
   \Big( \frac{1}{H-z}\Big)_{ij} \approx \frac{1}{-z-m_{sc}(z)}\delta_{ij} 
\ee
exactly shows how much  the spectral parameter must be shifted  compared to the naive (and wrong)
approximation $(H-z)^{-1}\approx -1/z$. This amount is $m_{sc}(z)$ and it is often called
{\bf self-energy shift} in the physics literature. On the level of the resolvent
(and in the senses described above), the effect of the random matrix $H$
can be simply described by this shift.


Finally, we indicate the mechanism that makes the fluctuation term $Z$ in \eqref{Zdef} small. We  
compute only  its variance, higher moment calculations are similar but more involved:
\[
   \E_\ba |Z|^2 = \sum_{mn} \sum_{kl} \E_\ba \Big[  a_m \bar B_{mn} \bar a_n - \E_\ba  a_m \bar B_{mn} \bar a_n\Big]\Big[ \bar a_k B_{kl} a_l - \E_\ba \bar a_k B_{kl} a_l\Big].
\]
The summations run for all indices from 2 to $N$.
Since $\E_\ba a_m=0$, in the terms with nonzero contribution we need to pair every $a_m$ to another $\bar a_m$. For simplicity,  
here we assume
that we work with the complex symmetry class and $ \E a_m^2=0$ (i.e. the real and imaginary parts of each matrix elements $h_{ij}$ are
independent and identically distributed). If $a_m$ is paired with $\bar a_n$ in the above sum, i.e. $m=n$, then this pairing is cancelled by
the $\E_\ba  a_m \bar B_{mn} \bar a_n$ term. So $a_i$ must be paired with an $a$ from the other bracket and since $\E a^2=0$, it has to be 
paired with $\bar a_k$, thus $m=k$. Similarly $n=l$ and we get 
\be\label{Zsquare}
  \E_\ba |Z|^2 = \frac{1}{N^2}\sum_{m\ne n} |B_{mn}|^2 + \E_\ba |a|^4\sum_m |B_{mm}|^2,
\ee
where the last term comes from the case when $m=n=k=l$. Assuming that the matrix elements $h_{ij}$ have fourth moments in a sense
that $\E |\sqrt{N} h_{ij}|^4\le C$, we have $\E_\ba |a|^4 = O(N^{-2})$ in this last term and it is negligible. The main term
in \eqref{Zsquare} has a summation
over $N^2$ elements, so {\it a priori} it looks order one, i.e. too 
large. But in our application, $B$ will be the resolvent of the minor, $B= G^{[1]}$, and we can 
use the Ward identity \eqref{wardidentity}.

 In our concrete application with   $B=G^{[1]}$ we get
\begin{align*}
      \E_\ba |Z|^2 =  & \frac{1}{N\eta} \frac{1}{N}\sum_{m} \im B_{mm}  + \frac{C}{N^2}\sum_m |B_{mm}|^2  \\
     \le {}& \frac{C}{N\eta} \frac{1}{N} \im \tr G^{[1]} \le \frac{C}{N\eta} \im m^{[1]}
      = O\Big( \frac{1}{N\eta}\Big),
\end{align*}
which is small, assuming $N\eta\gg 1$. To estimate the second term here we used that for the resolvent of any hermitian matrix $T$ we have
\be\label{squaredresolvent}
    \sum_m \Big| \Big(\frac{1}{T-z}\Big)_{mm}\Big|^2  \le \frac{1}{\eta} \im \tr  \frac{1}{T-z}
\ee
by spectral calculus.  We also used that the traces of $G$ and $G^{[1]}$ are close:
\begin{exercise}\label{exinterlace}
Let $H$ be any hermitian matrix and $H^{[1]}$ its minor. Prove that their eigenvalues interlace, i.e. they satisfy
\[
  \lambda_1\le \mu_1\le \lambda_2\le \mu_2 \le\ldots \le \mu_{N-1}\le \lambda_N,
\]
where the $\lambda$'s and $\mu$'s are the eigenvalues of $H$ and $H^{[1]}$, respectively. Conclude from this that
\[
      \Big| \tr \frac{1}{H-z} - \tr \frac{1}{H^{[1]}-z}\big|\le \frac{1}{\im z}
\]
\end{exercise}

\begin{exercise}
Prove the Ward identity \eqref{wardidentity} and   the estimate  \eqref{squaredresolvent} by using the spectral decomposition of $T=T^*$.
\end{exercise}

\subsubsection{Cumulant expansion}\label{sec:cum}

Another way to prove \eqref{money} starts with the defining identity of the resolvent: $HG = I +zG$ and computes its expectation:
\be\label{eggg}
   \E HG = I + z \E G.
 \ee
 Here $H$ and $G$ are not independent, but it has the structure that the basic random variable $H$ multiplies a function of it
 viewing $G=G(H)$.
 In a single random variable $h$ it looks like $ \E h f(h)$. If $h$ were a centered  real Gaussian, then we could use the basic 
 {\bf integration by parts} identity of Gaussian variables:
 \be\label{IP}
 \E h f(h) = \E h^2 \E f'(h).
 \ee
 In our concrete application, when $f$ is the resolvent whose derivative is its square, in the Gaussian case we have
 the formula
 \be\label{IP1}
   \E HG = -\E \wt\E \big[ \wt H G \wt H \big] G,
 \ee
 where tilde denotes an independent copy of $H$. 
We may define a linear map $\cS$ on the space of $N\times N$ matrices by
\be\label{Sdef}
   \cS[R]: = \wt\E \big[ \wt H R \wt H \big],
\ee
then we can write \eqref{IP1} as
\[
   \E HG = -\E \cS[G] G.
\]
 
 This indicates to smuggle  the $\E HG$ term into $HG = I +zG$ and write it as
\be\label{egg}
    D=  I + \big(z+\cS[G] \big) G, \qquad D: = HG + \cS[G] G.
 \ee
With these notations,  \eqref{IP1} means that $\E D=0$. Notice that the term $\cS[G]G$ acts as a counter-term to
balance $HG$.

Suppose we can prove that $D$ is small with high probability, i.e. 
not only $\E D=0$ but also $\E |D_{ij}|^2$ is small for any $i,j$, then
\be\label{eg1}
   I + \big(z+\cS[G] \big) G\approx 0.
\ee
So it is not unreasonable to hope that the solution $G$ will be, in some sense, close to the solution $M$ of the 
deterministic equation 
\be\label{mdeee}
 I + \big(z+\cS[M] \big) M =0 
 \ee
 with the side condition that $\im M := \frac{1}{2i}(M-M^*)\ge 0$ (positivity in the sense of
 hermitian matrices). It turns out that this equation  in its full generality will play a central role in 
 our analysis for much larger class of random matrices, see Section~\ref{sec:corr} later.
 The operator $\cS$ is called the {\bf self-energy operator} following the analogy explained around \eqref{self energy}.

To see how $\cS$ looks like, in the real  Gaussian Wigner case (GOE)  we have
\[
  \cS[R]_{ij}=  \wt\E \big[ \wt H R \wt H \big]_{ij} = \wt\E \sum_{ab} \wt h_{ia} R_{ab} \wt h_{bj}  = \delta_{ij} \frac{1}{N}\tr R + \frac{1}{N} R_{ji} {\bf 1}(i\ne j).
\] 
Plugging this relation back into \eqref{eg1}  with $R=G$ and neglecting the second term $\frac{1}{N} G_{ji}$ we have
\[
    0\approx I + \big(z+ \frac{1}{N}\tr G\big) G.
\]
 Taking the normalized trace, we end up with
 \be\label{mn}
       1 + (z + m_N)m_N \approx 0,
  \ee
  i.e. we proved \eqref{money}.

\begin{exercise}
Prove \eqref{IP} by a simple integration by parts and then use \eqref{IP} to prove \eqref{IP1}.
Formulate and prove the complex  versions of these formulas (assume that $\re h$ and $\im h$ are
independent).
\end{exercise}

\begin{exercise}
Compute the variance $\E |D|^2$ for a GOE/GUE matrix and conclude that it is small in the regime where $N\eta\gg 1$ (essentially as $(N\eta)^{-1/2}$).
Compute $\E \big| \frac{1}{N}\tr D\big|^2$ as well and show that it is essentially of order $(N\eta)^{-1}$.
\end{exercise}

This argument so far  heavily used that $H$ is Gaussian. However, the basic integration by parts formula
\eqref{IP}  can be extended to non-Gaussian situation. For this, we recall the {\bf cumulants} of  random variables.
We start with a single random variable $h$. As usual, its moments are defined by
\[
   m_k : = \E h^k,
\]
and they are generated by the {\it moment generating function}
\[
    \E e^{th} = \sum_{k=0}^\infty \frac{t^k}{k!} m_k
\]
(here we assume that all moments exist and  even the exponential moment exists at least for small $t$).
The {\bf cumulants}  $\kappa_k$ of $h$ are the Taylor coefficients of the {\it logarithm} of the moment generating function, i.e. they are 
defined by the identity
\[
   \log \E e^{th} = \sum_{k=0}^\infty \frac{t^k}{k!} \kappa_k.
\]
The sequences of $\{ m_k\; : \; k=0,1,2\ldots\}$ and $\{ \kappa_k\; : \; k=0,1,2\ldots\}$
mutually determine each other; these relations can be obtained from formal power series manipulations. For example
\[
    \kappa_0=m_0=1, \qquad \kappa_1=m_1,\qquad \kappa_2  = m_2- m_1^2, \qquad \kappa_3= m_3 - 3m_2m_1+ 2m_1^3, \ldots
\]
and 
\[
    m_1=\kappa_1,\qquad m_2 = \kappa_2+\kappa_1^2, \qquad  m_3= \kappa_3 +3\kappa_2\kappa_1+2\kappa_1^3,\ldots
 \]
 The general relations are given by
 \be\label{inv}
       m_k = \sum_{\pi\in \Pi_k} \prod_{B\in \pi}\kappa_{|B|},\qquad \kappa_k = \sum_{\pi\in \Pi_k}  (-1)^{|\pi|-1} (|\pi|-1)!\prod_{B\in \pi}m_{|B|},
 \ee
 where $\Pi_k$ is the set of all partitions of a $k$-element base set, say $\{1,2,\ldots, k\}$. Such a $\pi$ consists of
 a collection of  nonempty,
 mutually disjoint sets $\pi= \{ B_1, B_2, \ldots B_{|\pi|} \}$
 such that $\cup B_i =\{1,2,\ldots, k\}$  and $B_i\cap B_j=\emptyset$, $i\ne j$.
 
 For Gaussian variables, all but the first and second cumulants vanish, that is,  $\kappa_3=\kappa_4=\ldots =0$, and this is the reason for 
 the very simple form of the relation \eqref{IP}. For general  non-Gaussian $h$ we have 
 \be\label{cumexp}
   \E h f(h) = \sum_{k=0}^\infty \frac{\kappa_{k+1}}{k!} \E f^{(k)}(h).
 \ee
 Similarly to the Taylor expansion, one does not have to expand it up to infinity, there are versions of this
 formula containing only a finite number of cumulants plus a  remainder term.
 
To see the formula \eqref{cumexp}, we use Fourier transform:
\[
   \hat f(t) = \int_\R e^{ith} f(h) \rd h, \qquad \hat \mu(t)  = \int_\R e^{ith} \mu(\rd h) = \E e^{ith},
\]
where $\mu$ is the distribution of $h$, then 
\[
  \log \hat\mu(t) = \sum_{k=0}^\infty \frac{(it)^k}{k!} \kappa_k.
\]

 By Parseval identity (neglecting $2\pi$'s and assuming $f$ is real)
\[
  \E hf(h) = \int_\R hf(h) \mu(\rd h) =  i \int_\R \ov{ \hat f'(t)}\hat \mu(t) \rd t.
\]
Integration by parts gives
\begin{align*}
   i\int_\R \ov{ \hat f'(t)} \hat \mu(t) \rd t  ={}& - i \int_\R \ov{\hat f(t)} \hat \mu'(t) \rd t 
   = -i  \int_\R \ov{\hat f(t)} \hat \mu(t)  \big( \log \hat\mu(t)\big)' \rd t \\
={}& \sum_{k=0}^\infty \frac{\kappa_{k+1} }{k!} \int_\R (it)^{k} \ov{ \hat f(t)} \hat \mu(t) \rd t
   = \sum_{k=0}^\infty \frac{\kappa_{k+1}}{k!} \E f^{(k)}(h)
\end{align*}
by Parseval again.

So far we considered one random variable only, but joint cumulants can also be defined for any number of random variables.
This becomes especially relevant beyond the independent case, e.g. when the entries of the random matrix  have correlations.
For the Wigner case, many of these formulas simplify, but it is useful to introduce joint cumulants in full generality.

If $\bh =(h_1, h_2, \ldots h_m)$ is a collection of random variables (with possible repetition), then 
\[
   \kappa (\bh) = \kappa(h_1, h_2, \ldots h_m)
 \]
 are the coefficients of the logarithm of the moment generating function:
 \[
       \log \E e^{{\bf t}\cdot \bh} = \sum_{{\bf k}=0}^\infty \frac{{\bf t}^{\bf k}}{{\bf k}!} \kappa_{\bf k}.
\]
 Here ${\bf t} = (t_1, t_2, \ldots, t_n)\in \R^n$, and ${\bf k}=(k_1, k_2, \ldots, k_n)\in\N^n$ is a multi index with   $n$ components and 
 \[
    {\bf t}^{\bf k}: = \prod_{i=1}^n t_i^{k_i}, \qquad {\bf k}! = \prod_i k_i!, \qquad \kappa_{\bf k} = \kappa(h_1, h_1, \ldots h_2, h_2, \ldots ),
\]
where $h_j$ appears $k_j$-times (order is irrelevant, the cumulants are fully symmetric functions in all their variables).
The formulas \eqref{inv} naturally generalize, see e.g. Appendix A of \cite{EKScorrelated} for a good summary.
The analogue of \eqref{cumexp} is
\be\label{cumexp1}
   \E h_1 f({\bf h}) = \sum_{{\bf k}} \frac{\kappa_{{\bf k}+{\bf e}_1}}{{\bf k}!} \E f^{({\bf k})}({\bf h}), \qquad {\bf h}=(h_1, h_2, \ldots , h_n),
 \ee
where the summation is for all $n$-multi-indices and 
\[
  {\bf k} + {\bf e}_1 = (k_1+1, k_2, k_3, \ldots ,k_n)
\]
and the proof is the same. 

We use these cumulant expansion formulas to prove that $D$ defined in \eqref{egg} is small
with high probability by computing $\E |D_{ij}|^{2p}$ with large $p$.  Written as
\[
   \E |D_{ij}|^{2p} =  \E \big( HG + \cS[G]G)_{ij} D^{p-1}_{ij} \bar D_{ij}^{p},
 \]
 we may use \eqref{cumexp1} to do an integration by parts in the first $H$ factor, considering
 everything else as a function $f$. It turns out that the $\cS[G]G$ term cancels the second order cumulant
 and naively  the effect of higher order cumulants are negligible since a cumulant of order $k$ is $N^{-k/2}$.
 However, the derivatives of $f$ can act on the  $D^{p-1} \bar D^{p}$ part of $f$, resulting in a
 complicated combinatorics and in fact many cumulants need to be tracked, see \cite{EKScorrelated} for an extensive analysis.

\subsection{Deterministic stability step}

In this step we compare the approximate equation \eqref{money}  satisfied by the
empirical Stieltjes transform and the exact equation \eqref{exact} for the self-consistent Stieltjes transform
\[
 m_N(z) \approx -\frac{1}{z+m_N(z)},\qquad
     m_{sc}(z) = -\frac{1}{z+m_{sc}(z)}.
\]
In fact, considering the format \eqref{eg1} and \eqref{mn}, sometimes it is better to relate the following two equations
\[
    1+ (z+m_N)m_N\approx 0, \qquad  1+ (z+m_{sc})m_{sc} =0.
 \]
This distinction is irrelevant for Wigner matrices, where the basic object to investigate
is $m_N$, a scalar quantity -- multiplying an  equation with it is a trivial operation.
 But  already \eqref{egg} indicates that there is an approximate equation 
for the entire  resolvent $G$ as well and not only for its trace and in general we are interested in resolvent matrix elements as well.
 Since inverting $G$ is a nontrivial
operation (see the discussion after \eqref{sloppy}), the three possible versions of \eqref{egg} are very different:
\[
   I+ (z +\cS[G])G \approx 0, \qquad G \approx -\frac{1}{z+\cS[G]}, \qquad  - \frac{1}{G} \approx z+\cS[G]
\]
In fact the last version is blatantly wrong, see \eqref{wring}. The first version is closer to the
spirit of the cumulant expansion method, the second is closer to Schur formula method.

In both cases, we need to understand the stability of the equation
\[
    m_{sc}(z) = -\frac{1}{z+m_{sc}(z)} \quad \mbox{or}\quad 1+ (z+m_{sc})m_{sc} =0
 \]
 against a small additive perturbation. For definiteness, we look at the second equation and compare
 $m_{sc}$ with $m_\e$, where $m_\e$ solves
 \[
 1+ (z+m_\e)m_\e =\e
\]
for some small $\e$. Since these are quadratic equations, one may write up the solutions explicitly
and compare them, but this approach will not work in the more complicated situations. Instead, we subtract these two equations
and find that
\[
    (z+2m_{sc})(m_\e-m_{sc})  + (m_\e-m_{sc})^2=\e
 \]
 We may also eliminate $z$ using  the equation $1+ (z+m_{sc})m_{sc} =0$ and get
 \be
    \frac{m_{sc}^2-1}{m_{sc}}(m_\e-m_{sc})  + (m_\e-m_{sc})^2=\e.
\label{stabilit}
\ee
This is a quadratic equation for the difference $m_\e-m_{sc}$ and its
stability thus depends on the invertibility of the linear coefficient $(m_{sc}^2-1)/m_{sc}$,
which is determined by the limiting equation only. If we knew that
\be\label{bounds}
    |m_{sc}| \le C, \qquad |m_{sc}^2-1|\ge c
\ee
 with some positive constants $c, C$, then the linear coefficient would be invertible
 \be\label{linst}
      \Bigg| \Big[ \frac{m_{sc}^2-1}{m_{sc}}\Big]^{-1}\Bigg|\le C/c
 \ee
and \eqref{stabilit} would imply that
\[
     |m_\e-m_{sc}|\le C'\e
\]
at least if we had an a priori information that $|m_\e-m_{sc}|\le c/2C$. 
This a priori information can be obtained for large $\eta=\im z$ easily since
in this regime both $m_{sc}$ and $m_\e$ are of order $\eta$ (we still remember that 
$m_\e$ represents a Stieltjes transform). Then we can use a fairly standard continuity argument
to reduce $\eta=\im z$ and keeping $E=\re z$ fixed to see that the bound $|m_\e-m_{sc}|\le c/2C$
holds for small $\eta$ as well, as long as the perturbation  $\e=\e(\eta)$ is small.

Thus the key point of the stability analysis is to show that the inverse of the {\bf stability constant} (later: operator/matrix)
given in \eqref{linst} is bounded. As indicated in  \eqref{bounds}, the control of the stability constant typically will have two ingredients:
we need


(i)  an upper bound on $m_{sc}$, the solution of the deterministic  Dyson equation \eqref{exact};

(ii) an upper bound on the inverse of $1-m_{sc}^2$.


In the Wigner case, when $m_{sc}$ is explicitly given \eqref{msc}, both bounds are easy to obtain.
In fact, $m_{sc}$ remains bounded for any $z$, while $1-m_{sc}^2$ remains separated
away from zero except near two special values of the spectral parameter: $z =\pm 2$. These
are exactly the edges of the semicircle law, where an instability arises since here $m_{sc}\approx \pm 1$ (the same instability can
be seen from the explicit solution of the quadratic equation).

We will see that it is not a coincidence:  the edges of the asymptotic density $\varrho$  are always the critical points where the inverse of the
 stability constant blows up. These regimes require more careful treatment which typically consists
 in exploiting the fact that the error term $D$ is proportional with the local density, hence it is also 
 smaller near the edge. This additional smallness of $D$ competes with the deteriorating upper bound
 on the inverse of the  stability constant near the edge.
 
 In these notes we will focus on the behavior in the bulk, i.e. we consider 
spectral parameters $z=E+i\eta$ where $\varrho(E)\ge c>0$ for fixed positive constants. This will simplify
many estimates. The regimes where $E$ is separated away from the support of $\varrho$ are even easier and we will not consider them here.
The edge analysis is more complicated and we refer the reader to the original papers.

\section{Models of increasing complexity}\label{sec:compl}

\subsection{Basic setup}
In this section we introduce subsequent generalizations of the original Wigner ensemble. 
We also mention the key features of their resolvent that will be proven later along the local laws.
The $N\times N$ matrix 
\be\label{Hdef1}
H=
\begin{pmatrix} h_{11} &  h_{12} & \ldots & h_{1N} \\ 
 h_{21} &  h_{22} & \ldots & h_{2N} \\
\vdots & \vdots &  & \vdots\\
 h_{N1} &  h_{N2} & \ldots & h_{NN} \\
\end{pmatrix} 
\ee
will always be hermitian, $H=H^*$ and centered, $\E H=0$. The distinction between
real symmetric and complex hermitian cases play no role here; both symmetry classes are allowed.
Many quantities, such as the distribution of $H$, 
the matrix of variances $S$,   naturally depend on $N$, but for notational simplicity we will often
omit this dependence from the notation. 

We will always assume that we are in the mean field regime, i.e. the typical size of the 
 matrix elements is of order $N^{-1/2}$ in a high moment sense:
 \be\label{mmm}
 \max_{ij} \E \big| \sqrt{N} h_{ij}\big|^{p} \le \mu_p
 \ee
 for any $p$  with some sequence of constants $\mu_p$. This strong moment  condition 
 can be substantially relaxed but we will not focus on this direction.

\subsection{Wigner matrix}

We assume that the matrix elements of $H$ are independent (up to the hermitian symmetry) and
identically distributed. We choose the normalization such that 
\[
    \E |h_{ij}|^2 =\frac{1}{N},
\]
see \eqref{aves} for explanation.
The asymptotic density of eigenvalues 
 is the
semicircle law, $\varrho_{sc}(x) $ \eqref{sc} and its Stieltjes transform $m_{sc}(z)$ is given explicitly  in \eqref{msc}.
The corresponding self-consistent (deterministic) equation (Dyson equation) is a  {\bf scalar equation}
\[
     1 + (z+m)m=0, \qquad \im m>0,
 \]
 that is solved by $m=m_{sc}$. The  inverse of the stability ``operator'' is just  the constant
 \[
           \frac{1}{1-m^2}, \qquad m=m_{sc}.
 \]
The resolvent $G(z) =(H-z)^{-1}$ is approximately constant diagonal in the {\bf entrywise sense}, i.e.
\be\label{jig}
    G_{ij}(z) \approx \delta_{ij} m_{sc}(z).
\ee
In particular,  the diagonal elements are approximately the same
\[
  G_{ii}\approx G_{jj} \approx m_{sc}(z).
\]
This also implies that the normalized trace (Stieltjes transform of the empirical eigenvalue density)  is close to $m_{sc}$
\be\label{avert}
  m_N (z) =\frac{1}{N}\tr G(z) \approx m_{sc}(z),
\ee
which we often call an approximation in {\bf average (or tracial) sense}.

Moreover, $G$ is also diagonal in {\bf isotropic sense}, i.e. for any vectors $\bx, \by$ (more precisely, any sequence of
vectors $\bx^{(N)}, \by^{(N)}\in \C^N$) we have
\be\label{isotro}
  G_{\bx\by}:= \langle \bx, G\by\rangle \approx m_{sc}(z) \langle \bx, \by\rangle. 
\ee
In Section~\ref{sec:precise}  we will comment on the precise meaning of $\approx$ in this context, incorporating 
the fact that $G$ is random.

If these relations hold for any fixed $\eta=\im z$, independent of $N$, then we talk about {\bf global law}.
If they hold down to $\eta\ge N^{-1+\gamma}$ with some $\gamma\in (0,1)$, then we talk about {\bf local law}.
If $\gamma>0$ can be chosen arbitrarily small (independent of $N$), than we talk about {\bf local law on the
optimal scale}.

\subsection{Generalized Wigner matrix}

We assume that the matrix elements of $H$ are independent (up to the hermitian symmetry), but not necessarily
identically distributed. We define the matrix of variances as
\be
  S: = \begin{pmatrix} s_{11} &  s_{12} & \ldots & s_{1N} \\ 
 s_{21} &  s_{22} & \ldots & s_{2N} \\
\vdots & \vdots &  & \vdots\\
 s_{N1} &  s_{N2} & \ldots & s_{NN} \\
\end{pmatrix}, \qquad s_{ij}: = \E |h_{ij}|^2.
\label{def:Sigmamatrix}
\ee
We assume that
\be
  \sum_{j=1}^N s_{ij} =1, \qquad \mbox{for every} \quad i=1,2,\ldots, N,
\label{normal}
\ee
i.e., the deterministic
$N\times N$  matrix of variances, $S= (s_{ij})$,
 is  symmetric and  doubly stochastic. The key point is
 that the row sums are all the same. 
 The fact that the sum in \eqref{normal} is exactly one is a chosen normalization.
  The original Wigner ensemble is a special case, $s_{ij}=\frac{1}{N}$.

Although generalized Wigner matrices form a bigger class than the Wigner matrices, the 
key results are exactly the same. The asymptotic  density of states is still the semicircle law,
$G$ is constant diagonal in both the entrywise and isotropic senses:
\[ 
    G_{ij} \approx \delta_{ij} m_{sc} \qquad \mbox{and}\quad 
 G_{\bx\by}= \langle \bx, G\by\rangle \approx m_{sc} \langle \bx, \by\rangle .
\]
In particular, the diagonal elements are approximately the same
\[
   G_{ii}\approx G_{jj}
 \]
 and we have the same averaged law
 \[
     m_N (z) =\frac{1}{N}\tr G(z) \approx m_{sc}(z).
\]
However, within the proof some complications arise. Although eventually $G_{ii}$ turns out to be
essentially independent of $i$, there is no a-priori complete  permutation symmetry among the indices.
We will need to consider the equations for each $G_{ii}$ as a coupled system of $N$ equations.
The corresponding Dyson equation is a genuine {\bf vector equation} of the form
\be\label{vectordyson}
     1 + (z + (S\bbm)_i)m_i  =0, \qquad i=1,2,\ldots N
\ee
for the unknown $N$-vector $\bbm=(m_1, m_2, \ldots , m_N)$ with $m_j\in \Cp$ and we will see that $G_{jj}\approx m_j$.
The matrix $S$ may also be called {\bf self-energy matrix} according to the analogy
explained around \eqref{self energy}.
Owing to \eqref{normal}, the solution to \eqref{vectordyson} is still the constant vector $m_i=m_{sc}$, but the stability operator
depends on $S$ and it is given by the matrix
\[
     1-m_{sc}^2S.
\]

\subsection{Wigner type matrix}

We still assume that the matrix elements are independent, but we impose no special algebraic condition on the variances $S$. 
For normalization purposes, we will assume  that $\|S\|$ is bounded, independently of  $N$, this guarantees that the spectrum of $H$ 
also remains 
bounded.
We only
require an upper bound of the form
\be\label{Supp}
     \max_{ij}  s_{ij}\le \frac{C}{N}
\ee
for some constant $C$. This is a typical mean field condition, it guarantees that no matrix element is too big.
Notice that at this stage there is no requirement for a lower bound, i.e. some $s_{ij}$ may vanish.
However, the analysis becomes considerably harder if large blocks of $S$ can become zero,
so for pedagogical convenience later  in these notes we will assume that $s_{ij}\ge c/N$ for some $c>0$.

The corresponding Dyson equation is just the {\bf vector Dyson equation} \eqref{vectordyson}:
\be\label{vectordyson1}
  1 + (z + (S\bbm)_i)m_i  =0, \qquad i=1,2,\ldots N
 \ee
 but the solution is not the constant vector any more.  
 We will see that the system of equations  \eqref{vectordyson1} still has a unique solution
 $\bbm=(m_1, m_2, \ldots , m_N)$ under the side condition $m_j\in \Cp$, but the components of $\bbm$ may differ
 and they are not given by $m_{sc}$ any more.  
 
 The components $m_i$ approximate the diagonal elements of the resolvent $G_{ii}$.
 Correspondingly, their average 
 \be\label{dos}
   \langle \bbm\rangle := \frac{1}{N}\sum_i m_i,
 \ee
 is the  Stieltjes transform of  a measure  $\varrho$ that approximates the empirical density of states.
We will call this measure the {\bf self-consistent density of states} since it is obtained
from the self-consistent Dyson equation. It is well-defined for any finite $N$ and if it has a limit as
$N\to\infty$, then the limit coincides with the asymptotic density introduced earlier (e.g. the semicircle law for Wigner
and generalized Wigner matrices). However, our analysis is more general and it does not need to assume the existence of this limit
(see Remark~\ref{remark:DOS} later).

In general there is no explicit formula for $\varrho$, 
  it has to be computed by taking the inverse Stieltjes transform of $ \langle \bbm(z)\rangle$:
  \be\label{ados}
   \varrho(\rd \tau ) =  \lim_{\eta\to 0+}\frac{1}{\pi}\im \langle\bbm(\tau +i\eta)\rangle\rd\tau.
  \ee
 No simple closed equation is known for the scalar quantity $ \langle \bbm(z)\rangle$, even if one is interested only in
 the self-consistent density of states or its Stieltjes transform, the only known way to compute it is to solve \eqref{vectordyson1} first
 and then take the average of the solution vector. Under some further conditions on $S$, the density of states is
 supported on finitely many intervals, it is real analytic away  from the edges of these intervals and
 it has a specific singularity structure at the edges, namely it can have either square root singularity or cubic root cusp,
 see Section~\ref{sec:properties} later.

The resolvent is still approximately diagonal and it is given by the $i$-th component of $\bbm$:
\[
   G_{ij}(z) \approx \delta_{ij} m_i(z),
\]
but in general
\[
   G_{ii}\not\approx G_{jj}, \qquad i\ne j.
\]
Accordingly, the isotropic law takes the form
\[
   G_{\bx\by}= \langle \bx, G\by\rangle \approx \langle \bar\bx\bbm\by\rangle  
\] 
and the averaged law
\[
   \frac{1}{N}\tr G \approx  \langle \bbm\rangle.
\]
Here $\bar\bx\bbm\by$ stands for the entrywise product of vectors, i.e.,
 $\langle \bar\bx\bbm\by\rangle  = \frac{1}{N}\sum_i \bar x_i m_i y_i$.
 
The stability operator is
\be\label{vectors}
   1- \bbm^2 S,
\ee
where $\bbm^2 $ is  understood as an entrywise multiplication, so the linear operator $\bbm^2 S$  acts on any vector $\bx\in \C^N$
as
\[
      [(\bbm^2 S)\bx]_i := m_i^2 \sum_j s_{ij}x_j.
 \]

{\it Notational convention.}
 Sometimes we write 
 the equation \eqref{vectordyson1} in the concise vector form as
 \[
     -\frac{1}{\bbm}  = z + S\bbm.
 \]
 Here we introduce the convention that for any vector $\bbm\in \C^N$ and for any function $f:\C\to \C$,
 the symbol $f(\bbm)$ denotes the $N$-vector with components $f(m_j)$,  that is,
 \[
   f(\bbm): = \big(f(m_1), f(m_2), \ldots, f(m_N)\big), \qquad \mbox{for any $\bbm = (m_1, m_2, \ldots, m_N)$}.
 \]
 In particular, $1/\bbm$ is the vector of the reciprocals $1/m_i$. Similarly, the entrywise  product of two
 $N$-vectors $\bx, \by$ is denoted by $\bx\by$; this is the $N$-vector with components 
 \[
    (\bx\by)_i: = x_iy_i
 \]
 and similarly for products of more than two factors. Finally $\bx\le \by$ for real vectors  means $x_i\le y_i$ for all $i$.

\subsubsection{A remark on the density of states}\label{remark:DOS}
 The Wigner type matrix is the first ensemble where the various concepts of density of states truly differ.
The wording ``density of states'' has been used slightly differently by various authors in random matrix theory; here we use 
the opportunity to clarify this point. Typically, in the physics literature the  {\bf density of states} 
means  the statistical average of the {\bf empirical density of states} $\mu_N$ defined in  \eqref{empire}, i.e. 
\[
          \E \mu_N(\rd \tau) = \E \frac{1}{N}\sum_{i=1}^N \delta (\lambda_i-\tau).
\]
This object  depends on $N$, but very often it has a limit (in a weak sense) as $N$, the system size,
goes to infinity. The limit, if exists, is often called the  {\bf limiting (or asymptotic) density of states}. 

In general it is not easy to find $\mu_N$ or its expectation;
the vector Dyson equation is essentially the
only way to proceed.  However, the quantity computed in \eqref{ados}, called
 the {\bf  self-consistent density of states}, 
is not exactly the density of states, it is only a good approximation.
The local law states that  the empirical (random) eigenvalue density $\mu_N$
can be very well approximated  by the self-consistent density of states,
computed from the Dyson equation and \eqref{ados}. Here ``very well'' means in high probability and with
 an explicit error bound of size $1/N\eta$, i.e. on larger scales we have more
precise bound, but we still have closeness even down to scales $\eta\ge N^{-1+\gamma}$.
High probability bounds imply that also the density of states $\E \mu_N$ is
close to the self-consistent density of states $\varrho$, but in general they are not the same.
Note that  the significance of the local law is to approximate a random quantity with a
deterministic one if $N$ is large; there is no direct statement about any $N\to \infty$ limit. 
The variance matrix $S$ depends on $N$ and a-priori there is no relation between 
$S$-matrices for different $N$'s.

In some cases a limiting version of these objects also exists. For example, if the variances $s_{ij}$
arise from a deterministic nonnegative  profile function $S(x,y)$ on $[0,1]^2$ with some regularity, i.e.
\[
   s_{ij} = \frac{1}{N} S\Big( \frac{i}{N}, \frac{j}{N}\Big),
\]
then the sequence of the self-consistent density of states $\varrho^{(N)}$ have a limit. 
If the global law holds, then  this limit must be the limiting density of states, defined as the limit
of $\E\mu_N$. This is the case for Wigner matrices in a trivial way: the self-consistent density of states
is always the semicircle for any $N$. However, the density of states for finite $N$ is not the semicircle law;
it depends on the actual distribution of the matrix elements, but decreasingly as $N$ increases.

In these notes we will focus on computing the self-consistent density of states and proving local laws
for fixed $N$; 
we will not consider the possible large $N$ limits of these objects.

\subsection{Correlated random matrix}\label{sec:corr}

For this class we drop the independence condition, so the matrix elements of $H$ may have nontrivial correlations
in addition to the one required by the hermitian symmetry $h_{ij}=\bar h_{ji}$. The Dyson equation is still 
determined by the second moments of $H$, but the covariance structure of all matrix elements 
is not described by a matrix; but by a four-tensor. 
We already introduced in \eqref{Sdef}  the necessary ``super operator''
\[
   \cS[R] : = \E HRH
 \]
 acting linearly on the space of $N\times N$ matrices $R$. Explicitly
 \[
      \cS[R]_{ij} = \E \sum_{ab} h_{ia} R_{ab} h_{bj} = \sum_{ab} \big[ \E h_{ia} h_{bj}\big] R_{ab}.
\]
The analogue of the upper  bound \eqref{Supp} is
\[
     \cS[R] \le C\langle R\rangle
 \]
 for any positive definite matrix $R\ge 0$, where we introduced the notation
 \[
   \langle R\rangle: = \frac{1}{N}\tr R.
\]
In the actual proofs we will need a lower bound of the form $\cS[R]\ge c\langle R\rangle$
and further conditions on the decay of correlations among the matrix elements of $H$.

The corresponding Dyson equation becomes a {\bf matrix equation} 
\be\label{MDE}
   I + (z+\cS[M])M=0
\ee
for the unknown matrix $M=M(z)\in \C^{N\times N}$
under the constraint that $\im M\ge 0$.  Recall that the imaginary part of any
matrix is a hermitian matrix defined by
\[
   \im M = \frac{1}{2i}(M-M^*).
\]
In fact, one may add a hermitian  {\bf external source matrix} $A=A^*$ and consider
the more general equation
\be\label{MDE1}
   I + (z- A+\cS[M])M=0.
\ee
In random matrix applications, $A$ plays the role of the matrix of expectations, $A= \E H$.
We will call \eqref{MDE1} and \eqref{MDE} the {\bf matrix Dyson equation} with  or without external source.
The equation \eqref{MDE1} has a unique solution and in general it is a non-diagonal matrix even if $A$ is diagonal.
Notice that the Dyson equation contains only the second moments of the elements of $H$
via the operator $\cS$; no higher order correlations appear, although in the proofs of the local laws
further conditions on the correlation decay are necessary.

 The Stieltjes transform of the 
density of states is given by
\[ 
       \langle M(z)\rangle= \frac{1}{N}\tr M(z).
\]
The matrix $M=M(z)$ approximates the resolvent in the usual senses, i.e. we have
\[
      G_{ij} (z)\approx M_{ij}(z),
\]
\[
   \langle \bx, G\by\rangle \approx  \langle \bx, M\by\rangle,
\]
and
\[
   \frac{1}{N}\tr G\approx  \langle M\rangle.
\]
Since in general  $M$ is not diagonal, the resolvent $G$ is not approximately diagonal 
any more.
We will call $M$, the solution to the matrix Dyson equation \eqref{MDE1}, the {\bf self-consistent Green function}
or {\bf self-consistent resolvent}.

The stability operator is of the form
\[
   I-\cC_M\cS,
 \]
where $\cC_M$ is the linear map acting on the space of matrices as 
\[ \cC_M[R]: =MRM.
\]
In other words, the stability  operator is the linear map $R\to R- M\cS[R]M$ 
on the space of matrices.

The independent case (Wigner type matrix) is a special case of the correlated ensemble and
it is interesting to exhibit their relation. 
In this case the super-operator $\cS$  maps diagonal matrices to
diagonal matrix. For any vector $\bv\in \C^N$ we denote by $\mbox{diag}(\bv)$ the $N\times N$ 
diagonal matrix with $(\mbox{diag}(\bv))_{ii}=v_i$ in the diagonal. Then we have, for the independent case
with $s_{ab}: = \E |h_{ab}|^2$ as before, 
\[
    \big( \cS [\mbox{diag}(\bv)]\big)_{ij} =\sum_{a} \big[ \E \bar h_{ai} h_{aj}\big] v_a =\delta_{ij} (S\bv)_i ,
\]
thus
\[
   \cS [\mbox{diag}(\bv)] = \mbox{diag}(S\bv).
\]
\begin{exercise}
Check that in the independent case, the solution $M$ to \eqref{MDE}  is diagonal, $M=\mbox{diag}(\bbm)$, where $\bbm$ solves
the vector Dyson equation \eqref{vectordyson1}. Verify that the statements of the local laws formulated in the general 
correlated language reduce to those for the Wigner type problem. Check that the stability operator $I-\cC_M\cS$ 
restricted to diagonal matrices is equivalent to the stability operator \eqref{vectors}.
\end{exercise}


The following table summarizes the four classes of ensembles we discussed.

\begin{table}[!htbp]\scriptsize\renewcommand{\tabcolsep}{0.15cm}
  \centering
 \begin{tabular}{@{}l|c|c|c|c@{}}
\quad Name& Dyson Equation & For & Stability  op &  Feature \\[5pt]
\hline 
\\[-5pt]
\begin{tabular}{l} \textbf{Wigner} \\ {$\E |h_{ij}|^2=s_{ij}=\frac{1}{N}$}\end{tabular}
  & $1+(z+m)m=0$ &  $m \approx \frac{1}{N}\tr G $   &  $1-m^2$ & 
\begin{tabular}{l} {Scalar Dyson equation,} \\ {$m=m_{sc}$ is explicit} \end{tabular}\\[10pt]
\begin{tabular}{l} \textbf{Generalized Wigner} \\ {$\sum_j s_{ij}=1$}\end{tabular}
& $1+(z+S\bbm)\bbm=0$ &  $m_i\approx \frac{1}{N}\tr G $ &   $ 1-m^2S$ & \begin{tabular}{l}  
{Vector  Dyson equation,} \\  {Split  $S$ as  $S^\perp+ |{\bf e}\rangle\langle {\bf e}|$  }
\end{tabular} \\[10pt]
\begin{tabular}{l} \textbf{Wigner-type} \\ {$s_{ij}$ arbitrary}\end{tabular}
 & $1+(z+S\bbm)\bbm=0$ & $m_i \approx G_{ii}$  &  $1-{\bf m}^2S $ & 
\begin{tabular}{l}  {Vector  Dyson equation,} \\ {${\bf m}$  to be determined}\end{tabular}  \\[10pt]
\begin{tabular}{l}  \textbf{Correlated matrix} \\ {$\E h_{xy}h_{uw} \not\asymp\delta_{xw}\delta_{yu}$}\end{tabular}
& $I+(z+\cS[M])M=0$ & $M_{ij} \approx G_{ij}$  &  $1-M\cS[\cdot]M $ & 
\begin{tabular}{l}  {Matrix Dyson equation }\\  {Super-operator } \end{tabular}
\end{tabular}
\end{table}


We remark that in principle the averaged law (density of states) for  generalized Wigner ensemble could be studied 
 via a scalar equation only since the answer is given by the  scalar Dyson  equation, but in practice a vector 
 equation is studied in order to obtain entrywise and isotropic information. However, Wigner-type matrices need 
 a vector Dyson equation even to identify the density of states. Correlated matrices need a full scale matrix
 equation since the answer $M$ is typically a non-diagonal matrix.

\subsection{The precise meaning of the approximations}\label{sec:precise} 

In the previous  sections we used the sloppy notation $\approx$ to indicate that the  (random) resolvent $G$ in
various senses is close to a deterministic object. We now explain what we mean by that. Consider
first \eqref{jig}, the entrywise statement for the Wigner case:
\[
   G_{ij}(z) \approx \delta_{ij} m_{sc}(z).
\]
More precisely, we will see that 
\be\label{precise}
    \big| G_{ij}(z) -  \delta_{ij} m_{sc}(z)\big|\lesssim \frac{1}{\sqrt{N\eta}}, \qquad \eta=\im z
\ee
holds. Here the somewhat sloppy notation $\lesssim$ indicates that the statement holds
with very high probability and with an additional factor $N^\e$. The very precise form 
of \eqref{precise} is the following: for any $\e, D>0$ we have 
\be\label{perch}
  \max_{ij}  \P\Big(  \big| G_{ij}(z) -  \delta_{ij} m_{sc}(z)\big| \ge \frac{N^\e}{\sqrt{N\eta}}\Big) \le \frac{C_{D,\e}}{N^D}
\ee
with some constant $C_{D,\e}$ independent of $N$, but depending on $D, \e$ and the sequence $\mu_p$ bounding the
moments in \eqref{mmm}. We typically consider only spectral parameters with 
\be\label{domain}
  |z|\le C, \qquad  \eta\ge N^{-1+\gamma}
\ee
for any fixed positive constants $C$ and $\gamma$, and we encourage the reader to think of $z$ satisfying these
constraints, although our results are eventually valid for a larger set as well (the restriction $|z|\le C$ can be replaced
with $|z|\le N^C$ and the lower bound on $\eta$ is not necessary if $E=\re z$ is away from the support of the density of states).

Notice that \eqref{perch} is formulated for any fixed $z$, but the probability control is very strong, so
one can extend the same bound to hold simultaneously for any $z$ satisfying \eqref{domain}, i.e.
\be\label{perch1}
   \P\Big(  \exists z \in \C \; : \;   |z|\le C, \im z\ge N^{-1+\gamma},
   \;\;  \max_{ij} \big| G_{ij}(z) -  \delta_{ij} m_{sc}(z)\big| \ge \frac{N^\e}{\sqrt{N\eta}}  \Big) \le \frac{C_{D,\e}}{N^D}.
\ee
Bringing the maximum over $i,j$  inside the probability follows from a simple union bound.
The same trick does not work directly for bringing the maximum over all $z$ inside since there are uncountable
many of them. But notice that the function
\[
    z\to G_{ij}(z) -  \delta_{ij} m_{sc}(z)
\]
is Lipschitz continuous with a Lipschitz constant $C/\eta^2$ which is bounded by $ CN^2$ in the domain \eqref{domain}. Therefore, we
can first choose a very dense, say $N^{-3}$-grid of $z$ values, apply the union bound to them and then argue with
Lipschitz continuity for all other $z$ values.

\begin{exercise} Make this argument precise, i.e. show that \eqref{perch1} follows from \eqref{perch}.
\end{exercise}

Similar argument does not quite work for the isotropic formulation. While \eqref{isotro} holds for any fixed (sequences of)
$\ell^2$-normalized vectors $\bx$ and $\by$, i.e. in its precise formulation we have
\be\label{perch1a}
   \P\Big(  \big| \langle \bx, G(z)\by\rangle  -   m_{sc}(z) \langle \bx,\by\rangle \big| \ge \frac{N^\e}{\sqrt{N\eta}}\Big) \le \frac{C_{D,\e}}{N^D}
\ee
for any fixed $\bx, \by$ with $\|\bx\|_2=\|\by\|_2=1$, 
 we cannot bring the supremum over all $\bx, \by$ inside the probability.
Clearly $\max_{\bx,\by} \langle \bx, G(z)\by\rangle$ would give the norm of $G$ which is $1/\eta$.

Furthermore, a common feature of all our estimates is that the local law in averaged  sense is one order more precise than the
entrywise or isotropic laws, e.g. for the precise form of \eqref{avert} we have
\be\label{perch2}
   \P\Big(  \big| \frac{1}{N}\tr G(z) -   m_{sc}(z)\big| \ge \frac{N^\e}{N\eta}\Big) \le \frac{C_{D,\e}}{N^D}.
\ee

\section{Physical motivations}\label{sec:motivation}

The primary motivation to study local spectral statistics of large random matrices comes
from nuclear and condensed matter physics where the matrix models a quantum 
Hamiltonian and its eigenvalues correspond to energy levels.
Other applications concern statistics
(especially largest eigenvalues of sample covariance matrices of the form $XX^*$ where
$X$ has independent entries), wireless communication  and neural networks. Here we focus only
on physical motivations.

\subsection{Basics of quantum mechanics}

We start with summarizing the basic setup of quantum mechanics. A quantum system is described by
a {\it configuration space} $\Sigma$, e.g. $\Sigma= \{ \uparrow, \downarrow\}$ for a single  spin, 
or $\Sigma = \Z^3$  for an electron hopping on an ionic lattice or $\Sigma=\R^3$ for an electron in vacuum.
Its elements $x\in \Sigma$ are called configurations and it is equipped with a natural measure (e.g. the
counting measure for discrete $\Sigma$ or the Lebesgue measure for $\Sigma=\R^3$).
The {\it state space} is a complex Hilbert space, typically the natural $L^2$-space of $\Sigma$, i.e. $\ell^2(\Sigma)=\C^2$
in case of a single spin or $\ell^2(\Z^3)$ for an electron in a lattice. Its elements are called wave functions, 
these are normalized functions $\psi \in \ell^2(\Sigma)$, with $\|\psi\|_2=1$. The quantum wave function 
entirely describes the quantum state. 
 In fact its overall phase does not carry measurable physical information;
wave functions $\psi$ and $e^{ic}\psi$ are indistinguishable for any real constant $c$.
This is because only quadratic forms of $\psi$ are measurable, i.e. 
only quantities of the form $\langle \psi, O\psi\rangle$ where $O$ is a self-adjoint operator.
The probability density  $|\psi(x)|^2$ on the configuration  space describes the probability to 
find the quantum particle at configuration  $x$.

The dynamics   of the quantum system, i.e. the process how $\psi$ changes in time,
 is described by the {\it Hamilton operator}, which is a self-adjoint operator
acting on the state space $\ell^2(\Sigma)$. If $\Sigma$ is finite, then it is an $\Sigma\times \Sigma$ hermitian matrix. The matrix elements
$H_{xx'}$ describe the quantum transition rates from configuration $x$ to $x'$. The dynamics of $\psi$ is described by the Schr\"odinger equation
\[
   i\partial_t \psi_t = H\psi_t
\]
with a given initial condition $\psi_{t=0}:=\psi_0$.
The solution is given by  $\psi_t = e^{-itH} \psi_0$. This simple formula
is however, quite hard to compute or analyze, especially for large times.
Typically one writes up the spectral decomposition of $H$ in the form 
$H =\sum_n\lambda_n |\bv_n\rangle\langle \bv_n|$, where $\lambda_n$ and $\bv_n$
are the eigenvalues and eigenvectors of $H$, i.e. $H\bv_n=\lambda_n\bv_n$.
Then
\[
    e^{-itH}\psi_0 =\sum_n e^{-it\lambda_n}  \langle \bv_n, \psi_0\rangle \bv_n =: \sum_n e^{-it\lambda_n}  c_n \bv_n.
\]
If $\psi_0$ coincides with one of the eigenvectors, $\psi_0=\bv_n$, then
the sum above collapses and
\[
 \psi_t=  e^{-itH}\psi_0 = e^{-it\lambda_n}  \bv_n.
\]
Since the physics encoded in the  wave function is insensitive to an overall phase, 
we see that eigenvectors remain unchanged along the quantum evolution.

Once $\psi_0$ is a genuine linear combination of several eigenvectors, quadratic forms
of $\psi_t$ become complicated:
\[
   \langle\psi_t, O \psi_t\rangle = \sum_{nm} e^{it(\lambda_m-\lambda_n)}  \bar{c}_m c_n \langle  \bv_m, O  \bv_n, \rangle .
\]
This double sum is highly oscillatory and subject to possible periodic and quasi-periodic behavior depending
on the commensurability of the eigenvalue differences $\lambda_m-\lambda_n$. Thus the statistics of the eigenvalues
carry important physical information on the quantum evolution.

The Hamiltonian $H$ itself can be considered as an observable, and the quadratic form $\langle \psi, H\psi\rangle$ 
describes the {\it energy} of the system in the state $\psi$. Clearly the energy is a conserved quantity
\[
   \langle \psi_t, H\psi_t\rangle = \langle e^{-itH}\psi_0, H e^{-itH} \psi_t\rangle= \langle \psi_0, H  \psi_t\rangle.
\]
The eigenvalues of $H$ are called {\bf energy levels} of the system.

{\it Disordered quantum systems} are described by random Hamiltonians, here the randomness comes from an external source
and is often described phenomenologically. For example, it can represent impurities in the state space (e.g. the
ionic lattice is not perfect) that we do not wish  to (or cannot) describe with a deterministic precision, only their statistical
properties are known.

\subsection{The ``grand'' universality conjecture for disordered quantum systems}\label{sec:grand}

The general belief is that  disordered quantum systems with ``sufficient'' complexity  are subject to  a strong dichotomy. They exhibit
one of the following two behaviors:  they are either in the {\it insulating} or in the {\it conducting} phase. 
These two phases are also called {\bf localization} and {\bf delocalization} regime. The behavior may depend
on the energy range: the same quantum system can be simultaneously in both phases but at different energies.

The {\bf insulator (or localized regime)} is characterized by the following properties:
\begin{itemize}
\item[1)] Eigenvectors are {\bf spatially localized}, i.e. the overwhelming mass of the probability density $|\psi(x)|^2\rd x$ is 
supported in a small subset of $\Sigma$. More precisely, there exists an $\Sigma'\subset \Sigma$, with $|\Sigma'|\ll |\Sigma|$ such that
\[
   \int_{\Sigma\setminus \Sigma'} |\psi(x)|^2 \rd x \ll 1
 \]
 \item[2)] {\bf Lack of transport}: if the state $\psi_0$ is initially localized, then it remains so (maybe on a larger domain) for all times.
 Transport is usually measured with the mean square displacement  if $\Sigma$ has a metric. For example, for $\Sigma=\Z^d$ we consider
 \be\label{mean square}
      \langle x^2 \rangle_t : = \sum_{x\in \Z^3} x^2 |\psi_t(x)|^2,
 \ee
 then localization means that
\[
  \sup_{t\ge 0}   \langle x^2 \rangle_t \le C
\]
 assuming that at time $t=0$ we had $\langle x^2 \rangle_{t=0} <\infty$. Strictly speaking this concept makes
 sense only if $\Sigma$ is infinite, but one can require that the constant $C$ does not depend on some relevant size
 parameter of the model.
 
 \item[3)] Green functions have a {\bf finite  localization length} $\ell$, i.e. the off diagonal  matrix elements of the
 resolvent decays exponentially (again for $\Sigma=\Z^d$ for simplicity)
 \[
     |G_{xx'}|  \le Ce^{-|x-x'|/\ell}.
 \]

 \item[4)] {\bf Poisson local eigenvalue statistics}: Nearby eigenvalues are statistically independent, i.e. they approximately
 form a Poisson point process after appropriate rescaling.
 
\end{itemize}

The {\bf conducting (or delocalized) regime} is characterized by the opposite features:
 
\begin{itemize}
\item[1)] Eigenvectors are {\bf spatially delocalized}, i.e.  the mass of 
 the probability density $|\psi(x)|^2$ is  not concentrated on a much  smaller subset of $\Sigma$.

 \item[2)] {\bf Transport via diffusion}: The mean square displacement \eqref{mean square}  grows diffusively, e.g. 
 for $\Sigma=\Z^d$
 \[
    \langle x^2 \rangle_t \approx Dt
\]
with some nonzero constant $D$ (diffusion constant) for large times. If $\Sigma$ is a finite part of $\Z^d$,
e.g. $\Sigma = [1,L]^d\cap \Z^d$, then this relation 
 should be modified
so that the growth of  $\langle x^2 \rangle_t $ with time can last only until the whole $\Sigma$ is exhausted.

\item[3)] The Green function does not decay exponentially, the localization length $\ell=\infty$.

 \item[4)] {\bf Random matrix local eigenvalue statistics}: Nearby eigenvalues are statistically strongly dependent, in particular
 there is a level repulsion. They 
  approximately
 form a GUE or GOE eigenvalue point process after appropriate rescaling. The symmetry type of the approximation is
 the same as the symmetry type of the original model (time reversal symmetry gives GOE).

\end{itemize}

The most prominent simple example for the conducting regime is the
Wigner matrices or more generally Wigner-type matrices.  They represent a quantum system where hopping from any site $x\in \Sigma$ 
to any other site $x'\in\Sigma$
is statistically equally likely (Wigner ensemble) or at least comparably likely (Wigner type ensemble).

Thus, a convenient way to represent the conducting regime is via a complete graph as illustrated below in Figure~\ref{hh}. This graph has one vertex for each of the $N=| \Sigma|$  states and an edge joins each pair of states.  The edges correspond
to the matrix elements $h_{xx'}$ in \eqref{Hdef1} and they are independent. For Wigner matrices there is no specific spatial structure  present, the 
system is completely homogeneous. Wigner type
ensembles model a system with an inhomogeneous  spatial structure, but it is still 
a {\bf mean field} model since most  transition rates are comparable. However, some results on Wigner type matrices
allow zeros in the matrix of variances $S$ defined in \eqref{def:Sigmamatrix}, i.e. certain jumps are  explicitly forbidden. 
 
\setlength\figurewidth{1.5cm}
\newcommand\numberCorners{7}
{\centering
\begin{figure}[H]
\begin{tikzpicture}[scale=1.8,baseline={(current bounding box.center)}]
\foreach \i in {1,...,\numberCorners} 
\node (circ\i) at ( 360/\numberCorners*\i:\figurewidth) [label=,circle,draw,fill=black,inner sep=.5mm] {};
\foreach \i in {1,...,\numberCorners}
\foreach \j in {\i,...,\numberCorners}
\pgfmathparse{0.4+2.0*rnd}
\draw[line width = \pgfmathresult pt] (circ\i) to (circ\j);
\end{tikzpicture}
\caption{Graph schematically indicating the configuration space of $N=|\Sigma|=7$ states with random 
quantum transition rates}\label{hh}
\end{figure}
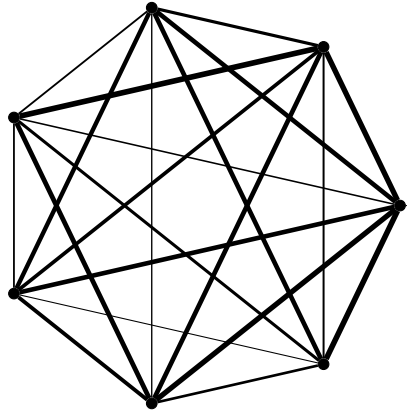
}

The delocalization of the eigenvectors (item 1) was presented in \eqref{delocbound}, while
item 4) is the WDM universality. The diffusive feature (item 2) is trivial since  due to the mean field
character, the maximal displacement is already achieved after $t\sim O(1)$.
Thus the {\bf Wigner matrix is in the  delocalized regime.}

It is not so easy to present a non-trivial example for the insulator regime. A trivial example is if $H$ is a diagonal matrix in the basis given by $\Sigma$,
with i.i.d. entries in the diagonal, then items 1)--4) of the insulator regime clearly hold. Beyond the diagonal, even a short range hopping
can become delocalized, for example the lattice Laplacian on $\Z^d$ has delocalized eigenvectors (plane waves).
However, if the Laplacian is perturbed by a random diagonal, then localization may occur -- this is the celebrated
{\it Anderson metal-insulator transition} \cite{And1958}, which we now discuss.

\subsection{Anderson model}\label{sec:anderson}

The prototype of the random Schr\"odinger operators is the Anderson model  on the $d$-dimensional
square lattice $\Z^d$. It
consists of a Laplacian (hopping term to the neighbors) and a random potential:
\be\label{rs}
    H = \Delta + \lambda V
\ee
acting on $\ell^2(\Z^d)$. The matrix elements of  the Laplacian are given by
\[
      \Delta_{x y} = {\bf 1}( |x-y|=1) 
\] 
and the potential is diagonal, i.e. 
\[
     V_{xy} = \delta_{xy} v_x,
 \]
 where $\{ v_x \; : \; i\in \Z^d\}$ is a collection of  real i.i.d. random variables sitting
 on the lattice sites.  For definiteness we assume that 
 \[
    \E v_x=0, \qquad \E v_x^2 =1
 \]
 and  $\lambda$ is a coupling parameter. 
 Notice that $\Delta$ is self-adjoint and bounded, while the potential at every site is
 bounded almost surely. For simplicity we may assume that the common distribution of $v$
 has bounded support, i.e. $V$, hence $H$ are  bounded operators. This eliminates
 some technical complications related to the proper definition of the self-adjoint extensions.

 \subsubsection{The free Laplacian}
  
 For $\lambda=0$, the spectrum is well known, the eigenvector equation $\Delta f = \mu f$, i.e. 
 \[
        \sum_{|y-x|=1} f_y = \mu f_x, \qquad \forall x\in \Z^d,
   \]
  has plane waves parametrized by the $d$-torus, $k=(k_1, k_2, \ldots , k_d)\in [-\pi, \pi]^d$  as eigenfunctions:
  \[
        f_x =   e^{i k\cdot x}, \qquad \mu = 2\sum_{i=1}^d \cos k_i .
  \]
  Although these plane waves are not $\ell^2$-normalizable, they still form a complete system
  of generalized eigenvectors for the bounded self-adjoint operator $\Delta$.
  The spectrum is the interval $[-2d, 2d]$ and it is a purely absolutely continuous spectrum
  (we will not need its precise definition if you are unfamiliar with it).
   Readers   uncomfortable
  with unbounded domains  can take a large torus $[-L, L]^d$, $L\in \N$, instead of $\Z^d$ 
  as the configuration space. Then everything is finite dimensional, and the wave-numbers $k$ are
  restricted to a finite lattice within the torus $[-\pi, \pi]^d$. Notice that the eigenvectors are still 
   plane waves, in particular they are completely delocalized. 
   
   One may also study the 
  time evolution $e^{it\Delta}$ (basically by Fourier transform) and one finds {\bf ballistic behavior}, i.e.
  for the mean square displacement \eqref{mean square} one finds
  \[
         \langle x^2 \rangle_t  = \sum_{x\in \Z^d} x^2 |\psi_t(x)|^2 \sim Ct^2, \qquad  \psi_t = e^{it\Delta}\psi_0
   \]
   for large $t$.  Thus for $\lambda=0$ the system in many aspects is in the delocalized regime.
   Since randomness is completely lacking,  it is not expected that other features of the delocalized regime hold, e.g.
   the local spectral statistics is not the one from random matrices -- it is  rather related to a lattice
   point counting problem. Furthermore, the eigenvalues have degeneracies, i.e. level repulsion,  a main
   characteristics for random matrices, does not hold.

\subsubsection{Turning on the randomness}

Now we turn on the randomness by taking some $\lambda\ne 0$.  This changes the behavior
of the system drastically in certain regimes. More precisely:

\begin{itemize}
\item In {\bf $d=1$ dimension } the system is in the localized regime as soon as $\lambda\ne 0$,
see \cite{GoldMolPastur1997}

\item In {\bf $d=2$ dimensions } On physical grounds it is conjectured that the system is localized for any $\lambda\ne0$
\cite{Vollhard1980}. No mathematical proof exists.

\item In the most important physical {\bf $d=3$ dimensions } we expect a phase transition: The system is
localized for large disorder, $|\lambda|\ge \lambda_0(d)$ or at the spectral edges 
\cite{FroSpe1983, AizMol1993}. For small disorder and away from the spectral edges  
delocalization is expected but there is no rigorous proof. This is the celebrated {\bf extended states or delocalization conjecture},
one of the few central  holy grails of mathematical physics.
\end{itemize}

Comparing random Schr\"odinger with random matrices, we may write up the matrix of the $d=1$ dimensional
operator $H$  \eqref{rs} in the basis given by $\Sigma=\llbracket 1, L\rrbracket$:
\[
  H = \Delta + \sum_{x=1}^L v_x = 
  \begin{pmatrix}  
v_1 & 1 &&&& \\
1 & v_2 &1&&& \\
&1 & \ddots &&& \\
&&&\ddots & 1 &\\
&&& 1&  v_{L-1} &1 \\
&&&& 1 &v_L
\end{pmatrix}.
\]
It is tridiagonal matrix with i.i.d. random variables in the diagonal
and all ones in the minor diagonal. It is a  {\bf short range model} as
immediate quantum transitions (jumps) are allowed only to the
nearest neighbors. Structurally this $H$  is very different from the
typical Wigner matrix \eqref{hh} where all matrix elements are roughly comparable
({\bf mean field model}).

\subsection{Random band matrices}

\newcommand{\rast}{\ast\nc}
\newcommand{\fS}{{\mathfrak S}}

Random band matrices naturally interpolate between the mean field Wigner ensemble and the
short range random Schr\"odinger operators. Let the state space be 
\[
\Sigma := [1,L]^d\cap\Z^d
\]
a lattice box of linear size $L $  in $d$ dimensions. The total dimension of the state space is $N=|\Sigma|=L^d$.
The entries of $H=H^*$ are centered, independent but not identically distributed -- it is like the Wigner type ensemble,
but without the mean field condition $s_{xy} =\E |h_{xy}|^2\le C/N$. Instead, we introduce a new parameter,
$1\le W\le L$  the {\bf bandwidth} or the interaction range. We assume  that the variances behave as
\[
   \E |h_{xy}|^2 =  \frac{1}{W^d} f\Big(\frac{ |x-y|}{W}\Big).
 \]
 In $d=1$ physical dimension the corresponding matrix is an $L\times L$ matrix with a nonzero band of
 width $2W$ around the diagonal. From any site a direct hopping of size $W$ is possible, see
 the figure below with $L=7$, $W=2$:
 
{\centering
\[
   H = {\begin{pmatrix}
   \rast & \rast & \rast  & 0 & 0&0&0 \\
   \rast & \rast & \rast & \rast & 0&0&0 \\
\rast & \rast & \rast & \rast & \rast&0&0 \\
0&\rast & \rast & \rast & \rast & \rast&0 \\
0&0&\rast & \rast & \rast & \rast & \rast \\
0&0&0&\rast & \rast & \rast & \rast  \\
0&0&0&0&\rast & \rast & \rast  
   \end{pmatrix}}
\qquad\qquad\mbox{
\begin{tikzpicture}[scale=0.8]
	\draw[] (0,0) -- (6,0);
	\foreach \i in {0,...,6}
		{
		\draw[fill=black] (\i,0) circle (0.14);
		}
	\draw (3,0) arc(0:180:1);
	\draw (3,0) arc(0:180:0.5);
	\draw (4,0) arc(0:180:0.5);
	\draw (5,0) arc(0:180:1);
\end{tikzpicture}}
\]
}
\bigskip

Clearly $W=L$ corresponds to the Wigner ensemble, while $W=1$ is very similar to the random Schr\"odinger
with its short range hopping. The former is delocalized, the latter is localized, hence there is a transition 
with can be probed by changing $W$ from 1 to $L$.
The following table summarizes ``facts'' from physics literature on the transition threshold:


   \underline{Anderson metal-insulator transition occurs at the following thresholds:}
\begin{align*}
    W\sim L^{1/2} & \qquad (d=1) \qquad \mbox{Supersymmetry \cite{FyoMir1991} }\\
    W \sim \sqrt{\log L} & \qquad (d=2)  \qquad 
\mbox{Renormalization group scaling \cite{Abrahams1979} }\\
    W \sim W_0(d) & \qquad (d\geq 3) \qquad \mbox{extended states conjecture \cite{And1958}}
\end{align*}
All these conjectures are mathematically  open, the most progress has been done in $d=1$. It is known
that we have localization in the regime  $W\ll L^{1/8}$ \cite{Sch2009} and delocalization for $W\gg L^{4/5}$  \cite{ErdKnoYauYin2013}.
The two point correlation function of the characteristic polynomial  was shown to be given by the Dyson sine kernel up to the threshold
 $W\gg L^{1/2}$ in \cite{TShc2014-2}.

In these lectures we restrict our attention to mean field models, i.e. band matrices will not be discussed. We nevertheless
mentioned them because they are expected to be easier than the short range random Schr\"odinger operators
and they still exhibit the Anderson transition in a highly nontrivial way.

\subsection{Mean field quantum Hamiltonian with correlation}

Finally we explain how correlated random matrices with a certain correlation decay are motivated. We again equip
the state space $\Sigma$ with a metric to be able to talk about ``nearby'' states.
It is then reasonable to assume  that  $h_{xy}$ and $h_{xy'}$ are correlated if $y$ and $y'$ are
close with a decaying correlation as $\mbox{dist}(y,y') $ increases.\\[7pt]
\centerline{
\includegraphics[width=0.33 \textwidth] {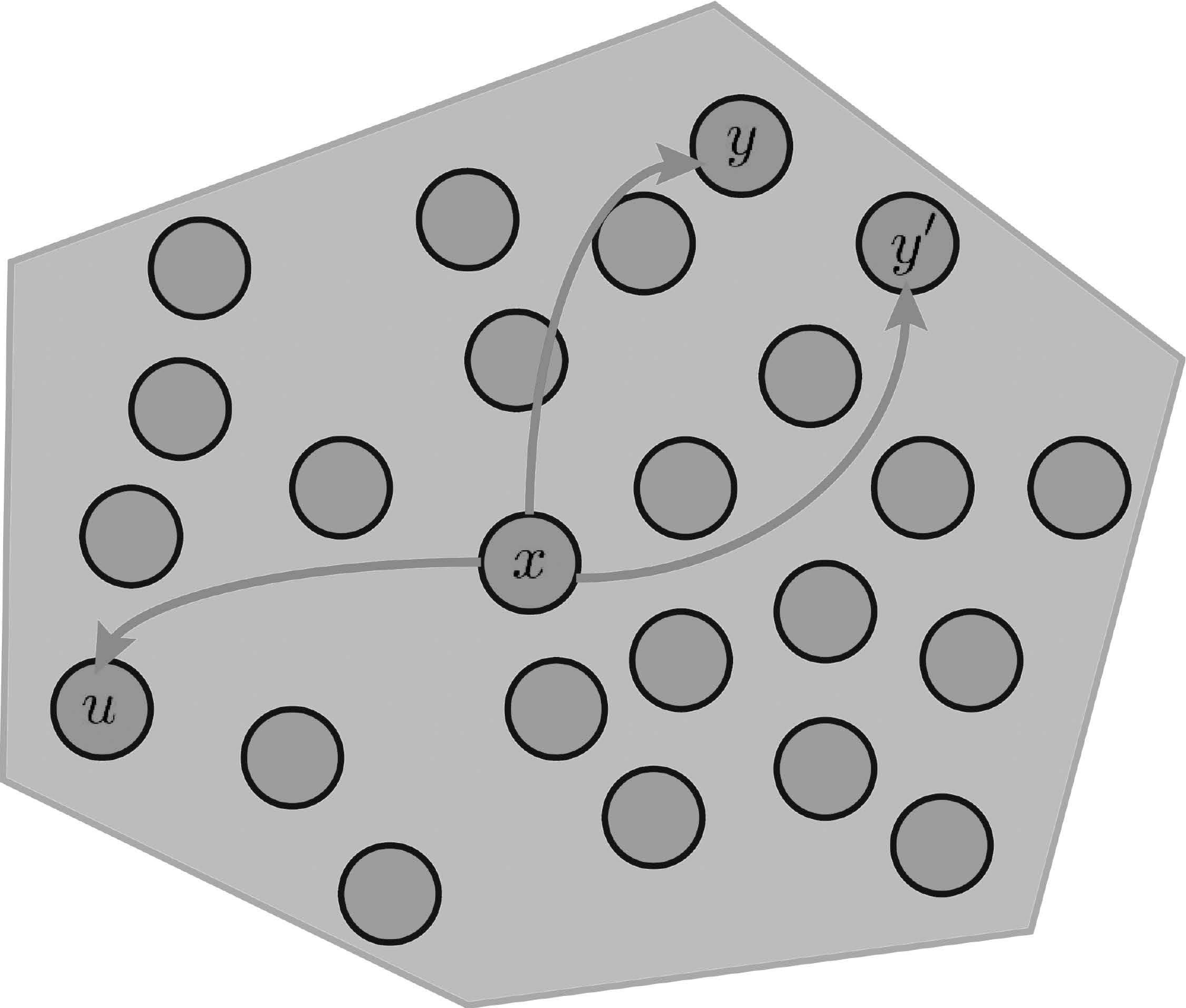}}
For example, in the figure $h_{xy}$ and $h_{xy'}$ 
are strongly correlated but $h_{xy}$ and $h_{xu}$ are not (or only  very weakly) correlated. We can combine this  feature
with an inhomogeneous spatial structure as in the Wigner-type ensembles.

\section{Results}\label{sec:results}

Here we list a few representative results with precise conditions. The results can be divided roughly into three categories:
\begin{itemize}
\item Properties of the solution of the Dyson equation, especially the singularity structure of the density of states and the
boundedness of the inverse of the  stability operator. This part  
of the analysis is deterministic.
\item Local laws, i.e. approximation of the (random) resolvent $G$ by the solution of the corresponding Dyson equation  with very high probability
down to the optimal scale $\eta\gg 1/N$. 
\item Bulk universality of the local eigenvalue statistics on scale $1/N$.
\end{itemize}

\subsection{Properties of the solution to the  Dyson equations}\label{sec:properties}

\subsubsection{Vector Dyson equation}

First we focus on the vector Dyson equation \eqref{vectordyson1} with a general symmetric variance matrix $S$ motivated by Wigner type matrices:
\be\label{vectordyson2}
       -\frac{1}{\bbm} =z+ S\bbm, \qquad \bbm\in \Cp^N, \quad z\in\Cp
\ee
(recall that the inverse of a vector $1/\bbm$ is understood component wise, i.e. $1/\bbm$ is an $N$ vector with components $(1/\bbm)_i=1/m_i$).
We may add an external source which is real vector $\ba\in \R^N$ and the equation is modified to
\be\label{vectordyson5}
       -\frac{1}{\bbm} =z- \ba+S\bbm, \qquad \bbm\in \Cp^N, \quad z\in\Cp,
\ee
but we will consider the $\ba=0$ case for simplicity.
We equip the space $\C^N$ with the maximum norm, 
\[
    \| \bbm\|_\infty : = \max_i |m_i|,
\]
and we let $\| S\|_\infty$ be the matrix norm induced by the maximum norm of vectors.

 We start with the existence and uniqueness
result for \eqref{vectordyson2}, see e.g. Proposition 2.1 in \cite{AjaErdKru2015}:

\begin{theorem}\label{exuniqvector}
The equation \eqref{vectordyson2} has a unique solution $\bbm=\bbm(z)$ for any $z\in\Cp$. For each $i\in\llbracket 1, N\rrbracket$ there is a probability measure
$\nu_i(\rd x)$ on $\R$  (called {\bf generating measure})
 such that $m_i$ is the Stieltjes transform of $v_i$:
\be\label{reps}
        m_i(z) = \int_\R \frac{\nu_i(\rd \tau)}{\tau-z},
\ee
and the support of all $\nu_i$ lie in the interval $[-2\| S\|_\infty^{1/2}, 2\|S\|_\infty^{1/2}]$.
In particular we have the trivial upper bound
\be\label{trivupper}
     \| \bbm(z)\|_\infty\le \frac{1}{\eta}, \qquad \eta=\im z.
 \ee
\end{theorem}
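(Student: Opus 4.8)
I would first establish existence and uniqueness of the solution $\bbm(z)\in\Cp^N$ for every $z\in\Cp$ by a connectedness (continuity) argument in the spectral parameter, then read off the representation \eqref{reps} from Nevanlinna's representation theorem, and finally locate the support of the generating measures by turning \eqref{vectordyson2} into a scalar quadratic inequality for $\norm{\bbm(z)}_\infty$ that lets the whole construction be rerun on $\C\setminus[-2\norm{S}_\infty^{1/2},2\norm{S}_\infty^{1/2}]$.

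Before anything else, note that \eqref{trivupper} and the self-consistency of the side condition follow from the equation alone: if $\bbm\in\Cp^N$ solves \eqref{vectordyson2}, then $1/m_i=-(z+(S\bbm)_i)$, so, using $s_{ij}\ge0$ and $\im m_j>0$,
\[
  \im\tfrac1{m_i}=-\bigl(\eta+(S\im\bbm)_i\bigr)\le-\eta,\qquad \im m_i=|m_i|^2\bigl(\eta+(S\im\bbm)_i\bigr)\ge\eta|m_i|^2 ,
\]
whence $|m_i|=|1/m_i|^{-1}\le\eta^{-1}$, $m_i\neq0$, and $\im m_i>0$ automatically. For existence and uniqueness on $\Cp$ I would start from the regime $\im z\gg1$, where $\bbm\mapsto-(z+S\bbm)^{-1}$ is a contraction on $\{\norm{\bbm}_\infty\le2/\eta\}$ with constant $O(\norm S_\infty/\eta^2)$, producing a unique, analytic solution there by the Banach fixed point theorem; then I would propagate along $\Cp$ by the implicit function theorem. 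The one structural input needed is invertibility of the linearized (stability) operator $1-\diag(\bbm^2)S$ at a solution. Writing $\bu:=\im\bbm>0$, the identity above gives $|m_i|^2(S\bu)_i<|m_i|^2(\eta+(S\bu)_i)=u_i$, i.e. the \emph{nonnegative} matrix $F$ with entries $F_{ij}=|m_i|^2s_{ij}$ satisfies $F\bu<\bu$ componentwise; by Collatz--Wielandt $\rho(F)<1$, hence $\rho(\diag(\bbm^2)S)\le\rho(F)<1$ and $1-\diag(\bbm^2)S$ is invertible. Thus the set $U\subseteq\Cp$ of parameters near which a unique analytic $\Cp^N$-valued solution exists is open, and it is closed by the a priori bound \eqref{trivupper} (a subsequential limit of solutions solves the equation at the limit point and lies in $\Cp^N$ by the self-improvement above, so the IFT identifies it with the local solution). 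Since $\Cp$ is connected, $U=\Cp$. Global uniqueness at a fixed $z$ is the same mechanism: two solutions obey $(1-\diag(\bbm\bbm')S)(\bbm-\bbm')=0$, and with $|m_im_i'|^2\le\frac{u_iu_i'}{(\eta+(S\bu)_i)(\eta+(S\bu')_i)}$ and the Cauchy--Schwarz bound $\sum_js_{ij}\sqrt{u_ju_j'}\le\sqrt{(S\bu)_i(S\bu')_i}$ one checks $G\bw<\bw$ for $G_{ij}=|m_im_i'|s_{ij}$ and $\bw=\sqrt{\bu\bu'}$, so again the relevant map has spectral radius $<1$ and $\bbm=\bbm'$.

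Analyticity of each $m_i$ on $\Cp$ is built into this construction and $\im m_i>0$ was shown; as $\eta\to\infty$, $|(S\bbm)_i|\le\norm S_\infty\norm\bbm_\infty\to0$, so $m_i(i\eta)=-(i\eta+o(1))^{-1}$ and $-i\eta m_i(i\eta)\to1$. Nevanlinna's representation theorem (quoted after the Stieltjes-transform exercise) then furnishes probability measures $\nu_i$ with \eqref{reps}. For the support bound I would extract a scalar inequality from $-1=m_iz+m_i(S\bbm)_i$: since $|m_i||z|=|1+m_i(S\bbm)_i|\le1+\norm S_\infty\norm\bbm_\infty^2$, with $\rho:=\norm{\bbm(z)}_\infty$ this gives $\norm S_\infty\rho^2-|z|\rho+1\ge0$. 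For $|z|>2\norm S_\infty^{1/2}$ this quadratic has two positive roots $\rho_\pm(z)$ with $\rho_-\rho_+=\norm S_\infty^{-1}$, so $\rho$ lies in $(0,\rho_-(z)]$ or in $[\rho_+(z),\infty)$; since $\rho\to0$ as $\eta\to\infty$ and $\{|z|>2\norm S_\infty^{1/2}\}\cap\Cp$ is connected, a standard connectedness argument forces $\rho\le\rho_-(z)$, hence $\norm{\bbm(z)}_\infty^2\le\rho_-^2<\rho_-\rho_+=\norm S_\infty^{-1}$, i.e. $\norm{\bbm(z)}_\infty^2\norm S_\infty<1$. This strict bound makes $1-\diag(\bbm^2)S$ invertible \emph{without} using $\im m_i>0$, so the contraction-plus-connectedness argument can be repeated verbatim on the connected open set $\{z:|z|>2\norm S_\infty^{1/2}\}$, with the ball $\norm\bbm_\infty\le\rho_-(z)$ playing the role of $\Cp^N$; by uniqueness the solution so produced agrees with $\bbm$ on the overlap with $\Cp$. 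Hence each $m_i$ continues analytically to $\C\setminus[-2\norm S_\infty^{1/2},2\norm S_\infty^{1/2}]$, and, using the reflection $m_i(\bar z)=\overline{m_i(z)}$ together with Stieltjes inversion, $\nu_i$ charges no point outside that interval.

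\textbf{Main obstacle.} Everything hinges on invertibility of the stability operator $1-\diag(\bbm^2)S$: it drives openness in the continuity method, both local and global uniqueness, and the analytic continuation step. On $\Cp$ it comes for free from $\im\bbm>0$ via Perron--Frobenius, so the genuinely delicate part is the support statement, where that positivity is unavailable and one must first convert the equation into the scalar quadratic inequality to regain the strict bound $\norm{\bbm}_\infty^2\norm S_\infty<1$ before the construction can be pushed off the real interval.
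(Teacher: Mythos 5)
Your proposal is correct, but it reaches existence and uniqueness by a genuinely different route than the text. The paper runs a single fixed-point argument for $\Phi(\bu)=-(z+S\bu)^{-1}$ in the hyperbolic metric $D(\zeta,\om)=|\zeta-\om|^2/(\im\zeta\,\im\om)$: $\Phi$ contracts the compact set $B_{\eta_0}$ uniformly for all $\im z\ge\eta_0$, which delivers existence, uniqueness and analyticity for every $z\in\Cp$ in one stroke, with no continuation argument and no invertibility of the linearization needed. You instead contract only for $\im z\gg 1$ in the sup norm and then propagate by the implicit function theorem, which forces you to control the stability operator $1-\diag(\bbm^2)S$; your Collatz--Wielandt bound $\rho(F)<1$ from $F\bu<\bu$ with $F_{ij}=|m_i|^2 s_{ij}$, $\bu=\im\bbm$, is precisely the ``saturated self-energy'' mechanism the paper introduces only later (Definition~\ref{def:F} and the proposition $\|F\|_2<1$) as the engine of the \emph{stability} analysis, so you are front-loading that estimate into the existence proof. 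This costs extra work here but buys a fully quantitative uniqueness argument (the $G\bw<\bw$ step with $\bw=\sqrt{\bu\bu'}$ is a nice self-contained version) and reuses machinery needed anyway for the local law. The support bound follows the paper's own exercise (the quadratic inequality $\|S\|_\infty\rho^2-|z|\rho+1\ge0$ plus connectedness, with $|z|/(2\|S\|_\infty)$ the midpoint of the two roots), though you finish by analytic continuation and reflection where the hint instead shows directly that $\im\bbm(E+i\eta)\to0$; both are fine. One spot to tighten: for the initial large-$\eta$ fixed point, the identity $\im m_i=|m_i|^2(\eta+(S\im\bbm)_i)$ alone does not force $\im m_i>0$ for a fixed point found without a sign constraint; either run the contraction on the closed invariant set $\{\im\bbm\ge0,\ \|\bbm\|_\infty\le 2/\eta\}$ or invert $I-\diag(|\bbm|^2)S$ by a Neumann series --- a one-line fix.
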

Recalling that the  {\bf  self-consistent density of states} was defined in \eqref{dos}
 via the inverse Stieltjes transform of  $\langle \bbm\rangle =\frac{1}{N}\sum m_i$, we see
that  
\[
  \varrho = \langle \bnu \rangle= \frac{1}{N}\sum_i \nu_i.
\]

We now list two assumptions on $S$, although for some results we will need only one of them:
\begin{itemize}
\item {\bf Boundedness:} We assume that there exists two positive constants $c, C$ such that
\be\label{sbound}
     \frac{c}{N}\le s_{ij}\le \frac{C}{N}
\ee
\item {\bf H\"older regularity:}
\be\label{holder}
    |s_{ij} -s_{i'j'}| \le \frac{C}{N}\, \Big[\frac{ |i-i'| + |j-j'|}{N}\Big]^{1/2} 
 \ee
\end{itemize}

We remark that the lower bound in \eqref{sbound} can be substantially weakened, in particular
large zero blocks are allowed. For example,  we may assume only that $S$ 
 has a substantial diagonal, i.e. $s_{ij}\ge \frac{c}{N}\cdot {\bf 1}( |i-j|\le \e N)$ with some fixed positive $c, \e$,
 but for simplicity of the presentation we follow \eqref{sbound}.  
 
 The H\"older regularity \eqref{holder} expresses a regularity on the order $N$ scale in the matrix. It
 can  be understood in the easiest way if we imagine that the matrix 
 elements $s_{ij}$ come from a macroscopic profile function $S(x,y)$ on $[0,1]\times[0,1]$ by the formula
 \be\label{sit}
    s_{ij} = \frac{1}{N} S\big( \frac{i}{N}, \frac{j}{N}\big).
 \ee
It is easy to check that if $S:[0,1]^2\to \R_+$ is H\"older continuous  with a H\"older exponent $1/2$, then \eqref{holder} holds.
In fact, the H\"older regularity condition can also be weakened to {\bf piecewise 1/2-H\"older regularity} (with finitely many pieces), in that case 
we assume that $s_{ij}$ is of the form \eqref{sit} with a  profile function $S(x,y)$  that is piecewise H\"older continuous with exponent $1/2$, i.e.
 there exists a fixed ($N$-independent) partition  $I_1\cup I_2\cup \ldots \cup I_n=[0,1]$
 of the unit interval into smaller intervals such that
 \be\label{piece holder}
    \max_{ab}\sup_{x, x'\in I_a}\sup_{y,y'\in I_b} \frac{ |S(x,y)- S(x',y')|}{|x-x'|^{1/2}+ |y-y'|^{1/2}} \le C.
 \ee

The main theorems summarizing the properties of the solution to \eqref{vectordyson2} are the following. The first theorem assumes only
\eqref{sbound} and it is relevant in the bulk. We will prove it later in Section~\ref{thm:bulkbound}.

\begin{theorem}\label{thm:bulkbound}
Suppose that $S$ satisfies \eqref{sbound}. Then we have the following bounds:
\be\label{bulk bound}
  \|\bbm(z)\|_\infty\lesssim \frac{1}{\varrho(z)+\mbox{dist}(z, \mbox{supp} \varrho)},  \qquad \varrho(z)\lesssim \im \bbm(z) \lesssim (1+|z|^2)\| \bbm(z)\|^2_\infty \varrho(z).
\ee
\end{theorem}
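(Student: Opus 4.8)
The plan is to read off everything from the componentwise form of \eqref{vectordyson2} together with the Stieltjes transform representation \eqref{reps} supplied by Theorem~\ref{exuniqvector}. Write $z=E+i\eta$ and set $\varrho(z):=\frac{1}{\pi}\im\langle\bbm(z)\rangle$, the harmonic extension of the self-consistent density of states defined in \eqref{dos}. Dividing \eqref{vectordyson2} by $m_i$ and taking imaginary parts in each coordinate gives
\[
\frac{\im m_i}{|m_i|^2}=\eta+(S\,\im\bbm)_i ,\qquad i\in\llbracket 1,N\rrbracket .
\]
Since \eqref{sbound} forces $c\,\langle\im\bbm\rangle\le (S\,\im\bbm)_i\le C\,\langle\im\bbm\rangle$ for every $i$, this already delivers the two structural facts that drive the proof: the local weight is comparable to the average, $(S\,\im\bbm)_i\asymp\varrho(z)$, and
\[
\im m_i\ \asymp\ |m_i|^2\bigl(\eta+\varrho(z)\bigr)\qquad\text{uniformly in }i .
\]
So, up to the harmless factor $\eta+\varrho(z)$, controlling $\bbm$ in $\ell^\infty$ is the same as controlling $\im\bbm$.

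\textbf{The $\ell^\infty$ bound.} Two independent estimates feed into it. From the displayed comparability, $|m_i|^2\lesssim \im m_i/(\eta+\varrho)\le \|\bbm\|_\infty/(\eta+\varrho)$; taking the maximum over $i$ and dividing yields $\|\bbm\|_\infty\lesssim (\eta+\varrho(z))^{-1}$. On the other hand, since $\varrho=\langle\bnu\rangle$ we have $\supp\nu_i\subseteq\supp\varrho$ for every $i$, so directly from \eqref{reps}, $|m_i(z)|\le \operatorname{dist}(z,\supp\nu_i)^{-1}\le \operatorname{dist}(z,\supp\varrho)^{-1}$. Combining the two via $\min(a^{-1},b^{-1})=\max(a,b)^{-1}\le 2(a+b)^{-1}$ with $a=\eta+\varrho(z)$, $b=\operatorname{dist}(z,\supp\varrho)$, and discarding $\eta\ge0$ from the denominator, gives $\|\bbm\|_\infty\lesssim\bigl(\varrho(z)+\operatorname{dist}(z,\supp\varrho)\bigr)^{-1}$, the first claim.

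\textbf{The two-sided bound on $\im\bbm$.} The upper bound is immediate from the fundamental identity: $\im m_i=|m_i|^2(\eta+(S\,\im\bbm)_i)\le \|\bbm\|_\infty^2(\eta+C\varrho(z))$, so it suffices to absorb the $\eta$ term, i.e. to show $\eta\lesssim(1+|z|^2)\varrho(z)$. This follows from the averaged identity $\varrho(z)\asymp\langle|m|^2\rangle(\eta+\varrho(z))\ge\eta\,\langle|m|^2\rangle$ once $\langle|m|^2\rangle\gtrsim(1+|z|)^{-2}$; and such a lower bound is extracted from the equation itself, since $|m_i|^{-1}=|z+(S\bbm)_i|\le|z|+\|S\|_\infty\|\bbm\|_\infty$ and, after inserting the already proven $\|\bbm\|_\infty\le\operatorname{dist}(z,\supp\varrho)^{-1}\le1/\eta$, one gets $|m_i|\gtrsim(1+|z|)^{-1}$ on the relevant range of spectral parameters. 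Feeding the same lower bound on $|m_i|$ back into $\im m_i\asymp|m_i|^2(\eta+\varrho)\ge|m_i|^2\varrho$ produces $\varrho(z)\lesssim\im m_i$, completing the second display.

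\textbf{Main obstacle.} The genuinely delicate step is the lower bound $\varrho(z)\lesssim\im\bbm(z)$ — equivalently, ruling out that some coordinate $m_i$ is much smaller than the quadratic mean of $\bbm$, which can occur near gaps of $\varrho$ where an individual generating measure $\nu_i$ is nearly symmetric about $E$, so the crude bound $|m_i|\gtrsim(1+|z|)^{-1}$ is not robust. Making it honest requires the finer input that the supports and masses of the $\nu_i$ are all comparable to those of $\varrho$, a consequence of the two-sided bound \eqref{sbound}, proved by a positivity/maximum-principle argument applied to the fundamental identity and its boundary values as $\eta\downarrow0$. Everything else is bookkeeping with \eqref{vectordyson2} and \eqref{reps}; the quantitative content sits precisely in this comparability of the generating measures.
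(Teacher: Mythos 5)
Your overall strategy is the paper's: take the imaginary part of \eqref{vectordyson2}, use the two-sided bound \eqref{sbound} to replace $(S\im\bbm)_i$ by $\langle\im\bbm\rangle\sim\varrho$, and combine with the Stieltjes representation \eqref{reps} for the distance bound. The first two displayed claims of the theorem are correctly established this way. But there is a genuine gap at the pointwise lower bound $|m_i|\gtrsim(1+|z|)^{-1}$, which is the crux of the inequality $\varrho\lesssim\im\bbm$. You estimate $|m_i|^{-1}\le|z|+\|S\|_\infty\|\bbm\|_\infty$ and then insert $\|\bbm\|_\infty\le 1/\eta$; this gives $|m_i|^{-1}\le|z|+C/\eta$, which degenerates completely as $\eta\downarrow 0$ — precisely the regime the theorem is about. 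The phrase ``on the relevant range of spectral parameters'' cannot rescue this: no a priori $O(1)$ bound on $\|\bbm\|_\infty$ is available under \eqref{sbound} alone (it genuinely blows up near edges and cusps), so the $\ell^\infty\to\ell^\infty$ estimate of $S$ is the wrong tool here.

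The missing idea is to first extract the unconditional $\ell^2$ bound $\|\bbm\|_2\lesssim 1$ and then use the smoothing of $S$ from $\ell^1$ to $\ell^\infty$. Averaging your own inequality $\im m_i\ge c|m_i|^2\varrho$ over $i$ gives $\pi\varrho=\langle\im\bbm\rangle\ge c\|\bbm\|_2^2\,\varrho$, hence $\|\bbm\|_2^2\le\pi/c$ after dividing by $\varrho>0$. Then
\[
\frac{1}{|m_i|}\;=\;\big|z+(S\bbm)_i\big|\;\le\;|z|+\frac{C}{N}\sum_j|m_j|\;=\;|z|+C\|\bbm\|_1\;\le\;|z|+C\|\bbm\|_2\;\lesssim\;1+|z|,
\]
uniformly in $\eta$, and feeding $|m_i|\gtrsim(1+|z|)^{-1}$ back into $\im m_i\ge c|m_i|^2\varrho$ closes the argument. (You in fact write the averaged identity $\varrho\asymp\langle|\bbm|^2\rangle(\eta+\varrho)$ elsewhere, so the $\ell^2$ bound was within reach; you just never deploy it at the step where it is needed.) Relatedly, your closing paragraph misidentifies the obstacle: no comparability of the supports or masses of the individual $\nu_i$, and no maximum-principle argument at $\eta\downarrow 0$, is required — the lower bound on $\im\bbm$ is a two-line consequence of the $\ell^2$ bound plus Cauchy--Schwarz, exactly as above.
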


The second theorem additionally assumes \eqref{holder}, but the result is much more precise, in particular 
a complete analysis of singularities is possible.

\begin{theorem}\label{thm:cusp}[Theorem 2.6 in \cite{AEK1short}]
Suppose that $S$ satisfies \eqref{sbound}
 and it is H\"older continuous  \eqref{holder}
[or piecewise H\"older continuous \eqref{piece holder}]. Then we have the following:

\begin{itemize}
\item[(i)]  The generating measures have Lebesgue density, $\nu_i(\rd \tau)=\nu_i(\tau)\rd \tau$
and the {\bf generating densities} $\nu_i$ are uniformly 1/3-H\"older continuous, i.e.
\be\label{holdereq}
  \max_i \sup_{\tau\ne \tau'} \frac{|\nu_i(\tau)-\nu_i(\tau')|}{|\tau-\tau'|^{1/3}} \le C'.
\ee

\item[(ii)] The set on which $\nu_i$ is positive is independent of $i$:
\[
         \fS:= \{ \tau \in \R\; : \; \nu_i(\tau)>0 \}
 \]
 and it is a union of finitely many open intervals. If $S$ is H\"older continuous in the sense of \eqref{holder}, then $\fS$ consist of a single interval.
 
\item[(iii)] The restriction of $\bnu(\tau)$ to $\R\setminus\partial \fS$ is analytic in $\tau$ (as a vector-valued function).
 
 \item[(iv)]  At the (finitely many) points $\tau_0\in \partial \fS$ the generating density has one of the following two behaviors:
 \begin{itemize}
 \item [CUSP:] If $\tau_0$ is at the intersection of the closure of two connected components of $\fS$, then $\bnu$ has a cubic root singularity, i.e.
 \be\label{cusp}
   \nu_i(\tau_0+\om) = c_i |\om|^{1/3} + O(|\om|^{2/3})
\ee
with some positive constants $c_i$. 
 \item [EDGE:] If $\tau_0$ is not a cusp, then it is the right or left endpoint of a connected component of $\fS$ and $\bnu$ has
 a square root singularity at $\tau_0$:
 \be\label{edge}
      \nu_i(\tau_0\pm\om) = c_i \om^{1/2}+ O(\om), \qquad \om\ge 0,
\ee
with some positive constants $c_i$.
 \end{itemize}
 
 \end{itemize}
 The positive constant $ C'$ in \eqref{holdereq}  depends only on the constants $c$ and $C$ in the conditions \eqref{sbound} and \eqref{holder} [or  \eqref{piece holder}], 
 in particular it is independent of $N$. The constants $c_i$ in \eqref{cusp} and \eqref{edge} are also uniformly 
 bounded from above and below, i.e., $c'' \le c_i \le C''$, with some positive constants $c''$ and  $C''$ that, in addition to $c$ and $C$, may  also depend
 on the distance between the connected components of the generating density.   
\end{theorem}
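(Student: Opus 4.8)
The plan is to analyze \eqref{vectordyson2} through its imaginary part and an associated symmetric, entrywise nonnegative operator. Writing $\bu := \im\bbm$ and taking imaginary parts of $-1/m_i = z + (S\bbm)_i$ yields the pointwise identity $u_i = |m_i|^2\big(\eta + (S\bu)_i\big)$, which rearranges to $(1 - F)\bw = \eta\,|\bbm|$ with $\bw := \bu/|\bbm|$ and the \emph{saturated self-energy operator} $F := \diag(|\bbm|)\,S\,\diag(|\bbm|)$. Since $s_{ij} \geq c/N > 0$ by \eqref{sbound}, the matrix $F$ is symmetric, irreducible and has nonnegative entries, so its spectral radius $r(F) = \|F\|$ is its Perron--Frobenius eigenvalue, attained by a simple strictly positive eigenvector. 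First I would collect the a priori input: by Theorem~\ref{thm:bulkbound} the norm $\|\bbm(z)\|_\infty$ is bounded on compact sets, $|m_i(z)| \gtrsim 1$ unless $z$ is deep outside $\supp\varrho$, and, comparing the identity above with $\langle\bu\rangle = \pi\varrho$, one obtains the two-sided bound $\eta/\langle\bu\rangle \sim 1 - r(F)$, so that $r(F) < 1$ strictly wherever $\varrho(z) > 0$.

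With these bounds, part (iii) and the openness in part (ii) follow from analytic continuation. Where $r(F) < 1$ the stability operator $1 - \bbm^2 S$ of \eqref{vectors} is invertible: conjugating by $\diag(m_i/|m_i|)$ reduces its invertibility to the lower bound $\langle \f{g},(1-F)\f{g}\rangle \geq (1 - r(F))\|\f{g}\|^2$, which controls the phase-twisted operator. Hence the $\bbm$-derivative of $\Phi(\bbm,z) := -1/\bbm - z - S\bbm$, namely $\diag(1/m_i^2) - S$, is invertible there, and the implicit function theorem extends $\bbm$ analytically from $\Cp$ across every real point with $\varrho > 0$; thus $\bnu$ is real-analytic on $\R\setminus\partial\fS$ and $\{\varrho > 0\}$ is open. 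That this set is independent of $i$ follows from the pointwise inequality $u_i \geq |m_i|^2 s_{ii_0} u_{i_0}$ with $s_{ii_0} \geq c/N$: positivity of one generating density at a point forces positivity of all of them there.

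The Hölder regularity \eqref{holder} (or \eqref{piece holder}) enters in (i) and (iv). For (i) I would first deduce from it that $\varrho$ and each $\nu_i$ are uniformly bounded, then bootstrap the regularity of $z\mapsto\bbm(z)$ down to $\eta\to 0$ to reach \eqref{holdereq}; the exponent $1/3$ is forced by the worst local profile, the cusp. For (iv), at $\tau_0\in\partial\fS$ the boundary value $\bbm_0 := \lim_{\eta\to 0}\bbm(\tau_0 + i\eta)$ is real (the density vanishes) and $r(F(\tau_0)) = 1$ with simple Perron--Frobenius eigenvector $\f{f}$. Expanding \eqref{vectordyson2} around $(\bbm_0,\tau_0)$ and eliminating the directions transverse to $\f{f}$ produces a scalar cubic $\Psi^3 + \pi_2\Psi^2 + \pi_1\Psi + \pi_0 = 0$ with $\pi_0 \propto \tau_0 - z$; a second-order perturbation of the Perron--Frobenius eigenvalue identifies $\pi_1,\pi_2$ and shows $\pi_1\geq 0$, with $\pi_1 = 0$ exactly when $\tau_0$ is squeezed between two components of $\fS$, giving the cube-root singularity \eqref{cusp}, while $\pi_1 > 0$ gives the square-root singularity \eqref{edge}; the constants $c_i$ are read off from the expansion. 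Finiteness of $\partial\fS$ follows from quantitative lower bounds (depending only on $c,C$) on the lengths of the bands and gaps, and under the genuine Hölder hypothesis \eqref{holder} the continuity of the variance profile prevents the spectrum from splitting, so $\fS$ is a single interval.

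The main obstacle is the pair of sharp statements (i) and (iv): both demand quantitative control of $1 - \bbm^2 S$ precisely where it degenerates, at $\partial\fS$. The degeneracy is mild — a single eigenvalue of $F$ touches $1$, which is what makes the cubic scalar reduction possible — but converting this into uniform-in-$N$ moduli of continuity and into a clean edge/cusp dichotomy requires careful bookkeeping of the nonlinear remainder terms, and this is the technical heart of the argument.
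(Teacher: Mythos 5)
Your overall architecture matches the paper's: the imaginary-part identity $\im\bbm/|\bbm| = \eta|\bbm| + F[\im\bbm/|\bbm|]$ with the saturated self-energy operator $F=|\bbm|S|\bbm|\,\cdot$, Perron--Frobenius for $F$, invertibility of the stability operator $1-\bbm^2S$, the ODE $\partial_z\bbm=(1-\bbm^2S)^{-1}\bbm^2$ for regularity and analytic continuation, and a cubic reduction along the Perron--Frobenius direction at $\partial\fS$ (the last part is deferred by the paper to \cite{AEK1short, AjaErdKru2015}, so I judge it only as a sketch). The parts the paper actually proves are (i) and (iii), and there your argument has a genuine gap at its quantitative core.

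The gap is in how you invert the stability operator. Your own identity $\eta/\langle\im\bbm\rangle\sim 1-\|F\|_2$ (this is \eqref{Fnorm}) shows that $1-\|F\|_2\sim\eta/\varrho\to 0$ as $\eta\to 0$ everywhere on $\supp\varrho$. Consequently, deducing invertibility of $e^{-2i\bvarphi}-F$ from ``$r(F)<1$'' (whether via a Neumann series or via the quadratic form $\langle \f{g},(1-F)\f{g}\rangle\ge(1-r(F))\|\f{g}\|^2$, which in any case does not bound $\|(e^{-2i\bvarphi}-F)\f{g}\|$ from below when the phases $\varphi_j$ vary with $j$) only yields $\|(1-\bbm^2S)^{-1}\|\lesssim (1-\|F\|_2)^{-1}\sim\varrho/\eta$. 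This blows up as $\eta\to 0$ and is useless both for the implicit-function-theorem continuation across $\{\varrho>0\}$ in (iii) and for the uniform $1/3$-H\"older bound in (i). The bound you actually need is $\|(1-\bbm^2S)^{-1}\|_\infty\lesssim\varrho^{-2}$, uniformly as $\eta\to0$, and it does not come from $\|F\|_2<1$: it requires (a) a spectral gap of $F$ below its top eigenvalue, of order one by \eqref{sbound} and Lemma~\ref{lm:gap}, and (b) the phase estimate $|1-\|F\|_2\langle\bbf,e^{-2i\bvarphi}\bbf^2/1\rangle|\ge 2\langle(\im\bbm)^2\bbf^2/|\bbm|^2\rangle\gtrsim\langle\im\bbm\rangle^2$, combined through Lemma~\ref{lm:UT}. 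Your proposal never uses the gap or the phases constructively in the bulk analysis, so the mechanism that converts ``$\eta/\varrho$'' into ``$\varrho^2$'' is absent. Relatedly, your route to \eqref{holdereq} (``bootstrap the regularity down to $\eta\to0$; the exponent $1/3$ is forced by the cusp'') skips the actual step: from $\|(1-\bbm^2S)^{-1}\|_\infty\lesssim\varrho^{-2}$ and the comparability $\im m_i\sim\varrho$ (which is where the H\"older continuity of $S$ enters, via Theorem~\ref{thm:unbound}) one gets $|\partial_z\im\bbm|\lesssim(\im\bbm)^{-2}$, i.e.\ $(\im\bbm)^3$ is Lipschitz up to the real axis; the exponent $1/3$ is a global consequence of this differential inequality, not something read off from the local cusp profile.
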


Some of these statements will be proved in Section~\ref{sec:vector}. 
We now illustrate this  theorem by a few pictures.  The first picture indicates a nontrivial $S$-profile (different shades indicate
different values in the matrix) and the corresponding self-consistent density of states.

\begin{center}
\hbox{\null}
\includegraphics[width=0.5 \textwidth] {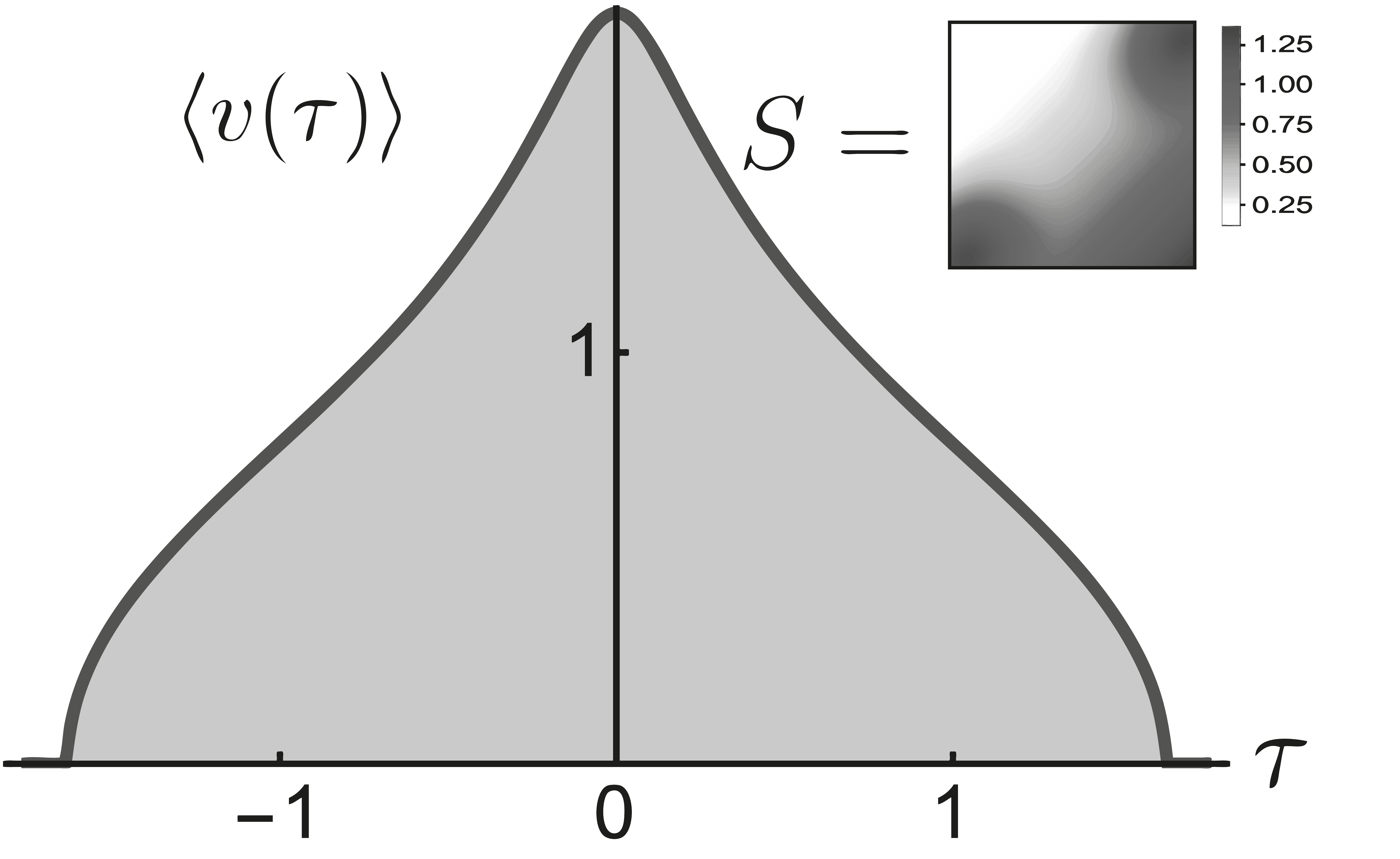}
\end{center}
In particular, we see that in general the density of states is not the semicircle if $\sum_{j} s_{ij} \neq\mbox{const}$.

The next pictures show how the support of the self-consistent density of states splits via cusps as the value of $s_{ij}$ slowly changes.
Each matrix below the pictures is the corresponding variance matrix $S$ represented as a $4\times 4$ block matrix with $(N/4)\times (N/4)$ blocks 
with constant entries.  Notice that the corresponding continuous  profile function $S(x,y)$ is only  piecewise H\"older (in fact, piecewise constant).
As the parameter in the diagonal blocks increases, a small gap closes at a cusp, then it develops a small 
local minimum.

\medskip
\begin{center}
\setlength\figurewidth{.3\textwidth}
\setlength\figureheight{.12\textheight}
%
%
.
\ee

\medskip

Cusps and splitting of the support are possible only if there is a discontinuity in the profile of $S$. If the above profile is smoothed
out (indicated by the narrow shaded region in the schematic picture of the matrix below), then the support  becomes a single interval
with a specific smoothed out ``almost cusp''.
\begin{center}
\includegraphics[width=0.2250 \textwidth] {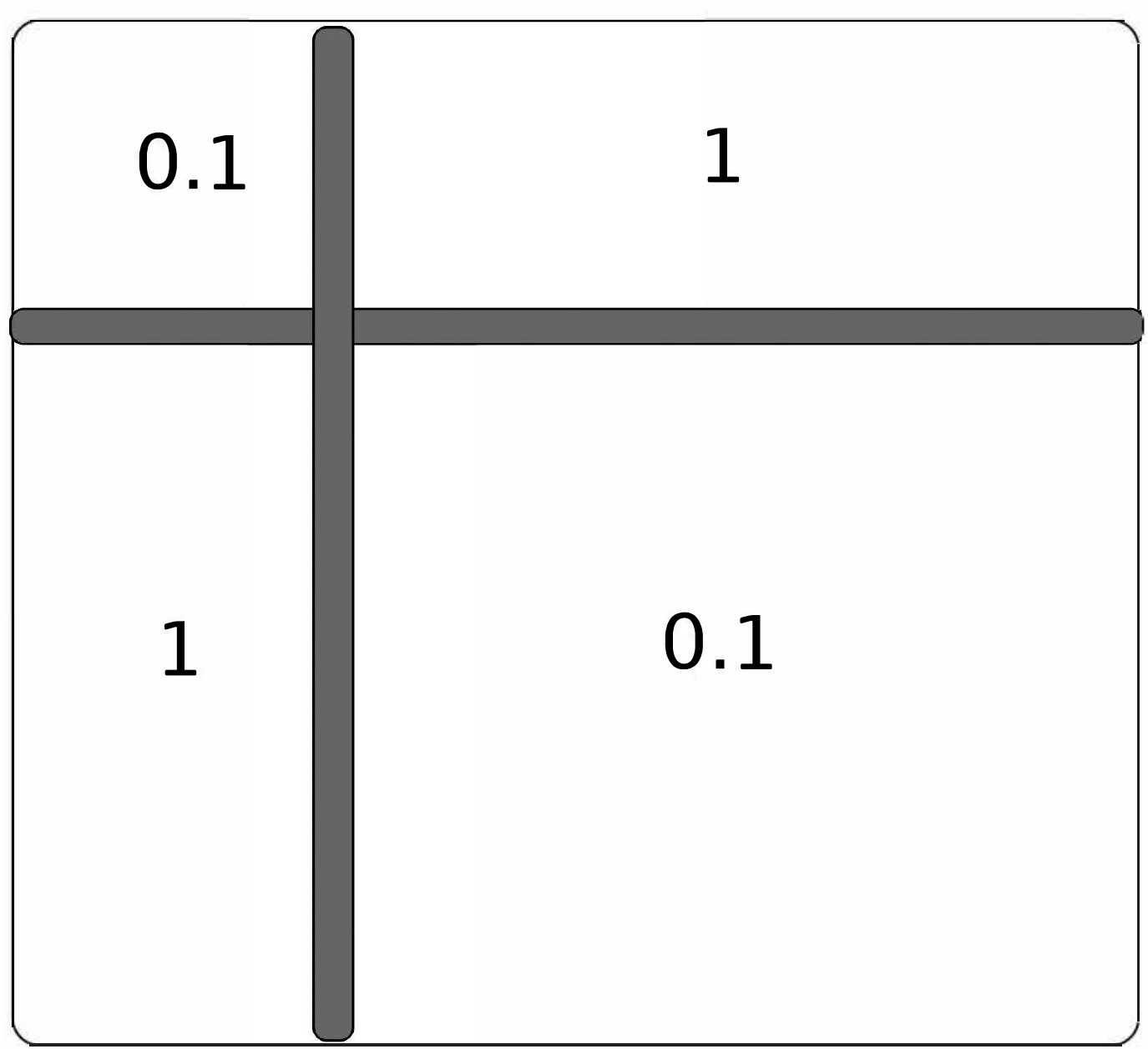}
\end{center}

Finally we show the universal shape of the singularities and near singularities in the self-consistent density of states.
The first two pictures are the edges and cusps, below them the approximate form of the density near
the singularity in terms of the parameter $\om =\tau-\tau_0$, compare with \eqref{edge} and \eqref{cusp}:

\begin{center}
	\medskip
\includegraphics[width=0.4 \textwidth] {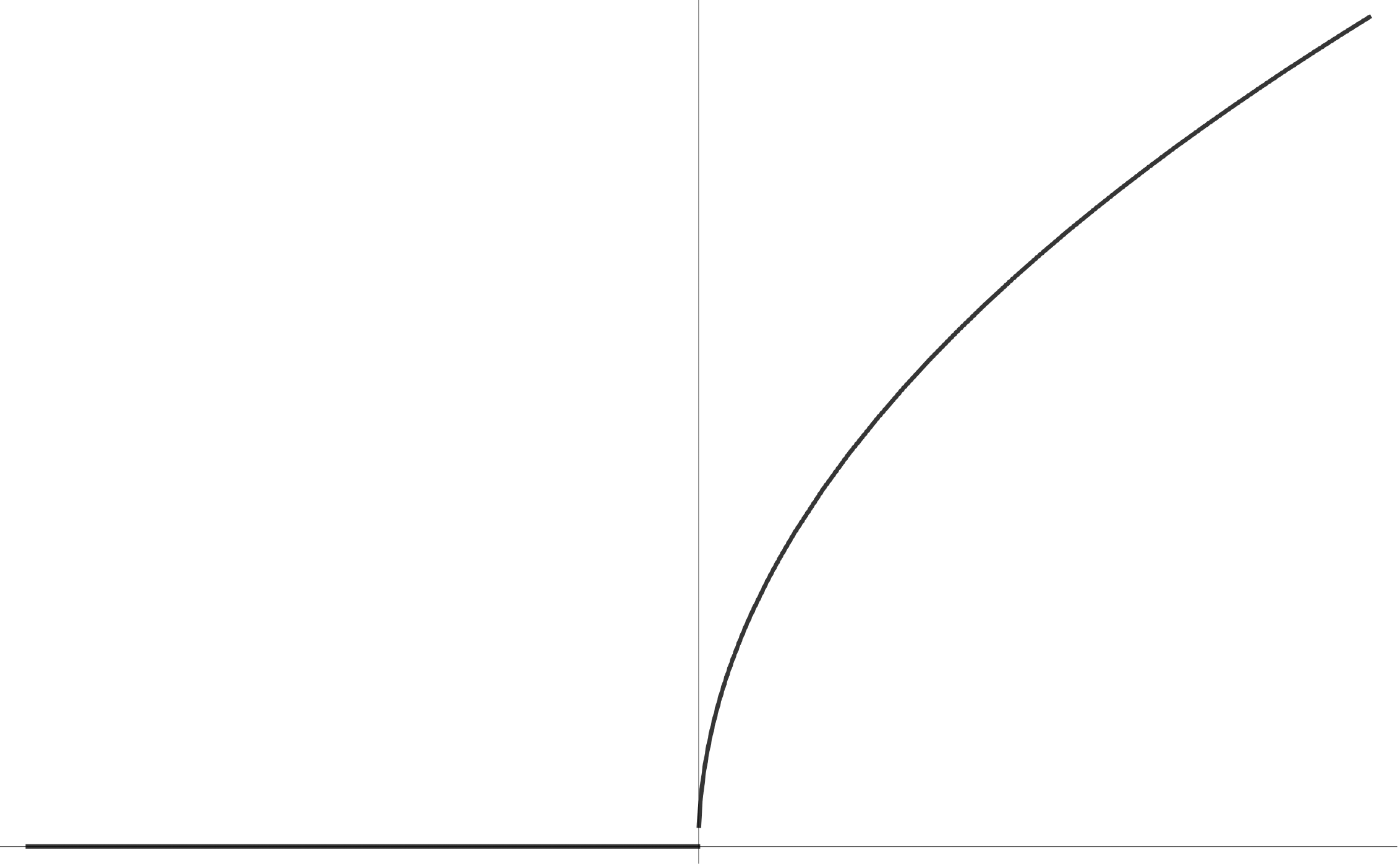} \hspace{2cm}
\includegraphics[width=0.4 \textwidth] {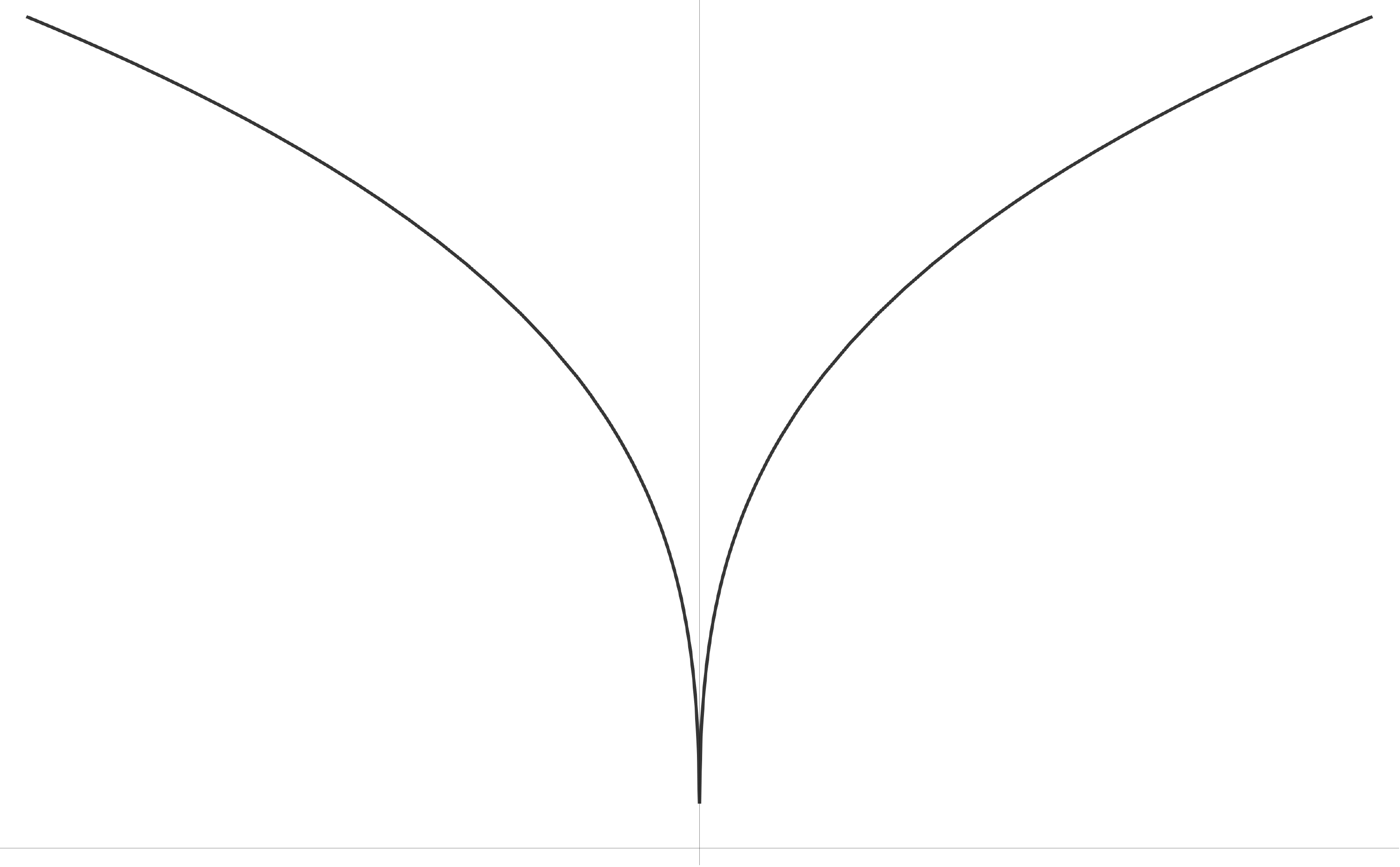}

\centerline{{  Edge, \nc $\sqrt{\omega}$ singularity  \hspace{2.5cm}   Cusp, \nc $|\omega|^{1/3}$ singularity}}
	\medskip
\end{center}

The next two pictures show the asymptotic form of the density right before and after the cusp formation.
The relevant parameter $t$ is an appropriate rescaling of $\omega$; the size of the gap (after the cusp formation)
 and the minimum value of the density (before the cusp formation) set the relevant length scales on which 
 the universal shape emerges:

\begin{center}
\includegraphics[width=0.35 \textwidth] {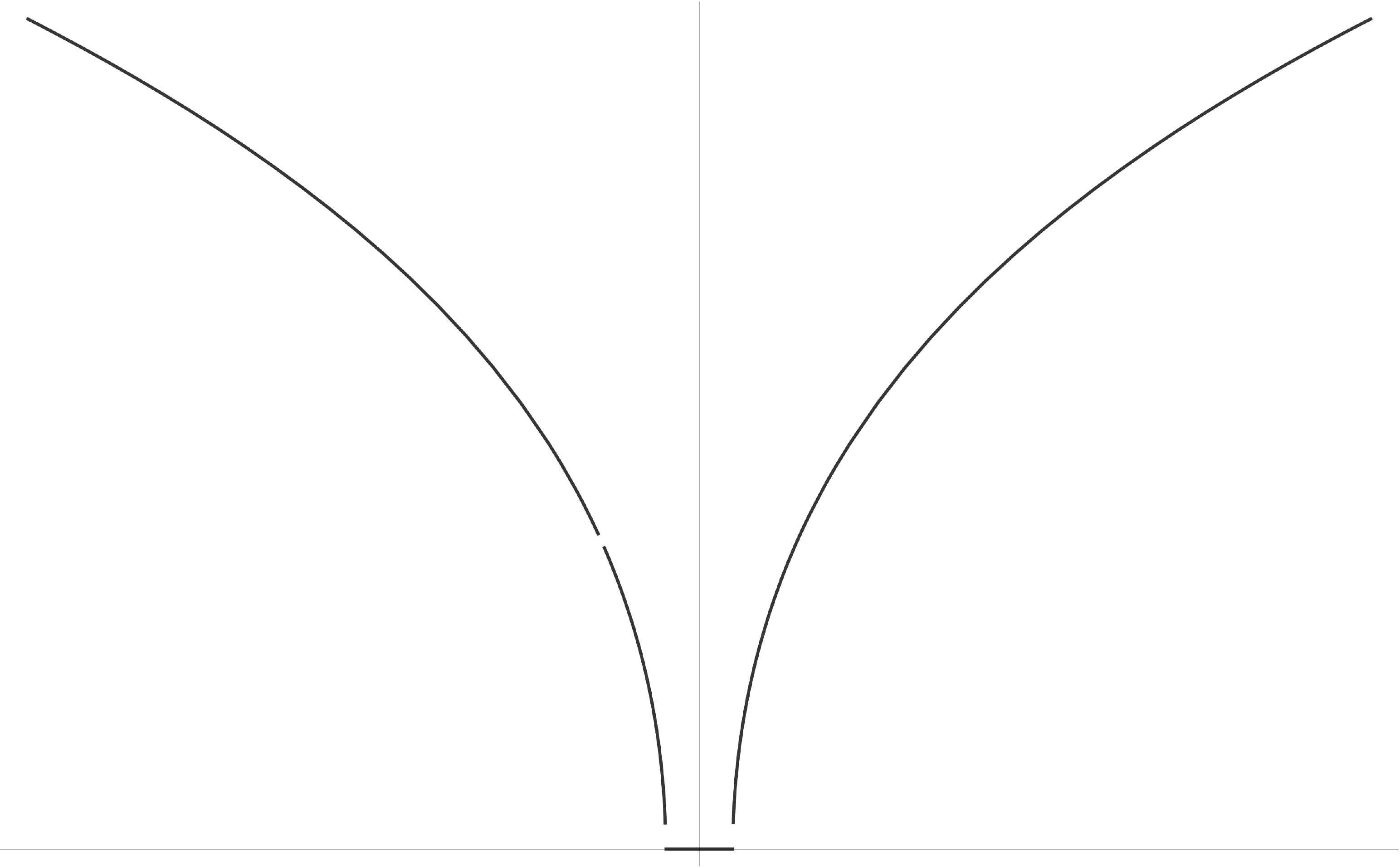} \hspace{2cm}
\includegraphics[width=0.35 \textwidth] {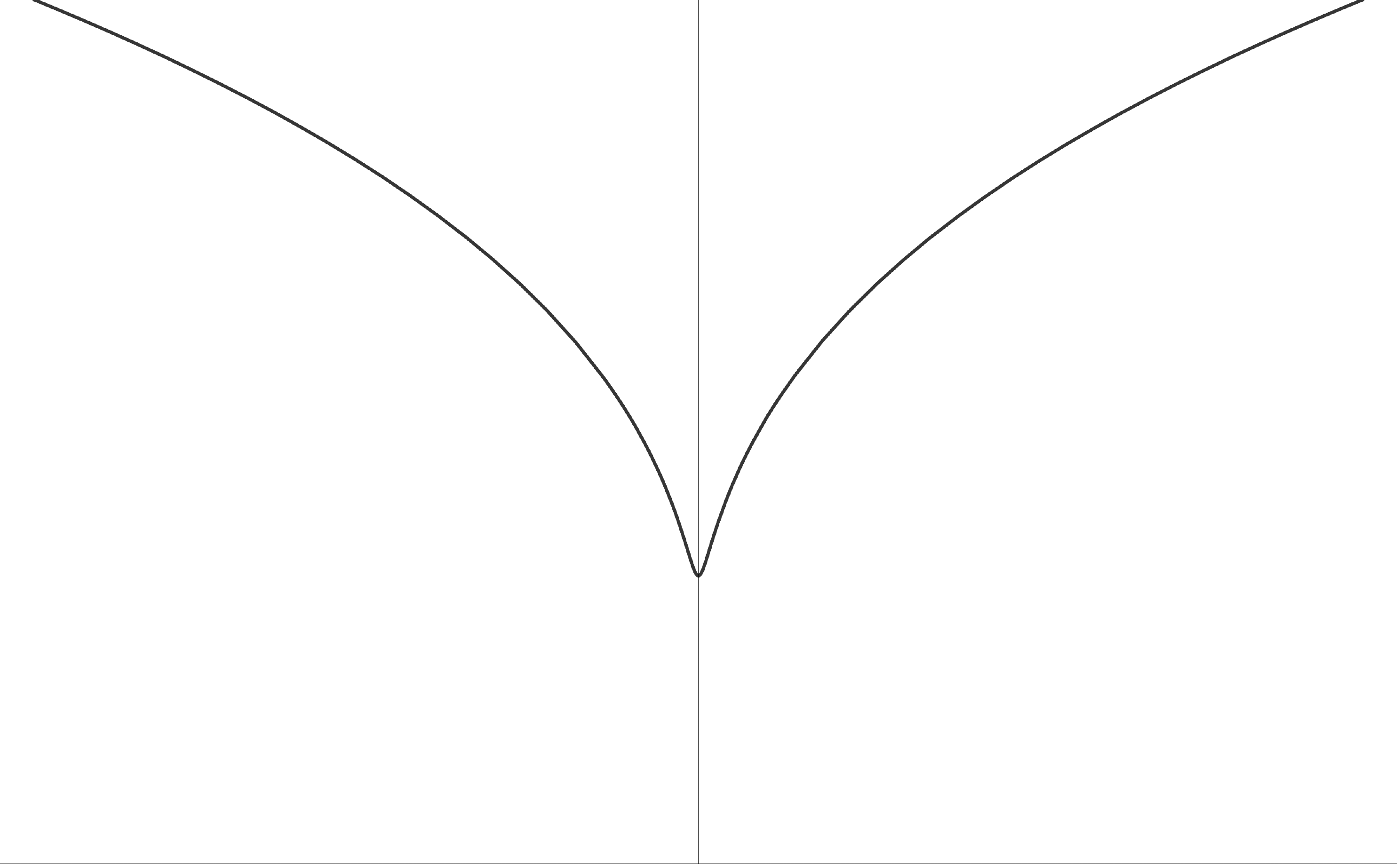}
\centerline{{ \hspace{.3cm} Small-gap \nc \hspace{4cm} \ Smoothed cusp \nc}}\\[6pt]
\centerline{$\frac{(2+t)t}{1+ (1+t +\sqrt{(2+t)t})^{2/3} + (1+t -\sqrt{(2+t)t})^{2/3}}$ \hspace{1cm}
$\frac{\sqrt{1+t^2}}{(\sqrt{1+t^2}+t)^{2/3} + (\sqrt{1+t^2}-t)^{2/3}-1}-1$}
\centerline{\hspace{1.5cm}$t := \frac{|\omega|}{\mbox{\small gap}}$, \hspace{4cm} $t:= \frac{|\omega|}{(\mbox{\small {minimum of $\varrho$} })^{3} }$}
\centerline{}
\end{center}

We formulated the vector Dyson equation in a discrete setup for $N$ unknowns
but  it can be considered in a more abstract setup as follows. 
For a measurable space $\mathfrak{A}$ and a subset $\mathbb{D}\subseteq \C$ of the complex numbers, we denote by $\Bounded(\mathfrak{A},\mathbb{D})$ the space of bounded measurable functions on $\mathfrak{A}$ with values in $\mathbb{D}$. 
Let  $(\xSpace,\xMeasure(\rd x))$ be a measure space with bounded positive (non-zero) measure $\pi$. Suppose we are given a real valued  $a\in \Bounded(\xSpace, \R)$ and a non-negative, symmetric, $s_{xy}=s_{yx}$, function $s\in \Bounded(\xSpace^2, \R_0^+)$. Then we consider the  \emph{quadratic vector equation (QVE)},
\begin{equation}
\label{QVE}
-\frac{1}{m(z)}\,=\, z-a+Sm(z)  \,,\qquad  z \in \Cp\,,
\end{equation}
for a function $m:\Cp \to \Bounded(\xSpace, \Cp), \, z \mapsto m(z)$, where $S:\Bounded(\xSpace, \C)\to \Bounded(\xSpace, \C)$ is the integral operator with kernel $s$,
\[
(Sw)_x\,:=\, \int s_{x y} \1w_y\1\xMeasure(\rd y)\,, \qquad x \in \xSpace\,,\; w \in \Bounded(\xSpace, \C)\,.
\]

We equip the space $\Bounded(\xSpace, \C)$ with its natural  supremum norm,
\[
\norm{w}\,:=\, \sup_{x \1\in\1\xSpace}\2|w_x|\,, \qquad w \in \Bounded(\xSpace,\C)\,.
 \]
With this norm $\Bounded(\xSpace, \C)$ is a Banach space. 
All results stated in Theorem~\ref{thm:cusp} are valid in this more general setup, for details, see
\cite{AjaErdKru2015}. The special case we discussed above corresponds to 
\[
   \xSpace: = \Big\{\frac{1}{N}, \frac{2}{N}, \ldots , \frac{N}{N}\Big\}, \qquad \xMeasure(\rd x) = \frac{1}{N}\sum_{i=1}^N \delta\big(x-\frac{i}{N}\big).
\]
The scaling here differs from \eqref{sit} by a factor of $N$, since now $s_{xy} = S\big( x, y\big)$, $x, y \in \xSpace$
in which case there is an infinite dimensional limiting equation with $\xSpace=[0,1]$
and $\xMeasure(\rd x)$ being the Lebesgue measure. If  $s_{ij}$ comes from a continuous profile, \eqref{sit}, then
in the $N\to\infty$ limit, the vector Dyson equation becomes
\[
   -\frac{1}{m_x(z)} = z+ \int_0^1 S(x,y) m_y(z) \rd y, \qquad x\in  [0,1],\quad z\in \Cp.
\]

\subsubsection{Matrix Dyson equation}

The matrix version of the Dyson equation naturally arises in the study of correlated random matrices, see Section~\ref{sec:cum}
and Section~\ref{sec:corr}.  
 It takes the form
\be\label{mde4}
      I + (z+\cS[M])M=0, \qquad \im M > 0, \quad \im z>0,   \qquad (MDE)
\ee
where we assume that $\cS: \C^{N\times N}\to \C^{N\times N}$ is a linear operator that is
\begin{itemize}
\item[1)] symmetric with respect to the Hilbert-Schmidt scalar product. In other words,  $\tr R^* \cS[T]=\tr \cS[R]^* T$
for any matrices $R, T\in \C^{N\times N}$;
\item[2)] positivity preserving, i.e. $\cS[R]\ge 0$ for any $R\ge 0$.
\end{itemize}
Somewhat informally we will refer to linear maps on the space of matrices as {\bf superoperators} to distinguish them
from usual matrices.

Originally, $\cS$ is defined in \eqref{Sdef} as a covariance operator of a hermitian random matrix $H$,
but it turns out that \eqref{mde4} can be fully analyzed solely under these two conditions 1)--2).
It is straightforward  to check that $\cS$ defined in \eqref{Sdef} satisfies the conditions 1) and 2).

Similarly to the vector Dyson equation \eqref{vectordyson5}  one may add an external source  $A=A^*\in \C^{N\times N}$
and
consider
\be\label{mde7}
    I + (z-A+\cS[M])M=0, \qquad \im M> 0
\ee
but these notes will be restricted to $A=0$. We remark that instead of finite dimensional matrices, 
a natural extension of \eqref{mde7} can be considered on a general  von Neumann algebra, see \cite{AEK2018}
for an extensive study.

The matrix Dyson equation \eqref{mde7} is a generalization of the vector Dyson equation  \eqref{vectordyson5}.
Indeed, if $\mbox{diag}(\bv)$ denotes the diagonal matrix  with the components of the vector $\bv$ in the
diagonal, then \eqref{mde7} reduces to \eqref{vectordyson5} 
with the identification $A=\mbox{diag}(\ba)$, $M= \mbox{diag}(\bbm)$ and  $\cS (\mbox{diag}(\bbm)) = \mbox{diag}(S\bbm)$.
The solution $\bbm$ to the vector Dyson equation was controlled in the maximum norm $\|\bbm\|_\infty$; for the matrix Dyson equation 
the analogous natural norm is the Euclidean matrix (or operator) norm, $\| M\|_2$, given by
\[ 
    \| M\|_2 = \sup\{ \| M\bx\|_2 \; : \; \bx\in\C^N, \; \|\bx\|_2=1\}.
\]
Clearly, for diagonal matrices we have
$\| \mbox{diag}(\bbm)\|_2 = \|\bbm\|_\infty$.
Correspondingly, the natural norm on the superoperator $\cS$
is the norm $\|\cS\|_2: =\| \cS\|_{2\to 2}$ induced by the Euclidean norm on matrices, i.e.
\[
  \| \cS\|_2 := \sup\{ \| \cS[R]\|_2 \; : \; R\in \C^{N\times N}, \; \| R\|_2=1\}.
\]

Similarly to  Theorem~\ref{exuniqvector}, we  have an existence and uniqueness result for the solution (see \cite{Helton2007-OSE})
 moreover,  we have a Stieltjes transform representation (Proposition 2.1 of \cite{AEK5}):

\begin{theorem}\label{mdeexuniq}
For any $z\in\Cp$, the MDE \eqref{mde4} with the side condition $\im M>0$ has a unique solution $M=M(z)$
that is analytic in the upper half plane.  The solution admits a Stieltjes transform representation
\be\label{rep matrix}
  M(z) = \int_\R \frac{V(\rd\tau)}{\tau-z}
\ee
where $V(\rd\tau)$ is a positive semidefinite matrix valued measure on $\R$ with normalization $V(\R)=I$.
In particular
\be\label{Mup}
   \| M(z)\|_2\le \frac{1}{\im z}.
\ee
The support of this measure lies in $[-2\|\cS\|_2^{1/2}, 2\| \cS\|_2^{1/2}]$. 
\end{theorem}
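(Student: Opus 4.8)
Throughout, abbreviate $\Phi_z(M):=-(z+\cS[M])^{-1}$, so that \eqref{mde4} becomes the fixed point equation $M=\Phi_z(M)$, and recall the elementary fact that $\im A\ge\eta I>0$ forces $A$ to be invertible with $\|A^{-1}\|_2\le\eta^{-1}$. The plan is to handle the four assertions in the order: (a) an a priori norm bound, valid for every solution; (b) existence and uniqueness of the solution at each fixed $z$; (c) analyticity in $z$; (d) the Stieltjes representation and the location of $\supp V$.

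\emph{(a) A priori bound.} If $M$ solves \eqref{mde4} with $\im M>0$, then from $-M^{-1}=z+\cS[M]$ and $\im(M^{-1})=-M^{-*}(\im M)M^{-1}$ one gets
\[
M^{-*}(\im M)M^{-1}\;=\;\im z\cdot I+\cS[\im M]\;\ge\;\im z\cdot I ,
\]
using positivity preservation of $\cS$. Hence $\im(-M^{-1})\ge\im z\cdot I$, and the elementary fact above gives $\|M\|_2=\|(-M^{-1})^{-1}\|_2\le1/\im z$, which is \eqref{Mup}. The same computation, applied to an arbitrary $M_0$ with $\im M_0>0$ rather than a solution, shows that $z+\cS[M_0]$ is invertible and that $\Phi_z$ maps $\{\im M>0\}$ into $\{\,\im M\ge\beta(z)I,\ \|M\|_2\le1/\im z\,\}$ for an explicit $\beta(z)>0$ (depending only on $z$ and $\|\cS\|_2$).

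\emph{(b)--(c) Existence, uniqueness, analyticity.} Fix $z\in\Cp$ and consider the bounded open set $\mathcal D_z:=\{M:\im M>0,\ \|M\|_2<1/\im z+1\}$ in the finite-dimensional Banach space $(\C^{N\times N},\|\cdot\|_2)$. By (a), $\Phi_z$ is holomorphic on $\mathcal D_z$ and $\Phi_z(\mathcal D_z)$ lies in the compact subset $\{\im M\ge\beta(z)I,\ \|M\|_2\le1/\im z\}$, which is at positive distance from $\partial\mathcal D_z$. The Earle--Hamilton fixed point theorem therefore yields a \emph{unique} fixed point $M(z)\in\mathcal D_z$ and shows that $\Phi_z$ is a strict contraction in the Carathéodory metric of $\mathcal D_z$, with contraction constant bounded locally uniformly in $z$; by (a) the fixed point satisfies $\im M(z)>0$. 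Uniqueness among \emph{all} matrices with $\im M>0$ follows since any such solution lies in $\mathcal D_z$ by \eqref{Mup}. For analyticity, run the Picard iteration $M_{n+1}(z):=\Phi_z(M_n(z))$ from a constant initial guess: each $M_n$ is holomorphic in $z$ (as $\Phi_z(M)$ is jointly holomorphic in $(z,M)$), the convergence $M_n\to M$ is locally uniform on $\Cp$ by the uniform contraction bound, and Weierstrass' theorem gives that $z\mapsto M(z)$ is analytic.

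\emph{(d) Stieltjes representation and support.} From the identity $-(z+A)^{-1}+z^{-1}=A\,[z(z+A)]^{-1}$ with $A=\cS[M(z)]$ and $\|A\|_2\le\|\cS\|_2/\im z$ we obtain $\|M(z)+z^{-1}I\|_2\le\|\cS\|_2\,/\,(|z|(\im z)^2)$, so in particular $-i\eta\,M(i\eta)\to I$ as $\eta\to\infty$. Since $M:\Cp\to\{\im M>0\}$ is analytic with $\|M(z)\|_2\le1/\im z$, the matrix-valued Nevanlinna representation theorem applies and gives \eqref{rep matrix} with a positive semidefinite matrix-valued measure $V$ (positivity of $V$ being built into the representation for functions with $\im M\ge0$), and $V(\R)=\lim_{\eta\to\infty}(-i\eta M(i\eta))=I$. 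For the support, fix real $E$ with $|E|>2\|\cS\|_2^{1/2}$ and set $r_-(E):=2\,/\,(|E|+\sqrt{E^2-4\|\cS\|_2})$. On the closed ball $\{M=M^*:\|M\|_2\le r_-(E)\}$, the operator $E+\cS[M]$ is self-adjoint with spectrum avoiding $0$, so $\Phi_E$ maps this ball into itself (one checks $\|(E+\cS[M])^{-1}\|_2\le r_-(E)$) and is a strict contraction there (since $\|\cS\|_2\,r_-(E)^2<1$); its unique self-adjoint fixed point $M(E)$ solves the MDE at $z=E$. Because $\|M(E)\|_2^2\,\|\cS\|_2<1$, the stability operator $I-\cC_{M(E)}\cS$ is invertible (Neumann series), so by the implicit function theorem $M$ extends to an analytic solution on a complex neighbourhood of $E$; a short computation with the imaginary-part identity of part (a) shows this extension has $\im M>0$ on the upper side, hence by the uniqueness of (b) it coincides with $M(z)$ there. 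Thus $M$ extends analytically across $\R\setminus[-2\|\cS\|_2^{1/2},2\|\cS\|_2^{1/2}]$ with vanishing imaginary part, and the (entrywise) inverse Stieltjes transform of Lemma~\ref{lm:st} gives $\supp V\subseteq[-2\|\cS\|_2^{1/2},2\|\cS\|_2^{1/2}]$.

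\emph{Main obstacle.} The crux is part (b)--(c): the naive Lipschitz estimate $\|\Phi_z(M)-\Phi_z(M')\|_2\le(\|\cS\|_2/(\im z)^2)\|M-M'\|_2$ is a contraction only when $(\im z)^2>\|\cS\|_2$, so obtaining uniqueness and analyticity \emph{down to the real axis} requires the geometric input that $\Phi_z$ is a holomorphic self-map of a bounded domain whose image stays away from the boundary — equivalently, constructing by hand the correct invariant metric on $\{\im M>0\}$, as carried out in \cite{Helton2007-OSE, AjaErdKru2015}. Once this is available, the a priori bound of (a) drives everything else and the remaining steps are routine.
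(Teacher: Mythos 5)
Your proposal is correct and follows essentially the same route the paper takes by reference: existence, uniqueness and analyticity via a holomorphic self-map of a bounded matrix domain whose image stays strictly inside, hence contracting the Carath\'eodory metric --- the Earle--Hamilton theorem is exactly the abstract form of the argument in \cite{Helton2007-OSE} and of the vector-case contraction \eqref{Dphi} --- and the representation \eqref{rep matrix} via the Nevanlinna theorem after checking $-i\eta M(i\eta)\to I$, which is the content of Exercise~\ref{exist}. Two small remarks: your statement that $\Phi_z$ maps all of $\{\im M>0\}$ into $\{\im M\ge\beta(z)I\}$ is an overstatement (the lower bound on $\im\Phi_z(M_0)$ needs an upper bound on $\|M_0\|_2$, which you do have on $\mathcal D_z$, so the argument is unaffected), and for the support you build a self-adjoint solution at real $|E|>2\|\cS\|_2^{1/2}$ and continue analytically, whereas the paper's vector-case exercise suggests a bootstrap bound $\|\bbm(z)\|_\infty<2/|z|$ combined with the imaginary part of the equation --- both routes are valid, yours giving the slightly stronger conclusion of analytic continuation across $\R\setminus[-2\|\cS\|_2^{1/2},2\|\cS\|_2^{1/2}]$.
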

The solution $M$ is called the {\bf self-consistent Green function} or {\bf self-consistent resolvent} since it will be
used as a computable deterministic approximation to the random Green function $G$.

From now on we assume the following {\it flatness} condition on $\cS$ that is the matrix analogue of the
 boundedness condition \eqref{sbound}:


{\bf Flatness condition:} The operator $\cS$ is called {\bf flat} if there exists two positive constants, $c, C$, independent of $N$, such that
\be\label{flatness}
    c \langle R \rangle \le \cS[R] \le C\langle R \rangle, \qquad \mbox{where} \quad \langle R \rangle : = \frac{1}{N}\tr R
\ee
holds for any positive definite matrix $R\ge 0$.

Under this condition we have the following quantitative results on the solution $M$ (Proposition 2.2 and  Proposition 4.2 of \cite{AEK5}):


\begin{theorem}
Assume that $\cS$ is flat, then the holomorphic function $\langle M\rangle : \Cp\to \Cp$ is the Stieltjes transform of a H\"older continuous 
probability density $\varrho$  w.r.t. the Lebesgue measure:
\[
  \langle V(\rd \tau)\rangle =\varrho(\tau)\rd \tau
\]
i.e.
\be\label{matrixholder}
   |\varrho(\tau_1)-\varrho(\tau_2)|\le C|\tau_1-\tau_2|^\e
\ee
with some H\"older regularity exponent $\e$, independent of $N$ ($\e=1/100$ would do). 
The density $\varrho$ is called the {\bf self-consistent density of states.} Furthermore, $\varrho$ is real analytic on the 
open set $\fS: =\{ \tau\in \R\; ; \;\varrho(\tau)>0\}$ which is called the {\bf self-consistent bulk spectrum}.
For the solution itself we also have
\be\label{Mupper}
  \| M(z)\|_2\le  \frac{C}{\varrho(z) + \mbox{dist}(z, \fS)}
  \ee
  and
  \[
   c\varrho(z) \le \im M(z)\le C\| M(z)\|^2_2 \varrho(z),
 \]     
  where $\varrho(z)$ is the harmonic extension of $\varrho(\tau)$ to the upper half plane.
  In particular, in the bulk regime of spectral parameters, 
  where $\varrho(\re z)\ge \delta$ for some fixed $\delta>0$, we see that  $M(z)$ is bounded and $\im M(z)$ is comparable
   (as a positive definite matrix)
  with $\varrho(z)$.
\end{theorem}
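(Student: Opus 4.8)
\emph{Proof strategy.} The plan is to read everything off from the self-consistent equation satisfied by $\im M$ together with the flatness bound \eqref{flatness}, and then to promote the resulting a priori bounds to continuity by a stability analysis of the MDE in the spectral parameter. I start from the output of Theorem~\ref{mdeexuniq}: $M(z)$ exists, is analytic on $\Cp$, and $M(z)=\int_\R V(\rd\tau)/(\tau-z)$ for a positive semidefinite matrix valued measure $V$ with $V(\R)=I$. Flatness gives $\|\cS[R]\|_2\le C\|R\|_2$ for every $R$ (split $R$ into Hermitian and anti-Hermitian, then positive and negative, parts), hence $\|\cS\|_2\le C$, so the support of $V$ lies in a compact set $K$ independent of $N$. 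Thus $\langle M(z)\rangle$ is the Stieltjes transform of the probability measure $\varrho_N:=\langle V\rangle$ supported in $K$, and it remains to prove: (a) $\varrho_N$ has a uniformly bounded, $\e$-Hölder density $\varrho$; (b) the stated pointwise bounds on $M$; (c) real analyticity of $\varrho$ on $\fS=\{\varrho>0\}$.

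\emph{Step 1: a priori bounds.} Rewriting \eqref{mde4} as $-M^{-1}=z+\cS[M]$, using that $\cS$ preserves Hermiticity and that $\im(M^{-1})=-M^{-1}(\im M)(M^{*})^{-1}$ when $\im M>0$, I obtain the \emph{balance equation}
\be\label{plan:balance}
  \im M(z)\;=\;M(z)\bigl(\eta\,I+\cS[\im M(z)]\bigr)M(z)^{*},\qquad \eta=\im z.
\ee
Writing $\rho(z):=\langle\im M(z)\rangle\ge0$, flatness sandwiches $c\rho\,I\le\cS[\im M]\le C\rho\,I$, so
\be\label{plan:sandwich}
  (\eta+c\rho)\,MM^{*}\;\le\;\im M\;\le\;(\eta+C\rho)\,MM^{*}.
\ee
Applying the left inequality to a top singular vector of $M^{*}$ (and using $\|\im M\|_2\le\|M\|_2$) yields the key estimate $\|M(z)\|_2\le(\eta+c\rho(z))^{-1}$; taking the normalized trace gives $\rho\le(\eta+C\rho)\langle MM^{*}\rangle\le(\eta+C\rho)\|M\|_2^{2}\le(\eta+C\rho)/(\eta+c\rho)^{2}$, from which a short case split ($\eta\ge1$, where $\rho\le\|M\|_2\le1/\eta\le1$; and $\eta<1$, where the inequality forces $\rho^{2}\lesssim1$) produces a universal bound $\rho(z)\le C_0$ for all $z\in\Cp$. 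For $|z|\le C$ compactness of $K$ also forces $\rho(z)\ge\eta/C$; combining this with $\|M^{-1}\|_2\le|z|+C\|M\|_2$ and $MM^{*}=\bigl((M^{-1})^{*}M^{-1}\bigr)^{-1}\ge\|M^{-1}\|_2^{-2}I$ in \eqref{plan:sandwich} delivers $c'\rho(z)\,I\le\im M(z)\le C(1+|z|^2)\,\|M(z)\|_2^{2}\,\rho(z)\,I$, i.e.\ the last display of the theorem (with the harmless $1+|z|^2$, trivial for bounded $z$). Finally, since $\im\langle M(E+i\eta)\rangle$ is, up to the factor $\pi$, the Poisson average at height $\eta$ of $\varrho_N$ and is $\le C_0$ uniformly, $\varrho_N$ is absolutely continuous with density $\varrho\le C_0/\pi$.

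\emph{Step 2: continuity, analyticity, and the refined norm bound.} To upgrade to Hölder continuity I would differentiate the MDE in $z$: subtracting the equations at $z$ and $z'$ and setting $\Delta:=M(z)-M(z')$ gives $M(z')^{-1}\Delta M(z)^{-1}-\cS[\Delta]=(z-z')I$, equivalently
\be\label{plan:diff}
  \bigl(\mathcal{I}-\cC_{z,z'}\bigr)[\Delta]\;=\;(z-z')\,M(z')M(z),\qquad \cC_{z,z'}[X]:=M(z')\,\cS[X]\,M(z),
\ee
with $\mathcal{I}$ the identity superoperator, so that $\|\Delta\|_2\le\|(\mathcal{I}-\cC_{z,z'})^{-1}\|_2\,|z-z'|\,\|M(z)\|_2\|M(z')\|_2$. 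Thus the modulus of continuity of $M$ is governed by the norm of the inverse of the stability superoperator $\mathcal{I}-\cC_{M,M}\cS$. On the bulk $\{\varrho(\re z)\ge\delta\}$ the bounds of Step~1 give $\|M\|_2\le C_\delta$ and $\im M\ge c_\delta I$, and a saturation argument built on \eqref{plan:balance} shows $\|(\mathcal{I}-\cC_{z,z})^{-1}\|_2\le C_\delta$; the implicit function theorem applied to the MDE then extends $M$ analytically across that part of $\R$, so $\varrho$ is real analytic on the open set $\fS$. Near $\partial\fS$ and the zero set of $\varrho$ the stability operator degrades, but one shows it degrades only polynomially in $\varrho(z)+\eta+\mathrm{dist}(z,\fS)$: there the MDE forces $\langle M\rangle$ — and, after a suitable scalar reduction, the whole spectrum of $M$ — to satisfy an approximately cubic equation whose branches have modulus of continuity at worst $|\cdot|^{1/3}$ (the mechanism behind the square-root edges and cubic-root cusps of Theorem~\ref{thm:cusp}). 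Feeding these bounds back into \eqref{plan:diff} and bootstrapping yields uniform $\e$-Hölder continuity of $M$ on $\overline{\Cp}\cap\{|z|\le C\}$ for a small universal $\e$ (e.g.\ $\e=1/100$), hence of $\varrho$; tracking $\re(-M^{-1})=E+\cS[\re M]$ when $E$ is far from the support then gives $\|M^{-1}\|_2\gtrsim\mathrm{dist}(z,\fS)$ in that regime, which combined with $\|M\|_2\le(\eta+c\rho)^{-1}$ produces the refined bound $\|M(z)\|_2\le C/(\varrho(z)+\mathrm{dist}(z,\fS))$.

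\emph{Main obstacle.} The only genuinely hard point is the quantitative invertibility of $\mathcal{I}-\cC_{z,z'}$ near the edges and cusps: identifying the exact rate at which the stability superoperator degenerates and showing it is controlled by $\varrho(z)+\eta+\mathrm{dist}(z,\fS)$ with $N$-independent constants. Everything else follows from \eqref{plan:balance}--\eqref{plan:sandwich} or from routine continuity and implicit-function arguments; the full treatment of the stability estimate is carried out in \cite{AEK5}.
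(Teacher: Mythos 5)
Your proposal is correct and follows essentially the same route as the paper (which itself quotes the result from \cite{AEK5} and develops the ingredients in its MDE sections): your Step 1 is exactly the imaginary-part/flatness sandwich $\im M = M^*(\eta+\cS[\im M])M$ behind Theorem~\ref{thmbulkmat}, and your Step 2 reduces H\"older continuity and bulk analyticity to a polynomial-in-$\varrho$ bound on the inverse stability operator $(\mathcal{I}-\cC_M\cS)^{-1}$, which is precisely Lemma~\ref{lm:stabM} proved via the saturated self-energy operator. The one step you flag as hard --- quantitative invertibility of the stability operator as $\varrho\to 0$ --- is indeed the part the notes defer to \cite{AEK5}, so your division of labor matches the paper's.
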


Notice that unlike in the analogous Theorem~\ref{thm:cusp} for the vector Dyson equation, 
here we do not assume any regularity on $\cS$, but the conclusion is weaker. We do not 
get H\"older exponent 1/3 for the self-consistent density of states $\varrho$. Furthermore,
cusp and edge analysis
would also require further conditions on $\cS$. Since in the correlated case
we focus on the bulk spectrum, i.e. on spectral parameters $z$ with $\re z\in \fS$, 
we will not need detailed information about the density near the spectral edges.
A detailed analysis of the singularity structure of the solution to \eqref{mde7},
in particular a theorem analogous to Theorem~\ref{thm:cusp}, has been given in \cite{AEK2018}.
The corresponding edge universality for correlated random matrices  was proven in \cite{AEKS}.

\subsection{Local laws for Wigner-type and correlated random matrices}

We now state the precise form of the local laws. 

\begin{theorem}[Bulk local law for Wigner type matrices, Corollary 1.8 from \cite{AEK2}]\label{thm:type}
Let $H$ be a centered Wigner type matrix with  bounded variances $s_{ij} =\E |h_{ij}|^2$ 
i.e. \eqref{sbound} holds. Let $\bbm(z)$ be the solution to the vector Dyson equation
\eqref{vectordyson5}.  If the uniform moment condition \eqref{mmm} for the matrix elements,
then the local law in the bulk holds. If we fix positive constants $\delta, \gamma, \e$ and $D$, then 
for any spectral parameter $z=\tau +i\eta$ with 
\be\label{domain12}
  \varrho(\tau)\ge\delta, \qquad  \eta\ge N^{-1+\gamma}
\ee
we have the entrywise local law
\be\label{perch12}
  \max_{ij}  \P\Big(  \big| G_{ij}(z) -  \delta_{ij} m_i(z)\big| \ge \frac{N^\e}{\sqrt{N\eta}}\Big) \le \frac{C}{N^D} \,,
  \ee
and, more generally, the isotropic law that for non-random normalized vectors  $\bx, \by\in \C^N$,
\be\label{perch14}
  \max_{ij}  \P\Big(  \big| \langle \bx, G(z)\by\rangle  -  \langle \bx, \bbm(z)\by\rangle \big| \ge \frac{N^\e}{\sqrt{N\eta}}\Big) \le \frac{C}{N^D} .
  \ee
Moreover for any non-random vector $\bw=(w_1, w_2, \ldots)\in \C^N$ with $\max_i |w_i|\le 1$ we have the averaged local law
\be \label{perch13}
\P\Big(  \big| \frac{1}{N} \sum_i w_i\big[G_{ii}(z) -  m_i(z)\big]\big| \ge \frac{N^\e}{N\eta}\Big) \le \frac{C}{N^D},
\ee
in particular (with $w_i=1$) we have
\be \label{perch16}
\P\Big(  \big| \frac{1}{N} \tr G (z) -  \langle \bbm(z) \rangle )\big| \ge \frac{N^\e}{N\eta}\Big) \le \frac{C}{N^D}.
\ee
The constant $C$  in \eqref{perch12}--\eqref{perch16} is
independent of $N$ and the choice of $w_i$, but it depends on $\delta, \gamma, \e, D$, the
constants in \eqref{sbound}  and the sequence $\mu_p$ bounding the
moments in \eqref{mmm}.
\end{theorem}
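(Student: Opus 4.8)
The plan is to follow the two-step resolvent strategy outlined in Section~\ref{sec:resolvent}, now with the vector Dyson equation \eqref{vectordyson5} playing the role of the scalar equation \eqref{exact}. First I would establish the \emph{self-consistent equation with error term}: writing $G=G(z)$ for the resolvent and using the cumulant expansion of Section~\ref{sec:cum} (integration by parts in one $H$-factor of $HG=I+zG$, with the second-order cumulant producing the counter-term $\cS[G]G$), one obtains
\be\label{proofplan:D}
   I + \big(z - A + \cS[G]\big)G = D, \qquad \E D = 0,
\ee
and the bulk of the probabilistic work is to show that $D$ is small with very high probability in the relevant norms: pointwise $|D_{ij}|\lesssim N^\e/\sqrt{N\eta}$ for the entrywise/isotropic laws, and $|\frac1N\sum_i w_i D_{ii}|\lesssim N^\e/(N\eta)$ (one order better, with cancellation in the trace) for the averaged law. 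This is done by computing high moments $\E|D_{ij}|^{2p}$ and $\E|\langle w, \mathrm{diag}(D)\rangle|^{2p}$ via the cumulant expansion: each additional $H$ brings a factor $N^{-1/2}$ from the moment bound \eqref{mmm}, derivatives of $G$ reproduce resolvent entries, and the loop/Ward-identity bookkeeping (using $\sum_j|G_{ij}|^2 = \eta^{-1}\im G_{ii}$ as in \eqref{wardidentity}) turns naive $O(1)$ sums into $O((N\eta)^{-1})$. One needs an \emph{a priori} bound $\max_{ij}|G_{ij}|\le N^{C}$ (trivially true, $\le 1/\eta$) to start, which is then bootstrapped; a standard self-improving (fluctuation-averaging) iteration upgrades the crude bound to the optimal one.

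Second is the \emph{deterministic stability step}. Here one compares \eqref{proofplan:D}, rewritten as a perturbation of the Dyson equation, with \eqref{vectordyson5}. Setting $G_{ii}\approx g_i$ and subtracting, one gets (schematically) $(1 - \bbm^2 S)(\bg - \bbm) = \text{(linear in }D) + \text{quadratic in }(\bg-\bbm)$, and invertibility of the stability operator $1-\bbm^2 S$ in the bulk is the crucial input. This is exactly where the hypothesis $\varrho(\tau)\ge\delta$ enters: by Theorem~\ref{thm:bulkbound} we have $\|\bbm(z)\|_\infty\lesssim 1$ and $\im\bbm(z)\sim\varrho(z)$ in the bulk, and a saturation/Perron–Frobenius argument for the nonnegative matrix $|\bbm|^2 S$ gives $\|(1-\bbm^2 S)^{-1}\|_\infty\lesssim C(\delta)$ uniformly (this is the content of the stability theory for the QVE from \cite{AjaErdKru2015,AEK2}). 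With the stability bound in hand, and the a priori smallness of $D$, a continuity (bootstrap) argument in $\eta$ — starting from large $\eta$ where $|\bg-\bbm|\lesssim\eta^{-1}$ is automatically in the perturbative regime, and descending to $\eta\ge N^{-1+\gamma}$ while keeping $\re z$ fixed in the bulk — closes the estimate $\|\bg-\bbm\|_\infty\lesssim N^\e/\sqrt{N\eta}$; the quadratic term is absorbed because its coefficient times the a priori bound is $o(1)$. The isotropic law \eqref{perch14} follows from the entrywise law by the same type of cumulant/moment computation applied to $\langle \bx, (G-\mathrm{diag}(\bbm))\by\rangle$, and the averaged law \eqref{perch13}--\eqref{perch16} from the fluctuation-averaging mechanism that exploits the extra cancellation in $\frac1N\sum_i w_i D_{ii}$.

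The extension from fixed $z$ to simultaneous control over all $z$ in the domain \eqref{domain12} is routine: the maximum over $i,j$ is brought inside the probability by a union bound, and the supremum over $z$ by a $N^{-10}$-net together with the Lipschitz bound $|G(z)-G(z')|\le|z-z'|/\eta^2\le CN^2|z-z'|$, exactly as in the argument following \eqref{perch1}.

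The main obstacle, and the genuinely model-dependent part, is the stability estimate $\|(1-\bbm^2 S)^{-1}\|_\infty \le C(\delta)$ uniformly in $N$ throughout the bulk. For Wigner and generalized Wigner matrices this is elementary because $\bbm=m_{sc}\mathbf 1$ and the operator is $1-m_{sc}^2 S$ with $\|S\|=1$ and $|m_{sc}|<1$ in the bulk; but for general bounded variance profiles $S$, $\bbm$ is a genuinely non-constant vector and one must control the smallest ``singular direction'' of $1-|\bbm|^2 S$ (which is governed by the spectral radius of $|\bbm|^2 S$, pushed below $1$ precisely by the positivity of $\im\bbm\sim\varrho\ge c\delta$). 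Propagating this through the quadratic stability equation — isolating the unstable direction, showing the quadratic term does not destroy the bound, and handling the coupling of the $N$ equations without permutation symmetry — is the heart of the deterministic analysis and is the part that requires the full QVE stability theory rather than an explicit formula. The flatness/boundedness condition \eqref{sbound} is what makes this uniform in $N$; on the probabilistic side the analogous (but more benign) difficulty is the combinatorics of tracking which high-order cumulants survive in the fluctuation-averaging bound for \eqref{perch13}.
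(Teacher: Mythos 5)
Your proposal follows essentially the same two-step strategy the paper uses (and sketches in Section~\ref{sec:ideas} for the more general correlated case): a probabilistic high-moment/cumulant bound on the error matrix $D$ in the self-consistent equation, followed by the deterministic inversion of the stability operator $1-\bbm^2S$ via the saturated self-energy/Perron--Frobenius analysis of Lemma~\ref{lm:stab}, closed by a continuity bootstrap in $\eta$ and fluctuation averaging for the improved averaged law. The only minor imprecision is that $\E D=0$ holds exactly only in the Gaussian case (for general entries the higher cumulants contribute small but nonzero terms), which does not affect the argument since you bound high moments of $D$ directly.
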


As we explained around \eqref{perch1},   in the entrywise local law \eqref{perch12} one may bring both superma
on  $i,j$ and on the spectral parameter $z$
inside the probability, i.e. one can guarantee that $G_{ij}(z)$ is close to $m_i(z)\delta_{ij}$ simultaneously for
all indices and spectral parameters in the regime \eqref{domain12}. Similarly, $z$ can be brought inside the probability in \eqref{perch14} and 
\eqref{perch13}, but  the  isotropic law \eqref{perch14} cannot hold simultaneously for all $\bx,\by$ and similarly, the
averaged law \eqref{perch13}  cannot simultaneously hold
for all $\bw$.

We formulated the local law only under the boundedness condition \eqref{sbound} but only  in the bulk of the spectrum for simplicity.
Local laws near the edges and cusps require much more delicate analysis and  some type of regularity on $s_{ij}$, e.g.
the 1/2-H\"older regularity introduced in \eqref{holder} would suffice. Much easier is the regime outside of the spectrum. The precise statement
is found in Theorem 1.6 of \cite{AEK2}.

For the correlated matrix we have the following local law
from  \cite{AEK5}:

\begin{theorem}[Bulk local law for correlated matrices]\label{thm:univcorr}
Consider a  random   hermitian  matrix  $H\in \C^{N\times N}$ with correlated  entries. 
Define the self-energy super operator $\cS$ as
\be\label{css}
  \cS[R] = \E \big[ H R H\big]
\ee
acting on any matrix $R\in \C^{N\times N}$.
Assume that the flatness condition \eqref{flatness} and the moment 
condition \eqref{mmm} hold. We also assume an exponential decay of  correlations in the form
\be\label{coroskari}
   \mbox{Cov}\Big( \phi( W_A); \psi(W_B)\Big)\le C(\phi,\psi)\; \; e^{-d(A,B)}.
  \ee
Here $W=\sqrt{N}H$ is the rescaled random matrix, $A, B$ are two subsets of the index set
 $\llbracket 1, N \rrbracket\times \llbracket 1, N \rrbracket $,
 the distance $d$ is the usual Euclidean distance between the sets $A\cup A^t$ and $B\cup B^t$
 and $W_A=(w_{ij})_{(i,j)\in A}$, see figure below.
  Let $M$ be the self-consistent Green function, i.e. the solution of the matrix Dyson equation \eqref{mde4} with $\cS$ given in \eqref{css}, 
   and consider a spectral parameter in the bulk, i.e.
   $z=\tau +i\eta$ with 
\be\label{domain14}
  \varrho(\tau)\ge\delta, \qquad  \eta\ge N^{-1+\gamma}
\ee
Then 
for any non-random normalized vectors  $\bx, \by\in \C^N$ we have the isotropic local law
\be\label{perch14cor}
   \P\Big(  \big| \langle \bx, G(z)\by\rangle  -  \langle \bx, M(z)\by\rangle \big| \ge \frac{N^\e}{\sqrt{N\eta}}\Big) \le \frac{C}{N^D} ,
  \ee
  in particular we have the entrywise law
  \be\label{perch16cor}
   \P\Big(   \big| G_{ij}(z)  -  M_{ij}(z) \big| \ge \frac{N^\e}{\sqrt{N\eta}}\Big) \le \frac{C}{N^D} ,
  \ee
  for any $i,j$.
Moreover for any fixed (deterministic) matrix $T$ with $\|T\|\le 1$, we have the averaged local law
\be \label{perch13cor}
\P\Big(  \Big| \frac{1}{N} \tr T\big[G(z) - M(z)\big] \Big| \ge \frac{N^\e}{N\eta}\Big) \le \frac{C}{N^D}.
\ee
The constant $C$  is
independent of $N$ and the choice of $\bx,\by $, but it depends on $\delta, \gamma, \e, D$, the constants in \eqref{flatness}
 and the sequence $\mu_p$ bounding the
moments in \eqref{mmm}.
\end{theorem}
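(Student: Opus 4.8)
The plan is to follow the two-step architecture of the resolvent method set out in Section~\ref{sec:resolvent}, upgraded to the matrix-valued, correlated setting. First I would establish that the self-consistent error matrix $D := HG + \cS[G]G$ (the matrix analogue of \eqref{egg}) is small with very high probability in the relevant norms. Here $G = G(z) = (H-z)^{-1}$ and $M = M(z)$ solves \eqref{mde4} with $\cS$ given by \eqref{css}. The key identity is $(H-z)G = I$, which upon inserting the counter-term $\cS[G]G$ rewrites as $I + (z + \cS[G])G = D$; comparing with the MDE, and writing $G = M + (G-M)$, one obtains the \emph{perturbative expansion}
\[
   G - M = -M D + M \big(\cS[G-M]\big) G,
\]
or in terms of the stability superoperator $\cB := I - \cC_M \cS$ (with $\cC_M[R] = MRM$, acting on $\C^{N\times N}$),
\[
   \cB[G-M] = - MD + M\,\cS[G-M]\,(G-M).
\]
The probabilistic step is to prove high-moment bounds $\E |\langle \bx, D(z) \by\rangle|^{2p} \lesssim (N\eta)^{-p} N^{\e}$ and $\E |\langle T, D(z)\rangle|^{2p} \lesssim (N\eta)^{-2p} N^\e$ (with $\langle A,B\rangle = \frac1N\tr A^*B$), using the multivariate cumulant expansion \eqref{cumexp1} applied to the first $H$-factor in $HG$. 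The cumulant expansion produces the second-order term $\cS[G]G$ which cancels the counter-term exactly; the higher-order cumulants are controlled using the decay-of-correlations hypothesis \eqref{coroskari}, which ensures that the effective number of ``relevant'' index quadruples appearing in the expansion is $O(N^{1+\e})$ rather than $O(N^2)$, so that the combinatorial sum converges after using the a priori bound $\|G\|\le 1/\eta$ and Ward-type identities $\sum_j |G_{ij}|^2 = (\im G_{ii})/\eta$.

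The deterministic stability step uses the bounds on $M$ from the theorem following \eqref{flatness}: in the bulk regime \eqref{domain14}, $\|M(z)\|_2 \le C$ and $\im M(z) \ge c\varrho(z) \ge c\delta$. The crucial analytic input — which I expect to be the \textbf{main obstacle} — is the invertibility of the stability superoperator $\cB = I - \cC_M\cS$ on $\C^{N\times N}$ with a bound $\|\cB^{-1}\|_2 \le C$ uniform in $N$ throughout the bulk. This is the matrix analogue of bounding $1/(1-m_{sc}^2)$ in \eqref{linst}, but it is genuinely harder: $\cC_M\cS$ is not self-adjoint, $M$ is not Hermitian, and one must decompose $\cB$ via the symmetrization $\cF := \cC_{\sqrt{\im M}}\,\cS\,\cC_{(\im M)^{-1/2}}$ — roughly, one conjugates to a self-adjoint, positivity-preserving operator whose top eigenvalue is strictly below $1$ precisely when $\varrho(\re z) > 0$ (a Perron–Frobenius / saturation argument using flatness). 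The flatness condition \eqref{flatness} is what makes $\cF$ have a spectral gap; one shows the leading eigenvalue of $\cF$ equals $1 - c\,\eta\,\|M\|_2^{-2}\,\varrho^{-1}(z) + \dots$ in the bulk, bounded away from $1$, and the remaining part of the spectrum is uniformly bounded away from $1$ as well, giving $\|\cB^{-1}\|_2 \lesssim 1/\varrho \lesssim 1/\delta$. This part can be imported wholesale from the analysis of the MDE (the bounds following \eqref{flatness} and the companion papers \cite{AEK5, AEK2018}).

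With $\|\cB^{-1}\|_2 \le C$ in hand, the stability inequality reads, schematically,
\[
   \|G-M\|_{*} \;\le\; C\,\|D\|_{*} \;+\; C\,\|\cS[G-M]\|_2\,\|G-M\|_{*},
\]
where $\|\cdot\|_*$ denotes either the entrywise/isotropic norm or the averaged norm. A standard bootstrap (continuity) argument in $\eta$ — starting from large $\eta = \im z \sim 1$ where $\|G-M\|_* \lesssim 1/\eta$ trivially, and decreasing $\eta$ down to $N^{-1+\gamma}$ while maintaining the a priori smallness $\|G-M\|_* \le N^{-\gamma/3}$ so that the quadratic term $\|\cS[G-M]\|_2\|G-M\|_*$ is absorbed — yields the isotropic bound \eqref{perch14cor}. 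The extra order of cancellation in the averaged law comes from the improved bound on $\langle T, D\rangle$: because $\cB^{-1}$ acts and the trace smooths fluctuations, one gains a factor $(N\eta)^{-1/2}$, producing \eqref{perch13cor}; specializing $T = I$ and using $\langle M\rangle = m_\varrho$ gives the density-of-states statement, and choosing $\bx = \mathbf{e}_i$, $\by = \mathbf{e}_j$ in \eqref{perch14cor} gives the entrywise law \eqref{perch16cor}. The union bound over a fine grid of $z$-values together with Lipschitz continuity of $z\mapsto G(z)-M(z)$ (Lipschitz constant $\lesssim N^2$ in the bulk), exactly as in the discussion around \eqref{perch1}, allows bringing the supremum over spectral parameters inside the probability; the supremum over $\bx,\by$ or over $T$ cannot be brought inside, as noted after \eqref{perch1a}.
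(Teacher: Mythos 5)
Your proposal follows essentially the same route as the paper's own argument in Section~\ref{sec:ideas}: the identity $I+(z+\cS[G])G=D$ with the counter-term $\cS[G]G$, a high-moment cumulant-expansion bound on $D$ using the correlation decay, inversion of the stability superoperator $I-\cC_M\cS$ via the symmetrized, positivity-preserving saturated self-energy operator with its flatness-induced spectral gap (Lemma~\ref{lm:stabM}), absorption of the quadratic term by a continuity bootstrap in $\eta$, and a grid-plus-Lipschitz argument for uniformity in $z$. The only cosmetic deviation is your form of the saturated operator (the paper sandwiches with $Q=\sqrt{\im M}\,W$ from the symmetric polar decomposition $M=QYQ^*$ so that $\cF=\cK_Q^*\cS\cK_Q$ is genuinely self-adjoint), and you leave implicit the boundedness of $\|M\|_\infty$ and $\|M\|_1$ needed to run the stability inequality in the entrywise norm, which for correlated ensembles requires a separate argument.
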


In our recent paper \cite{EKScorrelated}, we substantially relaxed the condition on the correlation decay \eqref{coroskari}
to the form
\[
   \mbox{Cov}\Big( \phi( W_A); \psi(W_B)\Big)\le \frac{ C(\phi,\psi)}{1+ d^2} e^{-d/N^{1/4}}, \qquad d=d(A,B),
 \]
and a similar condition on higher order cumulants, see \cite{EKScorrelated} for the precise forms.

In Theorem~\ref{thm:univcorr},  we again formulated the result only in the bulk, but similar (even stronger) local law is available
for energies $\tau$  that are separated away from the support of $\varrho$.

\centerline{
\includegraphics[width=0.3 \textwidth] {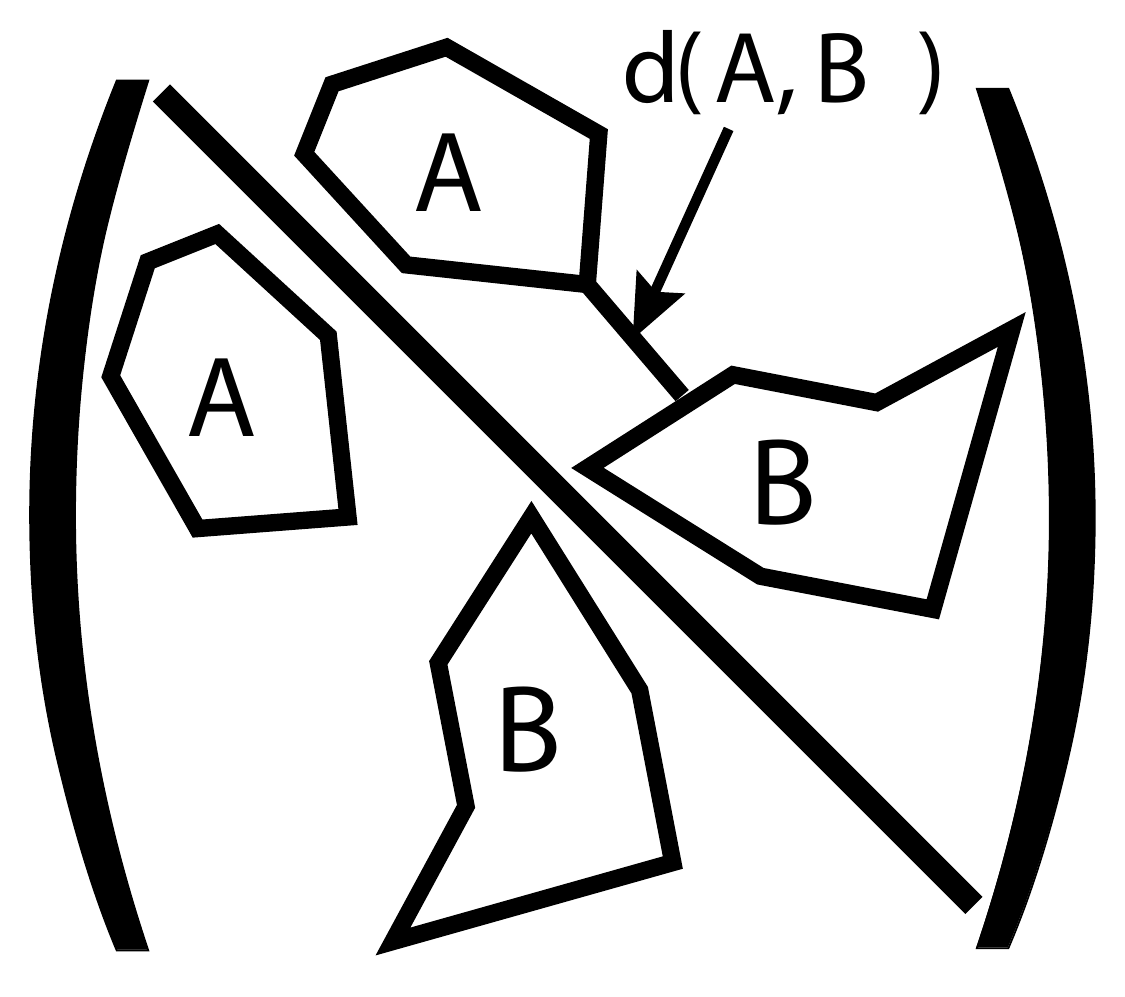}}

In these notes we will always assume that $H$ is centered, $\E H=0$ for simplicity, but our result holds in the 
general case as well. In that case $\cS$ is given by 
\[
 \cS[R] = \E \big[ (H-\E H) R (H-\E H)\big]
\]
and $M$ solves the MDE with external source $A:= \E H$, see \eqref{mde7}.

\subsection{Bulk universality and other consequences of the local law}

In this section we give precise theorems of three important consequences of the local
law. We will formulate the results in the simplest case, in the bulk. We give some sketches
of the proofs. Complete arguments for these results can be found 
in the papers \cite{AEK2} and \cite{AEK5, EKScorrelated}.

\subsubsection{Delocalization}

The simplest consequence of the entrywise local law is the  delocalization of the
eigenvectors as explained in Section~\ref{sec:resolve}. The precise formulation goes as follows:

\begin{theorem}[Delocalization of bulk eigenvectors] Let $H$ be a Wigner type or, more generally, a correlated random matrix, satisfying the conditions
of Theorem~\ref{thm:type} or Theorem~\ref{thm:univcorr}, respectively. Let $\varrho$ be the self-consistent density of states obtained from solving
the corresponding Dyson equation.
Then for any $\delta, \gamma>0$  and $D>0$ we have
\[
  \P \Big( \exists \bu,\lambda, \;  H\bu= \lambda\bu, \;\; \|\bu\|_2=1\;\; \varrho(\lambda) \ge \delta, \;\;  \| \bu\|_\infty \ge N^{-\frac{1}{2}+\gamma} \Big) \le 
  \frac{C}{N^D}.
\]
\end{theorem}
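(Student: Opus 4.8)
The plan is to derive eigenvector delocalization directly from the entrywise (or isotropic) local law together with the boundedness of $\im M(z)$ in the bulk. First I would fix an eigenpair $H\bu = \lambda\bu$ with $\|\bu\|_2=1$ and $\varrho(\lambda)\ge\delta$, and choose the spectral parameter $z = \lambda + i\eta$ with $\eta = N^{-1+\gamma}$. The key identity is the spectral decomposition of the imaginary part of the resolvent evaluated at its own eigenvalue: for any index $x$,
\[
   \im G_{xx}(z) = \sum_k \frac{\eta\,|\bu_k(x)|^2}{|\lambda_k - \lambda|^2 + \eta^2} \ge \frac{|\bu(x)|^2}{\eta},
\]
where we simply kept the single term $k$ corresponding to our eigenvector $\bu$ and used $|\lambda_k-\lambda|^2+\eta^2 = \eta^2$ for that term. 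Rearranging gives $|\bu(x)|^2 \le \eta\, \im G_{xx}(z)$, which is exactly the bound \eqref{delocbound} from Section~\ref{sec:resolve}, now applied at $z$ sitting on the eigenvalue. Taking the maximum over $x$ yields $\|\bu\|_\infty^2 \le \eta \max_x \im G_{xx}(z)$.

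Next I would invoke the local law. By Theorem~\ref{thm:type} (or Theorem~\ref{thm:univcorr} in the correlated case), with very high probability $|G_{xx}(z) - m_x(z)| \le N^\e/\sqrt{N\eta}$ simultaneously for all $x$ and all $z$ in the bulk regime \eqref{domain12} — here one uses the version where the supremum over $x$ and over the spectral parameter is brought inside the probability, as discussed after \eqref{perch1}. Combined with the a priori bound $\|M(z)\|_2 \le C$ in the bulk (from Theorem~\ref{thm:bulkbound}, since $\varrho(\lambda)\ge\delta$ forces $\|\bbm(z)\|_\infty \lesssim 1$), we get $\max_x \im G_{xx}(z) \le C + N^\e/\sqrt{N\eta} \le C'$ on the same high-probability event, using $N\eta = N^\gamma \gg 1$. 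Therefore, on this event, every eigenvector with $\varrho(\lambda)\ge\delta$ satisfies $\|\bu\|_\infty^2 \le C'\eta = C' N^{-1+\gamma}$, i.e. $\|\bu\|_\infty \le \sqrt{C'}\, N^{-1/2+\gamma/2}$. Renaming $\gamma$ (or absorbing the constant into the $N^\e$ slack and relabelling) gives the stated bound $\|\bu\|_\infty \le N^{-1/2+\gamma}$ off an event of probability $\le C N^{-D}$.

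\textbf{Main obstacle.} The one genuinely delicate point is that the local law as stated in \eqref{perch12} holds for each \emph{fixed} $z$, whereas here $z = \lambda + i\eta$ is random — it depends on the eigenvalue $\lambda$, which in turn depends on $H$. The resolution, sketched around \eqref{perch1}, is the standard grid-plus-Lipschitz argument: the map $z \mapsto G_{xx}(z) - m_x(z)$ is Lipschitz with constant $O(\eta^{-2}) = O(N^{2})$ on the domain \eqref{domain12}, so it suffices to prove the bound on an $N^{-3}$-net of spectral parameters, apply a union bound over the polynomially many net points (each contributing $C_{D',\e} N^{-D'}$, so with $D' = D+4$ the sum is still $\le C N^{-D}$), and then control all other $z$ by Lipschitz continuity. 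Since $\varrho$ is continuous (Theorem~\ref{thm:bulkbound}/Theorem~\ref{thm:cusp}), the condition $\varrho(\lambda)\ge\delta$ is stable under the $O(N^{-3})$ perturbation onto the net, up to shrinking $\delta$ by an arbitrarily small amount. Everything else is a routine combination of the two displayed inequalities; the conceptual content is entirely carried by the local law and the boundedness of $M$ in the bulk.
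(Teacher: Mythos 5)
Your proposal is correct and follows the same route as the paper: the eigenvector bound $|\bu(x)|^2\le\eta\,\im G_{xx}(\lambda+i\eta)$ from the spectral decomposition (this is exactly \eqref{delocbound}), combined with the entrywise local law and the boundedness of $\bbm$ (resp.\ $M$) in the bulk to conclude $\im G_{xx}\lesssim 1$ with very high probability. Your explicit treatment of the randomness of $z=\lambda+i\eta$ via the grid-plus-Lipschitz argument is a detail the paper's sketch leaves implicit (it is the uniform-in-$z$ version \eqref{perch1} of the local law), but it is the intended argument, so there is nothing further to add.
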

\begin{proof}[{\it Sketch of the proof.}] The proof was basically given in \eqref{delocbound}. The local laws guarantee that $\im G_{jj}(z) $  is close to
its deterministic approximant, $m_i(z)\delta_{ij}$ or $M_{ij}(z)$, these statements hold for any $E=\re z$ in the bulk and for $\eta\ge N^{-1+\gamma}$.
Moreover,  \eqref{bulk bound} and \eqref{Mupper} show that in the bulk regime both $|\bbm|$ and $\| M\|$ are bounded. From these two information
we conclude that $\im G_{jj}(z) $ is bounded with very high probability. 
\end{proof}

\subsubsection{Rigidity}

The next  standard consequence of the local law is the {\bf rigidity of eigenvalues}. It states that with very high probability
the eigenvalues in the bulk are at most $N^{-1+\gamma}$-distance away from their {\bf classical locations} predicted
by the corresponding quantiles of the self-consistent density of states, for any $\gamma>0$. This error bar $N^{-1+\gamma}$ reflects 
that typically the eigenvalues are almost as close to their deterministically prescribed locations as the typical 
level spacing $N^{-1}$. This is actually an indication of a very strong correlation;  e.g. if the eigenvalues were completely uncorrelated, i.e.
given by a Poisson point process with intensity $N$, then the typical fluctuation of the location of the points would be $N^{-1/2}$.

Since local laws at spectral parameter $z=E+i\eta$ determine the local eigenvalue density
on scale $\eta$, it is very natural that a local law on scale $\eta=\im z$ locates individual eigenvalues with $\eta$-precision.
Near the edges and cusps the local spacing is different ($N^{-2/3}$ and $N^{-3/4}$, respectively), and
the corresponding rigidity result must respect this. For simplicity, here we state only the bulk result, as we did for the
local law as well; for results at the edge and cusp, see \cite{AEK2}.

Given the self-consistent density $\varrho$, for any energy $E$, define
\be\label{iedef}
  i(E): = \Big\lceil  N \int_{-\infty}^E \varrho(\om)\rd\om \Big\rceil
\ee
to be the index of the $N$-quantile closest to $E$. Alternatively, for any $i\in \llbracket 1, N \rrbracket$ one could 
define $\gamma_i =\gamma_i^{(N)}$ to be the $i$-th  $N$-quantile of $\varrho$ by the relation
\[
   \int_{-\infty}^{\gamma_i} \varrho(\om)\rd\om = \frac{i}{N},
\]
then clearly $\gamma_{i(E)}$ is (one of) the closest $N$-quantile  to $E$ as long as $E$ is in the bulk, $\varrho(E)>0$.

\begin{theorem}[Rigidity of bulk eigenvalues] Let $H$ be a Wigner type or, more generally, a correlated random matrix, satisfying the conditions
of Theorem~\ref{thm:type} or Theorem~\ref{thm:univcorr}, respectively. Let $\varrho$ be the self-consistent density of states obtained from solving
the corresponding Dyson equation.  Fix any $\delta,\e,  D>0$. For any energy $E$ in the bulk, $\varrho(E)\ge \delta$, we have
\be\label{rigged}
  \P \Big( |\lambda_{i(E)} - E|\ge \frac{N^\e}{N} \Big) \le  \frac{C}{N^D}.
\ee
\end{theorem}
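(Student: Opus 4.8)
The plan is to deduce the rigidity bound \eqref{rigged} from the averaged local law \eqref{perch16} (resp. \eqref{perch13cor} in the correlated case) via the standard Helffer--Sj\"ostrand / Stieltjes inversion argument. First I would introduce the empirical counting function $\mathfrak{n}_N(E) := \frac{1}{N}\#\{i : \lambda_i \le E\}$ and its deterministic counterpart $n(E) := \int_{-\infty}^E \varrho(\omega)\rd\omega$, and observe that the averaged law controls the difference of their Stieltjes transforms: for $z = E + i\eta$ with $\varrho(E)\ge\delta$ and $\eta \ge N^{-1+\gamma}$ we have $|\frac{1}{N}\tr G(z) - \langle\bbm(z)\rangle| \le N^\e/(N\eta)$ with overwhelming probability, and by the remark after \eqref{perch1} this holds simultaneously for all such $z$ on a grid fine enough that Lipschitz continuity (with constant $\lesssim \eta^{-2}\le N^2$) fills in the gaps. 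The imaginary part gives $|\im m_N(E+i\eta) - \im\langle\bbm(E+i\eta)\rangle| \le N^\e/(N\eta)$, i.e. control of the regularized densities on scale $\eta$.

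Next I would upgrade this into a bound on $|\mathfrak{n}_N(E) - n(E)|$. The cleanest route is to write $\mathfrak{n}_N(E) - n(E)$ as an integral against the difference of Stieltjes transforms using Helffer--Sj\"ostrand: for a smooth cutoff $\chi$ supported in the bulk and equal to $1$ near $E$, one has $\mathfrak{n}_N(E)-n(E) = \frac{1}{\pi}\int_{\R}\int_0^\infty \partial_{\bar z}\big(\tilde f(E';y)\chi(y)\big)\, [m_N - \langle\bbm\rangle](E'+iy)\,\rd E'\,\rd y$ plus boundary terms, where $\tilde f$ is an almost-analytic extension of a smoothed step function at $E$. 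Splitting the $y$-integral at the scale $\eta_0 = N^{-1+\gamma}$, on the region $y \ge \eta_0$ I use the averaged law directly, which contributes $O(N^{-1+\e}\log N)$; on the region $y \le \eta_0$ I use the a priori bound $\im m_N(E'+iy)\lesssim \im m_N(E'+i\eta_0)$ together with $|\im\langle\bbm\rangle|\lesssim 1$ from Theorem~\ref{thm:bulkbound} / \eqref{Mupper} to control the small-$y$ part, again by $O(N^{-1+\e})$. The conclusion is $\sup_{E\,:\,\varrho(E)\ge\delta}|\mathfrak{n}_N(E)-n(E)| \le N^{\e}/N$ with probability at least $1-C_{D}N^{-D}$, after a union bound over a polynomial grid of energies $E$ and adjusting $\e$.

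Finally, I would convert the counting-function estimate into the eigenvalue-location estimate. Since $n$ is strictly increasing with $n'(E)=\varrho(E)\ge\delta$ in the bulk, $n$ has a Lipschitz inverse there. Fix $E$ with $\varrho(E)\ge\delta$; by definition $i(E) = \lceil N n(E)\rceil$, so $|n(E) - i(E)/N| \le 1/N$. Suppose $\lambda_{i(E)} \ge E + N^\e/N$ (the other direction is symmetric). Then $\mathfrak{n}_N(E + N^\e/N) \le (i(E)-1)/N$, whereas $n(E+N^\e/N) \ge n(E) + \delta N^\e/N - O(N^{2\e-2}) \ge i(E)/N + (\delta/2) N^\e/N$ for $N$ large; subtracting contradicts $|\mathfrak{n}_N - n| \le N^{\e/2}/N$ once $\e$ is chosen appropriately relative to the exponent in the counting bound. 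This forces $|\lambda_{i(E)}-E|\le N^\e/N$ on the good event, which is \eqref{rigged}.

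The main obstacle is the small-$\eta$ part of the Helffer--Sj\"ostrand integral, i.e. controlling $\int_0^{\eta_0}$ where the averaged local law is \emph{not} available: one must not lose a power of $N$ there. The standard fix is the monotonicity estimate $y\mapsto y\,\im m_N(E+iy)$ is increasing, so $\int_0^{\eta_0}\im m_N(E'+iy)\,\rd y \lesssim \eta_0\,\im m_N(E'+i\eta_0)\lesssim \eta_0(1 + N^\e/(N\eta_0)) \lesssim N^{-1+\gamma+\e} + N^{-1+\e}$, which is still $o(1/N^{1-2\e})$ and hence absorbable; the same bound applies to $\langle\bbm\rangle$ since it is the Stieltjes transform of the bounded density $\varrho$. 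One also needs the implied constants in these a priori bounds to be uniform in the bulk, which is exactly what \eqref{bulk bound} and \eqref{Mupper} provide.
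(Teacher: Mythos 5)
Your overall route is the same as the paper's: Helffer--Sj\"ostrand representation of the counting function, splitting the $y$-integral at $\eta_0=N^{-1+\gamma}$, monotonicity of $y\mapsto y\,\im m(E'+iy)$ below $\eta_0$, and a Lipschitz-inverse argument to pass from the counting function to eigenvalue locations. The last step and the small-$y$ step are fine. But there is a genuine gap in the large-$y$ regime.

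You claim that on $y\ge\eta_0$ the averaged local law applied directly to the $y\,f''(E')$ term ``contributes $O(N^{-1+\e}\log N)$.'' It does not. With a test function smoothed on scale $\ell=N^{-1+\gamma}$ one has $|f''|\lesssim \ell^{-2}$ on a set of measure $\ell$ in $E'$, so the direct bound is
\[
\int_{\eta_0}^{1}\rd y\int \rd E'\; y\,|f''(E')|\,\frac{N^{\e}}{Ny}\;\lesssim\;\int_{\eta_0}^{1}\rd y\;\frac{1}{\ell}\cdot\frac{N^{\e}}{N}\;\sim\;\frac{N^{\e}}{N\ell}\;=\;N^{\e-\gamma},
\]
which is an $O(1)$ bound rather than the $O(N^{-1+\e})$ you need; you are short by a full factor of order $N$. (The $\log N$ you expect would appear only if $f''$ were of size $O(1)$, i.e.\ if the smoothing scale were order one, but then the counting function would only be resolved on scale $1$ and the rigidity conclusion would be lost.) The missing idea — and the actual content of the paper's proof at this point — is to integrate by parts before estimating: move one $E'$-derivative from $f''$ onto $m_N-\langle\bbm\rangle$, convert $\partial_{E'}$ into $\partial_y$ via the Cauchy--Riemann equations, and integrate by parts again in $y$ so that no derivative remains on the Stieltjes transforms; the boundary terms are of $L_2/L_3$ type and the bulk term gains the factor that turns $N^{\e-\gamma}$ into $N^{-1+\e+C\gamma}$. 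Without this step the argument does not close.

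A secondary point: your deterministic counting function $n(E)=\int_{-\infty}^{E}\varrho$ and the index $i(E)$ involve the semi-infinite interval $(-\infty,E]$, so the comparison of $\mathfrak{n}_N$ with $n$ requires control of $m_N-\langle\bbm\rangle$ at energies outside the bulk (where $\varrho<\delta$), which the averaged local laws \eqref{perch16}, \eqref{perch13cor} as stated do not cover. This is easily repaired — away from $\mbox{supp}\,\varrho$ the local law is stronger, cf.\ \eqref{lll1} — but it must be invoked explicitly, as the paper does.
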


\begin{proof}[\it Sketch of the proof.]
The proof of rigidity from the local law is a fairly standard procedure by now, see Chapter 11 of \cite{ErdYau2017},  or  Lemma 5.1 \cite{AEK2} 
especially tailored to our situation. The key step is the following {\it Helffer-Sj\"ostrand formula} that expresses integrals of a compactly supported
function $f$ on the real line against
a (signed) measure $\nu$ with bounded variation in terms of the Stieltjes transform of $\nu$. (Strictly speaking we defined
Stieltjes transform only for probability measures, but the concept can be easily extended  since  any signed measure with
bounded variation can be written as a difference of two non-negative measures, and thus Stieltjes transform extends by linearity).

 Let $\chi$ be a compactly supported smooth cutoff function on $\R$ such that $\chi\equiv 1$ on $[-1,1]$. Then the
Cauchy integral formula implies
\be\label{cauchy}
  f(\tau) = \frac{1}{2\pi} \int_{\R^2} \frac{ i\eta f''(\sigma) \chi(\eta) + i(f(\sigma) + i\eta f'(\sigma))\chi'(\eta)}{\tau-\sigma -i\eta} \rd \sigma\,\rd\eta.
\ee
Thus for any real valued smooth $f$ the {\bf Helffer-Sj\"ostrand formula} states that
\be\label{HS}
   \int_\R f(\tau)\nu(\rd\tau) = -\frac{1}{2\pi} \big( L_1 + L_2 +L_3\big)
\ee
with
\   \begin{align*}
	   L_1 &=\int_{\R^2}\eta f''(\sigma)\chi(\eta) \im m (\sigma+i\eta) \rd\sigma\,\, \rd \eta\\
	   L_2 &=\int_{\R^2} f'(\sigma)\chi'(\eta) \im m (\sigma+i\eta) \rd\sigma\, \rd \eta\\
	   L_3 &=\int_{\R^2} \eta f'(\sigma)\chi'(\eta) \re m (\sigma+i\eta) \rd\sigma\, \rd \eta
   \end{align*}
where $m(z)=m_\nu(z)$ is the Stieltjes transform of $\nu$. Although this formula is a simple identity,
it plays an essential role in various problems of spectral analysis. One may apply it
to develop functional calculation (functions of a given self-adjoint operator) in terms of
the its resolvents \cite{Dav1995}.

For the proof of the eigenvalue rigidity, the formula \eqref{HS} is used for $\nu: = \mu_N -\varrho$, i.e.
for the difference of the empirical and the self-consistent density of states.  Since the normalized trace of the
resolvent is the Stieltjes transform of the empirical density of states,  the averaged local
law \eqref{perch16} (or \eqref{perch13cor} with $T=1$) states that 
\be\label{lll}
   |m_\nu(\tau+i\eta)| \leq \frac{N^\e}{N\eta}   \qquad \eta \ge N^{-1+\gamma}
 \ee
with very high probability for any $\tau$ with $\varrho(\tau)\ge \delta$.
Now we fix two energies, $\tau_1$ and $\tau_2$ in the bulk and  define $f$ to be
the characteristic function of the interval $[\tau_1,\tau_2]$ smoothed out
on some scale $\eta_0$ at the edges, i.e. 
\[
   f|_{[\tau_1,\tau_2]}=1,\qquad  f|_{\R\setminus [\tau_1-\eta_0,\tau_2+\eta_0]}=0
\]
with derivative bounds $|f'|\le C/\eta_0$, $|f''|\le C/\eta_0^2$ in the transition regimes 
\[J:=[\tau_1-\eta_0, \tau_1]\cup [\tau_2,\tau_2+\eta_0].
\]  We will choose $\eta_0=N^{-1+\gamma}$.  Then it is easy to see
that $L_2$ and $L_3$ are bounded by $N^{-1+\eps + C\gamma}$ since
$\chi'(\eta)$ is supported far away from 0, say on $[1,2]\cup [-2,-1]$, hence, for example
\[ 
    |L_2| \lesssim \int_1^2  \rd\eta \int_J \rd\sigma  \; \frac{1}{\eta_0} \; |\chi'(\eta)|  \frac{N^\e}{N\eta} \le N^{-1+\eps+2\gamma}
\]
using that $|J|\le 2\eta_0\lesssim N^{-1+\gamma}$.
A similar direct estimate does not work for $L_1$ since it would give 
\be\label{l1}
    |L_1| \lesssim \int_0^\infty \rd\eta \int_J \rd\sigma  \; \eta \; \frac{1}{\eta_0^2} \; \chi(\eta)  \frac{N^\e}{N\eta} \le N^{\eps+3\gamma}.
\ee
Even this estimate would need a bit more care since the local law \eqref{lll} does not hold for $\eta$ smaller than $N^{-1+\gamma}$,
but here one uses the fact that for any positive measure $\mu$, the (positive) function $\eta \to \eta \im m_\mu(\sigma+i\eta)$
is monotonously increasing, so the imaginary part of the Stieltjes transform at smaller $\eta$-values can be controlled
by those at larger $\eta$ values. Here it is crucial that $L_1$ contains only the imaginary part of the Stieltjes transforms 
and not the entire Stieltjes transform. The argument \eqref{l1}, while does not cover the entire $L_1$, it gives a sufficient bound
on the small $\eta$ regime:
\begin{align*}
\int_0^{\eta_0} \rd\eta & \int_J \rd\sigma\, \eta f''(\sigma)\chi(\eta) \im m (\sigma+i\eta) \\
\le  & \int_0^{\eta_0} \rd\eta \int_J \rd \sigma\   |f''(\sigma)| \eta_0 \im m (\sigma+i\eta_0) \\
 \le {}& N^{-1+\eps+3\gamma}.
\end{align*}

To improve \eqref{l1} by a factor $1/N$ for $\eta\ge \eta_0$,
 we  integrate by parts before estimating. First we put one $\sigma$-derivative from $f''$ to $m_\nu(\sigma+i\eta)$,
then the $\partial_\sigma$ derivate is switched to $\partial_\eta$ derivative, then  another integration by 
parts, this time in $\eta$ removes the derivative from $m_\nu$. The boundary terms, we obtain 
formulas similar to $L_2$ and $L_3$ that have already been estimated.

The outcome is that 
\be\label{outcome}
   \int_\R f(\tau)\big[ \mu_N(\rd\tau) -\varrho(\tau)\rd \tau] \le N^{-1+ \e'}
\ee
for any $\e'>0$  with very high probability,
since $\e$ and $\gamma$ can be chosen arbitrarily small positive numbers
in the above argument. If $f$ were exactly the characteristic function,  then  \eqref{outcome} would imply that
\be\label{counting}
  \frac{1}{N} \#\big\{ \lambda_j \in [\tau_1, \tau_2]\big\} = \int_{\tau_1}^{\tau_2} \varrho(\om)\rd\om + O( N^{-1+ \e'})
\ee
i.e. it would identify  the eigenvalue counting function down to the optimal scale. 
Estimating the effects of the smooth  cutoffs is an easy technicality. Finally,  \eqref{counting}
can be easily turned into \eqref{rigged}, up to one more catch. So far we assumed that $\tau_1, \tau_2$
are  both in the bulk since the local law was formulated in the bulk
 and \eqref{counting} gave the number of eigenvalues in any interval with
endpoints in the bulk. 

The quantiles appearing in \eqref{rigged}, however, involve semi-infinite intervals,
so one also needs a local law well outside of the bulk. Although in Theorems \ref{thm:type} and \ref{thm:univcorr} we formulated  local laws in
the bulk, similar, and typically even easier estimates are available for energies far away from the support of $\varrho$.
In fact,  in the regime where $\mbox{dist} (\tau, \mbox{supp}\varrho) \ge \delta$ for some fixed $\delta>0$, 
the analogue \eqref{lll} is improved to 
\be\label{lll1}
   |m_\nu(\tau+i\eta)| \leq \frac{N^\e}{N}   \qquad \eta > 0,
 \ee
makingo the  estimates on $L_j$'s  even easier when $\tau_1$ or $\tau_2$ is far  from the bulk. 
\end{proof}

\subsubsection{Universality of local eigenvalue statistics}

The universality of the local distribution of the eigenvalues is the main coveted goal of random matrix theory.
While local laws and rigidity are statements where random quantities are compared with deterministic ones, i.e. they
are, in essence, law of large number type results (even if not always formulated in that way), the
universality is about the emergence and ubiquity of a new distribution.

We will formulate universality in two forms: on the level of  correlation functions and on the level
of individual gaps. While these formulations are ``morally'' equivalent, technically they require quite
different proofs.

We need to strengthen a bit the assumption on the lower bound on the variances in \eqref{sbound} 
for complex hermitian Wigner type matrices $H$. In this case we define the real symmetric $2\times 2$ matrix
\[
  \sigma_{ij} := \begin{pmatrix} \E (\re h_{ij})^2 & \E (\re h_{ij})(\im h_{ij}) \\\E (\re h_{ij})(\im h_{ij}) &
  \E (\im h_{ij})^2
  \end{pmatrix}
\]
for every $i,j$ and we will demand that
\be\label{flange}
     \sigma_{ij}\ge \frac{c}{N}
\ee
with some $c>0$ uniformly for all $i,j$ in the sense of  quadratic forms on $\R^2$.
Similarly,  for correlated matrices the  flatness condition \eqref{flatness} is strengthened to the
requirement that there is a constant $c>0$ such that
\be\label{fullness}
  \E  |\tr BH|^2 \ge c \tr B^2
\ee
for any real symmetric (or complex hermitian, depending on the symmetry class of $H$) deterministic matrix $B$.

\begin{theorem}[Bulk universality]\label{thm:bu}
 Let $H$ be a Wigner type or, more generally, a correlated random matrix, satisfying the conditions
of Theorem~\ref{thm:type} or Theorem~\ref{thm:univcorr}, respectively. ~For Wigner type matrices in the complex hermitian symmetry class we additionally assume \eqref{flange}.
For correlated random matrices, we additionally assume~\eqref{fullness}.

Let $\varrho$ be the self-consistent density of states obtained from solving
the corresponding Dyson equation.  Let $k\in \N$, $\delta>0,E\in \R$ with $\varrho(E)\ge \delta$ and let $\Phi\colon \R^k\to\R$ be a compactly supported smooth test function. Then for some positive constants $c$ and $C$, depending on   $\Phi,\delta,k$, we have the following:

(i) [Universality of correlation functions]
 Denote the $k$-point correlation function of the eigenvalues of $H$ by $p^{(k)}_N$ (see \eqref{pk})
  and denote the corresponding $k$-point correlation function of the 
GOE/GUE-point process by $\Upsilon^{(k)}$. Then 
\be\label{univcorr}
\Bigg|\int_{\R^k} \Phi(\bt) \left[ \frac{1}{\rho(E)^k}p^{(k)}_N\Big(E+\frac{\bt}{N\rho(E)}\Big)-\Upsilon_k(\bt) \right]\rd\bt\Bigg|
\le C N^{-c}.
\ee
(ii) [Universality of gap distributions]   Recall that $i(E)$ is the index of the   $N$-th quantile
in the density $\varrho$  that is closest to the energy $E$  \eqref{iedef}. Then
\begin{equation}\label{univgap}
\begin{split}
	\Bigg| \E \Phi\Big( \big(N & \rho(\lambda_{k(E)})[\lambda_{k(E)+j}-\lambda_{k(E)}]\big)_{j=1}^k \Big) \\
	&- \E_{\text{GOE/GUE}}\Phi\Big( \big(N \rho_{sc}(0)[\lambda_{\lceil N/2\rceil+j}-\lambda_{\lceil N/2\rceil}]\big)_{j=1}^k \Big) \Bigg|\le C N^{-c},
\end{split}
\end{equation}
where the expectation $\E_{\text{GOE/GUE}}$ is taken with respect to the Gaussian matrix ensemble in the same symmetry class as $H$.
\end{theorem}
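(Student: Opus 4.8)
The plan is to deduce bulk universality (Theorem~\ref{thm:bu}) from the local law (Theorem~\ref{thm:type} or Theorem~\ref{thm:univcorr}) via the three-step strategy, using the local law as the only model-dependent input, exactly as advertised in Section~\ref{sec:3step}. \textbf{Step 1} is already done: the local law and rigidity \eqref{rigged} provide the a priori control on the eigenvalue density down to scale $\eta\gg 1/N$ that the subsequent steps require. \textbf{Step 2} is the Dyson Brownian motion (DBM) analysis. I would run the matrix Ornstein--Uhlenbeck flow \eqref{OU} started from $H_0:=H$ for a time $t=N^{-1+\omega}$ with a small $\omega>0$; this writes the perturbed matrix $H_t$ as a Gaussian-divisible ensemble $e^{-t/2}H_0+\sqrt{1-e^{-t}}H^{\rm G}$. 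The eigenvalues then evolve under the DBM \eqref{dbmm}, and the now-standard local relaxation estimates (as presented in \cite{ErdYau2017}, and in the form needed here adapted to a generic self-consistent density $\varrho$ rather than the semicircle) show that after time $t$ the local correlation functions and the local gap statistics of $H_t$ in the bulk at energy $E$ coincide with those of the corresponding Gaussian ensemble up to an error $N^{-c}$. The crucial inputs for this step are: the rigidity estimate \eqref{rigged} to localize the relevant eigenvalues, a level-repulsion/regularity estimate near $E$, and a lower bound on the local density --- this is precisely where the strengthened conditions \eqref{flange} and \eqref{fullness} enter, guaranteeing that the Gaussian component added by the flow genuinely regularizes all the eigenvalue trajectories near $E$ and that the density of states of $H_t$ stays comparable to $\varrho(E)\ge\delta$ for the duration of the flow.

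\textbf{Step 3} is the Green-function comparison (or ``density'') argument that removes the Gaussian component: one shows that the local statistics of $H$ and of $H_t=e^{-t/2}H+\sqrt{1-e^{-t}}H^{\rm G}$ differ by at most $N^{-c}$ for $t=N^{-1+\omega}$. The mechanism is a continuity/interpolation estimate along the OU flow: since only the second moments of the entries matter for the leading order and the flow \eqref{OU} preserves the variance profile $\cS$ (so $M(z)$, hence $\varrho$, is unchanged), the derivative in $t$ of a smooth functional of the resolvent entries $G_{ij}(E+i\eta)$ with $\eta$ slightly below $N^{-1}$ is small. Here one expands $\E\,\partial_t F(G_t)$ using the cumulant expansion \eqref{cumexp1}, exactly as sketched in Section~\ref{sec:cum}; the second-order cumulant terms cancel because the flow is variance-preserving, and the higher-order cumulant terms are controlled using the local law bounds \eqref{perch14}/\eqref{perch14cor} together with the fluctuation averaging mechanism. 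Integrating from $0$ to $t$ gives the comparison. In the correlated case one additionally needs that the exponential (or polynomial) decay of correlations \eqref{coroskari} is preserved by the OU flow, which is immediate since the flow acts entrywise with the same covariance structure.

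Finally, chaining Step 2 and Step 3 yields, at the energy $E$ with $\varrho(E)\ge\delta$, that both the rescaled $k$-point correlation function $\rho(E)^{-k}p_N^{(k)}(E+\bt/(N\rho(E)))$ and the rescaled gaps $N\rho(\lambda_{i(E)})(\lambda_{i(E)+j}-\lambda_{i(E)})$ agree, against any fixed smooth compactly supported test function $\Phi$, with the corresponding GOE/GUE quantities up to $N^{-c}$; this is precisely \eqref{univcorr} and \eqref{univgap}, where passing from the gap statistics at a fixed quantile to those at the center $\lceil N/2\rceil$ of the Gaussian ensemble uses translation invariance of the sine/Pfaffian kernel in the bulk together with rigidity for the Gaussian ensemble itself. \textbf{The main obstacle} is Step 2: carrying out the DBM local relaxation uniformly over the full generality of Wigner-type and correlated ensembles requires that the a priori rigidity and level-repulsion inputs hold with respect to the (generically non-semicircular, $N$-dependent, and merely H\"older-continuous) self-consistent density $\varrho$, and that the short-time regularization survives despite the inhomogeneous variance profile; this is where conditions \eqref{flange} and \eqref{fullness} are indispensable and where the bulk of the technical work in \cite{AEK2} and \cite{AEK5,EKScorrelated} is concentrated.
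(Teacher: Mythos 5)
Your overall skeleton (local law $\to$ rigidity $\to$ DBM relaxation $\to$ removal of the Gaussian component) is the same three-step strategy the paper uses, and your Steps 1 and 2 match the paper's: the only input to the DBM step (Theorem~\ref{thm:u}) is the two-sided bound on $\im m_V$ in a bulk window, which rigidity supplies. The genuine gap is in your Step 3. You assert that the OU flow \eqref{OU} "preserves the variance profile $\cS$ (so $M(z)$, hence $\varrho$, is unchanged)" and that consequently the second-order cumulant terms cancel in $\partial_t\,\E F(G_t)$. This is false for anything beyond the standard Wigner ensemble: the Brownian motion in \eqref{OU} carries the \emph{flat} covariance $\E|\rd b_{ij}|^2=\rd t/N$, so the flow drives the covariance operator from $\cS$ toward the Wigner one, $\cS_t=e^{-t}\cS+(1-e^{-t})\cS^{\rm Wig}$. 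The second-order terms in the cumulant expansion therefore leave a residual proportional to $s_{ij}-\frac{1}{N}$ (resp. $\cS-\cS^{\rm Wig}$ in the correlated case) which does not vanish and is not negligible without further argument; your continuity-in-$t$ estimate does not close as written.

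The paper avoids this by \emph{not} comparing $H$ with $e^{-t/2}H+\sqrt{1-e^{-t}}H^{\rm G}$. Instead it constructs a \emph{different} initial matrix $V$ such that $e^{-t/2}V+\sqrt{1-e^{-t}}W$ matches the first three moments of $H$ exactly and the fourth moments up to $N^{-2-\delta}$, and then invokes the four-moment Green function comparison theorem (Theorem~\ref{eye}) at scale $\eta'=N^{-1-\gamma}$. This is also where the strengthened conditions actually enter: \eqref{flange} (resp.\ \eqref{fullness}) guarantees that such a $V$ exists, i.e.\ that one can subtract an independent Gaussian component of size $t\sim N^{-1+\gamma}$ from the given ensemble and still have a legitimate matrix of the same class — not, as you place them, in the DBM relaxation step. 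If you want to keep a flow-based Step 3 you must either use a covariance-adapted flow (which is no longer the matrix Brownian motion of \eqref{OU}) or explicitly control the drift of $M(z)$ and of the reference density along the flow; as stated, the cancellation you rely on is not there.
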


\begin{proof}[{\it Short sketch of the proof.}]
The main method to prove universality is the {\it three-step strategy} outlined in Section \ref{sec:3step}.
The first step is to obtain a local law which serves as an {\it a priori} input for the
other two steps and it is the only model dependent step.  The second step is to show that a small Gaussian component in the distribution  
already produces the desired universality. The third step is a perturbative argument to show that removal of the Gaussian
component does not change the local statistics. 
There  have been many theorems of increasing generality to complete the second and third  steps and by now
very general ``black-box'' theorems exist that are model-independent.

The {\bf second step} relies on the local equilibration properties of the Dyson Brownian motion
introduced in  \eqref{dbmm}.
The  latest and most general formulation
of this idea  concerns universality of deformed Wigner matrices of the form
\[
     H_t = V + \sqrt{t} W,
\]
where $V$ is a deterministic matrix and $W$ is a GOE/GUE matrix.   In applications $V$ itself is a random matrix 
and in $H_t$ an additional independent Gaussian component is added.
 But for the purpose of local equilibration of the DBM,
hence for the emergence of the universal local statistics, only the randomness of $W$ is used, hence one may
condition on $V$. The main input of the following result  is that the local eigenvalue density of $V$ must be controlled in a
sense of lower and upper bounds on the imaginary part  of the Stieltjes transform $m_V$ of
the empirical eigenvalue density of $V$. In practice this is obtained from the local law with very high probability in 
the probability space of $V$.

\begin{theorem}[\cite{LanYau2015, LSY2016}]\label{thm:u}
Choose two $N$-dependent parameters,
$L, \ell$ for which we have   $1\gg L^2 \gg \ell \gg N^{-1}$ (here the notation $\gg$ indicates separation by an $N^\e$ factor
for an arbitrarily small $\e>0$).
Suppose that around a fixed energy $E_0$
in a window of size $L$  the local eigenvalue density of $V$ on scale  $\ell$ 
is controlled, i.e.
\[
  c\le \im m_V(E+i\eta) \le C, \qquad E\in (E_0-L, E_0+L), \qquad \eta\in [\ell, 10]
\]
(in particular, $E_0$ is in the bulk of $V$). Assume also that $\| V\|\le N^C$.
Then for any $t$ with $N^\e \ell \le t\le N^{-\e}L^2$ the bulk universality of $H_t$ around $E_0$ holds both in the
sense of correlation functions  at fixed energy \eqref{univcorr} and in sense of gaps \eqref{univgap}.
\end{theorem}
Theorem~\ref{thm:u}  in this general form appeared in  \cite{LanYau2015}  (gap universality) and in \cite{LSY2016} (correlation functions
universality at fixed energy).  These ideas have been developed in several papers.
Earlier  results concerned  Wigner or generalized
Wigner matrices and proved correlation function universality with a small energy averaging 
\cite{ErdSchYau2011, ErdSchYauYin2012}, fixed energy universality  \cite{BouErdYauYin2015} and gap universality \cite{ErdYau2015}.
Averaged energy and gap universality for random matrices with general density profile were also proven  in  \cite{ErdSch2015} assuming 
more precise information on $m_V$ that are  available from the optimal local laws.

Finally, the {\bf third step} is to remove the small  Gaussian component by realizing that the family of matrices of the form $H_t= V+\sqrt{t}W$
to which Theorem~\ref{thm:u} applies
is sufficiently rich so that for any given random matrix $H$ there exists a matrix $V$ and a small $t$ so that the local
statistics of $H$ and $H_t= V+\sqrt{t}W$ coincide. We will use this result for some $t$ with $t= N^{-1+\gamma}$
with a small $\gamma$. 
The time $t$ has to be much larger than $\ell$ and $\ell$ has to be much  larger than $ N^{-1}$ since below that
scale the local density of $V$ (given by $ \im m_V(E+i\eta)$) is not bounded. But $t$ cannot be too large either otherwise 
the comparison result cannot hold.

Note that the local statistics is not compared directly with that of $V$;
this would not work even for Wigner matrices  $V$ and even
if we used the Ornstein Uhlenbeck process, i.e. $H_t = e^{-t/2} V + \sqrt{1-e^{-t}}W$
(for  Wigner matrices  $V$ the OU process has the advantage that it preserves  not only the first 
but also the second moments of $H_t$). But for any given Wigner-type ensemble $H$ one can find a random $V$ and 
an independent Gaussian $W$ so that the first three moments of $H$ and $H_t = e^{-t/2} V + \sqrt{1-e^{-t}}W$ coincide
and the fourth moments  are very close;  this freedom is guaranteed by the  lower bound on $s_{ij}$ and  $\sigma_{ij}$ \eqref{flange}.

The main perturbative result is the following {\it Green function comparison theorem} that allows us to compare 
{\it expectations} of reasonable functions of the Green functions of two different ensembles
whose first four moments (almost) match (the idea of matching four moments in random matrices was introduced in \cite{TaoVu2011}).
 The key point is that $\eta=\im z$ can be slightly below the critical
threshold $1/N$: the expectation regularizes the possible singularity. 
 Here is the prototype of such a theorem:

\begin{theorem}[Green function comparison]\cite{EYY}\label{eye}
 Consider two Wigner type ensembles $H$ and $\wt H$ such that their first two moments are the
same, i.e. the matrices of variances coincide, $S=\wt S$ and the third and fourth  moments almost match in a sense that
\be\label{match}
   \big| E h_{ij}^s -\E\wh h_{ij}^s\big| \le N^{-2-\delta}, \qquad s=3,4
 \ee
 (for the complex hermitian case all mixed moments  of  order 3 and 4 should match).  Define
 a sequence of interpolating  Wigner-type matrices  $H_0, H_1, H_2, \ldots $ such that $H_0=H$,
 then in $H_1$ the $h_{11}$ matrix element is replaced with $\wt h_{11}$, in $H_2$
 the $h_{11}$ and $h_{12}$ elements are replaced with $\wt h_{11}$ and $\wt h_{12}$, etc., i.e.
 we  replace one by one  the distribution of the matrix elements. Suppose that the Stieltjes transform 
 on scale $\eta = N^{-1+\gamma}$ is bounded   for all
 these interpolating matrices and for any $\gamma>0$. Set now $\eta' : = N^{-1-\gamma}$ and let
 $\Phi$ a smooth function with moderate growth. Then
 \be\label{comparison}  \Big| \E \Phi\big( G(E+i\eta')\big) - \wt\E \Phi\big( G(E+i\eta')\big)\Big| \le N^{-\delta+C\gamma}
 \ee
 and similar multivariable versions also hold.
 \end{theorem}
  
In the applications, choosing $\gamma$ sufficiently small, we could conclude that the distribution of the Green functions
of $H$ and $\wt H$ on scale even
{\bf below the eigenvalue spacing} are close. On this scale local correlation functions can be identified,
so we conclude that the local eigenvalue statistics of $H$ and $\wt H$ are the same. This will conclude
step 3 of the three step strategy and finish the proof of bulk universality, Theorem~\ref{thm:bu}.
\end{proof}


\begin{proof}[{\it Idea of the proof of Theorem~\ref{eye}.}]
The proof of \eqref{comparison} is a ``brute force'' resolvent and Taylor expansion. For simplicity, we first replace  $\Phi$
by its finite Taylor polynomial. Moreover, we consider only the linear term for illustration in this proof. 
 We estimate the change
of $\E  G(E+i\eta')$ after each replacement; we need to bound each of them by $o(N^{-2})$ 
since there are of order $N^2$ replacements.  Fix an index pair $i, j$. 
Suppose  we are at the step when we change the $(ij)$-th matrix element $h_{ij}$ 
to $\wt h_{ij}$. Let $R$ denote the resolvent of the matrix with $(ij)$-th  and $(ji)$-th
elements being zero, in particular $R$ is independent of $h_{ij}$.
 It is
easy to see from the local law that $\max_{ab}|R_{ab}(E+i\eta)|\lesssim 1$ for any $\eta\ge N^{-1+\gamma}$
and therefore, by the monotonicity of $\eta\to \eta \im m(E+i\eta)$ we find that $|R_{ab}(E+i\eta')\lesssim N^{2\gamma}$.
Then simple resolvent expansion gives, schematically, that
\be\label{Gexp}
   G = R + Rh_{ij}R + Rh_{ij}Rh_{ij} R +Rh_{ij}Rh_{ij} S h_{ij}R + Rh_{ij}Rh_{ij} R h_{ij}R h_{ij}R+\ldots
 \ee
 and a similar expansion for $\wt G =G(h_{ij}\leftrightarrow \wt h_{ij})$ where all $h_{ij}$ is replaced with $\wt h_{ij}$
 (strictly speaking we need to replace $h_{ij}$ and $h_{ji}=\bar h_{ij}$ simultaneously due to hermitian symmetry, but we neglect this).
We do the expansion up to the fourth order terms (counting the number of $h$'s). The naive size of a third order 
term, say, $Rh_{ij}Rh_{ij} Rh_{ij}R$
 is of order $N^{-3/2+8\gamma}$ since every $h_{ij}$ is of order $N^{-1/2}$. However, the difference in $\E$ and $\wt\E$-expectations
 of these terms are of order $N^{-2-\delta}$ by \eqref{match}.  Thus for the first four terms  (fully expanded ones) in \eqref{Gexp}
 it holds that 
 \[
    \E G - \wt \E \wt G = O(N^{-2-\delta + C\gamma}) + \mbox{fifth and higher order terms}
 \]
 But all fifth and higher order terms have at least five $h$ factors  so their size is  essentially $N^{-5/2}$, i.e. negligible,
 even without any cancellation between $G$ and $\wt G$. Finally, we need to repeat this one by one replacement $N^2$ times,
 so we arrive at a bound of order $N^{-\delta+C\gamma}$. This proves \eqref{comparison}.
 \end{proof}

\begin{exercise}
For a given real symmetric matrix $V$ let  $H_t$ solve the SDE
\[
  \rd H_t = \frac{\rd B_t}{\sqrt{N}}, \qquad H_{t=0}=V
\]
where $B_t=B(t)$ is a standard real symmetric matrix valued  Brownian motion, i.e. the matrix elements
$b_{ij}(t)$ for $i<j$ as well as $b_{ii}(t)/\sqrt{2}$ are independent standard Brownian motions and $b_{ij}(t)= b_{ji}(t)$.
Prove that the eigenvalues of $H_t$ satisfy the following coupled system of stochastic differential equations
(Dyson Brownian motion):
\[
  \rd\lambda_a=\sqrt{ \frac{2}{N}} \rd B_a +\frac{1}{N}\sum_{b\ne a}\frac{1}{\lambda_a-\lambda_b}\rd t,
  \qquad a\in \llbracket 1, N\rrbracket
\]
where $\{ B_a\; : \; a\in \llbracket 1, N \rrbracket\} $ is a collection of independent standard Brownian motions
with initial condition $\bla_a(t=0)$ given by the eigenvalues of $V$. {\it Hint:} Use first and second order perturbation
theory to differentiate the eigenvalue equation $H\bu_a= \lambda_a \bu_a$  with the side condition $\langle \bu_a,  \bu_b\rangle =\delta_{ab}$,
then use Ito formula (see Section 12.2 of \cite{ErdYau2017}). Ignore the complication that Ito formula cannot be
directly used due to the singularity; for a fully rigorous proof, see Section 4.3.1 of \cite{AndGuiZei2010}.

\end{exercise}

\section{Analysis of the vector Dyson equation}\label{sec:vector}

In this section we outline the proof of a few results concerning the vector Dyson equation \eqref{vectordyson2}
\be\label{vectordyson3}
  -\frac{1}{\bbm} = z+ S\bbm, \qquad z\in \Cp, \quad \bbm\in \Cp^N,
\ee
where $S=S^t$ is symmetric, bounded, $\| S\|_\infty\le C$  and has nonnegative entries.

We recall the convention that $1/\bbm$ denotes a vector in $\C^N$ with components $1/m_j$.
Similarly, the relation $\bu\le \bv$ and the product $\bu\bv$ of two vectors are understood in coordinate-wise sense.

\subsection{Existence and uniqueness}

We sketch the existence and uniqueness result, i.e. Theorem~\ref{exuniqvector}, a detailed proof can
be found in Chapter 4 \cite{AjaErdKru2015}. To orient the reader here we only mention that it is a fix-point argument for the map
\be\label{Phidef}
   \Phi(\bu): = -\frac{1}{z+ S\bu}
\ee
that maps $\Cp^N$ to $\Cp^N$ for any fixed $z\in \Cp$. Denoting by
\[
   D(\zeta,\om): = \frac{|\zeta-\om|^2}{ (\im \zeta)(\im\omega)}, \qquad \zeta, \om\in\Cp
\]
the standard hyperbolic metric on the upper half plane, one may check that $\Phi$ is a contraction in this metric. More precisely,
for any fixed constant $\eta_0$, we have the bound
\be\label{Dphi}
      \max_j D\Big( \Phi(\bu)_j , \Phi(\bw)_j\Big) \le \Big(1 + \frac{\eta_0^2}{\| S\|}\Big)^{-2} \max_j D(u_j, w_j)
\ee
assuming that $\im z\ge \eta_0$ and both $\bu$ and $\bw$ lie in a large compact set
\be\label{Beta}
    B_{\eta_0}: = \Big\{ \bu\in \Cp^N\; : \;   \| \bu\|_\infty \le \frac{1}{\eta_0},\quad \inf_j  \im u_j \ge \frac{\eta_0^2}{(2+\|S\|)^2}\Big\},
\ee
that is mapped by $\Phi$ into itself. Here $\|\bu\|= \max_j |u_j|$. Once setting up the contraction properly, the rest is a straightforward fixed point theorem.
The representation \eqref{reps} follows from  the Nevanlinna's theorem as mentioned after Definition~\ref{def:sit}.

Given  \eqref{reps}, we recall that $\varrho = \langle \bnu\rangle= \frac{1}{N}\sum_j \nu_j$ is the self-consistent density of states. We consider its harmonic 
extension to the upper half plane and continue to denote it by  $\varrho$:
\be\label{vr}
  \varrho= \varrho(z) : = \frac{\eta}{\pi}\int_\R \frac{\varrho(\rd \tau)}{|x-E|^2 +\eta} = \frac{1}{\pi} \langle \im \bbm(z) \rangle, \qquad z=E+i\eta.
 \ee

 \begin{exercise}
Check directly from \eqref{vectordyson3} that the solution satisfies the additional condition of the Nevanlinna's theorem,
i.e. that for every $j$ we have  $i\eta m_j(i\eta) \to -1$ as $\eta\to \infty$. Moreover, check    that $|m_j(z)|\le 1/\im z$.
\end{exercise}

\begin{exercise} Prove that the support of all measures $\nu_i$ lie in $[-2\sqrt{\| S\|_\infty}, 2\sqrt{\| S\|_\infty}]$.  \\
{\it Hint: }suppose $|z|> 2\sqrt{\| S\|_\infty}$, then 
check  the following implication:
\[
   \mbox{If} \quad \| \bbm(z)\|_\infty < \frac{|z|}{2\| S\|_\infty}, \quad\mbox{then} \quad \| \bbm(z)\|_\infty < \frac{2}{|z|}
\]
and apply a continuity argument to conclude that $\| \bbm(z)\|_\infty < \frac{2}{|z|}$ holds unconditionally. Taking the imaginary part of
\eqref{vectordyson3} conclude that $\im \bbm(E+i\eta)\to 0$ as $\eta\to 0$ for any  $|E| > 2\sqrt{\| S\|_\infty}$.
\end{exercise}

\begin{exercise}
Prove the inequality \eqref{Dphi}, i.e. that $\Phi$ is indeed a contraction on $B_{\eta_0}$. {\it Hint:}  Prove and then use the following
properties of the metric $D$:
\begin{itemize}
\item[1)] The metric $D$ is invariant under linear fractional transformations of $\Cp$ of the form 
\[
  f(z)= \frac{az+b}{cz+d}, \qquad z\in \Cp, \qquad \begin{pmatrix} a & b\\ c & d \end{pmatrix} \in SL_2(\R).
 \]
\item[2)] Contraction: for any $z, w\in \Cp$ and $\lambda>0$ we have
\[
    D(z+i\lambda, w+i\lambda) = \Big(1+\frac{\lambda}{\im z}\Big)^{-1}\Big(1+\frac{\lambda}{\im w}\Big)^{-1} D(z, w);
\]
\item[3)] Convexity: Let $\ba=(a_1, \ldots, a_N)\in\R_+^N$, then
\[
   D\big(\sum_i a_iu_i ,  \sum_i a_i w_i\big)\le  \ \max_i D(u_i, w_i), \qquad \bu,\bw\in \Cp^N.
\]

\end{itemize}
\end{exercise}

\subsection{Bounds on the solution}

Now we start  the quantitative analysis of the solution and we start with a result on the boundedness  in the bulk.
We introduce the maximum norm and the $\ell^p$ norms  on $\C^N$ as follows:
\[
\|\bu\|_\infty = \max_j |u_j|, \qquad \|\bu\|_p^p : =\frac{1}{N}\sum_j |u_j|^2 = \langle |\bu|^p\rangle.
\]

The  procedure to bound $\bbm$ is that we first obtain an $\ell^2$-bound which usually requires less 
conditions. Then we enhance it to an $\ell^\infty$ bound. First we obtain a bound that is useful in the bulk
but deteriorates as the self-consistent density vanishes, e.g. at the edges and cusps. Second, we 
improve this bound to one that is also useful near the edges/cusps but 
 this requires some additional regularity condition on $s_{ij}$. In these notes
 we will not aim at the most optimal conditions, see \cite{AEK1short} and \cite{AjaErdKru2015}
 for the detailed analysis.

\subsubsection{Bounds useful in the bulk}\label{sec:useful}

\begin{theorem}\label{thmbulk}[Bounds on the solution]
Given lower and upper bounds of the form
\be\label{sbound1}
  \frac{c}{N}\le s_{ij} \le \frac{C}{N},
\ee
as in \eqref{sbound}), we have
\[
  \| \bbm\|_2\lesssim 1, \qquad   |\bbm(z)|\lesssim\frac{1}{\varrho(z)+\mbox{dist}(z, \mbox{supp}\varrho)}, \qquad  \frac{1}{|\bbm(z)|} \lesssim 1+|z|, 
\]
and 
\[    \varrho(z) \lesssim \im \bbm(z) \lesssim (1+|z|)^2 \| \bbm(z)\|^2_\infty\varrho(z),
\]
where we recall that $\lesssim$ indicates a bound up to  an unspecified multiplicative constant 
that is independent of $N$ (also, recall that the last three inequalities are understood in coordinate-wise sense). 
\end{theorem}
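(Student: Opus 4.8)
The plan is to extract all four inequalities from the single fixed-point identity $-1/\bbm = z + S\bbm$, proceeding in the order: $\ell^2$-bound first, then the two-sided comparison $\varrho \lesssim \im\bbm \lesssim \dots$, then the $\ell^\infty$-bound, and finally the lower bound on $|\bbm|$. The underlying structure is that every bound below comes either from taking the imaginary part of the equation or from rearranging it and using positivity of $S$ together with \eqref{sbound1}.

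First I would take the imaginary part of \eqref{vectordyson3}. Writing $m_j = -1/(z + (S\bbm)_j)$, one gets the key identity
\[
   \im m_j \;=\; |m_j|^2 \big( \eta + (S\,\im\bbm)_j \big), \qquad z = E+i\eta,
\]
since $\im(-1/w) = (\im w)/|w|^2$ and $\im(z+(S\bbm)_j) = \eta + (S\im\bbm)_j$ because $S$ has real nonnegative entries. This immediately gives the lower bound $\im m_j \ge |m_j|^2 (S\im\bbm)_j$, and combined with $|m_j| = |z+(S\bbm)_j|^{-1} \gtrsim (1+|z|)^{-1}$ (which is the $1/|\bbm| \lesssim 1+|z|$ claim — proved directly from $|m_j| \ge 1/(|z| + \|S\|_\infty\|\bbm\|_2 \cdot \text{something})$; here one first needs a crude a priori $\ell^2$ bound, see below), it yields $\im m_j \gtrsim (1+|z|)^{-2} (S\im\bbm)_j \gtrsim (1+|z|)^{-2}\,\varrho$ using the lower bound $s_{ij}\ge c/N$ and $\varrho = \frac1\pi\langle\im\bbm\rangle$. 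That is the left inequality $\varrho \lesssim \im\bbm$. For the right inequality $\im m_j \lesssim (1+|z|)^2\|\bbm\|_\infty^2 \varrho$, I return to the same identity: $\im m_j = |m_j|^2(\eta + (S\im\bbm)_j) \le \|\bbm\|_\infty^2(\eta + C\langle\im\bbm\rangle) \lesssim \|\bbm\|_\infty^2 \varrho$ for $\eta$ in the relevant range (one absorbs the $\eta$ term into $\varrho$ using that $\varrho \gtrsim \eta/(1+|z|^2)$, or simply notes $\eta \le \pi\varrho$ is false in general but $\eta \lesssim$ const and $\varrho$ can be small, so one keeps the $(1+|z|)^2$ factor to handle it carefully). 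The $\ell^2$-bound $\|\bbm\|_2 \lesssim 1$ I would get by multiplying the equation by $\bar m_j$, summing, taking imaginary parts: $\langle \im\bbm\rangle = \langle |\bbm|^2(\eta + S\im\bbm)\rangle \ge \eta\langle|\bbm|^2\rangle$, which controls $\|\bbm\|_2$ for $\eta$ bounded below — for the uniform-in-$\eta$ statement one instead uses the Stieltjes representation \eqref{reps} with support in a fixed compact set (Theorem~\ref{exuniqvector}) to get $\|\bbm\|_2 \lesssim 1$ away from that set and a separate bulk argument inside.

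The $\ell^\infty$-bound $|m_j| \lesssim (\varrho + \mathrm{dist}(z,\mathrm{supp}\,\varrho))^{-1}$ is the main obstacle and I would treat it last. The idea: from $m_j(z+(S\bbm)_j) = -1$ we have $|m_j|^{-1} = |z + (S\bbm)_j| \ge |\im(z+(S\bbm)_j)| = \eta + (S\im\bbm)_j \ge c\langle\im\bbm\rangle = c\pi\varrho$, giving $|m_j| \lesssim 1/\varrho$ directly from the lower bound $s_{ij}\ge c/N$. This is the easy half. To also get the $\mathrm{dist}(z,\mathrm{supp}\,\varrho)$ improvement when $\varrho$ is tiny, I would use the representation $m_j(z) = \int \nu_j(d\tau)/(\tau-z)$: when $E = \re z$ is at distance $\ge \delta$ from $\mathrm{supp}\,\nu_j \subseteq \mathrm{supp}\,\varrho$ (the supports coincide by the flatness/positivity, or at least $\mathrm{supp}\,\nu_j \subseteq \mathrm{supp}\,\varrho$), then $|m_j(z)| \le \int \nu_j(d\tau)/|\tau - z| \le 1/\mathrm{dist}(z,\mathrm{supp}\,\varrho)$ since $\nu_j$ is a probability measure. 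Combining the two estimates with a $\max$ gives $|m_j| \lesssim (\varrho + \mathrm{dist}(z,\mathrm{supp}\,\varrho))^{-1}$. The delicate point I expect to fight with is justifying that $\mathrm{supp}\,\nu_j$ is contained in $\mathrm{supp}\,\varrho$ uniformly — i.e. that no individual generating measure charges a region where the averaged density vanishes; this follows from the identity $\im m_j = |m_j|^2(S\im\bbm)_j$ again, which forces $\im m_j = 0$ wherever $\langle\im\bbm\rangle = 0$ because $S$ has a strictly positive kernel, so $\nu_j$ and $\varrho$ have the same support. Finally, the lower bound $1/|\bbm| \lesssim 1 + |z|$, equivalently $|m_j| \gtrsim (1+|z|)^{-1}$, comes from $|m_j|^{-1} = |z + (S\bbm)_j| \le |z| + \|S\|_\infty \|\bbm\|_2 \lesssim 1 + |z|$ once $\|\bbm\|_2 \lesssim 1$ is in hand — so this step must be sequenced after the $\ell^2$-bound but can be stated early since it feeds the $\varrho \lesssim \im\bbm$ argument; I would organize the writeup so the logical dependencies ($\ell^2 \Rightarrow$ lower bound on $|m_j| \Rightarrow \varrho\lesssim\im\bbm$; imaginary-part identity $\Rightarrow \im\bbm\lesssim\varrho$; support identification $\Rightarrow \ell^\infty$-bound) are laid out explicitly.
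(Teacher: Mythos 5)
Your route is essentially the paper's: every bound is read off from the imaginary part of the Dyson equation, $\im\bbm=|\bbm|^2(\eta+S\im\bbm)$, together with the two-sided bound on $s_{ij}$ and the Stieltjes representation. The one genuine gap is in your $\ell^2$ bound, and since you correctly note that the remaining steps ($1/|\bbm|\lesssim 1+|z|$, hence $\varrho\lesssim\im\bbm$) are sequenced after it, the gap propagates. After averaging the identity you discard the term $S\im\bbm$ and keep only $\eta\langle|\bbm|^2\rangle$, which yields $\|\bbm\|_2^2\le\pi\varrho/\eta$ --- useless as $\eta\to 0$ --- and your proposed patch (``Stieltjes representation away from the support and a separate bulk argument inside'') names no bulk argument; the bound $|m_j|\le 1/\mathrm{dist}(z,\mathrm{supp}\,\nu_j)$ says nothing when $E$ lies in the bulk and $\eta$ is small. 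The fix is to keep the term you threw away: by the lower bound $s_{ij}\ge c/N$ one has $(S\im\bbm)_j\ge c\,\langle\im\bbm\rangle=c\pi\varrho(z)$, so averaging the identity gives
\[
\pi\varrho(z)\,=\,\langle\im\bbm\rangle\,\ge\,\big\langle\, |\bbm|^2\, S\im\bbm\,\big\rangle\,\ge\, c\,\pi\varrho(z)\,\|\bbm\|_2^2,
\]
and since $\varrho(z)>0$ for every $z\in\Cp$ (it is the harmonic extension of a probability measure) one divides by it to get $\|\bbm\|_2^2\le 1/c$ uniformly in $\eta$. This is the paper's argument and it is one line.

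The rest of your plan is sound and matches the paper step for step: the pointwise bound $\im\bbm\ge c|\bbm|^2\varrho$ gives $|\bbm|\lesssim 1/\varrho$ (your variant via $|m_j|^{-1}\ge\eta+(S\im\bbm)_j\ge c\pi\varrho$ is equivalent); $1/|m_j|\le|z|+C\|\bbm\|_1\le|z|+C\|\bbm\|_2$ gives the lower bound on $|\bbm|$; feeding that back in gives $\varrho\lesssim\im\bbm$; and the upper bound on $\im\bbm$ absorbs the $\eta$ term via $\varrho(z)\gtrsim\eta/(1+|z|)^2$, exactly as you describe. One remark on the point you expected to fight with: the inclusion $\mathrm{supp}\,\nu_j\subseteq\mathrm{supp}\,\varrho$ requires no kernel-positivity argument --- $\varrho=\frac1N\sum_k\nu_k$ is an average of nonnegative measures, so $\nu_j\le N\varrho$ and the supports are nested for free.
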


\begin{proof} For simplicity, in the proof we assume that $|z|\lesssim 1$; the
large $z$ regime is much easier and follows directly from the Stieltjes transform representation of $\bbm$.
Taking the imaginary part of the Dyson equation \eqref{vectordyson3}, we have
\be\label{impart}
  \frac{\im \bbm}{|\bbm|^2} = \eta + S\im \bbm.
\ee
Using the lower bound from \eqref{sbound1}, we get 
\[
     S\im \bbm \ge c \langle \im \bbm \rangle \ge c\varrho
\]     
thus
\be\label{346}
    \im \bbm \ge c|\bbm|^2 \varrho.
\ee
Taking the average of both sides and dividing by $\varrho>0$, we get $\|\bbm\|_2\lesssim 1$.
Using $\im \bbm\le |\bbm|$, we immediately get an upper bound on $|\bbm|\lesssim 1/\varrho$.
The alternative bound 
\[
  |\bbm(z)|\lesssim \frac{1}{\mbox{dist}(z, \mbox{supp}\varrho)}
\]
follows from the Stieltjes transform representation \eqref{reps}.

Next, we estimate the rhs. of \eqref{vectordyson3} trivially, we have
\[
   \frac{1}{|m_i|} \le |z| + \sum_j s_{ij} |m_j|\lesssim |z| + \| \bbm\|_1 \le |z|+\| \bbm\|_2 \lesssim 1
\]
using  H\"older inequality in the last but one step. This gives the upper bound on $1/|\bbm|$.

Using this bound, we can conclude from \eqref{346} that $\varrho\lesssim \im \bbm$. The upper bound on $\im \bbm$
also follows from \eqref{impart} and \eqref{sbound1}:
\[
   \frac{\im \bbm}{|\bbm|^2} \le \eta  + S\im \bbm \le \eta + \langle \im \bbm \rangle \lesssim \eta + C\varrho.
\]
 Using that
 \[
   \varrho(z) \gtrsim \frac{\eta}{(1+ |z|)^2}
 \]
 which can be easily checked from \eqref{vr} and the boundedness of the support of $\varrho$, 
 we conclude the two-sided bounds on $\im \bbm$. 
 \end{proof}
 
Notice two weak points when using  this relatively simple argument. First, the lower bound in \eqref{sbound1} was heavily used,
although much less assumption is sufficient. We will not discuss  these generalizations in these notes, but see 
Theorem 2.11 of \cite{AjaErdKru2015} and remarks thereafter addressing this issue.
 Second, the upper bound on $|\bbm|$ for small $\eta$ is useful only inside the self-consistent bulk
spectrum or away from the support of $\varrho$,  it deteriorates near the edges of the spectrum.
In the next sections we remedy this situation. 

\subsubsection{Unconditional $\ell^2$-bound away from zero}\label{sec:uncond}

Next, we present a somewhat surprising result that shows that an $\ell^2$-bound on the solution,  $\| \bbm(z)\|_2$,
away from the only critical point $z=0$ is possible {\it without any condition on $S$}. The spectral parameter $z=0$ is clearly
critical, e.g. if $S=0$, the  solution $\bbm(z) = -1/z$  blows up.  Thus to control the behavior of $\bbm$ around $z\approx 0$ 
one needs some non degeneracy condition on $S$. We will not address  the issue of $z\approx 0$  in these
notes, but we remark that a fairly complete picture was obtained in Chapter 6 of \cite{AjaErdKru2015}
using the concept of {\it fully indecomposability}.

Before presenting the $\ell^2$-bound away from zero, 
we introduce 
an important  object, the {\bf saturated self-energy operator}, that will also play a key role later in the stability analysis:

\begin{definition}\label{def:F}
Let $S$ be a symmetric matrix with nonnegative entries and let $\bbm=\bbm(z)\in\Cp^N$ solve the vector Dyson equation  \eqref{vectordyson3}
for some fixed spectral parameter $z\in \Cp$. The matrix $F=(F_{ij})$ with 
\[
   F_{ij} := |m_i| s_{ij} |m_j|
\]
 acting as
\be\label{Fdef}
      F\bu  = |\bbm| S\big(|\bbm| \bu), \quad \mbox{i.e.} \quad  \big( F\bu\big)_i = |m_i| \sum_j s_{ij} |m_j| u_j
\ee
on any vector $\bu\in\Cp^N$, is called the {\bf saturated self-energy operator}. 
\end{definition}

Suppose that $S$ has strictly positive entries. Since $m_i\ne0$ from \eqref{vectordyson3},  clearly
$F$ has also positive entries, and 
 $F=F^*$. Thus the Perron-Frobenius theorem applies to $F$, and it guarantees that $F$ has a single largest eigenvalue $r$
 (so that for any other eigenvalue $\lambda$ we have $|\lambda|<r$) and the corresponding eigenvector $\bbf$ 
 has positive entries: $  F\bbf = r \bbf, \qquad \bbf> 0$.
 Moreover, since $F$ is symmetric, we have $\| F\|_2=r$ for the usual Euclidean matrix norm of $F$.

\begin{proposition}
Suppose that $S$ has strictly positive entries and let $\bbm$ solve \eqref{vectordyson3} for some $z=E+i\eta\in \Cp$. Then the norm of the saturated
self-energy operator is given by
\be\label{Fnorm}
    \|F\|_2= 1- \eta\frac{\langle \bbf |\bbm| \rangle}{\big\langle \bbf \frac{\im \bbm}{|\bbm|}\big\rangle},
\ee
in particular $\|F\|_2< 1$. Moreover,
\be\label{trivl2}
\| \bbm(z)\|_2\le \frac{2}{|z|}.
\ee
\end{proposition}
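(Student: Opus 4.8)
The plan is to establish \eqref{Fnorm} first and then deduce \eqref{trivl2} as a corollary. For \eqref{Fnorm}, the starting point is the imaginary part of the Dyson equation \eqref{vectordyson3}, which I write in the form
\[
  \frac{\im\bbm}{|\bbm|^2} = \eta + S\,\im\bbm.
\]
The idea is to feed this into the saturated operator $F$. Since $(F\bu)_i = |m_i|\sum_j s_{ij}|m_j|u_j$, applying $F$ to the vector $\im\bbm/|\bbm|$ gives $F\big(\im\bbm/|\bbm|\big)_i = |m_i|\sum_j s_{ij}(\im m_j) = |m_i|(S\im\bbm)_i$. Combining with the identity above, $(S\im\bbm)_i = \im m_i/|m_i|^2 - \eta$, so
\[
  F\Big(\frac{\im\bbm}{|\bbm|}\Big) = \frac{\im\bbm}{|\bbm|} - \eta|\bbm|.
\]
This is the key algebraic step: it shows that $\im\bbm/|\bbm|$ is an "approximate eigenvector" of $F$ with eigenvalue $1$, up to the explicit defect $\eta|\bbm|$, which is small when $\eta$ is small but crucially is a \emph{positive} vector (coordinate-wise), since $|\bbm|>0$ and $\im\bbm>0$.

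The second step is to extract $\|F\|_2 = r$ from this relation using the Perron--Frobenius eigenvector $\bbf$ of $F$ (which exists and is positive because $F$ is symmetric with strictly positive entries, as noted just before the proposition). Pairing the displayed identity with $\bbf$ and using self-adjointness of $F$, i.e. $\langle \bbf, F\bw\rangle = \langle F\bbf, \bw\rangle = r\langle \bbf, \bw\rangle$ with $\bw = \im\bbm/|\bbm|$, yields
\[
  r\Big\langle \bbf\,\frac{\im\bbm}{|\bbm|}\Big\rangle = \Big\langle \bbf\,\frac{\im\bbm}{|\bbm|}\Big\rangle - \eta\langle \bbf|\bbm|\rangle,
\]
and solving for $r$ gives exactly \eqref{Fnorm}. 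The strict inequality $\|F\|_2 < 1$ is then immediate, since $\bbf$, $|\bbm|$, $\im\bbm/|\bbm|$ are all strictly positive vectors (and $\eta>0$), so the subtracted term is strictly positive and finite. I should remark that all the averages here are with respect to $\langle\,\cdot\,\rangle = \frac1N\tr$ or $\frac1N\sum$, and that the denominator $\langle \bbf\,\im\bbm/|\bbm|\rangle$ is strictly positive, so the expression is well-defined.

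For \eqref{trivl2}, the plan is to use the fixed-point / self-improving bound in the style of the exercises in Section~\ref{sec:useful}: from $-1/\bbm = z + S\bbm$ one gets $|m_i|^{-1} \le |z| + (S|\bbm|)_i \le |z| + \|S\|_\infty\|\bbm\|_\infty$, but a cleaner route that avoids $\|S\|_\infty$ is to observe that $\|F\|_2<1$ already forces an $\ell^2$ bound. Indeed, taking the $\ell^2$-pairing of the imaginary-part equation $\im\bbm/|\bbm|^2 = \eta + S\im\bbm$ with $|\bbm|^2$, or more directly using $\|F\|_2 < 1$ together with the relation $F\big(\im\bbm/|\bbm|\big) = \im\bbm/|\bbm| - \eta|\bbm|$: since $(\mathbf 1 - F)\big(\im\bbm/|\bbm|\big) = \eta|\bbm|$ and $\|F\|_2<1$ ensures $(\mathbf 1-F)^{-1}$ exists with the wrong-sign issue handled by positivity, one can compare $\langle|\bbm|\rangle$ and the defect. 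Alternatively — and this is the route I would actually write — multiply the real and imaginary parts appropriately: from $-1/\bbm = z + S\bbm$, take the inner product of both sides with $\bbm$ in the $\langle\,\cdot\,\rangle$ sense; the $S\bbm$ term contributes $\langle \bbm, S\bbm\rangle$ which, after taking imaginary parts and using positivity of $S$, combines with $\eta\|\bbm\|_2^2$ to be controlled, leaving $\|\bbm\|_2 \le 2/|z|$ after estimating $\langle -\bbm/\bbm\rangle = -1 = z\langle\bbm\rangle + \langle\bbm, S\bbm\rangle$ and separating the $z\langle\bbm\rangle$ contribution. The expected main obstacle is getting the constant exactly $2$ rather than some larger constant: this requires carefully splitting into the regime $\|\bbm\|_2 \le 2/|z|$ (nothing to prove) versus $\|\bbm\|_2 > 2/|z|$ and running a continuity argument in $\eta$ (as in the preceding exercise on the support of $\nu_i$), starting from large $\eta$ where $\bbm \approx -1/z$ so $\|\bbm\|_2 \approx 1/|z| < 2/|z|$, and showing the bound cannot first be violated. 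The positivity structure ($F$ entrywise positive, $\bbm$ with positive imaginary part) is what makes every inequality go the right way; keeping track of that is the only real subtlety.
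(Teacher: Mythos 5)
Your derivation of \eqref{Fnorm} is exactly the paper's argument: take the imaginary part of the Dyson equation, multiply by $|\bbm|$ to get $\frac{\im\bbm}{|\bbm|}=\eta|\bbm|+F\frac{\im\bbm}{|\bbm|}$, and pair with the Perron--Frobenius eigenvector $\bbf$ using the self-adjointness of $F$. That half is complete and correct, including the strict inequality $\|F\|_2<1$ from positivity of $\bbf$, $|\bbm|$ and $\im\bbm$.

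The bound \eqref{trivl2} is where you have a genuine gap: you list several candidate attacks but none of them closes. The averaged identity $-1=z\langle\bbm\rangle+\langle\bbm S\bbm\rangle$ controls only the scalar $\langle\bbm\rangle$, and since $|\langle\bbm\rangle|\le\|\bbm\|_2$ goes the wrong way, "separating the $z\langle\bbm\rangle$ contribution" cannot yield an upper bound on $\|\bbm\|_2$. The continuity-in-$\eta$ argument is likewise not needed, and you never write down the self-improving inequality that would drive it. The missing observation --- and the reason the two halves of the proposition are stated together --- is that $\|F\|_2\le 1$ applied to the constant vector already does the job. Multiply the Dyson equation componentwise by $\bbm$ to get the vector identity $-z\bbm={\bf 1}+\bbm S\bbm$, and note that pointwise
\[
|(\bbm S\bbm)_i|=\Big|m_i\sum_j s_{ij}m_j\Big|\le |m_i|\sum_j s_{ij}|m_j|=(F{\bf 1})_i ,
\]
so that $\|\bbm S\bbm\|_2\le\|F{\bf 1}\|_2\le\|F\|_2\,\|{\bf 1}\|_2\le 1$ (recall $\|{\bf 1}\|_2=1$ in the normalized $\ell^2$ norm). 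Taking $\ell^2$ norms in $-z\bbm={\bf 1}+\bbm S\bbm$ then gives $|z|\,\|\bbm\|_2\le 1+\|F\|_2\le 2$ directly; the constant $2$ is exactly $1+\|F\|_2$, with no case distinction or continuity argument.
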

We remark that for the bounds $\| F\|_2< 1$ and \eqref{trivl2} it   is sufficient if $S$ has nonnegative entries 
instead of positive entries; the proof requires a bit more care, see Lemma 4.5 \cite{AjaErdKru2015}.


\begin{proof} Taking the imaginary part of \eqref{vectordyson3} and multiplying it by $|\bbm|$, 
we have
\be\label{perron}
   \frac{\im \bbm}{|\bbm|} = \eta |\bbm| + |\bbm| S\Big( |\bbm| \frac{\im \bbm}{|\bbm|}\Big) = \eta |\bbm| + F\, \frac{\im \bbm}{|\bbm|}.
 \ee
 Scalar multiply this equation by $\bbf$, use the symmetry of $F$ and $F\bbf = \|F\|_2\bbf$ to get
 \[
   \big\langle \bbf, \frac{\im \bbm}{|\bbm|}\big\rangle = \eta \langle \bbf |\bbm|\rangle +  \big\langle \bbf, F \frac{\im \bbm}{|\bbm|}\big\rangle
   = \eta \langle \bbf |\bbm|\rangle + \| F\|_2   \big\langle \bbf, \frac{\im \bbm}{|\bbm|}\big\rangle,
 \]
 which is equivalent to \eqref{Fnorm} (note that $\langle, \rangle$  as a binary operation is the scalar product while $\langle\cdot  \rangle$
 is the averaging).

 For the bound on $\bbm$, we write \eqref{vectordyson3} as $-z\bbm = 1+ \bbm S\bbm$, so taking the $\ell^2$-norm, we have
 \[
   \|\bbm\|_2 \le \frac{1}{|z|}\big( 1 + \| \bbm S\bbm\|_2\big) \le  \frac{1}{|z|}\big( 1 + \big\| |\bbm| S|\bbm|\big\|_2\big)
    =  \frac{1}{|z|}\big( 1 + \| F{\bf 1}\|_2\big)\le\frac{2}{|z|},
 \]
 where ${\bf 1} = (1,1,1,\ldots )$, note that $\| {\bf 1}\|_2=1$ and we used \eqref{Fnorm} in the last step. 
 \end{proof}

 \subsubsection{Bounds valid uniformly in the spectrum}

 In this section we introduce an extra regularity assumption that enables us to control $\bbm$ uniformly  throughout the
spectrum, including edges and cusps. 
For simplicity, we restrict our attention to the special case when $s_{ij}$ originates from a piecewise continuous
nonnegative profile function $S(x,y)$
defined on $[0,1]\times [0,1]$, i.e. we assume
\be\label{profile2}
     s_{ij} = \frac{1}{N} S\Big( \frac{i}{N}, \frac{j}{N}\Big).
 \ee
 We will actually need  that $S$ is piecewise 1/2-H\"older  continuous \eqref{piece holder}.
  
 \begin{theorem}\label{thm:unbound}
 Assume that $s_{ij}$ is given  by \eqref{profile2} with a piecewise H\"older-1/2 continuous function $S$
 with uniform lower and upper bounds  $c\le S(x,y)\le C$. Then for any $R>0$ and for any  $|z|\le R $ we have
 \[
     |\bbm(z)|\sim 1, \qquad  \im m_i(z)\sim \im m_j(z),
 \]
 where the implicit constants in the $\sim $ relation depend only on $c, C$ and $R$. In particular, 
 all components of $\im \bbm$ are comparable, hence 
 \be\label{compare}
  \im m_i \sim \langle  \im \bbm \rangle = \varrho. 
 \ee
 \end{theorem}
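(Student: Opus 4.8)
The plan is to bootstrap from the weaker bulk bounds of Theorem~\ref{thmbulk} (equivalently Theorem~\ref{thm:bulkbound}), which already give $\|\bbm\|_2\lesssim 1$, $1/|\bbm|\lesssim 1+|z|$, and the two-sided comparison $\varrho(z)\lesssim\im\bbm(z)\lesssim(1+|z|)^2\|\bbm(z)\|_\infty^2\varrho(z)$. The only missing piece is a \emph{uniform} upper bound $|\bbm(z)|\lesssim 1$ valid all the way to the spectral edges and cusps, where the naive bound $|\bbm|\lesssim 1/\varrho$ degenerates. Once that is in hand, the lower bound $|\bbm|\gtrsim 1$ follows from $1/|\bbm|\lesssim 1+|z|\le 1+R$, and the comparability $\im m_i\sim\im m_j$ follows by feeding $|\bbm|\sim 1$ back into the upper and lower halves of the $\im\bbm$ bound, which then collapse to $\im m_i\sim\varrho$ uniformly in $i$, giving \eqref{compare}.

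So the real work is the uniform $\ell^\infty$ bound on $\bbm$. First I would take the imaginary part of \eqref{vectordyson3} in the form \eqref{impart}, $\im\bbm/|\bbm|^2=\eta+S\im\bbm$, and rewrite it using the saturated self-energy operator $F$ from Definition~\ref{def:F}: dividing by $|\bbm|$ gives $\im\bbm/|\bbm| = \eta|\bbm| + F(\im\bbm/|\bbm|)$, i.e. $\bu := \im\bbm/|\bbm|$ solves $(I-F)\bu = \eta|\bbm|$. Since $\|F\|_2<1$ (the Perron--Frobenius bound \eqref{Fnorm}) one can in principle invert, but the norm of $(I-F)^{-1}$ blows up at the edges, so a purely $\ell^2$ spectral-gap argument is not enough. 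Instead the key structural input is the piecewise $1/2$-H\"older regularity \eqref{piece holder}: because $s_{ij}=\tfrac1N S(i/N,j/N)$ with $S$ piecewise $1/2$-H\"older and bounded below, the matrix $S$ ``mixes'' neighboring indices strongly, and one shows that the components of $\im\bbm$ (and of $|\bbm|$) cannot vary too wildly from one index to the next. Concretely I would derive a componentwise comparison of the form $|m_i - m_j|\lesssim |i-j|^{1/2}N^{-1/2}\,(\text{something bounded})$ plus a jump contribution across the finitely many partition points, by subtracting the Dyson equations at indices $i$ and $j$ and using $s_{ik}-s_{jk} = \tfrac1N[S(i/N,k/N)-S(j/N,k/N)]$ together with the already-known $\ell^2$ bound. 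This regularity propagation is exactly the mechanism behind the uniform $1/3$-H\"older continuity of the generating densities in Theorem~\ref{thm:cusp}, and the quantitative statement I need is a corollary of that analysis (this is the content of \cite{AEK1short} and \cite{AjaErdKru2015}, which I would cite).

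With componentwise comparability established, the argument closes as follows. Comparability of the $\im m_i$ means $\im m_i \sim \langle\im\bbm\rangle = \varrho$ for all $i$, so $\|\im\bbm\|_\infty \sim \varrho$. Plugging $\|\im\bbm\|_\infty\lesssim\varrho$ into the upper half of the Theorem~\ref{thmbulk} estimate, $\im\bbm\lesssim(1+|z|)^2\|\bbm\|_\infty^2\varrho$, is not yet a bound on $\|\bbm\|_\infty$ directly; instead I would argue by a dichotomy / continuity argument in $\eta$: at large $\eta$ (say $\eta\ge 2R$) the Stieltjes representation \eqref{reps} with support in $[-2\|S\|_\infty^{1/2},2\|S\|_\infty^{1/2}]$ gives $|\bbm|\le 1/\eta$ and in particular $|\bbm|\lesssim 1$; then using that $\bbm(z)$ is continuous in $z$ and that the set $\{|\bbm(z)|\le K\}$ is both open and (via the self-improving estimate coming from comparability plus \eqref{vectordyson3} in the form $1/|m_i|\le|z|+\langle|\bbm|\rangle$, where $\langle|\bbm|\rangle\le\|\bbm\|_2\lesssim 1$) effectively closed for a suitable $K=K(c,C,R)$, one propagates the bound $|\bbm|\le K$ down to all $\eta>0$ with $|z|\le R$. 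Note the crucial point: once $\im m_i\sim\varrho$ uniformly, the chain $1/|m_i|\le |z| + \sum_j s_{ij}|m_j| \le R + C\langle|\bbm|\rangle \le R + C\|\bbm\|_2 \lesssim 1$ already gives the lower bound $|\bbm|\gtrsim 1$ \emph{without} reference to $\varrho$, and the matching upper bound $|\bbm|\lesssim 1$ then comes from $\im\bbm\lesssim\|\bbm\|_\infty^2\varrho$ combined with $\im\bbm\gtrsim\|\bbm\|_\infty^2\varrho$ being impossible to reconcile with $\|\bbm\|_\infty$ large once $\varrho$ is nonzero, and from the away-from-support Stieltjes bound $|\bbm|\lesssim\mathrm{dist}(z,\mathrm{supp}\,\varrho)^{-1}$ when $\varrho(z)$ is small. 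I expect the main obstacle to be precisely the regularity-propagation step: turning the piecewise $1/2$-H\"older hypothesis on $S$ into a quantitative componentwise comparison of $\im\bbm$ that survives at the edges, where $\varrho\to 0$ and the operator $(I-F)^{-1}$ is nearly singular — this is where one genuinely needs the finer analysis of \cite{AEK1short}/\cite{AjaErdKru2015} rather than the elementary manipulations used in Theorem~\ref{thmbulk}.
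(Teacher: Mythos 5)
Your reduction of the theorem to a uniform upper bound $\|\bbm\|_\infty\lesssim 1$ is correct, and your first concrete step is exactly the paper's: subtracting the Dyson equations at indices $i$ and $j$ and using the piecewise $1/2$-H\"older continuity of $S$ together with $\|\bbm\|_2\lesssim 1$ yields
\[
\Big|\frac{1}{m_i}\Big|\le\Big|\frac{1}{m_j}\Big|+\|\bbm\|_2\Big(N\sum_k|s_{ik}-s_{jk}|^2\Big)^{1/2}\le\Big|\frac{1}{m_j}\Big|+C'\sqrt{\frac{|i-j|}{N}}\,.
\]
(Note that this naturally compares the \emph{reciprocals}, not $|m_i-m_j|$ as you wrote; converting it into a bound on $|m_i-m_j|$ would already require the upper bound on $|m_im_j|$ that you are trying to prove.) At this point, however, you defer the decisive step to the regularity analysis of \cite{AEK1short}/\cite{AjaErdKru2015} and to a continuity argument in $\eta$, and that is a genuine gap. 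First, within the logic of this paper the citation is circular: the $1/3$-H\"older regularity of Theorem~\ref{thm:cusp} is itself derived via the stability bound of Lemma~\ref{lm:stab}, whose hypothesis is precisely $|\bbm(z)|\sim1$, i.e.\ the statement being proved. Second, your continuity-in-$\eta$ closing step does not close: the two-sided bound $\varrho\lesssim\im\bbm\lesssim\|\bbm\|_\infty^2\varrho$ is perfectly consistent with $\|\bbm\|_\infty$ being large, and the claim that $\{|\bbm|\le K\}$ is ``effectively closed'' is asserted rather than established.

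The paper closes the argument with a short, self-contained pigeonhole estimate that needs neither a continuity argument nor any external regularity input. Fix $j$ and invert the displayed inequality to obtain the pointwise lower bound $|m_i|\ge\bigl[|1/m_j|+C'\sqrt{|i-j|/N}\bigr]^{-1}$ for every $i$. Summing the squares over $i$ and using $\|\bbm\|_2\lesssim1$ gives
\[
1\gtrsim\frac1N\sum_i\Bigg[\frac{1}{|1/m_j|+C'\sqrt{|i-j|/N}}\Bigg]^2\gtrsim\frac1N\sum_i\frac{1}{|m_j|^{-2}+|i-j|/N}\gtrsim\log|m_j|,
\]
so $|m_j|\lesssim1$ uniformly in $j$, directly for every $z$ with $|z|\le R$ and $\im z>0$, including at edges and cusps. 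The rest is as you say: $1/|m_i|\le|z|+\frac{C}{N}\sum_j|m_j|\lesssim1$ gives the matching lower bound, and the imaginary part of the Dyson equation together with $c/N\le s_{ij}\le C/N$ and $|\bbm|\sim1$ gives $\im\bbm\sim\eta+\langle\im\bbm\rangle$, hence \eqref{compare}. The mechanism you were missing is that a single large component $|m_j|$ forces, through the H\"older comparison of the reciprocals, a logarithmically divergent contribution to $\|\bbm\|_2^2$, so the $\ell^2$ bound alone already rules it out.
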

 We mention that this theorem also holds under weaker conditions.  Piecewise 1/2-H\"older  continuity
 can be replaced by a a weaker condition called {\it component regularity}, see  Assumption (C) in \cite{AEK1short}.
 Furthermore, the uniform lower bound 
 of $S(x,y)$ can be replaced with
 a condition  called   {\it diagonal positivity} see  Assumption (A) in \cite{AEK1short}
 but we omit these generalizations here.
 
 \begin{proof} We have already obtained an $\ell^2$-bound $\| \bbm\|_2\lesssim 1$ in Theorem~\ref{thmbulk}. Now
 we consider any two indices $i, j$, evaluate \eqref{vectordyson3} at these points and subtract them. From 
 \[
    -\frac{1}{m_i} = z+  (S\bbm)_i,\qquad    -\frac{1}{m_j} = z+  (S\bbm)_j  
 \]
 we thus obtain
 \[
   \Big|\frac{1}{m_i}\Big|\le  \Big|\frac{1}{m_j}\Big| + \sum_k |s_{ik}-s_{jk}| |m_k| 
   \le \Big|\frac{1}{m_j}\Big| + \|\bbm\|_2  \Big( N \sum_k |s_{ik}-s_{jk}|^2\Big)^{1/2}.
\] 
 Using \eqref{profile2} and the H\"older continuity (for simplicity assume $n=1$), we have
 \[
 N \sum_k |s_{ik}-s_{jk}|^2 \le \frac{1}{N} \sum_k \Big| S\Big(\frac{i}{N}, \frac{k}{N}\Big) 
  -S\Big(\frac{j}{N}, \frac{k}{N}\Big)\Big|^2 \le C\frac{|i-j|}{N},
 \]
 thus
 \[
   \Big|\frac{1}{m_i}\Big|\le  \Big|\frac{1}{m_j}\Big| + C' \sqrt{\frac{|i-j|}{N}}.
 \]
Taking the reciprocal and squaring it we have for every fixed $j$ that 
\[
     \frac{1}{N} \sum_i \Bigg[ \frac{1}{  \Big|\frac{1}{m_j}\Big| + C' \sqrt{\frac{|i-j|}{N}}    } \Bigg]^2 \le \frac{1}{N}\sum_i |m_i|^2 = \| \bbm\|_2^2\lesssim 1.
 \]
 The left hand side is can be estimated from below by
 \[
     \frac{1}{N} \sum_i \Bigg[ \frac{1}{  \Big|\frac{1}{m_j}\Big| + C' \sqrt{\frac{|i-j|}{N}}    } \Bigg]^2 \gtrsim
      \frac{1}{N} \sum_i  \frac{1}{  \frac{1}{|m_j|^2} + \frac{|i-j|}{N}}    \gtrsim \log |m_j|.
 \]
 Combining the last two inequalities, 
 this shows the uniform upper bound
 \[  
      |\bbm|\lesssim 1.
  \]
  The lower bound is obtained from
  \[
     \Big| \frac{1}{m_i}\Big| = \big| z + \sum_j s_{ij} m_j \big| \le |z| + \frac{C}{N}\sum |m_j| \lesssim 1
  \]
  using the upper bound  $|\bbm|\lesssim1$  and  $s_{ij}\lesssim 1/N$. This proves $|\bbm|\sim 1$.
            
  To complete the proof, note that comparability of the components of  $\im \bbm$ now follows from the imaginary part of \eqref{vectordyson3}, $|\bbm|\sim 1$  and from
  $S(\im \bbm) \sim \langle \im\bbm \rangle$:
    \begin{equation*}
	    \frac{\im \bbm}{|\bbm|^2} = \eta + S\big( \im \bbm\big) \quad \Longrightarrow \quad \im \bbm \sim \eta +\langle \im\bbm \rangle . \qedhere
    \end{equation*}
\end{proof}
 
  \subsection{Regularity of the solution and the stability operator}\label{sec:reg}

  In this section we prove some parts of the regularity Theorem~\ref{thm:cusp}.
  We will not go into the details of the edge and cusp analysis here, see \cite{AEK1short} for a shorter qualitative
  analysis and \cite{AjaErdKru2015} for the full quantitative analysis of all possible singularities. Here we
  will only show the 1/3-H\"older regularity \eqref{holdereq}. We will use this opportunity to introduce 
  and analyze the key stability operator of the problem which then will also be used in the random matrix part
  of our analysis.
  
It is to keep in mind that the small $\eta=\im z$ regime is critical; typically bounds of order $1/\eta$ 
 or $1/\eta^2$ are easy to obtain but these are useless for local analysis (recall that $\eta$ 
 indicates the scale of the problem).  For the fine regularity  properties of the solution, one needs 
 to take $\eta\to0$ with uniform controls. For the random matrix part, we will take $\eta$ down to $N^{-1+\gamma}$
 for any small $\gamma>0$, so any $1/\eta$ bound would not be affordable. 
  
  
\begin{proof}[{\it Proof of (i) and (iii) from Theorem~\ref{thm:cusp}.}]  We differentiate \eqref{vectordyson3} with respect to $z$ (note
 that $\bbm(z)$ is real analytic by \eqref{reps} for any $z\in \Cp$).
 \be\label{diff}
    -\frac{1}{\bbm} = z + S\bbm \quad \Longrightarrow \quad \frac{\partial_z \bbm}{\bbm^2} = 1 + S\partial_z\bbm \quad \Longrightarrow \quad 
    \partial_z\bbm = \frac{1}{1-\bbm^2 S} \bbm^2.
 \ee
 The  ($z$-dependent) linear operator  $1-\bbm^2 S$ is called the {\bf stability operator}.  We will later prove the following main bound
 on this operator:
 
 \begin{lemma}[Bound on the stability operator]\label{lm:stab}
 Suppose that for any $z\in\Cp$ with $|z|\le C$ we have  $|\bbm(z)|\sim 1$. Then 
\be\label{stab bound}
      \Big\| \frac{1}{1- \bbm^2 S}\Big\|_2\lesssim \frac{1}{\varrho(z)^2}= \frac{1}{ \langle \im \bbm \rangle ^2}.
 \ee
 In fact, the same bound also holds in the $\ell^\infty\to \ell^\infty$ norm, i.e.
 \be\label{stab bound infty}
      \Big\| \frac{1}{1- \bbm^2 S}\Big\|_\infty\lesssim \frac{1}{\varrho(z)^2}= \frac{1}{ \langle \im \bbm \rangle ^2}.
 \ee
 \end{lemma}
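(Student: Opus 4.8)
\textbf{Proof plan for Lemma~\ref{lm:stab}.} The plan is to exploit the self-consistency structure of $\bbm$ to control the stability operator in terms of the saturated self-energy operator $F$ from Definition~\ref{def:F}, whose norm we already understand through \eqref{Fnorm}. The starting point is the observation that $1-\bbm^2 S$ is a ``twisted'' version of $1-F$: writing $\bbm = |\bbm| e^{i\arg \bbm}$, one has $\bbm^2 S = U |\bbm| S |\bbm| U^{-1} \cdot (\text{phases})$, more precisely $\bbm^2 S [\bu] = \frac{m}{|m|} \cdot F\big[ \frac{|m|}{m} \cdot \frac{m}{|m|} \cdot \ldots\big]$; the clean statement is that if $\cU$ denotes the diagonal unitary with entries $m_i/|m_i|$ and $\cV$ the diagonal matrix with entries $|m_i|/\ol{m_i}$ — wait, more simply: $\bbm^2 S = \cU F \cU$ where $\cU = \diag(m_i/|m_i|)$ is unitary and $F = |\bbm| S |\bbm|$ is the real symmetric operator. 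So $1 - \bbm^2 S = \cU(\cU^{-1} - F)\cU = \cU(\ol{\cU} - F)\cU$, and since $\cU$ is unitary, $\|(1-\bbm^2 S)^{-1}\|_2 = \|(\ol{\cU} - F)^{-1}\|_2$, reducing everything to a perturbation of the self-adjoint operator $F$ whose top eigenvalue $r = \|F\|_2$ is strictly below $1$ with the quantitative gap supplied by \eqref{Fnorm}.

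The next step is to estimate $\|(\ol{\cU}-F)^{-1}\|_2$. I would split according to the spectral decomposition of $F$: let $\bbf$ be the Perron--Frobenius eigenvector, $P = |\bbf\rangle\langle\bbf|$ the projection onto it (normalized in the relevant inner product), and $Q = 1-P$. On $\mathrm{Ran}\,Q$, the operator $F$ has spectrum bounded by $r' < r$ with a gap of order $\varrho$ (this ``spectral gap of $F$'' estimate is the analogue of the statement that $1-m_{sc}^2$ is separated from zero away from the edge, and is where the bulk assumption $|\bbm|\sim1$ together with \eqref{compare} enters); so $\|Q(\ol{\cU}-F)^{-1}Q\| = O(1)$ trivially since $\|\ol{\cU}\| = 1$ and $F|_Q$ is bounded away from $1$. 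The dangerous direction is $\bbf$: here one uses that $\langle \bbf, (\ol{\cU} - F)\bbf\rangle = \langle\bbf, \ol{\cU}\bbf\rangle - r$, and the key is to show that the real part (or modulus) of this scalar is bounded below by $c\varrho^2$. Using $\ol{\cU}_i = \ol{m_i}/|m_i| = 1 - i\,(\im m_i)/|m_i| + O((\im m_i)^2/|m_i|^2)$ and $\im m_i \sim \varrho$ from \eqref{compare}, one gets $1 - \langle\bbf,\ol{\cU}\bbf\rangle \sim \varrho \cdot (\text{something}) + $ and combining with $1 - r \sim \eta\langle\bbf|\bbm|\rangle/\langle \bbf \im\bbm/|\bbm|\rangle$ from \eqref{Fnorm} plus $\eta \lesssim \varrho$-type relations, the total denominator is $\gtrsim \varrho^2$. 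This yields \eqref{stab bound}.

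The main obstacle is precisely the last estimate — controlling $\langle\bbf, (\ol{\cU}-F)\bbf\rangle$ from below by $\varrho^2$ rather than merely by $\varrho$ or by $\eta$. The subtlety is that both the ``off-diagonal'' contribution $1 - \re\langle\bbf,\ol{\cU}\bbf\rangle$ (which is quadratic in $\im\bbm/|\bbm| \sim \varrho$) and the ``gap'' contribution $1-r$ (which via \eqref{Fnorm} is $\eta$ times a ratio that itself can be as small as $\varrho$) must be handled simultaneously and shown not to cancel; one has to expand $\ol\cU$ to second order and keep track of the imaginary parts carefully, using \eqref{perron} to relate $\eta$, $\bbm$, and $\im\bbm/|\bbm|$. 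For the $\ell^\infty\to\ell^\infty$ bound \eqref{stab bound infty}, after establishing the $\ell^2$ bound I would use that $S$ (hence $F$) is a bounded operator from $\ell^\infty$ to $\ell^\infty$ with norm $O(1)$ because of the upper bound $s_{ij}\le C/N$, together with $|\bbm|\sim 1$, and bootstrap: from the resolvent identity $(1-\bbm^2 S)^{-1} = 1 + \bbm^2 S (1-\bbm^2 S)^{-1}$, the $\ell^\infty$ norm of the left side is controlled by $1 + \|\bbm^2 S\|_{\infty\to 2}\,\|(1-\bbm^2 S)^{-1}\|_{2\to 2}\,\|(\text{a compatible factor})\|$; more cleanly, one writes $(1-\bbm^2S)^{-1} w = w + \bbm^2 S (1-\bbm^2 S)^{-1} w$ and notes $\|\bbm^2 S u\|_\infty \lesssim \|S u\|_\infty \lesssim \langle |u| \rangle \le \|u\|_2$, so $\|(1-\bbm^2S)^{-1}w\|_\infty \lesssim \|w\|_\infty + \|(1-\bbm^2S)^{-1}w\|_2 \lesssim \|w\|_\infty + \varrho^{-2}\|w\|_2 \lesssim \varrho^{-2}\|w\|_\infty$, using $\|\cdot\|_2 \le \|\cdot\|_\infty$. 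This last step is routine once the $\ell^2$ bound is in hand.
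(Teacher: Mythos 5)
Your overall strategy coincides with the paper's: symmetrize $1-\bbm^2S$ into ``unitary minus the self-adjoint saturated operator $F$'', exploit the Perron--Frobenius eigenvector and the spectral gap of $F$, extract the $\varrho^2$ from the overlap of that eigenvector with the unitary, and bootstrap to $\ell^\infty$ with a resolvent identity (your one-sided version of the bootstrap, using $\|S u\|_\infty\lesssim\|u\|_2\le\|u\|_\infty$, is fine and is essentially Exercise~\ref{stabex}). Two intermediate claims are wrong as stated, though. First, the algebra: $(\cU F\cU)_{ij}=m_i s_{ij} m_j$ whereas $(\bbm^2S)_{ij}=m_i^2 s_{ij}$, so $\bbm^2 S\ne \cU F\cU$; the correct identity is $(1-\bbm^2S)=|\bbm|\big(1-e^{2i\bvarphi}F\big)|\bbm|^{-1}$ with $\bbm=e^{i\bvarphi}|\bbm|$, so the relevant unitary is $e^{-2i\bvarphi}=(\ol{\bbm}/|\bbm|)^2$, the \emph{square} of your $\ol\cU$, and the conjugation is by the positive diagonal $|\bbm|$ (harmless since $|\bbm|\sim1$); also $1-\cU F\cU=\cU(\cU^{-2}-F)\cU$, not $\cU(\cU^{-1}-F)\cU$. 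Second, under the flatness assumption $s_{ij}\ge c/N$ and $\bbf\sim 1$ the gap of $F$ below its top eigenvalue is of order \emph{one} (Lemma~\ref{lm:gap}, Lemma~\ref{PFlemma}), not of order $\varrho$; if it really were only $\varrho$, your scheme would yield $\varrho^{-3}$ rather than $\varrho^{-2}$.

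The genuine gap is the inversion of $U-F$ with $U=e^{-2i\bvarphi}$. You split along $P=|\bbf\rangle\langle\bbf|$ and $Q=1-P$ and declare the $Q$-block ``trivially $O(1)$'', isolating the $\bbf$-direction as the only dangerous one. This fails as an argument because $U$ does not commute with $P$ when $\bbm$ is non-constant: the cross blocks $PUQ$ and $QUP$ are generically of order one, so invertibility cannot be decided block by block. (Your heuristic is exactly the argument that works in the Wigner case, where $U$ is a scalar.) The missing ingredient is Lemma~\ref{lm:UT}, which gives $\|(U-F)^{-1}\|_2\lesssim \big(\mbox{Gap}(F)\cdot\alpha\big)^{-1}$ with $\alpha=|1-\|F\|_2\langle\bbf,U\bbf\rangle|$, and whose proof requires a three-regime case distinction comparing $\|Q\bw\|_2^2$ and $\|PU\bbf\|_2^2$ with $\alpha$. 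By contrast, the step you flag as the main obstacle --- a possible cancellation between $1-\|F\|_2$ and $1-\re\langle\bbf,U\bbf\rangle$ --- is not an obstacle: both are nonnegative since $\|F\|_2\le1$ and $|\langle\bbf,U\bbf\rangle|\le1$, so no Taylor expansion of the phase is needed and one simply has
\[
\big|1-\|F\|_2\langle\bbf,U\bbf\rangle\big|\ \gtrsim\ \re\big[1-\langle\bbf,U\bbf\rangle\big]\ =\ 2\Big\langle\frac{(\im\bbm)^2\,\bbf^2}{|\bbm|^2}\Big\rangle\ \gtrsim\ \langle\im\bbm\rangle^2 .
\]
So the quantitative heart of your plan is correct; what must be added is the operator-inversion lemma that converts the order-one gap and this overlap bound into the bound on $\|(U-F)^{-1}\|_2$.
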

\medskip
 \noindent By Theorem~\ref{thm:unbound} we know that under conditions of Theorem~\ref{thm:cusp}, we have $\| \bbm\|\sim1$, so the lemma is
 applicable.
 
 Assuming this lemma for the moment, and using that $\bbm$ is analytic on $\Cp^N$,  we conclude from \eqref{stab bound infty} that
 \[
     |\partial_z \im \bbm|  = \frac{1}{2}|\partial_z \bbm| \lesssim \frac{1}{ \langle \im \bbm \rangle ^2}\sim\frac{1}{  (\im \bbm)^2},
 \]
 i.e. the derivative of $(\im \bbm(z))^3$ is bounded.
 Thus $z\to \im \bbm(z)$ is a 1/3-H\"older regular function on the open upper half plane with a 
 uniform H\"older constant. Therefore  $\im \bbm(z)$ extends to the real axis as a 1/3-H\"older continuous function.
 This proves \eqref{holdereq}. Moreover, it is real analytic away from the edges of the self-consistent spectrum
 $\fS = \{ \tau \in \R\; : \; \varrho(\tau)>0\}$; indeed on $\fS$ it  satisfies an analytic ODE \eqref{diff} with bounded coefficients
 by \eqref{stab bound infty} while outside of the closure of $\fS$ the density is zero. 
  \end{proof}
 
  \begin{exercise}
 Assume the conditions  of Theorem~\ref{thm:cusp}, i.e.  \eqref{sbound} and that $S$ is piecewise H\"older continuous \eqref{piece holder}.
  Prove that the saturated self-energy operator has norm 1 on the imaginary axis
 exactly on the support of the self-consistent density of states. In other words, 
 \[
      \lim_{\eta\to 0+}\| F(E+i\eta)\|_2=1 \quad \mbox{if and only if} \quad E\in \textup{supp}\, \varrho.
  \]
  {\it Hint:} First prove that the Stieltjes transform of a 1/3-H\"older continuous function 
  with compact support is itself 1/3-H\"older continuous up to the real line. 
 \end{exercise}

 \subsection{Bound on the stability operator}
 \begin{proof}[{\it Proof of Lemma~\ref{lm:stab}}]
 
 The main mechanism for the stability bound \eqref{stab bound} goes through the operator  $F =|\bbm|S\big( |\bbm| \cdot\big)$ 
 defined in \eqref{Fdef}.
 We know that $F$ has a single largest eigenvalue, but in fact under the condition \eqref{sbound1} this matrix has a substantial gap
 in its spectrum below the largest eigenvalue. To make this precise,
 we start with a definition:

\begin{definition}
For a  hermitian  matrix  $T$ the {\bf spectral gap} $\mbox{Gap}(T)$ is the difference 
between the two largest eigenvalues of $|T|=\sqrt{TT^*}$. If $\| T\|_2$ is a degenerate eigenvalue of $|T|$, then the
gap is zero by definition.
\end{definition}

The following simple lemma shows that matrices with nonnegative entries tend to have a positive gap:

\begin{lemma}\label{lm:gap} Let $T=T^*$ have nonnegative entries, $t_{ij} = t_{ji}\ge 0$ and let $\bh$ be
the Perron-Frobenius eigenvector, $T\bh = \| T\|_2\bh$ with $\bh\ge 0$. Then
\[
    \mbox{Gap}(T)\ge \Big(\frac{ \| \bh\|_2}{\|\bh\|_\infty}\Big) \cdot \min_{ij} t_{ij}.
\]
\end{lemma}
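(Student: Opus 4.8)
\textbf{Proof proposal for Lemma~\ref{lm:gap}.}
The plan is to exploit the Perron--Frobenius structure of $T$: the top eigenvector $\bh$ is (strictly) positive, and any eigenvector $\bw$ for a second eigenvalue $\lambda$ with $|\lambda|<\|T\|_2$ must be orthogonal to $\bh$, hence it must change sign. I would quantify how much a sign-changing unit vector must ``lose'' when hit by $T$, using the strict positivity of the entries $t_{ij}\ge \min_{ij}t_{ij}=:\tau_{\min}$.

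First I would reduce to the relevant normalized quantities. Write $r:=\|T\|_2$ and let $\mu$ be the second largest eigenvalue of $|T|$, so $\mbox{Gap}(T)=r-\mu$. Since $T=T^*$, by the variational (min--max) principle we have
\[
    \mu \;=\; \max\Big\{ \,|\langle \bw, T\bw\rangle| \;:\; \|\bw\|_2=1,\ \langle \bw,\bh\rangle=0\,\Big\}.
\]
So it suffices to show that for every real unit vector $\bw\perp \bh$ one has $|\langle \bw, T\bw\rangle|\le r - (\|\bh\|_2/\|\bh\|_\infty)\,\tau_{\min}$. (The complex case follows by splitting into real and imaginary parts, both of which are orthogonal to the real vector $\bh$; I'd handle that at the end as a routine remark.) The key computational step is the identity/estimate
\[
  \langle \bh, |\bw|\rangle^2 \;-\; \langle \bw, T\bw\rangle \, \langle \bh, |\bw| \rangle \;\text{-type bound},
\]
more precisely I would compare $\langle \bw,T\bw\rangle$ with $\langle |\bw|, T|\bw|\rangle \le r \||\bw|\|_2^2 = r$, and control the defect
\[
   \langle |\bw|, T|\bw|\rangle - \langle \bw, T\bw\rangle \;=\; \sum_{ij} t_{ij}\big(|w_i||w_j| - w_i w_j\big)\;\ge\; \tau_{\min}\sum_{ij}\big(|w_i||w_j| - w_i w_j\big) \;=\; \tau_{\min}\Big(\|\bw\|_1^2 - \langle \mathbf 1,\bw\rangle^2\Big),
\]
where I write $\|\bw\|_1=\sum_i|w_i|$ (unnormalized) and $\mathbf 1=(1,\dots,1)$. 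This is nonnegative and vanishes only when $\bw$ has a fixed sign.

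The heart of the matter is to turn ``$\bw\perp\bh$'' into a lower bound on $\|\bw\|_1^2-\langle\mathbf 1,\bw\rangle^2$ in terms of $\|\bh\|_2/\|\bh\|_\infty$. Decompose $\bw=\bw_+-\bw_-$ into positive and negative parts with disjoint supports; orthogonality to $\bh>0$ forces both $\bw_+$ and $\bw_-$ to be nonzero, and $\langle\bh,\bw_+\rangle=\langle\bh,\bw_-\rangle=:s>0$. Then $\|\bw\|_1=\|\bw_+\|_1+\|\bw_-\|_1$ while $\langle\mathbf 1,\bw\rangle = \|\bw_+\|_1-\|\bw_-\|_1$, so $\|\bw\|_1^2-\langle\mathbf 1,\bw\rangle^2 = 4\|\bw_+\|_1\|\bw_-\|_1$. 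Now $s=\langle\bh,\bw_\pm\rangle\le \|\bh\|_\infty\|\bw_\pm\|_1$, giving $\|\bw_\pm\|_1\ge s/\|\bh\|_\infty$; and from $1=\|\bw\|_2^2=\|\bw_+\|_2^2+\|\bw_-\|_2^2$ together with Cauchy--Schwarz $s=\langle\bh,\bw_\pm\rangle\le\|\bh\|_2\|\bw_\pm\|_2$ one gets $\|\bw_\pm\|_2\ge s/\|\bh\|_2$, hence $1\ge 2s^2/\|\bh\|_2^2$, i.e. $s\le \|\bh\|_2/\sqrt2$. Combining, I expect to land on $4\|\bw_+\|_1\|\bw_-\|_1\ge 4s^2/\|\bh\|_\infty^2$, and I need this to be at least $(\|\bh\|_2/\|\bh\|_\infty)^2$; this forces me to instead lower-bound $s$ rather than upper-bound it, so the genuinely delicate point is extracting the \emph{right} inequality between $s$, $\|\bw_\pm\|_1$, $\|\bw_\pm\|_2$. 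The clean way is: by Cauchy--Schwarz on the \emph{support} of $\bw_\pm$ one only gets an upper bound on $s$, so instead I would normalize differently --- test the quadratic form on the specific competitor $\bw$ built to be ``as sign-changing as possible'' is not needed; rather, I bound $\langle\bw,T\bw\rangle = r - (\langle|\bw|,T|\bw|\rangle-\langle\bw,T\bw\rangle) - (r-\langle|\bw|,T|\bw|\rangle) \le r - \tau_{\min}\cdot 4\|\bw_+\|_1\|\bw_-\|_1$ and then minimize $4\|\bw_+\|_1\|\bw_-\|_1$ over all sign-changing unit vectors orthogonal to $\bh$; the minimum is achieved at a two-valued $\bw$ and evaluates (after the elementary optimization $\|\bw_+\|_1\|\bw_-\|_1\ge \frac14(\|\bh\|_2/\|\bh\|_\infty)^2$ under the constraints $\|\bw\|_2=1$, $\langle\bh,\bw\rangle=0$) to exactly $(\|\bh\|_2/\|\bh\|_\infty)^2$. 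I expect this last constrained optimization to be the main obstacle: it is where all the structure concentrates, and it must be done carefully enough to produce precisely the stated constant $(\|\bh\|_2/\|\bh\|_\infty)\cdot\min_{ij}t_{ij}$ for the gap.
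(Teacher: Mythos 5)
Your framing is fine up to a point: the variational characterization of the second eigenvalue of $|T|$ over unit vectors orthogonal to $\bh$ is correct, as is the positivity of each term $t_{ij}(|w_i||w_j|-w_iw_j)$ and the identity $\|\bw\|_1^2-\langle {\bf 1},\bw\rangle^2=4\|\bw_+\|_1\|\bw_-\|_1$. But the argument has a genuine gap precisely at the step you flag as "the main obstacle", and the inequality you would need there is in fact false. Take $\bh$ constant and $\bw=(1,-1,0,\dots,0)/\sqrt2$: this is a unit vector orthogonal to $\bh$ with $4\|\bw_+\|_1\|\bw_-\|_1=2$, independent of $N$, while $\|\bh\|_2/\|\bh\|_\infty=\sqrt N$ in the Euclidean normalization you are using. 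So no constrained optimization can produce $4\|\bw_+\|_1\|\bw_-\|_1\gtrsim \|\bh\|_2/\|\bh\|_\infty$, let alone its square. The loss occurs one step earlier: the bound $\langle|\bw|,T|\bw|\rangle\le \|T\|_2$ is sharp only when $|\bw|$ is aligned with $\bh$, and for sparsely supported $\bw$ it discards essentially the whole gap. Concretely, for $T=N^{-1}{\bf 1}{\bf 1}^*$ (so $\mbox{Gap}(T)=1$) and the $\bw$ above, one has $\langle|\bw|,T|\bw|\rangle=2/N$ and $\langle\bw,T\bw\rangle=0$, yet your chain only certifies $\langle\bw,T\bw\rangle\le 1-2/N$. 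Note also that the lemma is invoked to get $\mbox{Gap}(F)\sim1$ from $\min_{ij}F_{ij}\sim1/N$, so the eigenvector-dependent factor must supply a factor of order $N$; a bound of the form $\mbox{Gap}(T)\ge 4\min_{ij}t_{ij}\cdot\inf\|\bw_+\|_1\|\bw_-\|_1$ caps out at $O(\min_{ij}t_{ij})$ and cannot do this. Finally, you never treat the eigenvalues near $-\|T\|_2$, which the definition of $\mbox{Gap}$ through $|T|$ requires.

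The paper's intended argument (the hint to the exercise following the lemma) avoids this loss by keeping the Perron--Frobenius weights inside the square: for $\|T\|_2=1$ and a unit vector $\bu\perp\bh$ one has the exact identity
\[
\langle\bu,(1\mp T)\bu\rangle=\frac12\sum_{ij}t_{ij}\Big[u_i\Big(\frac{h_j}{h_i}\Big)^{1/2}\mp u_j\Big(\frac{h_i}{h_j}\Big)^{1/2}\Big]^2,
\]
which follows from $T\bh=\bh$ and $\sum_i u_i^2=1$. Bounding $t_{ij}\ge\min_{ij}t_{ij}$ and expanding yields $2\big(\sum_jh_j\big)\big(\sum_iu_i^2/h_i\big)\mp2\big(\sum_iu_i\big)^2$; the minus-sign case is then controlled by a weighted Cauchy--Schwarz inequality that exploits $\langle\bu,\bh\rangle=0$ quantitatively (write $\sum_iu_i=\sum_i(u_i/\sqrt{h_i})(\sqrt{h_i}-c\,h_i^{3/2})$ and optimize over $c$). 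This is where the factor $\|\bh\|_2/\|\bh\|_\infty$ genuinely comes from; the decomposition into $\bw_\pm$ and the term $4\|\bw_+\|_1\|\bw_-\|_1$ cannot see it. I recommend redoing the proof along these lines.
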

\begin{exercise}
Prove this lemma. Hint: Set $\|T\|_2=1$ and take a vector $\bu\perp\bh$, $\|\bu\|_2=1$. Verify that
\[
  \langle \bu, (1\pm T)\bu\rangle = \frac{1}{2}\sum_{ij}  t_{ij}\Big[ u_i \Big(\frac{h_j}{h_i}\Big)^{1/2}\pm u_i\Big(\frac{h_i}{h_j}\Big)^{1/2}\Big]^2
\]
and estimate it  from below.
\end{exercise}

Applying this lemma to $F$, we have the following:

\begin{lemma}\label{PFlemma}
Assume \eqref{sbound1} and let $|z|\le C$. Then $F$ has norm of order one, it has uniform spectral gap;
\[ 
    \|F\|_2\sim 1, \qquad \mbox{Gap}(F)\sim 1;
\]
and its $\ell^2$-normalized Perron-Frobenius eigenvector, $\bbf$ with $F\bbf=\| F\|_2\bbf$, has comparable components
\[
     \bbf\sim 1.
\]
\end{lemma}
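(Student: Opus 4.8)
\textbf{Proof plan for Lemma~\ref{PFlemma}.}
The plan is to combine the definition of $F$ in \eqref{Fdef} with the uniform two-sided bound $|\bbm|\sim 1$ (available from Theorem~\ref{thm:unbound}, since we are assuming \eqref{sbound1} and $|z|\le C$) and with the two-sided bound $c/N\le s_{ij}\le C/N$. First I would establish that $F$ has nonnegative (in fact strictly positive) entries and is symmetric, so that the Perron--Frobenius theorem applies: there is a simple largest eigenvalue equal to $\|F\|_2$ with a positive eigenvector $\bbf$, which I normalize in $\ell^2$. The matrix entries of $F$ are $F_{ij}=|m_i|s_{ij}|m_j|$, so from $|\bbm|\sim 1$ and $s_{ij}\sim 1/N$ we get $F_{ij}\sim 1/N$ uniformly in $i,j$.

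Next I would bound $\|F\|_2$. For the upper bound, note that for the test vector $\mathbf{1}=(1,1,\ldots)$ (which has $\|\mathbf 1\|_2=1$ in the normalized $\ell^2$ norm), $\|F\|_2\ge\langle\mathbf 1,F\mathbf 1\rangle=\frac1N\sum_{ij}F_{ij}\sim 1$ gives the lower bound $\|F\|_2\gtrsim 1$; for the upper bound, $\|F\|_2\le\|F\|_{\infty}=\max_i\sum_j F_{ij}\lesssim 1$ since each row sum is $\sim\frac1N\sum_j 1/N\cdot N = O(1)$ --- more precisely $\sum_j F_{ij}=|m_i|\sum_j s_{ij}|m_j|\lesssim 1$ by $|\bbm|\sim1$ and $\sum_j s_{ij}\lesssim 1$. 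Hence $\|F\|_2\sim 1$. (Alternatively, one can cite \eqref{Fnorm}, which already gives $\|F\|_2<1$, together with the lower bound just derived.)

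Then I would control the Perron--Frobenius eigenvector $\bbf$. From $F\bbf=\|F\|_2\bbf$ and $\|F\|_2\sim 1$ we have $f_i=\|F\|_2^{-1}(F\bbf)_i=\|F\|_2^{-1}|m_i|\sum_j s_{ij}|m_j|f_j$. Using $c/N\le s_{ij}\le C/N$ and $|\bbm|\sim 1$, the sum $\sum_j s_{ij}|m_j|f_j$ is, up to constants, equal to $\langle\bbf\rangle=\frac1N\sum_j f_j$ independently of $i$, so $f_i\sim\langle\bbf\rangle$ for every $i$; combined with $\|\bbf\|_2=1$ this forces $\langle\bbf\rangle\sim 1$ and therefore $\bbf\sim 1$ componentwise. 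In particular $\|\bbf\|_2/\|\bbf\|_\infty\sim 1$.

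Finally, for the spectral gap I would invoke Lemma~\ref{lm:gap} applied to $T=F$: since $F$ has nonnegative entries with $\min_{ij}F_{ij}\gtrsim 1/N$ (wait --- this is only $\gtrsim 1/N$, which would give a gap $\gtrsim 1/N$, not $\sim 1$). The resolution is that Lemma~\ref{lm:gap} should instead be used after observing that $F\sim \frac1N|\bbm|\mathbf 1\mathbf 1^t|\bbm| + (\text{error})$ is a small perturbation of a rank-one matrix: write $F = D S_0 D + D(S-S_0)D$ where $D=\diag(|\bbm|)$ and $S_0$ is the constant matrix with entries $\langle s\rangle/N$ or similar; one shows the non-Perron part of the spectrum of $F$ is controlled by $\|D(S-S_0)D\|_2$, but this need not be small in general. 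I expect the cleanest route --- and the main obstacle --- is to prove directly that the second-largest eigenvalue of $F$ is bounded away from $\|F\|_2$ by a constant: take $\bu\perp\bbf$, $\|\bu\|_2=1$, use the quadratic-form identity from the hint to Lemma~\ref{lm:gap}, namely $\langle\bu,(\|F\|_2 - F)\bu\rangle = \tfrac12\sum_{ij}F_{ij}[u_i(f_j/f_i)^{1/2}-u_j(f_i/f_j)^{1/2}]^2$ after the appropriate rescaling, and estimate the right-hand side from below using $F_{ij}\gtrsim 1/N$, $\bbf\sim 1$, and $\sum_i u_i f_i = 0$ to conclude $\langle\bu,(\|F\|_2-F)\bu\rangle\gtrsim \|\bu\|_2^2$, hence $\mathrm{Gap}(F)\sim 1$. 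Making this lower bound uniform in $N$ --- i.e. extracting a genuine constant rather than something decaying in $N$ from the sum of $N^2$ terms each of size $1/N$ --- is the delicate point, and it is exactly where the two-sided bound $s_{ij}\sim 1/N$ (not merely $s_{ij}\le C/N$) is essential.
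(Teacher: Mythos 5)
Your bounds $\|F\|_2\sim 1$ and $\bbf\sim 1$ are obtained exactly as in the paper: the upper bound on $\|F\|_2$ comes from \eqref{Fnorm}, the lower bound from $F_{ij}\sim N^{-1}$ tested against ${\bf 1}$, and the eigenvector comparability from the identity $\bbf=\|F\|_2^{-1}F\bbf\sim F\bbf\sim\langle\bbf\rangle$ together with $\|\bbf\|_2=1$. The only substantive issue is the gap, where the paper simply cites Lemma~\ref{lm:gap} with the inputs $\min_{ij}F_{ij}\gtrsim 1/N$ and $\|\bbf\|_2\sim\|\bbf\|_\infty$. Your observation that, read literally in the normalization $\|\bu\|_2^2=\frac1N\sum_i|u_i|^2$, the displayed inequality of Lemma~\ref{lm:gap} only yields $\mbox{Gap}(F)\gtrsim 1/N$ is a fair reading; the missing factor of $N$ is exactly the one produced by the double sum in the hint to that lemma, and the ``delicate point'' you flag does close. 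Indeed, for real $\bu\perp\bbf$ with $\frac1N\sum_iu_i^2=1$, using $F\bbf=\|F\|_2\bbf$ and then $F_{ij}\ge c/N$,
\[
\bigl\langle\bu,(\|F\|_2-F)\bu\bigr\rangle
=\frac{1}{2N}\sum_{ij}F_{ij}\Bigl[u_i\Bigl(\frac{f_j}{f_i}\Bigr)^{1/2}-u_j\Bigl(\frac{f_i}{f_j}\Bigr)^{1/2}\Bigr]^2
\ge\frac{c}{N^2}\Bigl[\sum_i\frac{u_i^2}{f_i}\sum_jf_j-\Bigl(\sum_iu_i\Bigr)^2\Bigr],
\]
and the bracket is $\gtrsim N^2$ by a two-case argument. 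Since $\bbf\sim 1$, the first term is always $\gtrsim N\cdot N$; so either $(\sum_iu_i)^2$ is at most half of it and we are done, or $(\sum_iu_i)^2\gtrsim N^2$, hence $\sum_i|u_i|\gtrsim N$, and the orthogonality $\sum_iu_if_i=0$ with $\bbf\sim1$ forces the positive and negative parts of $\bu$ to each have mass $\gtrsim N$, so that $(\sum_i|u_i|)^2-(\sum_iu_i)^2=4(\sum_iu_i^+)(\sum_ju_j^-)\gtrsim N^2$; one then concludes from the Cauchy--Schwarz bound $\sum_i\tfrac{u_i^2}{f_i}\sum_jf_j\ge(\sum_i|u_i|)^2$. (The $+$ sign version of the same identity controls eigenvalues near $-\|F\|_2$ without even using orthogonality, which is needed since $\mbox{Gap}$ is defined through $|F|$.) So your plan is sound and coincides with the paper's; the one step you left open is precisely the content of Lemma~\ref{lm:gap}, correctly normalized, and it goes through with the bookkeeping above.
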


\begin{proof} We have already seen that $\|F\|_2\le 1$. The  lower bound $\| F\|_2\gtrsim 1$ follows from 
$F_{ij} = |m_i|s_{ij}|m_j|\gtrsim 1/N$, in fact $F_{ij}\sim N^{-1}$, thus $\| F{\bf 1}\|_2\gtrsim 1$.
For the last statement, we write $\bbf = \|F\|^{-1}_2 F\bbf\sim F\bbf \sim \langle \bbf\rangle$
and then by normalization obtain $1=\| \bbf\|_2\sim \langle \bbf\rangle\sim \bbf$.
Finally the statement on the gap follows from Lemma~\ref{lm:gap} and that $\|\bbf\|_\infty\sim \|\bbf\|_2$. 
\end{proof}

 Armed with this information on $F$, we explain how $F$ helps to establish a bound on the stability operator.
 Using the polar decomposition  $\bbm =e^{i\bvarphi }|\bbm|$, we can write for any vector $\bw$
 \be\label{keysymm}
    (1-\bbm^2S)\bw = |\bbm|\big(1-e^{2i\bvarphi} F\big)|\bbm|^{-1}\bw .
 \ee
 Since $|\bbm|\sim 1$, it is sufficient to invert $1 -e^{2i\bvarphi} F$ or $e^{-2i\bvarphi}- F$. Since $F$ has a real spectrum,
 this latter  matrix should intuitively  be invertible unless $\sin 2\bvarphi \approx 0$. This intuition is indeed correct 
 if $\bbm$ and thus $e^{2i\bvarphi}$ were constant; the general case is more complicated.
 
 Assume first that we are in the generalized Wigner case, when $\bbm =m_{sc}\cdot {\bf 1}$, i.e. the solution 
 is a constant vector with components $m:= m_{sc}$. Writing $m= |m|e^{i\varphi}$ with some phase $\varphi$, we see that
 \[   
    1- m^2 S = 1- e^{2i\varphi}F.
 \]
 Since $F$ is hermitian and has norm bounded by 1, it has spectrum in $[-1,1]$. So without the phase the inverse of $1-F$ would
 be quite singular (basically, we would have  $\| F\|_2\approx 1- c\eta$, see \eqref{Fnorm} at least in the bulk spectrum). The phase $e^{2i\varphi}$
 however rotates $F$ out of the real axis, see the picture.
  \bigskip
 
 \centerline{\includegraphics[width=7cm]{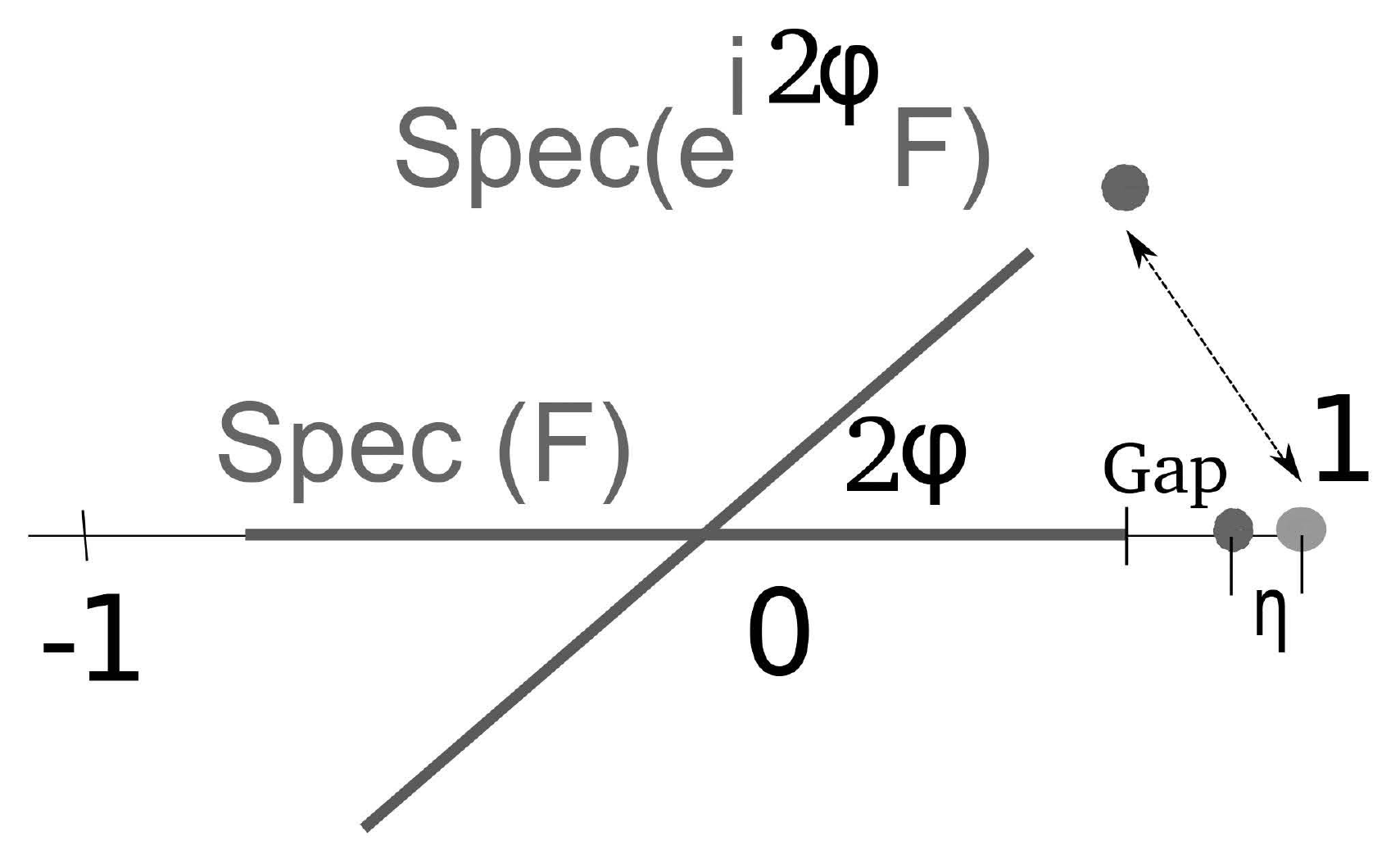}}
 
 
  The distance of 1 from the spectrum of $F$ is tiny, but from the spectrum of 
 $e^{2i\varphi}F$ is comparable with $\varphi\sim \im m=\varrho$:
 \[
   \Big\| \frac{1}{1- m^2 S}\Big\|_2 = \Big\| \frac{1}{1- e^{2i\varphi}F}\Big\|_2 \sim \frac{C}{|\varphi|}  \sim \frac{C}{ \varrho}
  \]
  in the regime where $|\varphi|\le\pi/2$ thanks to the gap in the spectrum of $F$ both below 1 and above $-1$.
  In fact this argument indicates  a better bound of order $1/\varphi\sim 1/\varrho$
 and not only  its square in \eqref{stab bound}.

  
 For the general case, when $\bbm$ is not constant, such a simple argument does not work, 
  since the rotation angles $\varphi_j$  from $m_j =e^{i\varphi_j} |m_j|$  now depend on the coordinate $j$, so 
 there is no simple geometric relation between the spectrum of $F$ and that of $\bbm^2S$. In fact the optimal bound in general
 is $1/\varrho^2$ and not $1/\varrho$.

 To obtain it, we still use the identity
 \be\label{trick}
  (1-\bbm^2S)\bw =e^{2i\bvarphi} |\bbm|\big(e^{-2i\bvarphi}- F\big)|\bbm|^{-1}\bw ,
  \ee
  and focus on inverting $e^{-2i\bvarphi}- F$. We have the following general lemma:
  
  \begin{lemma}\label{lm:UT}
  Let $T$ be hermitian with $\|T \|_2\le 1$ and with top normalized eigenvector $\bbf$, i.e.  $T\bbf =\|T \|_2\bbf$.
  For any unitary operator $U$ we have
\be\label{UTeq}
\Big\|\frac{1}{U-T}\Big\|_2\le \frac{C}{\mbox{Gap}(T)\; \cdot\big| 1-\| T\|_2 \langle \bbf, U\bbf\rangle\big|}.
\ee
\end{lemma}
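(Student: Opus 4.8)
The plan is to reduce the inversion of $U-T$ to a two-dimensional problem after splitting off the top eigenvector of $T$. First I would write $P$ for the orthogonal projection onto the one-dimensional span of $\bbf$ and $Q=1-P$ for its complement, and decompose $T = \|T\|_2 P + T_Q$ where $T_Q = QTQ$ satisfies $\|T_Q\|_2 \le \|T\|_2 - \mathrm{Gap}(T)$ by definition of the spectral gap. The idea is that on the range of $Q$ the operator $T$ is bounded away from the unit circle by a distance of order $\mathrm{Gap}(T)$, so the only potential instability comes from the $P$-component, whose size is governed precisely by the scalar $\langle \bbf, U\bbf\rangle$ appearing in the denominator of \eqref{UTeq}.

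The key steps, in order, would be: (1) Show that $Q(U-T)Q$ is invertible on $\mathrm{ran}\,Q$ with norm of its inverse bounded by a constant over $\mathrm{Gap}(T)$; this follows because $U$ is unitary (hence $\|QUQ\bu\|$ compares to $\|Q\bu\|$ up to the off-diagonal coupling through $P$, which we handle by the Schur complement rather than naive triangle inequality) and $\|T_Q\|_2$ is separated from $1$. (2) Perform a Schur complement (Feshbach) reduction of $U-T$ with respect to the block decomposition $P\oplus Q$: the operator $U-T$ is invertible iff the scalar Schur complement
\[
   \sigma := \langle \bbf, (U-T)\bbf\rangle - \langle \bbf, UQ\,[Q(U-T)Q]^{-1}\,QU\bbf\rangle
\]
is nonzero, and then $\|(U-T)^{-1}\|_2$ is controlled by $|\sigma|^{-1}$ times factors involving $\|[Q(U-T)Q]^{-1}\|_2$ from step (1). (3) Estimate $\sigma$ from below: the main term is $\langle \bbf,(U-T)\bbf\rangle = \langle \bbf, U\bbf\rangle - \|T\|_2 = -(1 - \|T\|_2\langle\bbf,U\bbf\rangle)/\langle\bbf,U\bbf\rangle \cdot(\ldots)$ — more carefully, $\langle\bbf,U\bbf\rangle - \|T\|_2$ has modulus comparable to $|1-\|T\|_2\langle\bbf,U\bbf\rangle|$ since $|\langle\bbf,U\bbf\rangle|\le 1$ — while the correction term from the Schur complement is $O(1/\mathrm{Gap}(T))$ in a way that can be absorbed, again using unitarity of $U$ to bound $\|QU\bbf\|_2\le 1$. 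Assembling these gives a bound of the form $C/(\mathrm{Gap}(T)\cdot|1-\|T\|_2\langle\bbf,U\bbf\rangle|)$, possibly after checking that $\mathrm{Gap}(T)\le 2$ so that no positive power of it helps or hurts.

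The main obstacle I expect is step (3), controlling the interaction between the scalar Schur complement and the off-diagonal blocks: a crude bound on the correction term $\langle \bbf, UQ[Q(U-T)Q]^{-1}QU\bbf\rangle$ only gives $O(1/\mathrm{Gap}(T))$, which could be of the same order as or larger than the main term $\langle\bbf,U\bbf\rangle-\|T\|_2$ when the latter is small. The resolution should be that $|\langle\bbf,U\bbf\rangle-\|T\|_2|$ small forces $\|T\|_2$ close to $1$ and $\langle\bbf,U\bbf\rangle$ close to $1$, hence $U\bbf$ close to $\bbf$, hence $\|QU\bbf\|_2$ small — quantitatively $\|QU\bbf\|_2^2 = 1 - |\langle\bbf,U\bbf\rangle|^2 \lesssim |1-\langle\bbf,U\bbf\rangle|\lesssim |1-\|T\|_2\langle\bbf,U\bbf\rangle| + |1-\|T\|_2|$ — so the correction term carries an extra smallness that exactly compensates. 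This trade-off is the crux; once it is made quantitative the rest is routine bookkeeping with the Schur complement formula. I would assume throughout the elementary facts already available in the excerpt (Perron--Frobenius structure, $\|F\|_2\le 1$, and the gap estimate of Lemma~\ref{lm:gap}), since this lemma is stated for abstract $T$ and $U$ and will be applied with $T=F$, $U=e^{-2i\bvarphi}$.
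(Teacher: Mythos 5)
Your overall strategy---split off the top eigenvector and exploit the gap on the orthogonal complement---is the right starting point, and it is also how the paper begins, but the Feshbach/Schur reduction you build on it breaks already at your step (1). The block $Q(U-T)Q$ need \emph{not} be invertible on $\mathrm{ran}\,Q$, with or without a $1/\mbox{Gap}(T)$ bound. Take $N=2$, $T=\mathrm{diag}(1,0)$ (so $\bbf=e_1$ and $\mbox{Gap}(T)=1$) and let $U$ be the rotation with $Ue_1=e_2$, $Ue_2=-e_1$: then $(U-T)e_2=-e_1\in\mathrm{span}(\bbf)$, so $Q(U-T)Q=0$ on $\mathrm{ran}\,Q$, even though $U-T$ itself is invertible with an $O(1)$ inverse and the claimed bound (with $\langle \bbf, U\bbf\rangle=0$) holds comfortably. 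The failure is exactly the off-diagonal coupling you hoped to absorb: unitarity only gives $\|QUQ\bu\|_2\ge (1-\|PUQ\|_2)\|\bu\|_2$, and $\|PUQ\|_2$ can equal $1$, so no gap condition on $T$ rescues the pivot block. Your step (3) has a second, independent problem: writing $u=\langle\bbf,U\bbf\rangle$ and $t=\|T\|_2$, one has $|1-tu|^2-|u-t|^2=(1-t^2)(1-|u|^2)\ge 0$, so $|u-t|\le |1-tu|$ always, but the reverse comparison you need is false (e.g.\ $t=u=1/2$ gives $u-t=0$ while $1-tu=3/4$). In that situation your ``main term'' $\langle\bbf,(U-T)\bbf\rangle$ vanishes and the Schur correction is precisely what makes $U-T$ invertible, so it cannot be treated as an absorbable error.

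The paper's proof (detailed in Appendix B of \cite{AEK1short}) avoids inverting any block. It lower-bounds $\|(U-T)\bw\|_2$ directly for every normalized $\bw$, splitting $\bw$ into its $\bbf$-component and the remainder and distinguishing three regimes according to whether the squared norms of the complement components of $\bw$ and of $U\bbf$ are large or small compared with $\alpha=|1-\|T\|_2\langle\bbf,U\bbf\rangle|$; in each regime one tests $(U-T)\bw$ against a well-chosen direction ($\bw$ itself via the triangle inequality, the vector $\bbf$, or the orthogonal complement of $\bbf$), using \eqref{gapuse} to exploit the gap. Your observation that $1-|\langle\bbf,U\bbf\rangle|^2$ is controlled by $\alpha$ is the correct ingredient for one of those regimes, but it must be deployed inside such a case analysis rather than inside a single Schur-complement identity whose pivot block may be singular.
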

A simple calculation shows that this lemma applied to $T=F$ and $U = \big( |\bbm|/\bbm\big)^2$ yields
the bound $C/\varrho^2$ for the inverse of $e^{-2i\bvarphi}- F$ since 
\[
   \big| 1-\| T\|_2\langle \bbf, U\bbf\rangle\big| \ge \re \big[ 1 -\big\langle \frac{\bbm^2 \bbf^2}{|\bbm|^2}\big\rangle\big]
   =2\big\langle \frac{(\im \bbm)^2 \bbf^2}{|\bbm|^2}\big\rangle\sim \langle \im \bbm\rangle^2.
\]
This proves the $\ell^2$-stability  bound \eqref{stab bound} in Lemma~\ref{lm:stab}. Improving this  bound  to the stability bound \eqref{stab bound infty} in $\ell^\infty$ is left as the following exercise.
\end{proof}

\begin{exercise}\label{stabex}
By using
$|\bbm(z)|\sim 1$ and \eqref{sbound}, prove \eqref{stab bound infty} from \eqref{stab bound}. Hint: show 
that for any matrix $R$ such that $1-R$ is invertible, we have
\begin{equation}\label{Rid}
    \frac{1}{1-R} = 1 + R + R  \frac{1}{1-R} R,
\end{equation}
and apply this 
with $R= \bbm^2 S$.
\end{exercise}


\begin{proof}[{\it Sketch of proof of Lemma~\ref{lm:UT}}] For details, see Appendix B of \cite{AEK1short}. The  idea is that one needs a lower bound on
$\| (U-T)\bw\|_2$ for any $\ell^2$-normalized $\bw$. Split $\bw$ as $\bw = \langle \bbf, \bw\rangle \bbf + P\bw$, where $P$ is 
the orthogonal projection to the complement of $\bbf$. We will frequently use that
\be\label{gapuse}
   \| TP\bw\|_2\le \big[ \| T\|-\mbox{Gap}(T)\big] \|P\bw\|_2,
\ee
following from the definition of the gap.
 Setting  $\alpha : = \big| 1-\| T\|_2 \langle \bbf, U\bbf\rangle\big|$, we distinguish 
three cases
\begin{itemize}
\item[(i)]   $16\| P\bw\|^2_2\ge \alpha$;
\item [(ii)] $16\| P\bw\|^2_2< \alpha$ and $\alpha \ge \| PU\bbf\|^2_2$;
\item [(iii)] $16\| P\bw\|^2_2< \alpha$ and $\alpha < \| PU\bbf\|^2_2$.
\end{itemize}
In regime (i) we use a crude triangle inequality $\| (U-T)\bw\|_2 \ge \|\bw\|_2 - \| T\bw\|_2$, the splitting of $w$ and \eqref{gapuse}.
In regime (ii) we first project $(U-T)\bw$ onto the $\bbf$ direction: $\| (U-T)\bw\|_2 \ge |\langle \bbf, (1-U^*T)\bw\rangle|$ and estimate.
Finally in regime (iii) we first project $(U-T)\bw$ onto the $P$ direction $\| (U-T)\bw\|_2 \ge \| P(U-T)\bw\|_2 $ and estimate. 

\begin{exercise} Complete the analysis of all these three regimes and finish the proof of Lemma~\ref{lm:UT}. \qedhere
\end{exercise}

\end{proof}

\section{Analysis of the matrix Dyson equation}\label{sec:matrix}

\subsection{Properties of the solution to the MDE}
In this section we analyze the matrix Dyson equation introduced in \eqref{mde4}
\be\label{mde5}
      I + (z+\cS[M])M=0, \qquad \im M > 0, \quad \im z>0,   \qquad (MDE)
\ee
where we assume that $\cS: \C^{N\times N}\to \C^{N\times N}$ is a symmetric and positivity preserving linear map.
In many aspects the analysis  goes parallel to that of the vector Dyson equation and we will highlight only
the main complications due to the matrix character of this problem.

The proof of the  existence and uniqueness result, Theorem~\ref{mdeexuniq}, is analogous 
to the vector case  using the Caratheodory metric, so we omit it, see \cite{Helton2007-OSE}. 
The Stieltjes transform representation \eqref{rep matrix} can also be proved
by reducing it to the scalar case (Exercise~\ref{exist}).
The self-consistent density of states is defined as before:
\[
  \varrho(\rd\tau) =\frac{1}{\pi} \langle V(\rd\tau)\rangle = \frac{1}{\pi N} \tr V(\rd\tau),
\]
and its harmonic extension is again denoted by $\varrho(z) =\frac{1}{\pi}\langle \im M(z)\rangle$.

From now on we assume the flatness condition \eqref{flatness} on $\cS$. We have the analogue of 
Theorem \ref{thmbulk} on various bounds on $M$ that can be proven in  a similar manner.
The role of the $\ell^2$-norm, $\| m\|_2$  in the vector case will be played by the  (normalized) Hilbert-Schmidt norm, i.e.
$\| M\|_{hs}: = \big( \frac{1}{N}\tr MM^* \big)^{1/2}$ as it comes from
the natural scalar product
structure on matrices. The role of the supremum norm of   $|\bbm|$ in the vector case will be played by the operator norm $\| M\|_2$
in the matrix case and similarly the supremum norm  of $1/|\bbm|$ is replaced with $\| M^{-1}\|_2$.

\begin{theorem}\label{thmbulkmat}[Bounds on $M$] 
Assuming the flatness condition \eqref{flatness}, we have
\be\label{MBOUND}
\| M\|_{hs} \lesssim 1, \quad \| M(z)\|_2 \lesssim \frac{1}{\varrho(z)+\mbox{dist}(z,\mbox{supp}(\varrho))}, \qquad \| M^{-1}(z)\|_2\lesssim 1+|z|
\ee
and
\be\label{imboundlower}
\qquad \varrho(z) \lesssim \im M(z)\lesssim (1+|z|)^2 \| M(z)\|^2_2 \varrho(z) 
\ee
where $\| T\|_{hs}: = \big( \frac{1}{N}\tr TT^* \big)^{1/2}$ is the normalized Hilbert-Schmidt norm.
\end{theorem}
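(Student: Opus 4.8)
The strategy is to mirror the argument in the proof of Theorem~\ref{thmbulk} for the vector case, replacing scalar/coordinatewise manipulations by their matrix counterparts and tracking positivity carefully. Throughout I assume $|z|\lesssim 1$; the large-$z$ regime follows directly from the Stieltjes transform representation \eqref{rep matrix} together with $V(\R)=I$, which already gives $\|M(z)\|_2\le 1/\im z$ and $\|M\|_{hs}\le 1$ there. The first step is to take the imaginary part of the MDE \eqref{mde5}. Writing $M = M^{1/2}$-free, one multiplies $I+(z+\cS[M])M=0$ by $M^*$ on the right and takes imaginary parts; using $\cS[M]^* = \cS[M^*]$ (symmetry of $\cS$) and standard algebra one arrives at the identity
\be\label{planimpart}
  \im M = \big(\im z\big)\, M M^* + M\,\cS[\im M]\, M^*,
\ee
which is the matrix analogue of \eqref{impart}. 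Since $\cS$ is positivity preserving and $\im M\ge 0$, both terms on the right are positive semidefinite, so \eqref{planimpart} already yields the lower bound $\im M\gtrsim \varrho$ once we feed in the flatness lower bound $\cS[\im M]\ge c\langle\im M\rangle = c\pi\varrho$; this gives $\im M \ge c\pi\varrho\, M M^*$, and taking normalized trace and dividing by $\varrho>0$ produces $\|M\|_{hs}\lesssim 1$, the first claim in \eqref{MBOUND}.

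\textbf{Second step: operator-norm bound on $M$.} From the MDE itself, $-M^{-1} = z + \cS[M]$, so $\|M^{-1}\|_2 \le |z| + \|\cS[M]\|_2$. By flatness $\cS[M^*M]\le C\langle M^*M\rangle I$, hence $\|\cS[\cdot]\|_2$ applied to a positive matrix is controlled by its normalized trace; more precisely one bounds $\|\cS[M]\|_2$ using that $\cS$ maps into a scalar multiple of the identity up to the flatness two-sided estimate, giving $\|\cS[M]\|_2\lesssim \langle \,\cdot\,\rangle$-type quantities $\lesssim \|M\|_{hs}\lesssim 1$. This establishes $\|M^{-1}(z)\|_2\lesssim 1+|z|$. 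Inverting, $\|M(z)\|_2 = \|(M^{-1})^{-1}\|_2$; but this only gives $\|M\|_2\gtrsim (1+|z|)^{-1}$ from below, so for the upper bound on $\|M\|_2$ one argues as follows: near the support one uses the already-established $\im M\gtrsim\varrho MM^*$ to get $\|M\|_2^2 = \|MM^*\|_2 \lesssim \varrho^{-1}\|\im M\|_2 \le \varrho^{-1}\|M\|_2$, i.e. $\|M\|_2\lesssim 1/\varrho$; away from the support one uses the Stieltjes representation $M(z)=\int (\tau-z)^{-1}V(\rd\tau)$ with $V\ge 0$, $V(\R)=I$ to get $\|M(z)\|_2\le \operatorname{dist}(z,\operatorname{supp}\varrho)^{-1}$. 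Combining the two gives the middle inequality in \eqref{MBOUND}.

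\textbf{Third step: upper bound on $\im M$.} Returning to \eqref{planimpart}, estimate the right-hand side from above: $\im z\, MM^* \le \eta\|M\|_2^2 I$ and $M\,\cS[\im M]\,M^* \le \|M\|_2^2\, \cS[\im M] \le C\|M\|_2^2\langle\im M\rangle I = C\pi\|M\|_2^2\,\varrho\, I$ using the flatness upper bound. Hence $\im M \le (\eta + C\pi\|M\|_2^2\varrho)\|M\|_2^2 I$. To absorb the $\eta$ term into $\varrho$ one uses the elementary lower bound $\varrho(z)\gtrsim \eta/(1+|z|)^2$, which follows from \eqref{rep matrix}, the normalization $V(\R)=I$, and compactness of $\operatorname{supp}\varrho$ (support in $[-2\|\cS\|_2^{1/2},2\|\cS\|_2^{1/2}]$), exactly as in the scalar case. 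This yields $\im M(z)\lesssim (1+|z|)^2\|M(z)\|_2^2\,\varrho(z)$, completing \eqref{imboundlower}.

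\textbf{Main obstacle.} The routine parts are the scalar-type estimates; the one genuinely matrix-specific subtlety is the manipulation leading to \eqref{planimpart} and, more importantly, the step $M\,\cS[\im M]\,M^* \le \|M\|_2^2\,\cS[\im M]$ and its reverse — i.e. correctly exploiting that $\cS$ is positivity preserving and symmetric to pass between two-sided operator inequalities when $M$ is \emph{not} normal and does not commute with $\cS[\cdot]$. One must be careful that $MM^*$ versus $M^*M$ distinctions and the non-self-adjointness of $M$ do not break the sandwich bounds; the clean way is to always reduce to statements about positive semidefinite matrices (like $\im M$, $MM^*$) before invoking flatness, and to use $A\le B \Rightarrow CAC^*\le CBC^*$ for the conjugations. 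Once that bookkeeping is done, the proof is structurally identical to Theorem~\ref{thmbulk}.
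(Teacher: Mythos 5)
Your proposal is correct and follows exactly the route the paper intends: the theorem is left as an exercise to be proved by mimicking Theorem~\ref{thmbulk}, and your key identity for $\im M$ is precisely the one the paper records later as \eqref{impart1} (both orderings $\eta MM^*+M\cS[\im M]M^*$ and $\eta M^*M+M^*\cS[\im M]M$ are valid, since $M^{-1}-(M^*)^{-1}$ factors either way). Two small points to tighten: the deduction $\im M\gtrsim\varrho$ from $\im M\ge c\pi\varrho\,MM^*$ additionally requires $MM^*\gtrsim I$, i.e.\ the bound $\|M^{-1}\|_2\lesssim 1$ that you only establish in your second step, and the intermediate inequality $M\cS[\im M]M^*\le\|M\|_2^2\,\cS[\im M]$ is false for non-normal $M$ --- but you already supply the correct fix yourself (apply the flatness upper bound $\cS[\im M]\le C\langle\im M\rangle I$ first, then conjugate by $M$), so the argument stands as written.
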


\begin{exercise}\label{exist}
Prove that if $M(z)$ is an analytic matrix-valued function on the upper half plane, $z\in \Cp$, such that
$\im M(z)>0$, and $i\eta M(i\eta) \to - I$ as $\eta\to \infty$, then $M(z)$ has a Stieltjes transform representation 
of the form \eqref{rep matrix}. Hint: Reduce the problem to the scalar case  by considering the quadratic form $\langle \bw, M(z) \bw\rangle$
for $\bw\in \C$.
\end{exercise}

\begin{exercise}
Prove Theorem~\ref{thmbulkmat} by mimicking the corresponding proof for the vector case
but watching out for  the non commutativity of the matrices.
\end{exercise}

\subsection{The saturated self-energy matrix}

We have seen in the vector Dyson equation that the stability operator $1-\bbm^2 S$ played a central role
both in establishing regularity of the self-consistent density of states and also in establishing the local law. What
is the matrix analogue of this operator? Is there any analogue for the saturated self-energy operator $F$ defined in 
Definition~\ref{def:F} ?

 The matrix responsible for the stability can be easily found, mimicking the calculation \eqref{diff} by differentiating \eqref{mde5}
 wrt. $z$
  \begin{align}\label{diff1}
   I + (z+\cS[M])M=0  \quad \Longrightarrow  & \quad  (I+ \cS [\partial_z M])M + (z+\cS[M])\partial_zM=0 \\
    \quad \Longrightarrow  & \quad 
    \partial_z M  = (1-M\cS[\cdot] M)^{-1} M^2. \nonumber
 \end{align}
where we took the inverse of the ``super operator'' $1- M\cS[\cdot ]M$.
We introduce the notation $\cC_T$ for the operator  {\bf ``sandwiching by a matrix $T$''},
that acts on any matrix $R$ as 
\[ \cC_T[R]:=TRT.
\]
With this notation we have $1- M\cS[\cdot ]M= 1- \cC_M\cS $  that acts on $N\times N$ matrices 
as $(1- \cC_M\cS)[R] = R - M\cS(R) M$.

The boundedness of the inverse of the  stability operator,  $1-\bbm^2S$ in the vector case, relied crucially on 
finding a symmetrized version of the operator $\bbm^2 S$, the saturated self-energy operator
(Definition~\ref{Fdef}), for which spectral theory can be applied, see 
the identity \eqref{trick}.   This will be the heart of the proof in the following section where we control the 
spectral norm of the inverse
of the stability operator.
Note that  spectral theory in the matrix setup means to work with the Hilbert space of
matrices, equipped with the Hilbert-Schmidt scalar product. We denote by $\| \cdot\|_{sp}: = \| \cdot \|_{hs\to hs}$
the corresponding norm of superoperators viewed as linear maps on this Hilbert space.

\subsection{Bound on the stability operator}

The key technical result of the analysis of the MDE is the following lemma:

\begin{lemma}\label{lm:stabM} Assuming the flatness condition \eqref{flatness}, we have, for $|z|\le C$,
\be\label{CMS}
   \Big\| (1- \cC_{M(z)}\cS)^{-1}\Big\|_{sp} \lesssim \frac{1}{ \big[\varrho(z) + \mbox{dist}(z, \mbox{supp}(\varrho))\big]^C}
\ee
with some universal constant ($C=100$ would do). 
\end{lemma}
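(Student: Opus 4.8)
\emph{Proof plan.} The strategy is to mimic the vector case (Lemma~\ref{lm:stab}), constructing a symmetrized, self-adjoint version of the stability operator $\cC_M\cS$ on the Hilbert space of matrices with the Hilbert--Schmidt scalar product. First I would use the polar decomposition $M=Q|M|$ of the (generically non-normal) matrix $M$, where $Q=M|M|^{-1}$ is unitary and $|M|=(M^*M)^{1/2}$. Both factors are bounded and invertible with bounds of order $1$ (by Theorem~\ref{thmbulkmat} and the flatness hypothesis, at least in the bulk; in general one carries the $\varrho+\mathrm{dist}$ factors). Writing $\cC_M=\cC_Q\,\cC_{|M|}$, I would factor the stability operator as $1-\cC_M\cS = \cC_{|M|^{1/2}}\big(1 - \cC_{|M|^{-1/2}}\cC_Q\cC_{|M|^{1/2}}\,\mathcal{F}\big)\cC_{|M|^{-1/2}}$ after inserting the \emph{saturated self-energy superoperator} $\mathcal{F}[R]:=|M|^{1/2}\cS\big[|M|^{1/2}R|M|^{1/2}\big]|M|^{1/2}$, which is manifestly self-adjoint and positivity preserving on the Hilbert--Schmidt space. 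The point is that conjugation by the bounded invertible superoperators $\cC_{|M|^{\pm1/2}}$ costs only order-one constants, so it suffices to invert an operator of the shape $\mathcal{U}-\mathcal{F}$ with $\mathcal{U}$ a Hilbert--Schmidt unitary (the appropriate conjugate of $\cC_Q$) and $\mathcal{F}$ self-adjoint with $\|\mathcal{F}\|_{sp}\le 1$.

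Next I would establish the spectral properties of $\mathcal{F}$ that make the abstract Lemma~\ref{lm:UT} applicable. Flatness gives $c\langle R\rangle\le\cS[R]\le C\langle R\rangle$ for $R\ge0$; combined with $|M|\sim1$ this forces $\mathcal{F}$ to be, up to order-one constants, a rank-one-dominated positivity-improving operator, hence by a Perron--Frobenius argument (the matrix analogue of Lemma~\ref{lm:gap}) it has a simple top eigenvalue $\|\mathcal{F}\|_{sp}\sim1$ with a strictly positive-definite eigen-matrix $F_*$, $\mathcal{F}[F_*]=\|\mathcal{F}\|_{sp}F_*$, $F_*\sim I$, and a uniform spectral gap $\mathrm{Gap}(\mathcal{F})\sim1$ below it. Then Lemma~\ref{lm:UT} (applied with $T=\mathcal{F}$, $U=\mathcal{U}$ on the Hilbert--Schmidt Hilbert space) yields
\[
  \big\|(\mathcal{U}-\mathcal{F})^{-1}\big\|_{sp}\;\lesssim\;\frac{1}{\mathrm{Gap}(\mathcal{F})\cdot\big|1-\|\mathcal{F}\|_{sp}\langle F_*,\mathcal{U}F_*\rangle_{hs}\big|}\;\lesssim\;\frac{1}{\big|1-\|\mathcal{F}\|_{sp}\langle F_*,\mathcal{U}F_*\rangle_{hs}\big|}.
\]
It remains to lower-bound the denominator. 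Here I would take the imaginary part of the MDE, $\im M = M^*\big(\eta+\cS[\im M]\big)M$, rewrite it after the same symmetrization as an eigenvector-type relation $\mathcal{F}[W]=\eta(\cdots)+\mathcal{F}[W]$ for $W\sim|M|^{-1/2}(\im M)|M|^{-1/2}\sim\varrho$, and pair against $F_*$ exactly as in the vector proof of Lemma~\ref{lm:stab} (the displayed chain after \eqref{trick}). This produces $\big|1-\|\mathcal{F}\|_{sp}\langle F_*,\mathcal{U}F_*\rangle_{hs}\big|\gtrsim \langle\im M\rangle^2 = (\pi\varrho)^2$ in the bulk, and more generally $\gtrsim\big(\varrho+\mathrm{dist}(z,\mathrm{supp}\,\varrho)\big)^{c}$ once the outside-spectrum regime (where $\|\mathcal{F}\|_{sp}$ is bounded away from $1$) is treated separately using the Stieltjes representation \eqref{rep matrix}. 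Unwinding the conjugations gives \eqref{CMS} with a generous constant $C$ (any $C\ge100$ absorbing the various order-one conjugation losses, the two-sidedness near the edge, and the need to also pass from the Hilbert--Schmidt bound to the claimed $\|\cdot\|_{sp}$ bound).

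The main obstacle, and where real care is needed beyond the vector case, is the \emph{non-normality of $M$}: the factor $\cC_Q$ corresponding to the unitary phase $Q$ does not commute with $\mathcal{F}$, and unlike the scalar phases $e^{2i\varphi_j}$ in the vector setting there is no componentwise structure to exploit. One must check that conjugating $\cC_Q$ by $\cC_{|M|^{\pm1/2}}$ still yields an operator that is close enough to unitary (it is similar to a unitary, with similarity constants $\sim\|\,|M|\,\|_2\|\,|M|^{-1}\|_2\sim1$) for Lemma~\ref{lm:UT}'s hypotheses to hold after absorbing these into the constant; this is the analogue of the passage \eqref{keysymm}--\eqref{trick} but now genuinely operator-valued. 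A secondary technical point is proving the Perron--Frobenius/gap statement for the \emph{superoperator} $\mathcal{F}$ on $\C^{N\times N}$ rather than for a matrix on $\C^N$: flatness must be leveraged to show $\mathcal{F}$ is positivity improving in the cone of positive-definite matrices, after which the Krein--Rutman version of Perron--Frobenius applies and the gap bound follows from the matrix analogue of Lemma~\ref{lm:gap}. The remaining steps — the conjugation bookkeeping, the outside-spectrum estimate, and reading off the final exponent — are routine once these two structural facts are in place.
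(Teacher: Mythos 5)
Your plan has the right architecture, and it is in fact the same as the paper's: symmetrize $\cC_M\cS$ into a ``unitary minus bounded self-adjoint'' superoperator on the Hilbert--Schmidt space, prove a Krein--Rutman statement with spectral gap for a saturated self-energy superoperator $\cF$, invoke Lemma~\ref{lm:UT}, and bound the resulting denominator from below by $\langle \im M\rangle^2\sim\varrho^2$. The gap is in the symmetrization step, precisely at the non-normality issue you defer to a ``check''. The identity $\cC_M=\cC_Q\cC_{|M|}$ you start from is false unless $M$ is normal: $\cC_{Q|M|}[R]=Q|M|\,R\,Q|M|$ while $\cC_Q\cC_{|M|}[R]=Q|M|\,R\,|M|Q$, and these differ exactly because $Q$ and $|M|$ do not commute. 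More importantly, even after repairing the order of the factors, the operator you are left to invert has the form $\wt\cU-\cF$ where $\wt\cU$ is only \emph{similar} to a unitary (via $\cC_{|M|^{\pm 1/2}}$), and this breaks both places where unitarity is genuinely used. First, the hypothesis $\|\cF\|_{sp}\le 1$ of Lemma~\ref{lm:UT} is not free: it comes from exhibiting a positive fixed point of $\cF$, obtained by rewriting $\im M=\eta M^*M+M^*\cS[\im M]M$ as $X=Y^*\cF[X]Y$ and cancelling the $Y$'s using that $X=\im Y$ commutes with the \emph{unitary} $Y$; with a merely similar-to-unitary factor $\wt Y$ one has $X\neq \im \wt Y$, no commutation, hence no relation $\cF[X]=X$ and no a priori norm bound on $\cF$. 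Second, the key lower bound $\big|1-\|\cF\|_{sp}\langle F,\cU[F]\rangle\big|\gtrsim\varrho^2$ is extracted from $\langle F,\cC_{\im Y^*}[F]\rangle$ together with $\im Y=Q^{-1}(\im M)Q^{-*}$, which again requires $Y$ to be exactly unitary.

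The missing ingredient --- the one genuinely new idea beyond the vector case --- is a \emph{symmetric} polar decomposition $M=QYQ^*$ with $Y$ exactly unitary, constructed explicitly: with $A=\re M$ and $B=\im M>0$, set $W=\big[1+(B^{-1/2}AB^{-1/2})^2\big]^{1/4}$, $Q=\sqrt{B}\,W$ and $Y=W^{-2}\big(B^{-1/2}AB^{-1/2}+i\big)$. Then $I-\cC_M\cS=\cK_Q(I-\cC_Y\cF)\cK_Q^{-1}$ with $\cK_Q[R]=QRQ^*$ and $\cF=\cK_{Q^*}\cS\cK_Q$; the superoperator $\cC_Y$ is honestly unitary on the Hilbert--Schmidt space, $X=Q^{-1}(\im M)Q^{-*}=\im Y$ commutes with $Y$, and your scheme then goes through essentially verbatim. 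Note that this $Q$ satisfies only $\|Q^{-1}\|_2\lesssim\varrho^{-1/2}$ rather than $O(1)$, so the conjugation is not an order-one cost but contributes further powers of $\varrho$ to the final exponent. With this replacement, the remaining steps of your outline (Krein--Rutman and gap for $\cF$, the $\varrho^2$ denominator, the outside-spectrum regime) match the paper's proof.
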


Similarly to the argument in Section~\ref{sec:reg} for the vector case, the bound \eqref{CMS} directly implies H\"older 
regularity of the solution and it implies \eqref{matrixholder}. It is also  the key estimate in the random matrix part 
of the proof of the local law.

 
\begin{proof}[{\it Proof of Lemma~\ref{lm:stabM}.}] In the vector case, the saturated self-energy matrix $F$ naturally emerged from taking the imaginary part of the 
Dyson equation and recognizing a Perron-Frobenius type eigenvector of the form $\frac{\im \bbm}{|\bbm|}$, see \eqref{perron}.
This structure was essential to establish the bound $\|F\|_2\le 1$.
 We proceed similarly for the matrix case to find the analogous super operator $\cF$ that has to be
 symmetric and positivity preserving in addition to having a ``useful'' Perron-Frobenius eigenequation. The imaginary
part of the MDE in the form
\[
   - \frac{1}{M} = z+\cS[M]
\]
is given by
\be\label{impart1}
    \frac{1}{M^*} \im M \frac{1}{M} = \eta + \cS[\im M], \quad \Longrightarrow \quad  \im M = \eta M^*M + M^* \cS[\im M]M.
\ee
What is the analogue of $\frac{\im \bbm}{|\bbm|}$ in this equation that is positive, but this time as a matrix? 
``Dividing by $|M|$'' is a quite ambiguous operation, not just because the matrix multiplication is not commutative, but
also for the fact that for non-normal matrices, the  absolute value of a general matrix  $R$ is not defined in a canonical way.
The standard definition is $|R| = \sqrt{R^*R}$, which leads to the polar decomposition of the 
usual form $R=U|R|$ with some unitary $U$,   but the alternative definition $\sqrt{RR^*}$ would also be
equally justified. But they are not the same, and this ambiguity would destroy the symmetry of the attempted 
super operator $\cF$  if done naively.

Instead of guessing the right form, we just look for the matrix version of $\frac{\im \bbm}{|\bbm|}$ in the form $\frac{1}{Q}\im M \frac{1}{Q^*}$
with some matrix $Q$ yet to be found. Then we can rewrite \eqref{impart1} (for $\eta=0$ for simplicity) as
\[
   \frac{1}{Q}\im M \frac{1}{Q^*} 
= \frac{1}{Q} M^* \frac{1}{Q^*} Q^* \cS \Big[  Q  \;\frac{1}{Q} (\im M) \frac{1}{Q^*} \; Q^*\Big] Q\frac{1}{Q} M \frac{1}{Q^*}.
\]
We write it in the form
\[
     X=  Y^*  Q^* \cS [ Q X Q^*] Q Y, \quad \mbox{with}\quad X :=  \frac{1}{Q} \; (\im M) \frac{1}{Q^*}, \quad Y:= \frac{1}{Q} M \frac{1}{Q^*}
\]
i.e. 
\[ 
     X= Y^* \cF[X] Y, \qquad \mbox{with} \quad \cF[\cdot] := Q^* \cS[ Q\cdot Q^*] Q.
\]
With an appropriate $Q$, this operator will be the correct saturated self-energy operator.
Notice that $\cF$ is positivity preserving. 

To get the Perron-Frobenius structure, we need to ``get rid'' of the $Y$ and $Y^*$ above; we have a good chance
if we require that $Y$ be unitary, $YY^*=Y^*Y=I$. The good news is that $X=\im Y$ and 
if $Y$ is unitary, then  $X$ and $Y$ commute (check this fact). We thus arrive at
\[
      X = \cF [X].
\]
Thus the  Perron-Frobenius argument  applies and we get that  $\cF$ is bounded in spectral norm:
\[ 
   \| \cF\|_{sp}\le 1
\]
Actually, if $\eta>0$, then we get a strict inequality.

Using the definition of $\cF$ and that $M = QYQ^*$ with some unitary $Y$, we can also
 write the operator $\cC_M\cS$ appearing in the stability operator in terms of $\cF$. Indeed,
for any matrix $R$
\[
    M\cS[R] M =  Q Y Q^*\cS\Big[Q \frac{1}{Q} R \frac{1}{Q^*} Q^* \Big] QY Q^* =  QY \cF\Big[ \frac{1}{Q} R \frac{1}{Q^*}\Big] Y Q^*
\]
so
\[ 
   R - M\cS[R] M  = Q\Big( 1 - Y\cF [\cdot ] Y\Big)\big[ \frac{1}{Q} R \frac{1}{Q^*}\big] Q^*.
\]
Thus
\be\label{keysymm1}
   I - \cC_M \cS = \cK_Q (I- \cC_Y \cF) \cK^{-1}_{Q},
\ee
where for any matrix $T$ we defined the super operator $\cK_T$ acting on any matrix $R$
as $\cK_T[R]: = TRT^*$ to be the symmetrized analogue of the  sandwiching operator $\cC_T$.
The formula \eqref{keysymm1} is the matrix analogue of \eqref{keysymm}.

Thus, assuming  that $Q\sim 1$ in a sense that $\|Q\|_2\lesssim 1$ and $\| Q^{-1}\|_2 \lesssim 1$,  we have
\[
   \mbox{$I - \cC_M \cS$ is  stable  \quad $\Longleftrightarrow$ \quad $I- \cC_Y \cF$ is stable   \quad $\Longleftrightarrow$\quad  $\cC_{Y^*}- \cF$ is stable} \nc
\]
bringing our stability operator into the form of a ``unitary minus bounded self-adjoint''
to which   Lemma~\ref{lm:UT} (in  the Hilbert space of matrices)
will apply. 

To complete this argument, 
all we need is a ``symmetric polar decomposition''  of $M$ in the form $M = QYQ^*$, where $Y$ is unitary and $Q\sim 1$
knowing that $M\sim 1$. We will give this decomposition explicitly. Write 
$M=A+iB$ with $A :=\re M$ and $B:=\im M>0$. Then we can write
\[
      M = \sqrt{B} \Big( \frac{1}{\sqrt{B}}A \frac{1}{\sqrt{B}} +i \Big)\sqrt{B}
 \]
and now we make the middle factor unitary by dividing its absolute value:
\[
   M = \sqrt{B} W Y W \sqrt{B}= : QYQ^*
\]
\[
  W: = \Bigg[ 1 + \Big(\frac{1}{\sqrt{B}} A \frac{1}{\sqrt{B}}\Big)^2 \Bigg]^{\frac{1}{4}},
  \quad Y:=\frac{ \frac{1}{\sqrt{B}} A \frac{1}{\sqrt{B}} +i}{W^2}.
\]
In the regime, where $c\le B\le C$ and $\|A\|_2\le C$, we have 
\[
   Q = \sqrt{B} W \sim 1
\]
in the sense that $\|Q\|_2\lesssim 1$ and $\|Q^{-1}\|_2\lesssim 1$. In our application, we use
the upper bound \eqref{MBOUND} for $\|M\|_2$ and the lower bound on $B=\im M$ from \eqref{imboundlower}.
This gives a control on both $\|Q\|_2$ and $\| Q^{-1}\|_2$ as a certain power of $\varrho(z)$ and
this will be responsible for parts of the powers collected in  the right hand side of \eqref{CMS}.
In this proof here we focus only on the bulk, so we do not intend to gain the additional term $\mbox{dist}(z, \mbox{supp}\varrho)$
that requires a slightly different argument. The result is
\be\label{tof}
\Big\| \frac{1}{1- \cC_M\cS}\Big\|_{sp} \le \frac{1}{\varrho(z)^C} \Big\| \frac{1}{\cU- \cF}\Big\|_{sp}
\ee
with $\cU:=\cC_{Y^*}$.

We remark that  $\cF$ can also be written as follows: 
\be\label{CF}
  \cF =\cK_Q^* \cS \cK_Q = \cC_W \cC_{ \sqrt{\mbox{\small Im} M}}\cS \cC_{\sqrt{\mbox{\small Im}  M}}\cC_W . 
\ee

Finally, we need to  invert $\cC_{Y^*}- \cF$ effectively with the help of Lemma~\ref{lm:UT}.
Since $\cF$ is positivity preserving,
a Perron-Frobenius type theorem (called the Krein-Rutman theorem in  more general Banach spaces)
applied to $\cF$ yields that it has a normalized {\it eigenmatrix} $F$ with eigenvalue $\|\cF\|_{sp}\le 1$.
 The following lemma collects information on $\cF$ and $F$, similarly to Lemma~\ref{PFlemma}:
 \begin{lemma}
 Assume the flatness condition \eqref{flatness} and let $\cF$ be defined by \eqref{CF}. 
 Then $\cF$ has a unique normalized eigenmatrix corresponding to its largest eigenvalue
 \[
   \cF[F] = \|\cF\|_{sp} F, \qquad  \|F\|_{hs}=1,  \qquad \|\cF\|_{sp}\le 1.
\]
Furthermore
\[
 \|\cF\|_{sp}=1 - \frac{ \langle F, \cC_W[\im M]\rangle}{\langle F, W^{-2}\rangle} \im z,
\]
 the eigenmatrix $F$ has bounds
 \[
       \frac{1}{\|M\|^7_2}\le      F\le \| M\|^6_2
 \]
 and $\cF$ has a spectral gap:
 \be\label{cfgap}
   \mbox{Spec}\big(\cF/\|\cF\|_{sp}\big) \subset [ -1+\theta, 1-\theta] \cup \{1\}, \qquad \theta \ge \|M\|^{-42}_2
 \ee
 (the explicit powers do not play any significant role).
\end{lemma}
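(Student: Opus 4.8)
The plan is to mimic, in the Hilbert space of matrices equipped with the Hilbert--Schmidt inner product, the Perron--Frobenius argument carried out for the vector operator $F$ in Lemma~\ref{PFlemma}, using the flatness condition \eqref{flatness} together with the bounds on $M$ from Theorem~\ref{thmbulkmat}. First I would establish existence and uniqueness of the top eigenmatrix: since $\cF = \cK_Q^*\,\cS\,\cK_Q$ with $Q = \sqrt{\im M}\,W$ and $\cS$ positivity preserving, $\cF$ maps the cone of positive semidefinite matrices into itself, and flatness implies the strict positivity improving property $\cF[R] \gtrsim \langle R\rangle\, I$ for $R\ge 0$ (because $\cS[QRQ^*]\gtrsim \langle QRQ^*\rangle I \gtrsim \langle R\rangle I$ using $Q\sim 1$, and then $\cK_Q^*$ of a multiple of identity is again $\gtrsim I$). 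By the Krein--Rutman theorem this yields a unique normalized eigenmatrix $F>0$ with eigenvalue $\|\cF\|_{sp}\le 1$; the bound $\|\cF\|_{sp}\le 1$ itself comes from the relation $X = Y^*\cF[X]Y$ derived in the body of the proof with $X=\im Y>0$ (for $\eta>0$ one gets strict inequality, and for $\eta=0$ one passes to the limit). The two-sided bound on $F$ follows from the eigen-relation $\|\cF\|_{sp} F = \cK_Q^*\cS\cK_Q[F]$: the upper bound on $F$ uses flatness to write $\cS[QFQ^*]\le C\langle QFQ^*\rangle I$, so $F\le C\,\|Q\|_2^2\langle QFQ^*\rangle\, W^2 B^{-1}\cdots$, which after tracking powers of $\|M\|_2$ and $\|M^{-1}\|_2\lesssim 1+|z|$ gives $F\le \|M\|_2^6$; the lower bound is obtained symmetrically from the lower flatness bound, giving $F\ge \|M\|_2^{-7}$.

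Next I would derive the exact formula for $\|\cF\|_{sp}$. Taking the full imaginary part of the MDE as in \eqref{impart1}, conjugating by $Q^{-1}$ and using the definition of $\cF$, one gets $\im Y = \eta\, \cC_{Y^*}[W^{-2}] + \cF[\im Y]$ (the $\eta$ term coming from $\eta M^*M$ transported through the conjugation, which equals $\eta\, Q^{-1}M^*M Q^{-*}$ and simplifies via $M=QYQ^*$ to $\eta\, Y^*W^{-2}Y$, i.e.\ $\eta\,\cC_{Y^*}[W^{-2}]$). Pairing this identity with $F$ in the Hilbert--Schmidt inner product and using self-adjointness of $\cF$ together with $\cF[F]=\|\cF\|_{sp}F$ yields
\[
\langle F,\im Y\rangle = \eta\,\langle F,\cC_{Y^*}[W^{-2}]\rangle + \|\cF\|_{sp}\langle F,\im Y\rangle,
\]
so $\|\cF\|_{sp} = 1 - \eta\,\langle F,\cC_{Y^*}[W^{-2}]\rangle/\langle F,\im Y\rangle$. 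Rewriting $\im Y = \cC_{Q^{-1}}[\im M]$ and $\cC_{Y^*}[W^{-2}] = \cC_{Q^{-1}}[\cC_W[\im M]]$ (using $YW^{-2}Y^* $ relations and that $Y$ commutes with $\im Y$ since $Y$ is unitary, hence $Y$ commutes with $W$) converts this into the stated form $\|\cF\|_{sp} = 1 - \langle F,\cC_W[\im M]\rangle/\langle F, W^{-2}\rangle\cdot \im z$. A little care is needed here to move the $Q^{-1}$ conjugations across the inner product consistently, but it is bookkeeping.

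The last and most delicate step is the spectral gap \eqref{cfgap}. Here I would adapt Lemma~\ref{lm:gap} to the matrix setting. Normalizing $\|\cF\|_{sp}=1$, take an eigenmatrix $R\perp F$ (Hilbert--Schmidt orthogonal) with $\|R\|_{hs}=1$ and eigenvalue $\lambda$; I want $|\lambda|\le 1-\theta$. Write $\cF = \cK_Q^*\cS\cK_Q$ and expand $\langle R,(1\pm\cF)[R]\rangle$ as a positivity-structured quadratic form in the entries of $Q R Q^*$ against the positive-definite kernel of $\cS$ (flatness gives that $\cS[T]\ge c\langle T\rangle I$ means, after a Schur-complement / completing-the-square manipulation analogous to the hint in Exercise attached to Lemma~\ref{lm:gap}, a genuine lower bound on $\langle R,(1\pm\cF)[R]\rangle$ in terms of $\|R\|_{hs}^2$ minus a term proportional to $|\langle F,R\rangle|$, which vanishes). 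The ratio $\|F\|_{hs}/\|F\|_{op}$ that controls the gap in the vector case is replaced by a ratio of the two-sided bounds $\|M\|_2^{-7}\le F\le \|M\|_2^6$, and combined with the flatness constants $c,C$ this produces a gap $\theta\gtrsim \|M\|_2^{-42}$ after multiplying out all the intermediate powers (the exponent is generous and not optimized). The main obstacle I anticipate is precisely this matrix version of Lemma~\ref{lm:gap}: in the scalar case the completing-the-square identity is elementary, but for superoperators one must be careful that $\cS\cK_Q$ acting on non-commuting matrices still admits the right quadratic-form lower bound; the cleanest route is probably to first reduce to the case where $\cS$ has strictly positive ``entries'' (kernel bounded below, which flatness provides) and then run a Cauchy--Schwarz/Jensen argument on $\langle R, \cF[R]\rangle$ against $F$, exactly as in the hint for the vector lemma, keeping track of how conjugation by $Q\sim 1$ distorts the relevant norms by at most controlled powers of $\|M\|_2$ and $\|M^{-1}\|_2$. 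Once the gap is in hand, \eqref{cfgap} follows and, fed into Lemma~\ref{lm:UT} via \eqref{tof} with $\cU=\cC_{Y^*}$ unitary, completes the bound \eqref{CMS}.
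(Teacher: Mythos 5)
Your overall architecture coincides with the one the paper follows (the paper omits this proof and defers to Lemma 4.6 and Lemma A.3 of \cite{AEK5}): Krein--Rutman for existence and uniqueness of the Perron eigenmatrix, pairing the imaginary part of the MDE with $F$ for the norm formula, and the flatness condition \eqref{flatness} combined with the eigenvalue equation for the two-sided bounds on $F$. Two points need attention.

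First, there is a concrete computational error in your derivation of the formula for $\|\cF\|_{sp}$. You claim that $Q^{-1}M^*M(Q^*)^{-1}$ simplifies to $Y^*W^{-2}Y$; in fact $Q^{-1}M^*M(Q^*)^{-1}=Y^*(Q^*Q)Y$ with $Q^*Q=W\sqrt{B}\sqrt{B}W=W(\im M)W=\cC_W[\im M]$, which is not $W^{-2}$. The object that \emph{is} exactly $W^{-2}$ is $\im Y$ itself, because $Y=\big(\frac{1}{\sqrt{B}}A\frac{1}{\sqrt{B}}+i\big)W^{-2}$ with $W^2$ commuting with $\frac{1}{\sqrt{B}}A\frac{1}{\sqrt{B}}$. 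The correct identity, after using $[Y,\im Y]=0$ to strip the $Y^*(\cdot)Y$ sandwich, is $\im Y-\cF[\im Y]=\eta\,\cC_W[\im M]$; pairing this with $F$, using self-adjointness of $\cF$, and inserting $\im Y=W^{-2}$ gives the stated formula directly. Your subsequent ``conversion'' that moves $\cC_W[\im M]$ into the numerator is not valid as written, though the pieces reassemble into the correct computation.

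Second, and more substantively, the spectral gap \eqref{cfgap} is not actually established by your proposal; you correctly flag it as the main obstacle, but it is precisely the step the paper singles out as requiring ``substantial changes'' relative to Lemma~\ref{lm:gap}. The completing-the-square identity of the scalar lemma has no direct noncommutative analogue: $\cF$ is a positivity-preserving superoperator with no entrywise-positive kernel representation, and the Perron eigenmatrix $F$ is a positive definite matrix rather than an entrywise positive vector, so the manipulation involving $u_i(h_j/h_i)^{1/2}\pm u_j(h_i/h_j)^{1/2}$ does not transfer. The replacement (Lemma A.3 of \cite{AEK5}) runs on a different mechanism: flatness upgrades positivity preservation to a quantitative positivity-improving bound of the schematic form $\cF[R]\gtrsim \|M\|_2^{-C}\langle F,R\rangle\,F$ for $R\ge 0$, i.e.\ a lower bound by the rank-one projection onto $F$, and the gap for $\langle R,(1\pm\cF)[R]\rangle$ with $R\perp F$ is then extracted by splitting a self-adjoint $R$ into positive and negative parts and applying this bound to each. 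That is a genuinely new ingredient, not bookkeeping, and without it the exponent $\|M\|_2^{-42}$ (or any quantitative gap) is not obtained.
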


We  omit the proof of this lemma (see Lemma 4.6 of \cite{AEK5}), its proof is similar but more involved than
that of Lemma~\ref{PFlemma}, especially the noncommutative analogue of Lemma~\ref{lm:gap} needs substantial changes
(this is given in Lemma A.3 in \cite{AEK5}).

Armed with the bounds on $\cal F$ and $F$, we can use Lemma~\ref{lm:UT} with $T$ playing the role of $\cF$ and
$\cU:=\cC_{Y^*}$ playing the role of $U$:
  \[
 \Big\| \frac{1}{\cU- \cF}\Big\|_{sp} \lesssim \frac{1}{\mbox{Gap}(\cF)\; \; \big| 1-  \|\cF\|\langle F, \cU(F)\rangle\big|}.
\]
We already had a bound on the gap of $\cF$ in \eqref{cfgap}. 
As a last step, we prove the estimate
\[ \big| 1-  \langle F, \cU(F)\rangle\big|= \big|1- \langle F,Y^*FY^*\rangle\big|\ge \frac{\varrho^2(z)}{\| M\|^4_2}\ge \varrho(z)^6.
\]
\begin{exercise} Prove these last two bounds by using $ \big|1- \langle F,Y^*FY^*\rangle\big| \ge \langle F, \cC_{\im Y^*} F\rangle$,
using the definition of $Y$ and various bounds on $M$ from  Theorem~\ref{thmbulkmat}.
\end{exercise}

Combining \eqref{tof} with  these last bounds and with the bound on the gap of $\cF$ \eqref{cfgap}
we  complete the proof of Lemma~\ref{lm:stabM} (without the 
$\mbox{dist}(z, \mbox{supp}\varrho)$ part). 
\end{proof}

\begin{exercise}
Prove the matrix analogue of the unconditional bound \eqref{trivl2}, i.e. if $M$ solves the MDE \eqref{mde5},
where  we only assume that $\cS$ is symmetric and  positivity preserving, then
$
    \| M\|_{hs} \le\frac{2}{|z|}.
$
(Hint: use the representation $M=QYQ^*$ to express $YQ^*Q = -\frac{1}{z}(1+Y \cF(Y))$ and take Hilbert-Schmidt norm on both sides.)
\end{exercise}

\section{Ideas of the proof of the local laws}\label{sec:ideas}

In this section we sketch the proof of the local laws. We will present the more general correlated case,
i.e. Theorem~\ref{thm:univcorr} and we will focus on the entrywise  local law \eqref{perch16cor}.

\subsection{Structure of the proof}
 In Section~\ref{sec:resolvent} around \eqref{egg} we already outlined the main idea.
Starting from $HG= I+zG$, we have the identity 
\be\label{iz}
    I+ (z+ \cS[G])G =  D, \qquad D:= HG + \cS [G]G,
\ee
and we compare it with the matrix Dyson equation
\be\label{mis}
    I + (z+ \cS[M])M =0.
\ee
The first (probabilistic) part of the proof is a good bound on $D$, the second (deterministic) part is to use the stability of the MDE 
to conclude from these two equations that $G-M$ is small.  The first question is in which norm 
should one estimate these quantities?

Since $D$ is still random, it is not realistic to estimate it in operator norm, in fact $\| D\|_2\gtrsim 1/\eta$ with high probability.
To see this,  consider the simplest Wigner case,
\[ 
  D = I +\Big( z +  \frac{1}{N}\tr G\Big) G.
\]
Let $\lambda$ be the closest eigenvalue to $\re z$ with normalized eigenvector $\bu$. Note that
 typically $|\re z- \lambda|\lesssim 1/N$ and $\eta\gg 1/N$, thus $\|G\bu\|_2 = 1/|\lambda- z|  \sim 1/\eta$
 (suppose that $\re z$ is away from zero). From the local law we know that $\frac{1}{N}\tr G\sim m_{sc}\sim 1$
 and $z+m_{sc}\sim 1$. Thus
 \[
    \| D\bu\|_2 = \Big\| \bu + \Big( z +  \frac{1}{N}\tr G\Big) G\bu\Big\|_2\sim \| G\bu\|_2\sim 1/\eta.
 \]
 The appropriate weaker norm is the entrywise maximum norm defined by
 \[
    \| T\|_{\max} : = \max_{ij} |T_{ij}|.
 \]
 
 \subsection{Probabilistic part of the proof}
In the maximum norm we have the following
\begin{theorem}\label{thm:D}
Under the  conditions of Theorem~\ref{thm:univcorr}, for any $\gamma, \e, D>0$ we have the following high
probability statement for some $z=E+i\eta$ with $|z|\le 1000$, 
$\eta\ge N^{-1+\gamma}$: 
\be\label{Dmax}
  \P\biggl(  \|D(z)\|_{\max}\ge \frac{N^\e}{\sqrt{N\eta}} \biggr) \le \frac{C}{N^D},
\ee
i.e. all matrix elements $D_{ij}$ are small simultaneously for all spectral parameters.
\end{theorem}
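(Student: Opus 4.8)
The plan is to estimate the entries $D_{ij} = (HG)_{ij} + (\cS[G]G)_{ij}$ via a high-moment bound: I would compute $\E |D_{ij}|^{2p}$ for a large (but fixed) integer $p$, show it is bounded by $(N^\e/\sqrt{N\eta})^{2p}$ up to a model-dependent constant, and then conclude \eqref{Dmax} by Markov's inequality together with a union bound over the $N^2$ index pairs and a grid/Lipschitz argument over the spectral parameter $z$ (exactly as in the passage around \eqref{perch1}, using that $z \mapsto D(z)$ is Lipschitz with constant at most $N^C$ on the relevant domain). The only genuinely probabilistic input is the moment bound; everything else is routine.

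\textbf{The moment estimate.} To bound $\E |D_{ij}|^{2p}$ I would use the multivariate cumulant expansion formula \eqref{cumexp1}, performing an integration by parts in one of the $H$ factors inside $(HG)_{ij}$, treating the remaining factors of $G$ and of $D_{ij}^{p-1}\bar D_{ij}^p$ as the function to be differentiated. As explained in Section~\ref{sec:cum}, the counter-term $\cS[G]G$ is precisely what cancels the second-order cumulant contribution, so $D$ has (approximately) vanishing ``leading order''. One then tracks the contributions of higher-order cumulants and of derivatives hitting the $G$-factors. A single cumulant of order $k$ carries a factor $N^{-k/2}$ (from the scaling \eqref{mmm}), each derivative of a resolvent produces another resolvent entry (and the correlated structure means one must sum over the cumulant's index set, but the exponential decay of correlations \eqref{coroskari} makes these sums convergent), and crucially one uses the \emph{a priori} self-improving bounds on $G$: before the final estimate one only knows the trivial $\|G\|_{\max}\le 1/\eta$, but the Ward identity $\sum_j |G_{ij}|^2 = \eta^{-1}\im G_{ii}$ converts a sum over an index into a factor $(N\eta)^{-1}$ times $\im G_{ii}$. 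The bookkeeping is the same as in \cite{EKScorrelated}; the key numerical fact is that each ``loop'' of the expansion gains a factor $1/\sqrt{N\eta}$, which after $p$ factors yields the claimed $(N\eta)^{-p}$ (plus the $N^{\e p}$ slack from accumulated powers of $N^\e$).

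\textbf{Bootstrapping.} Since the moment bound above requires an a priori control on $\|G\|_{\max}$ that is better than the trivial $1/\eta$, the argument is really a continuity (bootstrap) argument in $\eta$: one starts at $\eta \sim 1$ where $G \approx M$ is elementary, assumes inductively that $\|G - M\|_{\max} \le N^{\e}(N\eta)^{-1/2}$ down to some scale $\eta$, uses this to run the cumulant expansion and re-derive \eqref{Dmax} at the slightly smaller scale, then feeds $D$ back through the stability bound of Lemma~\ref{lm:stabM} (the deterministic step, valid in the bulk where $\varrho(\tau)\ge\delta$) to recover the improved bound $\|G-M\|_{\max}\lesssim \|D\|_{\max}\lesssim N^\e(N\eta)^{-1/2}$, closing the loop. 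In this notes-level sketch I would present only the structure of this self-improving scheme and the heuristic size-counting of a typical term, referring to \cite{AEK5, EKScorrelated} for the full combinatorics.

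\textbf{Main obstacle.} The hard part is the combinatorial control of the cumulant expansion in the \emph{correlated} setting: unlike the Wigner case, where one $H$ multiplies a function of it, here the derivatives of $G$ generate chains of resolvent entries indexed by the cumulant's support, and one must show that summing over these indices (using Ward identities and the correlation-decay assumption \eqref{coroskari}) does not destroy the gain from the $N^{-k/2}$ cumulant factors. Organizing these terms — in particular verifying that every term either benefits from the $\cS[G]G$ cancellation or carries enough $1/\sqrt{N\eta}$ factors — is the technical heart, and is where I would lean most heavily on the machinery of \cite{EKScorrelated} rather than reproducing it.
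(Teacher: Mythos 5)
Your proposal follows essentially the same route as the paper, which itself only sketches the argument: a high-moment bound on $\E|D_{ij}|^{2p}$ via the cumulant expansion of Section~\ref{sec:cum} (with the $\cS[G]G$ counter-term cancelling the second-order cumulant contribution), followed by Markov's inequality and a union bound plus grid/Lipschitz argument in $z$, with the full combinatorics deferred to \cite{AEK5, EKScorrelated}. The only structural difference is that you fold the $\eta$-continuity bootstrap into the probabilistic step, whereas the paper states Theorem~\ref{thm:D} unconditionally and runs the continuity argument entirely in the deterministic part (Section~\ref{sec:ideas}); this is a matter of presentation rather than substance.
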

We will omit the proof, which is a tedious calculation and whose basic ingredients were sketched in 
Section~\ref{sec:resolvent}. For the Wigner type matrices or for correlated matrices with fast (exponential)
correlation decay as in \cite{AEK5} one may use the Schur complement method  together
with concentration estimates on quadratic functionals of independent or essentially independent random vectors (Section~\ref{sec:schur}).
For more general correlations or if nonzero expectation of $H$ is allowed, then we may use the cumulant method (Section~\ref{sec:cum}).
In both cases, one establishes a high moment bound  on $\E |D_{ij}|^p$ via a detailed expansion and then one concludes
a high probability bound via Markov inequality.

\subsection{Deterministic part of the proof}

In the second (deterministic) part of the proof we compare \eqref{iz} and \eqref{mis}. From these two equations we have
\be\label{diffz}
(I-M\cS[\cdot]M) [G-M] =  MD + M \cS[G-M](G-M),
\ee
so by inverting the super operator $I-M\cS[\cdot]M = I-\cC_M\cS$, we get
\be\label{G-M}
  G-M = \frac{1}{I-\cC_M\cS}\big[MD\big]  + \frac{1}{I-\cC_M\cS}\Big[ M \cS[G-M](G-M)\Big].
\ee
Not only is $\| M\|$ is bounded, see \eqref{MBOUND}, but also both
\be\label{maxnorm}
   \| M\|_{\infty}: = \max_i \sum_j |M_{ij}|\qquad \mbox{and}\quad    \| M\|_{1}: = \max_j \sum_i |M_{ij}|
\ee
are bounded. This information is obvious for Wigner type matrices, when $M$ is diagonal. For correlated
matrices with fast correlation decay it requires a somewhat involved additional proof that we do not repeat here,
see Theorem 2.5 of \cite{AEK5}. Slow decay needs another argument \cite{EKScorrelated}.

Furthermore, 
we know that in the bulk spectrum the inverse of the  stability operator is bounded in spectral norm \eqref{CMS}, i.e. 
when the stability operator is considered mapping matrices with Hilbert Schmidt norm. 
We may also consider its norm in the other two natural norms, i.e. when the space of matrices
is equipped with the maximum norm \eqref{maxnorm} and the Euclidean matrix norm $\|\cdot \|$.
 We find  the boundedness of the inverse of the stability operator in these two other
norms as well since we can prove (see Exercise~\ref{ex1})
\be\label{SSb}
    \Big\| \frac{1}{I-\cC_M\cS} \Big\|_{\infty}  + \Big\| \frac{1}{I-\cC_M\cS} \Big\|_{2}
     \lesssim   \Big\| \frac{1}{I-\cC_M\cS} \Big\|_{sp}. 
\ee
Note that the bound on the first term in the left hand side is the analogue of the estimate from Exercise~\ref{stabex}. 
Using all this , we obtain from \eqref{G-M} that
\[ 
   \| G-M\|_{\max}\lesssim \|D \|_{\max} +  \| G-M\|_{\max}^2,
\]
where $\lesssim$ includes factors of $\varrho(z)^{-C}$, which are harmless in the bulk.
From this quadratic inequality  we easily obtain that
\be\label{GMD}
   \| G-M\|_{\max}\lesssim \|D \|_{\max} ,
\ee
assuming a weak bound  $ \| G-M\|_{\max}\ll 1$. This latter information is obtained by a continuity argument in the imaginary part
of the spectral parameter. We fix an $E$ in the bulk, $\varrho(E)>0$ and consider $(G-M)(E+i\eta)$ as a function of $\eta$.
For large $\eta$ we know that both $G$ and $M$ are bounded by $1/\eta$, hence they are small, so the
weak bound  $ \| G-M\|_{\max}\ll 1$ holds. Then we conclude that 
\eqref{GMD} holds for large $\eta$. Since $\| D\|_{\max}$ is small, at least with very high probability, see \eqref{Dmax}, 
we obtain that the strong bound
\be
\label{strong}
     \| G-M\|_{\max}\lesssim \frac{N^\e}{\sqrt{N\eta}}
\ee
also holds. Now we may reduce the value of $\eta$ a bit using the fact that  the function $\eta\to (G-M)(E+i\eta)$ is Lipschitz continuous with
Lipschitz constant $C/\eta^2$. So we know that $ \| G-M\|_{\max}\ll 1$ for this smaller $\eta$ value as well. 
Thus  \eqref{GMD} can again be applied and together with \eqref{Dmax} we get the strong bound  \eqref{strong}
for this reduced $\eta$ as well. We continue this ``small-step'' reduction  as long as the strong bound  implies  the weak bound, i.e. as long as
$N\eta\gg N^{2\e}$, i.e. $\eta \gg N^{-1+2\e}$. Since $\e>0$ is arbitrary we can go down to 
the scales $\eta\ge N^{-1+\gamma}$ for any $\gamma>0$.
 Some care is needed in this argument, since the smallness of $\| D\|_{\max}$ holds only with high probability, 
 so in every step we lose a set of small probability. This is, however, affordable by the union bound since
 the probability of the events where $D$ is not controlled is very small, see \eqref{Dmax}.

The proof of the averaged law \eqref{perch13cor} is similar. Instead of the maximum norm, we use averaged quantities of the
form $\langle TD \rangle =\frac{1}{N}\tr TD$. In the first, probabilistic step  instead of \eqref{Dmax} we prove
that for any fixed deterministic matrix $T$ we have
\[
   \big| \langle TD \rangle\big| \le \frac{N^\e}{N\eta}\| T\|
\] 
with very high probability. Notice that averaged quantities can be estimated with
an  additional $(N\eta)^{-1/2}$ power  better; this is the main reason why averaged law \eqref{perch13cor}
has a stronger control than the entrywise or the isotropic laws.

\begin{exercise}\label{ex1}
Prove \eqref{SSb}. Hint: consider the  identity \eqref{Rid} with $R= \cC_M\cS$
and use the smoothing properties of the self-energy operation $\cS$ following from \eqref{flatness}
and the boundedness of $M$ in all three relevant norms.
\end{exercise}

%
%
%
%
%
%
%

\bibspread
\def\cydot{\leavevmode\raise.4ex\hbox{.}} \def\cprime{$'$} \def\cprime{$'$}
  \def\cprime{$'$}
\begin{bibdiv}
\begin{biblist}

\bib{Abrahams1979}{article}{
      author={Abrahams, E.},
      author={Anderson, P.W.},
      author={Licciardello, D.C.},
      author={Ramakrishnan, T.V.},
       title={Scaling theory of localization: Absence of quantum diffusion in
  two dimensions},
        date={1979},
     journal={Phys. Rev. Lett.},
      volume={42},
       pages={673},
}

\bib{Aggarwal2016}{article}{
      author={Aggarwal, A.},
       title={Bulk universality for generalized {W}igner matrices with a few
  moments},
        date={2016},
     journal={Preprint Arxiv 1612.00421},
}

\bib{AizMol1993}{article}{
      author={Aizenman, M.},
      author={Molchanov, S.},
       title={Localization at large disorder and at extreme energies: an
  elementary derivation},
        date={1993},
        ISSN={0010-3616},
     journal={Comm. Math. Phys.},
      volume={157},
      number={2},
       pages={245\ndash 278},
         url={http://projecteuclid.org/euclid.cmp/1104253939},
      review={\MR{1244867}},
}

\bib{AjaErdKru2015}{article}{
      author={Ajanki, O.},
      author={Erd{\H o}s, L.},
      author={Kr{\"u}ger, T.},
       title={{Quadratic vector equations on complex upper half-plane}},
        date={2015},
     eprint={\bibarxiv{1506.05095}},
}

\bib{AEK5}{article}{
   author={Ajanki, Oskari H.},
   author={Erd\H{o}s, L\'{a}szl\'{o}},
   author={Kr\"{u}ger, Torben},
   title={Stability of the matrix Dyson equation and random matrices with
   correlations},
   journal={Probab. Theory Related Fields},
   volume={173},
   date={2019},
   number={1-2},
   pages={293--373},
   issn={0178-8051},
   review={\MR{3916109}},
    eprint={\bibarxiv{1604.08188}},
   doi={10.1007/s00440-018-0835-z},
}

\bib{AEK1short}{article}{
      author={Ajanki, O.},
      author={Erd{\H o}s, L.},
      author={Kr\"uger, T.},
       title={{Singularities of solutions to quadratic vector equations on
  complex upper half-plane}},
        date={2017},
     journal={Comm. Pure Appl. Math.},
      volume={70},
      number={9},
       pages={1672\ndash 1705},
}

\bib{AEK2}{article}{
      author={Ajanki, O.},
      author={Erd{\H o}s, L.},
      author={Kr\"uger, T.},
       title={{Universality for general Wigner-type matrices}},
        date={2017},
     journal={Prob. Theor. Rel. Fields},
      volume={169},
      number={3-4},
       pages={667\ndash 727},
}

\bib{AkeBaiDi-2011}{book}{
      editor={Akemann, G.},
      editor={Baik, J.},
      editor={Di~Francesco, P.},
       title={The {O}xford handbook of random matrix theory},
   publisher={Oxford University Press, Oxford},
        date={2011},
        ISBN={978-0-19-957400-1},
      review={\MR{2920518}},
}

\bib{AEK2018}{article}{
      author={Alt, J.},
      author={Erd{\H o}s, L.},
      author={Kr\"uger, T.},
       title={{The Dyson equation with linear self-energy: spectral bands,
  edges and cusps}},
        date={2018},
     eprint={\bibarxiv{1804.07752}},
}

\bib{AEKS}{article}{
      author={Alt, J.},
      author={Erd{\H o}s, L.},
      author={Kr\"uger, T.},
      author={Schr\"oder, D.},
       title={{Correlated random matrices: band rigidity and edge
  universality}},
        date={2018},
     eprint={\bibarxiv{1804.07744}},
}

\bib{Nelson2016}{article}{
      author={Amir, A.},
      author={Hatano, N.},
      author={Nelson, D.~R.},
       title={{Non-Hermitian localization in biological networks}},
        date={2016},
     journal={Phys. Rev. E},
      volume={93},
       pages={042310},
}

\bib{AndGuiZei2010}{book}{
      author={Anderson, G.~W.},
      author={Guionnet, A.},
      author={Zeitouni, O.},
       title={An introduction to random matrices},
      series={Cambridge Studies in Advanced Mathematics},
   publisher={Cambridge University Press, Cambridge},
        date={2010},
      volume={118},
        ISBN={978-0-521-19452-5},
      review={\MR{2760897}},
}

\bib{And1958}{article}{
      author={Anderson, P.~W.},
       title={Absence of diffusion in certain random lattices},
        date={1958Mar},
     journal={Phys. Rev.},
      volume={109},
       pages={1492\ndash 1505},
         url={http://link.aps.org/doi/10.1103/PhysRev.109.1492},
}

\bib{BaoErd2016}{article}{
      author={Bao, Z.},
      author={Erd{\H o}s, L.},
       title={Delocalization for a class of random block band matrices},
        date={2016},
        ISSN={1432-2064},
     journal={Probab. Theory Related Fields},
       pages={1\ndash 104},
         url={http://dx.doi.org/10.1007/s00440-015-0692-y},
}

\bib{BauHuaKnoYau2015}{article}{
      author={Bauerschmidt, R.},
      author={Huang, J.},
      author={Knowles, A.},
      author={Yau, H.-T.},
       title={{Bulk eigenvalue statistics for random regular graphs}},
        date={2015-05},
     eprint={\bibarxiv{1505.06700}},
}

\bib{BauKnoYau2015}{article}{
      author={Bauerschmidt, R.},
      author={Knowles, A.},
      author={Yau, H.-T.},
       title={{Local semicircle law for random regular graphs}},
        date={2015-03},
     eprint={\bibarxiv{1503.08702}},
}

\bib{BekFigGui2015}{article}{
      author={Bekerman, F.},
      author={Figalli, A.},
      author={Guionnet, A.},
       title={Transport maps for {$\beta$}-matrix models and universality},
        date={2015},
        ISSN={0010-3616},
     journal={Comm. Math. Phys.},
      volume={338},
      number={2},
       pages={589\ndash 619},
         url={http://dx.doi.org/10.1007/s00220-015-2384-y},
      review={\MR{3351052}},
}

\bib{BenPech2014}{article}{
      author={Benaych-Georges, F.},
      author={P\'eche, S.},
       title={Localization and delocalization for heavy tailed band matrices},
        date={2014},
     journal={Annales de l'Institut Henri Poincar\'e (B) Probabilit\'es et
  Statistiques, Institute Henri Poincar\'e},
      volume={50(4)},
       pages={1385\ndash 1403},
}

\bib{BleIts1999}{article}{
      author={Bleher, P.},
      author={Its, A.},
       title={Semiclassical asymptotics of orthogonal polynomials,
  {R}iemann-{H}ilbert problem, and universality in the matrix model},
        date={1999},
        ISSN={0003-486X},
     journal={Ann. of Math. (2)},
      volume={150},
      number={1},
       pages={185\ndash 266},
         url={http://dx.doi.org/10.2307/121101},
      review={\MR{1715324}},
}

\bib{BorGui02016}{article}{
      author={Bordenave, C.},
      author={Guionnet, A.},
       title={Delocalization at small energy for heavy-tailed random matrices},
        date={2016},
     eprint={\bibarxiv{1603.08845}},
}

\bib{BouErdYau2012}{article}{
      author={Bourgade, P.},
      author={Erd{\H o}s, L.},
      author={Yau, H.-T.},
       title={Bulk universality of general {$\beta$}-ensembles with non-convex
  potential},
        date={2012},
        ISSN={0022-2488},
     journal={J. Math. Phys.},
      volume={53},
      number={9},
       pages={095221, 19},
         url={http://dx.doi.org/10.1063/1.4751478},
      review={\MR{2905803}},
}

\bib{BouErdYau2014-2}{article}{
      author={Bourgade, P.},
      author={Erd{\H o}s, L.},
      author={Yau, H.-T.},
       title={Edge universality of beta ensembles},
        date={2014},
        ISSN={0010-3616},
     journal={Comm. Math. Phys.},
      volume={332},
      number={1},
       pages={261\ndash 353},
         url={http://dx.doi.org/10.1007/s00220-014-2120-z},
      review={\MR{3253704}},
}

\bib{BouErdYau2014}{article}{
      author={Bourgade, P.},
      author={Erd{\H o}s, L.},
      author={Yau, H.-T.},
       title={Universality of general {$\beta$}-ensembles},
        date={2014},
        ISSN={0012-7094},
     journal={Duke Math. J.},
      volume={163},
      number={6},
       pages={1127\ndash 1190},
         url={http://dx.doi.org/10.1215/00127094-2649752},
      review={\MR{3192527}},
}

\bib{BouErdYauYin2015}{article}{
      author={Bourgade, P.},
      author={Erd{\H o}s, L.},
      author={Yau, H.-T.},
      author={Yin, J.},
       title={Fixed energy universality for generalized {W}igner matrices},
        date={2015},
        ISSN={1097-0312},
     journal={Comm. Pure Appl. Math.},
       pages={1\ndash 67},
         url={http://dx.doi.org/10.1002/cpa.21624},
}

\bib{BreHik1996}{article}{
      author={Br{\'e}zin, E.},
      author={Hikami, S.},
       title={Correlations of nearby levels induced by a random potential},
        date={1996},
        ISSN={0550-3213},
     journal={Nuclear Phys. B},
      volume={479},
      number={3},
       pages={697\ndash 706},
         url={http://dx.doi.org/10.1016/0550-3213(96)00394-X},
      review={\MR{1418841}},
}

\bib{BreHik1997}{article}{
      author={Br{\'e}zin, E.},
      author={Hikami, S.},
       title={Spectral form factor in a random matrix theory},
        date={1997},
        ISSN={1539-3755},
     journal={Phys. Rev. E},
      volume={55},
      number={4},
       pages={4067\ndash 4083},
         url={http://dx.doi.org/10.1103/PhysRevE.55.4067},
      review={\MR{1449379}},
}

\bib{Dav1995}{article}{
      author={Davies, E.~B.},
       title={The functional calculus},
        date={1995},
        ISSN={0024-6107},
     journal={J. London Math. Soc.},
      volume={52},
      number={1},
       pages={166\ndash 176},
         url={http://dx.doi.org/10.1112/jlms/52.1.166?},
      review={\MR{1345723}},
}

\bib{Dei1999}{book}{
      author={Deift, P.},
       title={Orthogonal polynomials and random matrices: a {R}iemann-{H}ilbert
  approach},
      series={Courant Lecture Notes in Mathematics},
   publisher={New York University, Courant Institute of Mathematical Sciences,
  New York; American Mathematical Society, Providence, RI},
        date={1999},
      volume={3},
        ISBN={0-9658703-2-4; 0-8218-2695-6},
      review={\MR{1677884}},
}

\bib{DeiGio2007-2}{article}{
      author={Deift, P.},
      author={Gioev, D.},
       title={Universality at the edge of the spectrum for unitary, orthogonal,
  and symplectic ensembles of random matrices},
        date={2007},
        ISSN={0010-3640},
     journal={Comm. Pure Appl. Math.},
      volume={60},
      number={6},
       pages={867\ndash 910},
         url={http://dx.doi.org/10.1002/cpa.20164},
      review={\MR{2306224}},
}

\bib{DeiKriMcLVen1999}{article}{
      author={Deift, P.},
      author={Kriecherbauer, T.},
      author={McLaughlin, K. T-R},
      author={Venakides, S.},
      author={Zhou, X.},
       title={Strong asymptotics of orthogonal polynomials with respect to
  exponential weights},
        date={1999},
        ISSN={0010-3640},
     journal={Comm. Pure Appl. Math.},
      volume={52},
      number={12},
       pages={1491\ndash 1552},
  url={http://dx.doi.org/10.1002/(SICI)1097-0312(199912)52:12<1491::AID-CPA2>3.3.CO;2-R},
      review={\MR{1711036}},
}

\bib{Dys1962}{article}{
      author={Dyson, F.~J.},
       title={A {B}rownian-motion model for the eigenvalues of a random
  matrix},
        date={1962},
        ISSN={0022-2488},
     journal={J. Math. Phys.},
      volume={3},
       pages={1191\ndash 1198},
      review={\MR{0148397}},
}

\bib{ErdKno2011}{article}{
      author={Erd{\H o}s, L.},
      author={Knowles, A.},
       title={Quantum diffusion and delocalization for band matrices with
  general distribution},
        date={2011},
        ISSN={1424-0637},
     journal={Ann. Henri Poincar\'e},
      volume={12},
      number={7},
       pages={1227\ndash 1319},
         url={http://dx.doi.org/10.1007/s00023-011-0104-5},
      review={\MR{2846669}},
}

\bib{ErdKnoYau2013}{article}{
      author={Erd{\H o}s, L.},
      author={Knowles, A.},
      author={Yau, H.-T.},
       title={Averaging fluctuations in resolvents of random band matrices},
        date={2013},
        ISSN={1424-0637},
     journal={Ann. Henri Poincar\'e},
      volume={14},
      number={8},
       pages={1837\ndash 1926},
         url={http://dx.doi.org/10.1007/s00023-013-0235-y},
      review={\MR{3119922}},
}

\bib{ErdKnoYauYin2012}{article}{
      author={Erd{\H o}s, L.},
      author={Knowles, A.},
      author={Yau, H.-T.},
      author={Yin, J.},
       title={Spectral statistics of {E}rd{\H o}s-{R}\'enyi {G}raphs {II}:
  {E}igenvalue spacing and the extreme eigenvalues},
        date={2012},
        ISSN={0010-3616},
     journal={Comm. Math. Phys.},
      volume={314},
      number={3},
       pages={587\ndash 640},
         url={http://dx.doi.org/10.1007/s00220-012-1527-7},
      review={\MR{2964770}},
}

\bib{ErdKnoYauYin2013}{article}{
      author={Erd{\H o}s, L.},
      author={Knowles, A.},
      author={Yau, H.-T.},
      author={Yin, J.},
       title={Delocalization and diffusion profile for random band matrices},
        date={2013},
        ISSN={0010-3616},
     journal={Comm. Math. Phys.},
      volume={323},
      number={1},
       pages={367\ndash 416},
         url={http://dx.doi.org/10.1007/s00220-013-1773-3},
      review={\MR{3085669}},
}

\bib{ErdKnoYauYin2013-2}{article}{
      author={Erd{\H o}s, L.},
      author={Knowles, A.},
      author={Yau, H.-T.},
      author={Yin, J.},
       title={Spectral statistics of {E}rd{\H o}s-{R}\'enyi graphs {I}: {L}ocal
  semicircle law},
        date={2013},
        ISSN={0091-1798},
     journal={Ann. Probab.},
      volume={41},
      number={3B},
       pages={2279\ndash 2375},
         url={http://dx.doi.org/10.1214/11-AOP734},
      review={\MR{3098073}},
}

\bib{EKScorrelated}{article}{
      author={Erd{\H o}s, L.},
      author={Kr\"uger, T.},
      author={Schr\"oder, D.},
       title={{Random matrices with slow correlation decay}},
        date={2017},
     eprint={\bibarxiv{1705.10661}},
}

\bib{EKR}{article}{
   author={Erd\H{o}s, L\'{a}szl\'{o}},
   author={Kr\"{u}ger, Torben},
   author={Renfrew, David},
   title={Power law decay for systems of randomly coupled differential
   equations},
   journal={SIAM J. Math. Anal.},
   volume={50},
   date={2018},
   number={3},
   pages={3271--3290},
   issn={0036-1410},
   review={\MR{3816180}},
    eprint={\bibarxiv{1708.01546}},
   doi={10.1137/17M1143125},
}

\bib{ErdSchYau2010}{article}{
      author={Erd{\H o}s, L.},
      author={Schlein, B.},
      author={Yau, H.-T.},
       title={Wegner estimate and level repulsion for {W}igner random
  matrices},
        date={2010},
        ISSN={1073-7928},
     journal={Int. Math. Res. Not.},
      volume={2010},
      number={3},
       pages={436\ndash 479},
         url={http://dx.doi.org/10.1093/imrn/rnp136},
      review={\MR{2587574}},
}

\bib{ErdSchYau2011}{article}{
      author={Erd{\H o}s, L.},
      author={Schlein, B.},
      author={Yau, H.-T.},
       title={Universality of random matrices and local relaxation flow},
        date={2011},
        ISSN={0020-9910},
     journal={Invent. Math.},
      volume={185},
      number={1},
       pages={75\ndash 119},
         url={http://dx.doi.org/10.1007/s00222-010-0302-7},
      review={\MR{2810797}},
}

\bib{ErdSchYauYin2012}{article}{
      author={Erd{\H o}s, L.},
      author={Schlein, B.},
      author={Yau, H.-T.},
      author={Yin, J.},
       title={The local relaxation flow approach to universality of the local
  statistics for random matrices},
        date={2012},
        ISSN={0246-0203},
     journal={Ann. Inst. H. Poincar\'e Probab. Statist.},
      volume={48},
      number={1},
       pages={1\ndash 46},
         url={http://dx.doi.org/10.1214/10-AIHP388},
      review={\MR{2919197}},
}

\bib{ErdSch2015}{article}{
      author={Erd{\H o}s, L.},
      author={Schnelli, K.},
       title={{Universality for random matrix flows with time-dependent
  density}},
        date={2015},
     journal={Ann. Inst. H. Poincar\'e Probab. Statist.},
      volume={53},
      number={4},
       pages={1606\ndash 1656},
}

\bib{ErdYau2012-2}{article}{
      author={Erd{\H o}s, L.},
      author={Yau, H.-T.},
       title={Universality of local spectral statistics of random matrices},
        date={2012},
        ISSN={0273-0979},
     journal={Bull. Amer. Math. Soc. (N.S.)},
      volume={49},
      number={3},
       pages={377\ndash 414},
         url={http://dx.doi.org/10.1090/S0273-0979-2012-01372-1},
      review={\MR{2917064}},
}

\bib{ErdYau2015}{article}{
      author={Erd{\H o}s, L.},
      author={Yau, H.-T.},
       title={Gap universality of generalized {W}igner and
  {$\beta$}-ensembles},
        date={2015},
        ISSN={1435-9855},
     journal={J. Eur. Math. Soc. (JEMS)},
      volume={17},
      number={8},
       pages={1927\ndash 2036},
         url={http://dx.doi.org/10.4171/JEMS/548},
      review={\MR{3372074}},
}

\bib{ErdYau2017}{book}{
      author={Erd{\H o}s, L.},
      author={Yau, H.-T.},
       title={Dynamical approach to random matrix theory},
   publisher={Courant Lecture Notes in Mathematics},
        date={2017},
      volume={28},
}

\bib{EYY}{article}{
      author={Erd{\H o}s, L\'aszl\'o},
      author={Yau, Horng-Tzer},
      author={Yin, Jun},
       title={{Bulk universality for generalized Wigner matrices}},
        date={2011},
     journal={Probab. Theory Related Fields},
      volume={154},
      number={1-2},
       pages={341\ndash 407},
}

\bib{FokItcKit1992}{article}{
      author={Fokas, A.~S.},
      author={Its, A.~R.},
      author={Kitaev, A.~V.},
       title={The isomonodromy approach to matrix models in {$2$}{D} quantum
  gravity},
        date={1992},
        ISSN={0010-3616},
     journal={Comm. Math. Phys.},
      volume={147},
      number={2},
       pages={395\ndash 430},
         url={http://projecteuclid.org/euclid.cmp/1104250643},
      review={\MR{1174420}},
}

\bib{For2010}{book}{
      author={Forrester, P.~J.},
       title={Log-gases and random matrices},
      series={London Mathematical Society Monographs Series},
   publisher={Princeton University Press, Princeton, NJ},
        date={2010},
      volume={34},
        ISBN={978-0-691-12829-0},
         url={http://dx.doi.org/10.1515/9781400835416},
      review={\MR{2641363}},
}

\bib{FroSpe1983}{article}{
      author={Fr{\"o}hlich, J.},
      author={Spencer, T.},
       title={Absence of diffusion in the {A}nderson tight binding model for
  large disorder or low energy},
        date={1983},
        ISSN={0010-3616},
     journal={Comm. Math. Phys.},
      volume={88},
      number={2},
       pages={151\ndash 184},
         url={http://projecteuclid.org/euclid.cmp/1103922279},
      review={\MR{696803}},
}

\bib{FyoMir1991}{article}{
      author={Fyodorov, Y.~V.},
      author={Mirlin, A.~D.},
       title={Scaling properties of localization in random band matrices: a
  {$\sigma$}-model approach},
        date={1991},
        ISSN={0031-9007},
     journal={Phys. Rev. Lett.},
      volume={67},
      number={18},
       pages={2405\ndash 2409},
         url={http://dx.doi.org/10.1103/PhysRevLett.67.2405},
      review={\MR{1130103}},
}

\bib{GeronimoHill}{article}{
      author={Geronimo, J.},
      author={Hill, T.},
       title={{Necessary and sufficient condition that the limit of Stieltjes
  transforms is a Stieltjes transform}},
        date={2003},
     journal={J. Approx. Theory},
      volume={121},
       pages={54\ndash 60},
}

\bib{GoldMolPastur1997}{article}{
      author={Goldsheid, I.~Ya.},
      author={Molchanov, S.~A.},
      author={Pastur, L.~A.},
       title={{A pure point spectrum of the stochastic one-dimensional
  Schr\"odinger equation}},
        date={1977},
     journal={Funkt. Anal. Appl.},
      volume={11},
       pages={1\ndash 10},
}

\bib{GotNauTik2015}{article}{
      author={G{\"o}tze, F.},
      author={Naumov, A.},
      author={Tikhomirov, A.},
       title={{Local semicircle law under moment conditions. Part I: The
  Stieltjes transform}},
        date={2015-10},
     eprint={\bibarxiv{1510.07350}},
}

\bib{HeKnowlesRosenthal2016}{article}{
      author={He, Y.},
      author={Knowles, A.},
      author={Rosenthal, R.},
       title={Isotropic self-consistent equations for mean-field random
  matrices},
        date={2018},
     journal={Probab. Th. Rel. Fields},
      volume={171},
      number={1--2},
       pages={203\ndash 249},
}

\bib{Helton2007-OSE}{article}{
      author={Helton, J.~W.},
      author={Far, R.~R.},
      author={Speicher, R.},
       title={{Operator-valued Semicircular Elements: Solving A Quadratic
  Matrix Equation with Positivity Constraints}},
        date={2007},
     journal={Internat. Math. Res. Notices},
      volume={2007},
}

\bib{HuaLanYau2015}{article}{
      author={Huang, J.},
      author={Landon, B.},
      author={Yau, H.-T.},
       title={Bulk universality of sparse random matrices},
        date={2015},
        ISSN={0022-2488},
     journal={J. Math. Phys.},
      volume={56},
      number={12},
       pages={123301, 19},
         url={http://dx.doi.org/10.1063/1.4936139},
      review={\MR{3429490}},
}

\bib{ItzZub1980}{article}{
      author={Itzykson, C.},
      author={Zuber, J.~B.},
       title={The planar approximation. {II}},
        date={1980},
        ISSN={0022-2488},
     journal={J. Math. Phys.},
      volume={21},
      number={3},
       pages={411\ndash 421},
         url={http://dx.doi.org/10.1063/1.524438},
      review={\MR{562985}},
}

\bib{Joh2001}{article}{
      author={Johansson, K.},
       title={Universality of the local spacing distribution in certain
  ensembles of {H}ermitian {W}igner matrices},
        date={2001},
        ISSN={0010-3616},
     journal={Comm. Math. Phys.},
      volume={215},
      number={3},
       pages={683\ndash 705},
         url={http://dx.doi.org/10.1007/s002200000328},
      review={\MR{1810949}},
}

\bib{Joh2012}{article}{
      author={Johansson, K.},
       title={Universality for certain {H}ermitian {W}igner matrices under weak
  moment conditions},
        date={2012},
        ISSN={0246-0203},
     journal={Ann. Inst. H. Poincar\'e Probab. Statist.},
      volume={48},
      number={1},
       pages={47\ndash 79},
         url={http://dx.doi.org/10.1214/11-AIHP429},
      review={\MR{2919198}},
}

\bib{KnoYin2013}{article}{
      author={Knowles, A.},
      author={Yin, J.},
       title={The isotropic semicircle law and deformation of {W}igner
  matrices},
        date={2013},
        ISSN={0010-3640},
     journal={Comm. Pure Appl. Math.},
      volume={66},
      number={11},
       pages={1663\ndash 1750},
         url={http://dx.doi.org/10.1002/cpa.21450},
      review={\MR{3103909}},
}

\bib{KnoYin2014}{article}{
      author={Knowles, A.},
      author={Yin, J.},
       title={{Anisotropic local laws for random matrices}},
        date={2014-10},
     eprint={\bibarxiv{1410.3516}},
}

\bib{LSY2016}{article}{
      author={Landon, B.},
      author={Sosoe, P.},
      author={Yau, H.-T.},
       title={Fixed energy universality of {D}yson {B}rownian motion},
        date={2016},
     eprint={\bibarxiv{1609.09011}},
}

\bib{LanYau2015}{article}{
      author={Landon, B.},
      author={Yau, H.-T.},
       title={{Convergence of local statistics of Dyson Brownian motion}},
        date={2017},
     journal={Comm. Math. Phys},
      volume={355},
      number={3},
       pages={949\ndash 1000},
}

\bib{LeeSchSteYau2016}{article}{
      author={Lee, J.~O.},
      author={Schnelli, K.},
      author={Stetler, B.},
      author={Yau, H.-T.},
       title={Bulk universality for deformed {W}igner matrices},
        date={2016},
        ISSN={0091-1798},
     journal={Ann. Probab.},
      volume={44},
      number={3},
       pages={2349\ndash 2425},
         url={http://dx.doi.org/10.1214/15-AOP1023},
      review={\MR{3502606}},
}

\bib{Meh1991}{book}{
      author={Mehta, M.~L.},
       title={Random matrices},
     edition={Second},
   publisher={Academic Press, Inc., Boston, MA},
        date={1991},
        ISBN={0-12-488051-7},
      review={\MR{1083764}},
}

\bib{ORourkeVu2014}{article}{
      author={O'Rourke, S.},
      author={Vu, V.},
       title={Universality of local eigenvalue statistics in random matrices
  with external source},
        date={2014},
     journal={Random Matrices: Theory and Applications},
      volume={03},
      number={02},
}

\bib{PasShc2011}{book}{
      author={Pastur, L.},
      author={Shcherbina, M.},
       title={Eigenvalue distribution of large random matrices},
      series={Mathematical Surveys and Monographs},
   publisher={American Mathematical Society, Providence, RI},
        date={2011},
      volume={171},
        ISBN={978-0-8218-5285-9},
         url={http://dx.doi.org/10.1090/surv/171},
      review={\MR{2808038}},
}

\bib{Sch2009}{article}{
      author={Schenker, J.},
       title={Eigenvector localization for random band matrices with power law
  band width},
        date={2009},
        ISSN={0010-3616},
     journal={Comm. Math. Phys.},
      volume={290},
      number={3},
       pages={1065\ndash 1097},
         url={http://dx.doi.org/10.1007/s00220-009-0798-0},
      review={\MR{2525652}},
}

\bib{Shc2014}{article}{
      author={Shcherbina, M.},
       title={Change of variables as a method to study general
  {$\beta$}-models: bulk universality},
        date={2014},
        ISSN={0022-2488},
     journal={J. Math. Phys.},
      volume={55},
      number={4},
       pages={043504, 23},
         url={http://dx.doi.org/10.1063/1.4870603},
      review={\MR{3390602}},
}

\bib{TShc2014-2}{article}{
      author={Shcherbina, T.},
       title={On the second mixed moment of the characteristic polynomials of
  1{D} band matrices},
        date={2014},
        ISSN={0010-3616},
     journal={Comm. Math. Phys.},
      volume={328},
      number={1},
       pages={45\ndash 82},
         url={http://dx.doi.org/10.1007/s00220-014-1947-7},
      review={\MR{3196980}},
}

\bib{TShc2014}{article}{
      author={Shcherbina, T.},
       title={Universality of the local regime for the block band matrices with
  a finite number of blocks},
        date={2014},
        ISSN={0022-4715},
     journal={J. Stat. Phys.},
      volume={155},
      number={3},
       pages={466\ndash 499},
         url={http://dx.doi.org/10.1007/s10955-014-0964-4},
      review={\MR{3192170}},
}

\bib{Shc2015}{article}{
      author={Shcherbina, T.},
       title={Universality of the second mixed moment of the characteristic
  polynomials of the 1{D} band matrices: real symmetric case},
        date={2015},
        ISSN={0022-2488},
     journal={J. Math. Phys.},
      volume={56},
      number={6},
       pages={063303, 23},
         url={http://dx.doi.org/10.1063/1.4922621},
      review={\MR{3369897}},
}

\bib{Sod2010}{article}{
      author={Sodin, S.},
       title={The spectral edge of some random band matrices},
        date={2010},
        ISSN={0003-486X},
     journal={Ann. of Math. (2)},
      volume={172},
      number={3},
       pages={2223\ndash 2251},
         url={http://dx.doi.org/10.4007/annals.2010.172.2223},
      review={\MR{2726110}},
}

\bib{Tao2012}{book}{
      author={Tao, T.},
       title={Topics in random matrix theory},
      series={Graduate Studies in Mathematics},
   publisher={American Mathematical Society, Providence, RI},
        date={2012},
      volume={132},
        ISBN={978-0-8218-7430-1},
         url={http://dx.doi.org/10.1090/gsm/132},
      review={\MR{2906465}},
}

\bib{TaoVu2011}{article}{
      author={Tao, T.},
      author={Vu, V.},
       title={Random matrices: universality of local eigenvalue statistics},
        date={2011},
        ISSN={0001-5962},
     journal={Acta Math.},
      volume={206},
      number={1},
       pages={127\ndash 204},
         url={http://dx.doi.org/10.1007/s11511-011-0061-3},
      review={\MR{2784665}},
}

\bib{Vollhard1980}{article}{
      author={Vollhardt, D.},
      author={W\"olfle, P.},
       title={{Diagrammatic, self-consistent treatment of the Anderson
  localization problem in $d\le 2$ dimensions}},
        date={1980},
     journal={Phys. Rev. B},
      volume={22},
       pages={4666\ndash 4679},
}

\bib{Weidmann}{book}{
      author={Weidmann, J.},
       title={{Linear Operators in Hilbert Spaces}},
   publisher={Springer Verlag, New York},
        date={1980},
}

\bib{Wig1955}{article}{
      author={{W}igner, E.~P.},
       title={Characteristic vectors of bordered matrices with infinite
  dimensions},
        date={1955},
        ISSN={0003-486X},
     journal={Ann. of Math. (2)},
      volume={62},
       pages={548\ndash 564},
      review={\MR{0077805}},
}

\bib{Wis1928}{article}{
      author={Wishart, J.},
       title={The generalised product moment distribution in samples from a
  normal multivariate population},
        date={1928},
        ISSN={00063444},
     journal={Biometrika},
      volume={20A},
      number={1/2},
       pages={32\ndash 52},
         url={http://www.jstor.org/stable/2331939},
}

\end{biblist}
\end{bibdiv}
   

\end{document}